\newcommand{\res}{\!\!\mathop{\hbox{
                                \vrule height 7pt width .5pt depth 0pt
                                \vrule height .5pt width 6pt depth 0pt}}
                                \nolimits}
\def\z{{\bf z}}
\def\divi{\hbox{\rm div\,}}
\newtheorem{theorem}{Theorem}[section]
\newtheorem{lemma}[theorem]{Lemma}
\newtheorem{definition}[theorem]{Definition}
\newtheorem{proposition}[theorem]{Proposition}
\newtheorem{corollary}[theorem]{Corollary}
\newtheorem{remark}[theorem]{Remark}
\newtheorem{example}[theorem]{Example}
\newtheorem*{theorem*}{\it Theorem}
\def\divi{\hbox{\rm div\,}}
\def\R{\mathbb R}
\def\N{\mathbb N}
\numberwithin{equation}{section}
\def\1{\raisebox{2pt}{\rm{$\chi$}}}
\def\z{{\bf z}}
\definecolor{violet(ryb)}{rgb}{0.53, 0.0, 0.69}
\begin{document}

\title[]{Evolution problem for the $1$-Laplacian with mixed  boundary conditions}

\author{N. Igbida}
\address{N. Igbida: Institut de recherche XLIM-DMI, UMR-CNRS 6172, Universit\'{e} de Limoges, France. {\tt noureddine.igbida@unilim.fr}}

\author{J. M. Maz\'{o}n}
\address{J. M. Maz\'{o}n: Departament d'An\`{a}lisi Matem\`atica,
Universitat de Val\`encia, Valencia, Spain. {\tt mazon@uv.es }}

\author{J. Toledo}
\address{J. Toledo: Departament d'An\`{a}lisi Matem\`atica,
Universitat de Val\`encia, Valencia, Spain. {\tt toledojj@uv.es }}

\keywords{Total variation flow, $1$-Laplacian, mixed boundary conditions, maximal monotone operators.\\
\indent 2010 {\it Mathematics Subject Classification.}
35K92,47H06, 47H20,47J35.}

\begin{abstract}  This paper deals with evolution problem for the $1$-Laplacian with mixed  boundary conditions on a bounded open set $\Omega$ of $\R^N$. We prove existence and uniqueness of  strong solutions for data in $L^2(\Omega)$ by mean of the theory of maximal monotone operator. We   also see that if the flux on the boundary is~$1$   (that is, the maximum possible) then these strong solutions can be seen as the large solutions introduced in \cite{MP}. We give explicit examples of solutions.
\end{abstract}

\maketitle

\begin{center}\it In memory of our friend and collaborator Fr\'{e}d\'{e}rique Simondon
\end{center}
\bigskip


\section{Introduction}

The goal of this paper is to established the well-posedness of the problem
\begin{equation}\label{ProblemI}
    \left\{
    \begin{array}{ll}
        u_t -  \Delta_1 u\ni 0   &\hbox{in} \ \ (0,T) \times\Omega,
        \\[10pt] \frac{Du}{\vert Du \vert} \cdot\nu = \psi \quad &\hbox{on} \ \ (0,T) \times\Gamma_N,
        \\[10pt] u = g   &\hbox{on} \ \ (0,T) \times\Gamma_D,
         \\[10pt] u(0) = u_0,
    \end{array}
    \right.
\end{equation}
where $\Omega \subset \R^N$ is a bounded domain with smooth boundary $\partial \Omega$ satisfying
$$\partial \Omega = \Gamma_D \cup \Gamma_N,$$
where $\Gamma_D$ and $\Gamma_N$ are assumed to be disjoint, $\psi \in L^\infty(\Gamma_N)$, $||\psi||_\infty\le 1$, $g \in L^1(\Gamma_D)$ and $u_0 \in L^2(\Omega)$.

In the case $\Gamma_N = \emptyset$ , problem~\eqref{ProblemI} corresponds to the Dirichlet problem
\begin{equation}\label{DirichletProblem}
    \left\{
    \begin{array}{ll}
     u_t - \Delta_1 u \ni 0 &\hbox{in} \ \ (0,T) \times\Omega,
        \\[10pt] u = g   &\hbox{on} \ \ (0,T) \times\partial \Omega,
         \\[10pt] u(0) = u_0,
    \end{array}
    \right.
\end{equation}
 that was studied in \cite{ABCM2} (see also \cite{ACMBook}).   The motivation to study such problem comes from a  variational approach for filling in regions of missing data in digital images  introduced in \cite{BBCS}. Now, the study of the elliptic Dirichlet problem for the $1$-Laplacian starts with the paper \cite{SWZ} with the study of the least gradient problem, see the monograph \cite{GMBook} for the state of the art of this problem.

 The case $\Gamma_D = \emptyset$  corresponds to the Neumann problem
\begin{equation}\label{NeumannProblem}
    \left\{
    \begin{array}{ll}
      u_t -  \Delta_1 u\ni 0   &\hbox{in} \ \ (0,T) \times\Omega,
        \\[10pt] \frac{Du}{\vert Du \vert} \cdot \nu= \psi \quad &\hbox{on} \ \ (0,T) \times \partial \Omega,
         \\[10pt] u(0) = u_0,
    \end{array}
    \right.
\end{equation}
which was studied, for the homogeneous case, i.e., $\psi =0$, in \cite{ABCM1} (see also \cite{ACMBook}),   and whose motivation was the ROF-model in image restoration  introduced in \cite{ROF}. For the  nonhomogeneous case,  with $\Vert \psi \Vert_\infty <1$, its associated elliptic problem posed in $L^2(\Omega)$
was studied in~\cite{BCN},   but let us point out that, to our knowledge, the results on the non-homogeneous Neumann evolution problem are new.  The case $\psi=1$ was studied in~\cite{MP} to understand large solutions (see Section~\ref{sectls} later on).

It is clear that in order to have solutions to problem \eqref{ProblemI} we need to impose  the restriction $\Vert \psi \Vert_\infty \leq 1$. As we will see, under the restriction $\Vert \psi \Vert_\infty <1$,  the problem is the gradient flow in $L^2(\Omega)$ of a convex and lower semi-continuous functional, and consequently in this case we get existence and uniqueness of strong solution for all initial data in $L^2(\Omega)$. In the case  $\Vert \psi \Vert_\infty = 1$ we first prove existence and uniqueness of mild solutions, afterwards we see that they are, in fact, strong solutions.

To conclude this introduction, let us mention here that our proposed solution concept is natural. We first build the solution for the standard Euler implicit discretization of problem~\eqref{ProblemI} by minimizing an energy functional in the $BV$ space, a method typically used in total variation problems with mixed boundary conditions. Subsequently, Fenchel-Rockafellar duality allows us to deduce the solution notion and the PDE linked to such stationary problem. We finally obtain the solution of problem~\eqref{ProblemI} by means of  nonlinear semigroup theory.

 The paper is organized as follows: in Section \ref{Preli}  we introduce the results we need about
functions of bounded variation  and the  Anzellotti Green's formula.    In Section \ref{lasec3} we establish the main results. Section \ref{sectionla4} deals with  proofs for the case where $ \Vert \psi\Vert_\infty <1,$ and Section \ref{sectionla5} is dedicated to the  proofs for  the case general case $||\psi||_\infty \leq  1$. In Section~\ref{sectls}   we study the relation  with large solutions. Finally in Section \ref{ExplicitS}  we compute explicit  solutions.   In an Appendix section we collect the results we use from Nonlinear Semigroup Theory.

\section{Preliminaries on $BV$ functions and Anzellotti pairings}\label{Preli}

Due to the linear growth condition on the Lagrangian, the natural energy
space to study the problem is the space of functions of bounded variation. Let us recall several facts concerning functions of bounded variation (for further information we refer to  \cite{AFP}). Throughout the whole paper, we assume that $\Omega \subset \R^N$ is an open bounded set with { $C^{1,1}$~boundary.

\begin{definition}{\rm
		A function $u \in L^1(\Omega)$ whose partial derivatives in the sense
		of distributions are measures with finite total variation in $\Omega$
		is called a function of bounded variation.
		The space of such functions will be denoted by $BV(\Omega)$. In other words,
		$u \in BV(\Omega)$ if and only if there exist Radon measures
		$\mu_{1},\ldots,\mu_{N}$ defined in $\Omega$ with finite total mass
		in $\Omega$ and
		\begin{equation}
			\int_{\Omega} u \, D_{i} \varphi \, dx = -\int_{\Omega} \varphi \, d\mu_{i}
		\end{equation}
		for all $\varphi \in C^{\infty}_{0}(\Omega)$,
		$i=1,\ldots,N$. Thus, the distributional gradient of $u$ (denoted $Du$) is a vector valued measure with finite total variation
		\begin{equation}
			\label{deftv}
			\vert  Du \vert (\Omega) = \sup \bigg\{ \int_{\Omega} \, u \, \mathrm{div} \, \varphi \, dx: \,\, \varphi \in C^{\infty}_{0}(\Omega; \R^N), \,\, |\varphi(x)| \leq 1
			\, \, \hbox{for $x \in \Omega$} \bigg\}.
		\end{equation}
		The space $BV(\Omega)$ is endowed with the norm
		\begin{equation}
			\| u \|_{BV(\Omega)} = \| u \|_{L^1(\Omega)} +
			\vert  Du \vert (\Omega).
		\end{equation}
	}
\end{definition}

\begin{definition}{\rm Let $u, u_n \in BV(\Omega)$. We say that $\{u_n \}$ {\it strictly converges} in $BV(\Omega)$ to $u$ if $\{u_n \}$ converges to $u$ in $L^1(\Omega)$ and $\vert Du_n \vert (\Omega)$ converges to $\vert Du \vert (\Omega)$ as $n \to \infty$.
	}
\end{definition}

It is well-known (see \cite{AFP}) the following result about the existence of the trace on the boundary of functions of bounded variations.

\begin{theorem}\label{Trace} Let  $\Omega \subset \R^N$ be an open bounded set with Lipschitz boundary and $u \in BV(\Omega)$. Then, for $\mathcal{H}^{N-1}$-almost every $x \in \partial \Omega$ there exists $u^\Omega(x) \in \R$ such that
	$$\lim_{\rho \to 0} \frac{1}{\rho^N} \int_{\Omega \cap B_\rho(x)} \vert u(y) - u^\Omega(x) \vert dy =0.$$
	Moreover, $\Vert u^\Omega \Vert_{L^1(\partial (\Omega)} \leq C \Vert u \Vert_{BV}$ for some constant $C$ depending only on $\Omega$, the extension $\overline{u}$ of $u$ to $0$ out of $\Omega$ belongs to $BV(\R^N)$, and
	$$D \overline{u} = Du + u^\Omega \, \mathcal{H}^{N-1} \res \partial \Omega.$$
	The trace operator $u \mapsto u^\Omega$ is a continuous bijection between $BV(\Omega)$, endowed with the topology induced by the strict convergence, and  $L^1(\partial \Omega, \mathcal{H}^{N-1} \res \partial \Omega)$.
\end{theorem}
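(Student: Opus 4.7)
The plan is to follow the classical approach from Ambrosio--Fusco--Pallara via localization, flattening, and one-dimensional slicing, and then obtain the bijectivity with respect to strict convergence by exhibiting a continuous right inverse.

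\textbf{Localization and flattening.} Since $\partial\Omega$ is $C^{1,1}$ (Lipschitz is enough for existence of traces), I would cover a neighborhood of $\partial\Omega$ by finitely many open balls $B_j$ in which there is a bi-Lipschitz change of coordinates straightening $\partial\Omega$ to a piece of the hyperplane $\{x_N=0\}$, mapping $B_j\cap\Omega$ onto an upper half-ball $B^+$. Using a partition of unity subordinated to $\{B_j\}$, it suffices to prove the statement for $u\in BV(B^+)$, with the role of $\partial\Omega$ played by $\{x_N=0\}\cap B$. The Lipschitz change of variables is a continuous bijection on $BV$ preserving strict convergence (up to constants depending on the charts).

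\textbf{Existence of the trace in the half-ball.} In $B^+$ I would use Fubini: for $\mathcal{H}^{N-1}$-a.e. $x'\in B'\subset\R^{N-1}$ the slice $t\mapsto u(x',t)$ is a function of bounded variation on $(0,\rho(x'))$, hence admits a right limit $u^\Omega(x',0)$ as $t\to 0^+$. The $L^1$-trace estimate comes from writing, for $0<t<s$,
\begin{equation}
\int_{B'} |u(x',t)|\,dx' \;\leq\; \int_{B'} |u(x',s)|\,dx' + |Du|(B'\times(t,s)),
\end{equation}
integrating in $s$, and letting $t\to0^+$ with Fatou; this yields $\|u^\Omega\|_{L^1}\le C(\|u\|_{L^1}+|Du|(B^+))$. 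The averaged-convergence characterization of $u^\Omega(x)$ is then derived from the BV fundamental theorem applied slice-wise together with standard cube-to-ball comparisons.

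\textbf{Jump formula for the zero extension.} Set $\bar u=u$ on $\Omega$ and $\bar u=0$ elsewhere. For $\varphi\in C^\infty_c(\R^N;\R^N)$ I would compute $\int_{\R^N}\bar u\,\mathrm{div}\,\varphi\,dx=\int_{\Omega}u\,\mathrm{div}\,\varphi\,dx$, and then approximate $u$ in $BV$-strict sense by smooth functions in $\Omega$ (possible by a standard mollification that respects traces) to apply the classical divergence theorem. Passing to the limit, using the strict continuity of traces proved above, gives
\begin{equation}
\int_{\R^N}\bar u\,\mathrm{div}\,\varphi\,dx \;=\; -\int_{\Omega}\varphi\cdot dDu\;-\;\int_{\partial\Omega} u^\Omega\,\varphi\cdot\nu\,d\mathcal{H}^{N-1},
\end{equation}
which is exactly $D\bar u = Du + u^\Omega\,\mathcal{H}^{N-1}\res\partial\Omega$, and in particular shows $\bar u\in BV(\R^N)$.

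\textbf{Bijectivity and continuity for strict convergence.} Continuity of $u\mapsto u^\Omega$ from $(BV(\Omega),\text{strict})$ to $L^1(\partial\Omega)$ follows from the jump formula, since strict convergence $u_n\to u$ implies $|D\bar u_n|(\R^N)\to|D\bar u|(\R^N)$, which forces $u_n^\Omega\to u^\Omega$ in $L^1(\partial\Omega,\mathcal{H}^{N-1})$ (this is the step I expect to be the main obstacle, because it requires the delicate fact that the total-variation mass cannot concentrate on $\partial\Omega$ in the limit unless it already did so before). For surjectivity, given $h\in L^1(\partial\Omega)$, I would build an explicit lifting in local coordinates: take an extension $H$ of $h$ to a tubular neighborhood of $\partial\Omega$, mollify tangentially with parameter depending on distance to the boundary, and multiply by a cut-off in the normal direction to obtain $v\in BV(\Omega)$ with trace $h$ and $\|v\|_{BV}\le C\|h\|_{L^1(\partial\Omega)}$. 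Combined with injectivity given by the trace formula, this yields the claimed bijection.
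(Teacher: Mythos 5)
The paper does not prove this statement at all: it is quoted verbatim (up to one mis-transcription, see below) from Ambrosio--Fusco--Pallara \cite{AFP}, so there is no internal proof to compare against. Your outline does follow the classical route of \cite{AFP} (localization and flattening, one-dimensional slicing to get the trace and the $L^1$ bound, the jump formula for the zero extension, Gagliardo-type lifting for surjectivity), and those parts are sound in outline. But two steps are genuinely defective.

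First, the continuity with respect to strict convergence is circular as you have written it. You deduce $u_n^\Omega\to u^\Omega$ in $L^1(\partial\Omega)$ from $|D\bar u_n|(\R^N)\to|D\bar u|(\R^N)$; but $|D\bar u_n|(\R^N)=|Du_n|(\Omega)+\int_{\partial\Omega}|u_n^\Omega|\,d\mathcal{H}^{N-1}$, so knowing that this quantity converges already presupposes control of the traces --- exactly what you are trying to establish. Strict convergence in $BV(\Omega)$ gives only $|Du_n|(\Omega)\to|Du|(\Omega)$ and hence weak-$*$ convergence $|Du_n|\rightharpoonup|Du|$ with convergence of total masses. The correct argument (as in \cite{AFP}) works in the flattened chart from the slice estimate $\int_{B'}|u_n^\Omega-u_n(\cdot,t)|\,dx'\le|Du_n|(B'\times(0,t))$, averages in $t\in(0,\delta)$, uses $u_n\to u$ in $L^1$, and then uses $\limsup_n|Du_n|(K)\le|Du|(K)$ on compact sets $K$ together with $|Du|(B'\times(0,\delta))\to0$ as $\delta\to0$. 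You flagged this step as "the main obstacle", and indeed as written it is a gap, not a proof. Second, the claim that "injectivity is given by the trace formula" is false: the trace operator is grossly non-injective (every $u\in C^\infty_c(\Omega)$ has zero trace), and the jump formula $D\bar u=Du+u^\Omega\,\mathcal{H}^{N-1}\res\partial\Omega$ determines $u^\Omega$ from $u$, not conversely. The word "bijection" in the statement is itself a misquotation of \cite[Theorem 3.88]{AFP}, which asserts only that the trace operator is continuous (and it is surjective by the Gagliardo extension you sketch); no correct proof of bijectivity can exist, so this part of your argument must be discarded rather than repaired.
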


We will denote the trace operator by $T_r$,  and when there is no confusion we will denote  by $u$ to the trace  $Tr(u)$.

We recall the following embedding theorem  stated in \cite[Theorem
6.5.7]{Mazya}.

\begin{theorem}\label{mazinq}
	Suppose that $\Omega \subset \R^N$  ($N\ge 2$) is an open bounded set with
	Lipchitz boundary.  Then,   there exists  constants $C_1, C_2>0$ such that
	\begin{equation}
		\label{eq:7N}
		\Vert u \Vert_{L^{\frac{N}{N-1}}(\Omega)} \le C_1
		\vert Du \vert (\Omega)
		+ C_2 \Vert u\Vert_{L^1(\partial \Omega)},
	\end{equation}
	for every $u \in BV(\Omega)$.
\end{theorem}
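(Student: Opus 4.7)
My plan is to reduce the inequality to the classical Gagliardo--Nirenberg--Sobolev inequality for $BV$ functions on the whole space $\RR^N$, exploiting the trace formula already recorded in Theorem~\ref{Trace}.

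First I would extend $u$ by zero outside $\Omega$. Denote this extension by $\overline{u}$. By Theorem~\ref{Trace}, $\overline{u}\in BV(\RR^N)$ and
\begin{equation}
|D\overline{u}|(\RR^N) = |Du|(\Omega) + \int_{\partial\Omega}|u^{\Omega}|\, d\mathcal{H}^{N-1}=|Du|(\Omega)+\|u\|_{L^1(\partial\Omega)},
\end{equation}
since the zero extension contributes a jump equal to $u^{\Omega}$ on $\partial\Omega$ and no interior mass is added.

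Next I would invoke the global BV Sobolev inequality on $\RR^N$: for every $v\in BV(\RR^N)$ one has
\begin{equation}
\|v\|_{L^{N/(N-1)}(\RR^N)}\le C_N\, |Dv|(\RR^N),
\end{equation}
with $C_N$ a universal constant. This is the standard consequence of the coarea formula together with the isoperimetric inequality, or equivalently the Loomis--Whitney estimate applied to smooth approximations followed by a strict-convergence limit (valid since both sides are continuous for the strict topology on $BV(\RR^N)$). Applying this to $v=\overline{u}$ and noting that $\|\overline{u}\|_{L^{N/(N-1)}(\RR^N)}=\|u\|_{L^{N/(N-1)}(\Omega)}$ gives
\begin{equation}
\|u\|_{L^{N/(N-1)}(\Omega)}\le C_N|Du|(\Omega)+C_N\|u\|_{L^1(\partial\Omega)},
\end{equation}
which is the claim with $C_1=C_2=C_N$. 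Note that here the Lipschitz regularity of $\partial\Omega$ enters only through the existence and $L^1$-boundedness of the trace used to identify $|D\overline{u}|(\RR^N)$; the constants depend on $N$ alone.

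The main obstacle, if one wants a self-contained argument, is the BV Sobolev inequality on $\RR^N$ itself, which requires either an approximation by smooth compactly supported functions in the strict topology together with the classical $W^{1,1}$ Gagliardo--Nirenberg inequality, or a direct coarea--isoperimetric argument; everything else in the reduction is bookkeeping based on Theorem~\ref{Trace}.
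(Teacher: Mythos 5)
Your argument is correct. Note first that the paper does not prove this statement at all: it is quoted verbatim from Maz'ya \cite[Theorem 6.5.7]{Mazya}, so there is no in-paper proof to compare against. Your derivation is the standard one and it works: the zero extension $\overline{u}$ lies in $BV(\R^N)$ with $D\overline{u}=Du+u^{\Omega}\,\mathcal{H}^{N-1}\res\partial\Omega$ by Theorem~\ref{Trace}, and since the interior part of $D\overline u$ and the boundary part are mutually singular, the total variations add exactly, giving $|D\overline{u}|(\R^N)=|Du|(\Omega)+\Vert u\Vert_{L^1(\partial\Omega)}$. Combining this with the global inequality $\Vert v\Vert_{L^{N/(N-1)}(\R^N)}\le C_N|Dv|(\R^N)$ for $v\in BV(\R^N)$ (obtained by mollification, $\Vert\nabla(v*\rho_\varepsilon)\Vert_{L^1}\le|Dv|(\R^N)$, the classical $W^{1,1}$ Gagliardo--Nirenberg--Sobolev inequality, and Fatou) yields \eqref{eq:7N} with $C_1=C_2=C_N$ dimensional. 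This is in fact slightly stronger than the statement, since the constants do not depend on $\Omega$; the Lipschitz regularity enters only through the existence of the trace and the validity of the zero-extension formula, exactly as you say. The only cosmetic caveat is that the formula in Theorem~\ref{Trace} suppresses the unit normal (the boundary contribution to the vector measure $D\overline u$ is really $\pm u^{\Omega}\nu_\Omega\,\mathcal{H}^{N-1}\res\partial\Omega$), but this does not affect the total variation computation you need.
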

  For   $N=1$ and $\Omega=]a,b[$, we have that $BV(a,b)\subset L^\infty(a,b)$, and for any $x,y\in ]a,b[$,
$$|f(x)|\le \vert Du \vert (]a,b[)+|f(y)|.$$

We also have (see~\cite{Sabina}) that:

\begin{theorem}\label{pepesab}
	Suppose that $\Omega \subset \R^N$ is an open bounded set with
	$C^{1,1}$ boundary.  Then,   there exists a constant $C_\Omega>0$ such that
	\begin{equation}
		\label{eq:7}
		\Vert u \Vert_{L^{1}(\partial\Omega)} \le       \vert Du \vert (\Omega)
		+ C_\Omega\Vert  u \Vert_{L^1(\Omega)}
	\end{equation}
	for every $u \in BV(\Omega)$.
\end{theorem}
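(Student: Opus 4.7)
The plan is to reduce the trace estimate to the divergence theorem applied with a suitably chosen vector field. Specifically, I would construct a Lipschitz vector field $X\colon \overline{\Omega} \to \R^N$ satisfying $|X(x)| \le 1$ for $x \in \Omega$ and $X\cdot \nu = 1$ on $\partial\Omega$; then $C_\Omega$ will come out as (a bound for) $\|\divi X\|_\infty$.

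To build $X$, I would use the signed distance function $d(x)=\dist(x,\partial\Omega)$. Because $\partial\Omega$ is $C^{1,1}$, there exists $\delta>0$ such that $d$ is $C^{1,1}$ on the tubular neighborhood $\Omega_\delta=\{x\in\Omega:d(x)<\delta\}$, with $|\nabla d|=1$ there and $\nabla d(x)=-\nu(x)$ for $x\in\partial\Omega$. Choosing a $1$-Lipschitz cutoff $\eta\colon[0,\infty)\to[0,1]$ with $\eta(0)=1$ and $\eta(t)=0$ for $t\ge\delta$, I set
\begin{equation}
X(x)=\begin{cases}-\eta(d(x))\,\nabla d(x),& x\in\Omega_\delta,\\ 0,& x\in\Omega\setminus\Omega_\delta.\end{cases}
\end{equation}
This is Lipschitz on $\overline{\Omega}$, satisfies $|X|\le\eta(d)\le1$, gives $X\cdot\nu=-\nabla d\cdot\nu=1$ on $\partial\Omega$, and $\divi X\in L^\infty(\Omega)$ with a bound $C_\Omega$ depending only on $\Omega$.

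Next I would test against smooth functions. For $u\in C^1(\overline{\Omega})$, applying the divergence theorem to $\phi_\varepsilon(u)X$, with $\phi_\varepsilon(s)=\sqrt{s^2+\varepsilon^2}-\varepsilon$, yields
\begin{equation}
\int_{\partial\Omega}\phi_\varepsilon(u)\,d\mathcal{H}^{N-1}=\int_\Omega\phi_\varepsilon(u)\,\divi X\,dx+\int_\Omega\phi'_\varepsilon(u)\,X\cdot\nabla u\,dx.
\end{equation}
Since $|\phi'_\varepsilon|\le 1$ and $|X|\le 1$, letting $\varepsilon\to 0^+$ gives
\begin{equation}
\int_{\partial\Omega}|u|\,d\mathcal{H}^{N-1}\le C_\Omega\int_\Omega|u|\,dx+\int_\Omega|\nabla u|\,dx,
\end{equation}
which is exactly \eqref{eq:7} for $u\in C^1(\overline{\Omega})$.

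Finally, I would extend the estimate to $u\in BV(\Omega)$ by the Anzellotti--Giaquinta strict approximation: pick $u_n\in C^\infty(\Omega)\cap BV(\Omega)$ converging to $u$ strictly in $BV(\Omega)$. By Theorem~\ref{Trace}, $Tr(u_n)\to Tr(u)$ in $L^1(\partial\Omega)$, while $\|u_n\|_{L^1(\Omega)}\to\|u\|_{L^1(\Omega)}$ and $\int_\Omega|\nabla u_n|\,dx\to|Du|(\Omega)$, and the inequality passes to the limit. The main obstacle is Step~1: it is precisely the $C^{1,1}$ regularity of $\partial\Omega$ that provides the Lipschitz regularity of $\nabla d$ on $\Omega_\delta$, which in turn secures $\divi X\in L^\infty$; with only Lipschitz boundary one would lose the bound on $\divi X$ and a more delicate localization via partitions of unity and local flattening of $\partial\Omega$ would be required.
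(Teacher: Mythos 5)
Your argument is correct in substance, and it is worth noting at the outset that the paper does not prove this theorem at all: it is quoted from the reference \cite{Sabina}, whose whole point is that the constant $1$ in front of $\vert Du\vert(\Omega)$ is attainable precisely under $C^{1,1}$ regularity. Your construction is the natural self-contained route to that sharp constant: a vector field $X$ with $\Vert X\Vert_\infty\le 1$, $X\cdot\nu=1$ on $\partial\Omega$ and $\divi X\in L^\infty(\Omega)$ immediately forces the coefficient of the gradient term to be $1$ and pushes all the geometry into $C_\Omega=\Vert\divi X\Vert_\infty$, and the $C^{1,1}$ hypothesis enters exactly where you say it does, through the $C^{1,1}$ regularity of the distance function on a one-sided tubular neighborhood (equivalently the uniform two-sided ball condition), which gives $\Delta d\in L^\infty$ there. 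Two small points should be repaired. First, a cutoff $\eta$ with $\eta(0)=1$ and $\eta\equiv 0$ on $[\delta,\infty)$ cannot be $1$-Lipschitz when $\delta<1$; you should simply take $\eta(t)=(1-t/\delta)^+$, whose Lipschitz constant $1/\delta$ is harmless because it only enters $\divi X$, hence $C_\Omega$, and not the coefficient of $\int_\Omega\vert\nabla u\vert$. Second, there is a mild mismatch between the class $C^1(\overline\Omega)$ for which you prove the inequality and the class $C^\infty(\Omega)\cap BV(\Omega)$ of strict approximants you invoke at the end: either observe that the divergence identity for $\phi_\varepsilon(u)X$ holds for every $u\in W^{1,1}(\Omega)$ (the Gauss--Green formula is valid for $W^{1,1}$ vector fields on Lipschitz domains with the trace on the boundary), or use the Anzellotti--Giaquinta approximation in the form that preserves the trace, $Tr(u_n)=Tr(u)$, together with $\int_\Omega\vert\nabla u_n\vert\to\vert Du\vert(\Omega)$. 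With these two adjustments the proof is complete.
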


Modica in  \cite[Proposition~1.2]{Modica} gives the following result.

\begin{proposition}\label{ModicaP} Let $\tau : \partial \Omega \times \R \rightarrow \R$ be a Borel function, and  for $u \in BV(\Omega)$ let
	$$F(u):= \int_\Omega \vert Du \vert + \int_{\partial \Omega} \tau(x,  u(x)) \, \mathcal{H}^{N-1}(x).$$
	If
	$$\vert \tau(x,s_1) - \tau(x,s_2) \vert \leq \vert s_1 - s_2 \vert \quad \hbox{for $\mathcal{H}^{N-1}$-a.e. } x \in \partial \Omega, \  \hbox{and for all }  s_1, s_2 \in \R,$$
	then the functional $F$ is lower semi-continuous on $BV(\Omega)$ with respect to the topology of $L^1(\Omega)$.
\end{proposition}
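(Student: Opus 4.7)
The plan is to prove lower semi-continuity by slicing the functional via the $BV$ coarea formula and reducing everything to one key claim about a weighted perimeter.

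The first step is to decompose the boundary term through a layer-cake formula. Since $\tau(x,\cdot)$ is $1$-Lipschitz, its partial derivative $\tau_s(x,r)$ exists for a.e.\ $r\in\mathbb{R}$ and satisfies $|\tau_s|\le 1$. From $\tau(x,u)-\tau(x,0)=\int_0^u \tau_s(x,r)\,dr$ and Fubini (with a short sign bookkeeping for $r<0$), one obtains, modulo additive terms independent of $u$,
$$
\int_{\partial\Omega}\tau(x,u)\,d\mathcal{H}^{N-1}
= \mathrm{const} + \int_\mathbb{R}\int_{\partial\Omega\cap\{u>r\}}\tau_s(x,r)\,d\mathcal{H}^{N-1}(x)\,dr.
$$
Combining with the $BV$-coarea formula $|Du|(\Omega)=\int_\mathbb{R} P(\{u>r\},\Omega)\,dr$ one arrives at
$$
F(u) = \mathrm{const} + \int_\mathbb{R} G_r(\{u>r\})\,dr,\qquad
G_r(E) := P(E,\Omega) + \int_{\partial\Omega\cap E}\tau_s(x,r)\,d\mathcal{H}^{N-1}.
$$

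The key claim is that, for every fixed $r$ (so $\beta:=\tau_s(\cdot,r)$ is a fixed function with $|\beta|\le 1$), the functional $E\mapsto G_r(E)$ is $L^1(\Omega)$-lsc on sets of finite perimeter. I would prove this by the standard extension trick: pick $\Omega'\supset\supset\Omega$ and $V\in W^{1,1}(\Omega'\setminus\overline\Omega)$ whose outer trace on $\partial\Omega$ equals $(1-\beta)/2\in[0,1]$, available from surjectivity of the trace. For $E\subset\Omega$ of finite perimeter, the glued extension $\widetilde{\mathbf{1}_E}:=\mathbf{1}_E\mathbf{1}_\Omega + V\mathbf{1}_{\Omega'\setminus\Omega}$ belongs to $BV(\Omega')$, and splitting the jump on $\partial\Omega$ according to whether the inner trace is $1$ or $0$ gives, after a short calculation,
$$
|D\widetilde{\mathbf{1}_E}|(\Omega') = G_r(E) + \int_{\partial\Omega}\tfrac{1-\beta}{2}\,d\mathcal{H}^{N-1} + \int_{\Omega'\setminus\overline\Omega}|\nabla V|\,dx.
$$
Since the last two summands are independent of $E$, the classical $L^1(\Omega')$-lsc of the total variation — applied to $\widetilde{\mathbf{1}_{E_n}}\to\widetilde{\mathbf{1}_E}$ in $L^1(\Omega')$ coming from $L^1(\Omega)$-convergence of the $\mathbf{1}_{E_n}$ with the fixed outside $V$ — transfers to lsc of $G_r$.

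Finally, given $u_n\to u$ in $L^1(\Omega)$, one extracts a subsequence so that $\mathbf{1}_{\{u_n>r\}}\to\mathbf{1}_{\{u>r\}}$ in $L^1(\Omega)$ for a.e.\ $r$ (standard Cavalieri), and then $G_r(\{u>r\})\le \liminf_n G_r(\{u_n>r\})$ a.e.\ $r$. Fatou's lemma, applied to the sliced representation, produces $\liminf_n F(u_n)\ge F(u)$. The main technical obstacle is ensuring that Fatou applies: because $\beta$ can change sign, $G_r$ is not nonnegative, so one needs an $r$-integrable lower bound for $r\mapsto G_r(\{u_n>r\})$ uniform in $n$. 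This is obtained from $|\beta|\le 1$ combined with the $BV$-trace inequality of Theorem \ref{pepesab} and the $L^1$-boundedness of $(u_n)$ implicit in $L^1$-convergence, which controls $\mathcal{H}^{N-1}(\partial\Omega\cap\{u_n>r\})$ in $L^1_{r}$-sense.
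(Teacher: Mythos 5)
You should first note that the paper does not actually prove this proposition: it quotes it from Modica, whose argument is quite different from yours (he localizes the trace inequality of Theorem \ref{pepesab} in a thin collar of $\partial\Omega$, uses the $1$\nobreakdash-Lipschitz bound to replace $\tau(x,u_n)$ by $\tau(x,u)$ at the cost of the total variation of $u_n-u$ in the collar, and lets the collar shrink). Your slicing strategy is a genuinely different route, and its core is sound: the reduction to the level-set functionals $G_r$, the gluing construction with the exterior function $V$, and the identity $|D\widetilde{\mathbf{1}_E}|(\Omega')=G_r(E)+\mathrm{const}$, whence the $L^1$-lower semicontinuity of each $G_r$, all check out. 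Two points need repair, one of them essential. The minor one: for $r<0$ the additive ``constant'' you discard, $\int_{-\infty}^0\int_{\partial\Omega}\tau_s(x,r)\,d\mathcal H^{N-1}dr$, is in general infinite, so you must keep the two half-lines separate, writing $\int_0^{u}\tau_s\,dr=\int_0^{\infty}\mathbf 1_{\{u>r\}}\tau_s\,dr-\int_{-\infty}^0\mathbf 1_{\{u\le r\}}\tau_s\,dr$ and defining $G_r$ with $\{u\le r\}$ in place of $\{u>r\}$ for $r<0$; this changes nothing in the scheme.

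The essential gap is in the Fatou step. You propose to minorize $g_n(r):=G_r(\{u_n>r\})$ by $-\mathcal H^{N-1}(\partial\Omega\cap\{u_n>r\})$ and to control the latter in the $L^1_r$ sense using the $L^1(\Omega)$-boundedness of $(u_n)$. But $\int_0^{\infty}\mathcal H^{N-1}(\partial\Omega\cap\{u_n>r\})\,dr=\int_{\partial\Omega}(\mathrm{Tr}\,u_n)^+\,d\mathcal H^{N-1}$, and this is \emph{not} controlled by $\|u_n\|_{L^1(\Omega)}$: the trace inequality \eqref{eq:7} brings in $|Du_n|(\Omega)$, which need not stay bounded along a sequence with $F(u_n)$ bounded when the Lipschitz constant is exactly $1$ --- the paper's own example in Section \ref{lasec3} has $F(u_n)\to-\frac{8\pi}{3}$ while $|Du_n|(\Omega)=2\pi n^{1/4}\to\infty$ and $\int_{\partial\Omega}u_n\,d\mathcal H^{1}\to\infty$. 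So the minorant you describe does not exist. The fix is to apply Theorem \ref{pepesab} \emph{levelwise}, to $\mathbf 1_{\{u_n>r\}}$: this gives $\mathcal H^{N-1}(\partial\Omega\cap\{u_n>r\})\le P(\{u_n>r\},\Omega)+C_\Omega\,|\{u_n>r\}|$, and absorbing the perimeter into the nonnegative part of $G_r$ yields $g_n(r)\ge -C_\Omega\,|\{u_n>r\}|$ for $r\ge 0$ (and $\ge -C_\Omega\,|\{u_n\le r\}|$ for $r<0$). These minorants are not a single fixed integrable function, but along your subsequence they converge a.e.\ in $r$ and in $L^1(\mathbb R)$ (their total integral being $C_\Omega\|u_n\|_{L^1(\Omega)}\to C_\Omega\|u\|_{L^1(\Omega)}$) to the corresponding minorant for $u$, so the generalized Fatou lemma with varying integrable minorants closes the argument. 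Note that it is precisely here that the constant $1$ in front of $|Dv|(\Omega)$ in \eqref{eq:7} is indispensable, exactly as in Modica's original proof.
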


In the proof of the above result Modica uses the inequality~\eqref{eq:7},  and says that it is  true, with the constant $1$ in front of $\vert Du \vert (\Omega)$, when  $\partial \Omega$ is smooth enough. In~\cite{Sabina} it  is shown that this is true if $\partial \Omega$ is $C^{1,1}$ but it is not true, in general, under less regularity of the boundary.

We   now state several results from \cite{Anzellotti1} that we use afterwards. Let, for $1\le p<+\infty$,
\begin{equation}
	\label{xw}
	X_p(\Omega) = \{ \z \in L^{\infty}(\Omega; \R^N):
	\divi(\z)\in L^p(\Omega) \}.
\end{equation}

\begin{definition}{\rm
		For $\z \in X_p(\Omega)$ and $u \in BV(\Omega) \cap L^{p'}(\Omega)$,
		define the functional $(\z,Du): C^{\infty}_{0}(\Omega) \rightarrow \R$
		by the formula
		\begin{equation}\label{Anzel1}
			\langle (\z,Du),\varphi \rangle = - \int_{\Omega} u \, \varphi \, \divi(\z) \, dx -
			\int_{\Omega} u \, \z \cdot \nabla \varphi \, dx.
		\end{equation}
	}
\end{definition}

The following result collects some of the most important properties of the pairing $(\z, Du)$, formally defined only as a distribution on $\Omega$.

\begin{proposition}\label{Anzelotti}
	The distribution $(\z,Du)$ is a Radon measure in $\Omega$. Moreover,
	\begin{equation}\label{eq:absolutecontinuity}
		\bigg\vert \int_{B} (\z,Du) \bigg\vert \leq \int_{B} |(\z,Du)| \leq \| \z
		\|_{\infty}
		\int_{B} \vert Du \vert
	\end{equation}
	for any Borel set $B \subseteq \Omega$. In particular, $(\z,Du)$ is absolutely continuous with respect to $\vert Du \vert$. Furthermore,
	\begin{equation}
		\int_{\Omega} (\z,Dw) = \int_{\Omega} \z \cdot \nabla w \, dx \ \ \ \forall\,   w
		\in W^{1,1}(\Omega) \cap L^{\infty}(\Omega),
	\end{equation}
	 with what $(\z,Du)$ agrees on Sobolev functions with the dot product of $\z$ and $\nabla u$.
\end{proposition}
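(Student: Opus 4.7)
The strategy is to establish everything via a smooth approximation of $u$ that preserves the gradient measure in the weak-$*$ sense, reducing the Radon-measure claim and the absolute-continuity bound to a single functional estimate obtained by integration by parts in the smooth case.

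First I would mollify: set $u_\varepsilon = u * \rho_\varepsilon$ on $\Omega_\varepsilon = \{x \in \Omega : \dist(x,\partial\Omega) > \varepsilon\}$. Two standard facts are crucial. On the one hand, $u_\varepsilon \to u$ in $L^{p'}_{\mathrm{loc}}(\Omega)$ (and in $L^1_{\mathrm{loc}}$), so that for every test function $\varphi \in C_c^\infty(\Omega)$ with $\mathrm{supp}\,\varphi \subset \Omega_\varepsilon$,
\begin{equation}
\langle (\z,Du_\varepsilon),\varphi\rangle = -\int_\Omega u_\varepsilon\,\varphi\,\divi(\z)\,dx - \int_\Omega u_\varepsilon\,\z\cdot\nabla\varphi\,dx \;\longrightarrow\; \langle (\z,Du),\varphi\rangle.
\end{equation}
On the other hand, $|\nabla u_\varepsilon|\,dx \stackrel{*}{\rightharpoonup} |Du|$ weakly as Radon measures on $\Omega$ (a classical BV result). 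Since $u_\varepsilon$ is smooth on its domain, for $\varphi\in C_c^\infty(\Omega)$ supported in $\Omega_\varepsilon$ a direct integration by parts on $u_\varepsilon \varphi \in C_c^1(\Omega_\varepsilon)$ (legitimate because $\z\in L^\infty$ with $\divi(\z)\in L^p$) gives
\begin{equation}
\langle (\z,Du_\varepsilon),\varphi\rangle = \int_\Omega \varphi\,\z\cdot\nabla u_\varepsilon\,dx.
\end{equation}

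Combining these two observations and passing to the limit, for every $\varphi \in C_c^\infty(\Omega)$
\begin{equation}
|\langle (\z,Du),\varphi\rangle| \;\le\; \|\z\|_\infty \limsup_{\varepsilon\to 0}\int_\Omega |\varphi|\,|\nabla u_\varepsilon|\,dx \;=\; \|\z\|_\infty \int_\Omega |\varphi|\,d|Du|,
\end{equation}
where the equality uses weak-$*$ convergence tested against the continuous compactly supported function $|\varphi|$. This inequality shows that $(\z,Du)$ extends by density to a bounded linear functional on $C_c(\Omega)$ whose norm is controlled on compact sets, so the Riesz representation theorem yields a signed Radon measure; moreover the bound passes to the variation, giving $|(\z,Du)|(U)\le \|\z\|_\infty |Du|(U)$ for every open $U\subset\Omega$. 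Outer regularity of Radon measures then extends this to arbitrary Borel $B\subseteq \Omega$, which is exactly \eqref{eq:absolutecontinuity}, and in particular yields the absolute continuity of $(\z,Du)$ with respect to $|Du|$.

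For the final identity, take $w\in W^{1,1}(\Omega)\cap L^\infty(\Omega)$ and $\varphi \in C_c^\infty(\Omega)$. Since $w\varphi \in W^{1,1}_0(\Omega)\cap L^\infty(\Omega)$, the standard integration by parts for $\z\in X_p(\Omega)$ applies and gives
\begin{equation}
-\int_\Omega w\varphi\,\divi(\z)\,dx = \int_\Omega \z\cdot\nabla(w\varphi)\,dx = \int_\Omega w\,\z\cdot\nabla\varphi\,dx + \int_\Omega \varphi\,\z\cdot\nabla w\,dx,
\end{equation}
so $\langle(\z,Dw),\varphi\rangle = \int_\Omega \varphi\,\z\cdot\nabla w\,dx$. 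Hence $(\z,Dw) = \z\cdot\nabla w\,dx$ as Radon measures, both of which are finite since $\z\cdot\nabla w\in L^1(\Omega)$; integrating over $\Omega$ finishes the proof.

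The main obstacle is the limit step: one must be confident that the smooth $u_\varepsilon$, defined only on the shrinking open sets $\Omega_\varepsilon$, interact properly with a fixed $\varphi\in C_c^\infty(\Omega)$. This is handled precisely because $\varphi$ has compact support in $\Omega$, which eventually lies inside $\Omega_\varepsilon$, so that the integration by parts and the weak-$*$ convergence of $|\nabla u_\varepsilon|\,dx$ to $|Du|$ on all of $\Omega$ combine without any boundary contribution.
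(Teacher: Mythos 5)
The paper does not prove this proposition at all: it is quoted verbatim from Anzellotti \cite{Anzellotti1} as background material, so there is no in-paper argument to compare against. Your proof is a correct, self-contained reconstruction of the classical argument: mollify $u$, use that $u_\varepsilon\varphi\in C_c^\infty(\Omega)$ turns the defining distributional expression into $\int_\Omega\varphi\,\z\cdot\nabla u_\varepsilon\,dx$, pass to the limit using the weak-$*$ convergence $|\nabla u_\varepsilon|\,dx\stackrel{*}{\rightharpoonup}|Du|$ to get $|\langle(\z,Du),\varphi\rangle|\le\|\z\|_\infty\int_\Omega|\varphi|\,d|Du|$, and conclude by Riesz representation plus outer regularity; the $W^{1,1}\cap L^\infty$ identity then follows from the Gauss--Green formula applied to $w\varphi$. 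This is essentially Anzellotti's own route, so the proposal fills the citation with the intended proof rather than offering a genuinely different one.

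One small inaccuracy worth flagging: the claim that $u_\varepsilon\to u$ in $L^{p'}_{\mathrm{loc}}(\Omega)$ is false in the endpoint case $p=1$, $p'=\infty$ (mollifications of an $L^\infty$ function need not converge uniformly). The conclusion you need, namely $\int_\Omega u_\varepsilon\varphi\,\divi(\z)\,dx\to\int_\Omega u\varphi\,\divi(\z)\,dx$, still holds there by dominated convergence, since $u_\varepsilon\to u$ a.e. with $\|u_\varepsilon\|_\infty\le\|u\|_\infty$ and $\varphi\,\divi(\z)\in L^1(\Omega)$; and in the present paper only $p=2$ is ever used, so this does not affect anything downstream. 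With that one-line repair the argument is complete.
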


By \eqref{eq:absolutecontinuity}, the measure $(\z,Du)$ has a Radon-Nikodym derivative with respect to $|Du|$
$$\theta (\z,Du, \cdot):= \frac{d[(\z, Du)]}{d|Du|},$$
which is a $\vert Du \vert$-measurable function from $\Omega$ to $\R$ such that
\begin{equation}\label{410}
	\int_{B} (\z,Du) = \int_{B} \theta (\z,Du,x) \vert Du \vert
\end{equation}
for any Borel set $B \subseteq \Omega$. We have that
\begin{equation}
	\| \theta (\z, Du, \cdot) \|_{L^{\infty}(\Omega,\vert Du \vert )} \leq
	\| \z \|_{L^{\infty}(\Omega;\R^N)}.
\end{equation}

Moreover, the
following  chain rule for $(\z,D(\cdot))$ holds.

\begin{proposition}\label{prop:chain-rule}
	Let $\Omega$ be a bounded domain with a Lipschitz-continuous boundary
	$\partial\Omega$ and for $1\le p\le N$ and $p^{\mbox{}_{\prime}}$ given by
	$1=\tfrac{1}{p}+\tfrac{1}{p^{\prime}}$, let $\z \in X_p(\Omega)$ and
	$w \in BV(\Omega)_{p^{\mbox{}_{\prime}}}$. Then, for every Lipschitz continuous,
	monotonically  non-decreasing function $T : \R \rightarrow \R$, one has that
	\begin{equation}\label{E1paring12}
		\theta(\z, D(T \circ w),x) = \theta (\z, Dw, x)\ \ \text{for
			$\vert Dw \vert$-a.e. $x\in \Omega$.}
	\end{equation}
\end{proposition}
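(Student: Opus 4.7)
\emph{Plan.} My strategy is to reduce the identity to the classical chain rule by smoothing $T$ and $w$, and then to pass to the limit using the continuity of the Anzellotti pairing under strict $BV$-convergence.

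First I would approximate. Mollify $T$ to get $T_\epsilon = T\ast \rho_\epsilon \in C^\infty(\R)$; since $T$ is Lipschitz and monotonically non-decreasing, so is $T_\epsilon$, with $0\le T_\epsilon'\le \mathrm{Lip}(T)$, and $T_\epsilon\to T$ locally uniformly. In parallel, take $w_n\in C^\infty(\Omega)\cap BV(\Omega)\cap L^{p'}(\Omega)$ with $w_n\to w$ strictly in $BV(\Omega)$ and in $L^{p'}(\Omega)$. For such smooth data, the classical chain rule gives $\nabla(T_\epsilon\circ w_n)=T_\epsilon'(w_n)\nabla w_n$, and the last statement of Proposition~\ref{Anzelotti} yields, for every Borel $B\subseteq \Omega$,
\begin{equation}
\int_B (\z,D(T_\epsilon\circ w_n)) = \int_B T_\epsilon'(w_n)\, \z\cdot \nabla w_n\, dx, \qquad |D(T_\epsilon\circ w_n)|(B) = \int_B T_\epsilon'(w_n)|\nabla w_n|\, dx,
\end{equation}
where $T_\epsilon'\ge 0$ was used to drop the absolute value in the second identity. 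At each point where $T_\epsilon'(w_n)|\nabla w_n|>0$ the ratio of integrands equals $(\z\cdot\nabla w_n)/|\nabla w_n| = \theta(\z,Dw_n,x)$, which proves \eqref{E1paring12} in the smooth case.

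To pass to the limit I would invoke the continuity of the Anzellotti pairing: if $v_n\to v$ in $L^{p'}$ and $|Dv_n|(\Omega)\to |Dv|(\Omega)$, then $(\z,Dv_n)\rightharpoonup^\ast (\z,Dv)$ weakly as Radon measures on $\Omega$, and similarly $|Dv_n|\rightharpoonup^\ast |Dv|$. Applying this with $v_n=w_n$ and $v=w$ transfers the smooth identity to general $w$ for fixed $\epsilon$. Sending $\epsilon\to 0$, the BV chain rule (of Vol'pert) combined with dominated convergence applied to the absolutely continuous, Cantor and jump parts of $Dw$ gives $|D(T_\epsilon\circ w)|(\Omega)\to |D(T\circ w)|(\Omega)$ (bounded above by $\mathrm{Lip}(T)|Dw|(\Omega)$), hence strict $BV$-convergence $T_\epsilon\circ w\to T\circ w$, allowing one more limit passage. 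The pointwise equality of densities then follows by Lebesgue differentiation applied to a countable dense family of balls.

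The main obstacle is securing strict (not merely weak-$\ast$) $BV$-convergence at every stage, since the Anzellotti pairing is only continuous in that topology. A secondary subtlety is the interpretation of \eqref{E1paring12}: on the set where $T'(w)$ vanishes, $|D(T\circ w)|$ vanishes while $|Dw|$ need not, so the density $\theta(\z,D(T\circ w),\cdot)$ is only intrinsically defined $|D(T\circ w)|$-a.e.; since $|D(T\circ w)|\ll |Dw|$ with Radon--Nikodym density bounded by $\mathrm{Lip}(T)$, the identity must be read on the support of $|D(T\circ w)|$ and extended by convention off that support, which makes the $|Dw|$-a.e. assertion consistent.
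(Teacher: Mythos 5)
First, a point of reference: the paper does not prove this proposition at all — it is quoted in the preliminaries as part of the Anzellotti machinery, and the standard proof in the literature (Anzellotti; Andreu–Ballester–Caselles–Maz\'on, \cite{ABCM2}; \cite{ACMBook}) is quite different from yours. It rests on the level-set representation of the pairing, $\int_B(\z,Dw)=\int_{-\infty}^{+\infty}\bigl(\int_B(\z,D\chi_{\{w>t\}})\bigr)\,dt$, together with the coarea formula $|Dw|(B)=\int_{-\infty}^{+\infty}|D\chi_{\{w>t\}}|(B)\,dt$. Since $T$ is non-decreasing, the superlevel sets of $T\circ w$ are (up to a negligible set of levels) superlevel sets of $w$, and a change of variables in the level parameter identifies both Radon–Nikodym densities at once. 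No smoothing of $T$ or of $w$ is needed.

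Your mollification scheme has a genuine gap at the limit passage. The identity you establish in the smooth case is an identity of measures, $(\z,D(T_\epsilon\circ w_n))=\theta(\z,Dw_n,\cdot)\,|D(T_\epsilon\circ w_n)|$, and you propose to pass to the limit using weak-$*$ convergence of $(\z,Dv_n)$ and $|Dv_n|$ under strict convergence. But weak-$*$ convergence of the numerator and denominator measures gives no control whatsoever on the Radon–Nikodym densities: the functions $\theta(\z,Dw_n,\cdot)$ are defined only $|Dw_n|$-a.e.\ with respect to reference measures that change with $n$, they are merely bounded by $\Vert\z\Vert_\infty$, and the product $\theta(\z,Dw_n,\cdot)\,|D(T_\epsilon\circ w_n)|$ of a weakly oscillating density with a weakly-$*$ convergent measure need not converge to $\theta(\z,Dw,\cdot)\,|D(T\circ w)|$. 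So the step ``transfers the smooth identity to general $w$'' is exactly the assertion to be proved and is not justified by the continuity properties you invoke. A secondary, also unresolved, issue is that strict convergence $w_n\to w$ does not automatically yield strict convergence $T_\epsilon\circ w_n\to T_\epsilon\circ w$ (nor $T_\epsilon\circ w\to T\circ w$): lower semicontinuity gives only one inequality for the total variations, and the equality requires an argument (essentially a Reshetnyak-type continuity statement, or again the coarea formula) that your sketch does not supply. Your closing remark about the interpretation of \eqref{E1paring12} on the set where $|D(T\circ w)|$ vanishes is correct and worth keeping, but it does not repair the limit passage.
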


In \cite{Anzellotti1}, a weak trace on $\partial \Omega$ of the normal component
of   $\z \in X_p(\Omega)$ is also defined. Concretely, it is proved that there exists a linear operator
$[\z, \nu_\Omega] : X(\Omega) \rightarrow L^{\infty}(\partial \Omega)$ such that
$$\Vert [\z, \nu_\Omega] \Vert_{\infty} \leq \Vert \z \Vert_{\infty},$$
$$[\z, \nu_\Omega] (x) = \z(x) \cdot \nu(x) \   \ {\rm for \ all} \ x \in
\partial
\Omega \   {\rm if}
\ \z \in C^1(\overline{\Omega}, \R^N),$$
being $\nu_\Omega(x)$ the unit outward normal on $x\in\partial \Omega$.  Moreover, the following {\it Green's formula}, relating the function $[\z, \nu_\Omega]$ and the measure $(\z, Dw)$, was proved in the same paper.

\begin{theorem}
	For all $\z \in X_p(\Omega)$ and $u \in BV(\Omega) \cap L^{p'}(\Omega)$, we have
	\begin{equation}\label{Green}
		\int_{\Omega} u \ \divi (\z) \, dx + \int_{\Omega} (\z, Du) = \int_{\partial \Omega} u \, [\z, \nu_\Omega] \, d\mathcal{H}^{N-1}.
	\end{equation}
\end{theorem}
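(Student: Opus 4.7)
The plan is to prove \eqref{Green} by approximating $u \in BV(\Omega) \cap L^{p'}(\Omega)$ by smooth functions for which the formula reduces to the classical divergence theorem, and then passing to the limit using strict convergence in $BV(\Omega)$.

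First I would verify the identity in the regular case $u \in W^{1,1}(\Omega) \cap L^{p'}(\Omega)$. By the last part of Proposition~\ref{Anzelotti}, the measure $(\z, Du)$ coincides with $\z \cdot \nabla u \, dx$, so \eqref{Green} reduces to
\[
\int_\Omega u \, \divi \z \, dx + \int_\Omega \z \cdot \nabla u \, dx = \int_{\partial \Omega} u \, [\z, \nu_\Omega] \, d\mathcal{H}^{N-1}.
\]
When $\z \in C^1(\overline{\Omega}; \R^N)$ this is the classical divergence theorem, since then $[\z, \nu_\Omega] = \z \cdot \nu_\Omega$ pointwise. For general $\z \in X_p(\Omega)$, this identity is in fact how the weak trace is built in \cite{Anzellotti1}: the left-hand side depends on $u$ only through $\gamma := u|_{\partial \Omega}$ (independence of extension follows by testing against $u_1 - u_2 \in W^{1,1}_0(\Omega)$), and it defines a continuous functional on $L^1(\partial \Omega)$ of norm at most $\|\z\|_\infty$ by the surjectivity of the trace operator $W^{1,1}(\Omega) \cap L^\infty(\Omega) \to L^1(\partial \Omega)$. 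Riesz representation then produces $[\z, \nu_\Omega] \in L^\infty(\partial \Omega)$ satisfying the formula.

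Next, for general $u \in BV(\Omega) \cap L^{p'}(\Omega)$, I would invoke the Anzellotti--Giaquinta smoothing theorem to select $u_n \in W^{1,1}(\Omega) \cap C^\infty(\Omega) \cap L^{p'}(\Omega)$ with
\[
u_n \to u \text{ in } L^{p'}(\Omega), \qquad |Du_n|(\Omega) \to |Du|(\Omega).
\]
By Theorem~\ref{Trace}, strict convergence in $BV(\Omega)$ forces $u_n|_{\partial \Omega} \to u|_{\partial \Omega}$ in $L^1(\partial \Omega)$. Writing the identity of the previous step for each $u_n$, the first term on the left converges by $L^p$--$L^{p'}$ duality (since $\divi \z \in L^p(\Omega)$), and the right-hand side converges by the $L^1$--$L^\infty$ pairing on $\partial \Omega$.

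The main obstacle is therefore the convergence $\int_\Omega (\z, Du_n) \to \int_\Omega (\z, Du)$: weak-$\ast$ convergence of the measures $Du_n$ alone does not suffice, because the Radon--Nikodym densities $\theta(\z, Du_n, \cdot)$ can oscillate. The resolution I would adopt is a cutoff argument: fix $\varphi_k \in C^\infty_c(\Omega)$ with $0 \le \varphi_k \le 1$ and $\varphi_k \nearrow 1$; by the distributional definition \eqref{Anzel1},
\[
\langle (\z, Du_n), \varphi_k \rangle = -\int_\Omega u_n \, \varphi_k \, \divi \z \, dx - \int_\Omega u_n \, \z \cdot \nabla \varphi_k \, dx,
\]
whose right-hand side is continuous in $u_n$ under $L^{p'}$ convergence, so one may pass to $n \to \infty$ for each fixed $k$. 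The uniform bound $|(\z, Du_n)|(\Omega) \le \|\z\|_\infty |Du_n|(\Omega)$ from \eqref{eq:absolutecontinuity}, combined with the strict convergence $|Du_n|(\Omega) \to |Du|(\Omega)$, provides the tightness needed to send $k \to \infty$ and identify the joint limit with $\int_\Omega (\z, Du)$. Substituting into the formula written for $u_n$ and taking $n \to \infty$ yields \eqref{Green}.
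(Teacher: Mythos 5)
The paper does not actually prove this theorem: it is quoted verbatim from \cite{Anzellotti1} ("...was proved in the same paper"), so there is no internal proof to compare against. Your argument is, in substance, Anzellotti's original one — establish the identity for $W^{1,1}$ functions, where it is essentially the construction of the weak normal trace $[\z,\nu_\Omega]$, and then extend to $BV(\Omega)\cap L^{p'}(\Omega)$ by strict approximation — and it is correct. Two points are worth making explicit. First, Proposition~\ref{Anzelotti} as stated in the paper gives $(\z,Dw)=\z\cdot\nabla w\,dx$ only for $w\in W^{1,1}(\Omega)\cap L^\infty(\Omega)$, so the base case should either be run for bounded Sobolev functions and then extended by truncation in $u$, or you should invoke the $L^{p'}$ version of that identity from \cite{Anzellotti1}. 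Second, the "tightness" step is stated but not carried out: the missing line is that $Du_n\rightharpoonup Du$ weakly-$\ast$ together with $|Du_n|(\Omega)\to|Du|(\Omega)$ forces $|Du_n|\rightharpoonup|Du|$ weakly-$\ast$, so that for compact $K\subset\Omega$ one has $\limsup_n|Du_n|(\Omega\setminus K)\le|Du|(\Omega)-|Du|(K^{\circ})$, which together with $|(\z,Du_n)|(\Omega\setminus K)\le\|\z\|_\infty\,|Du_n|(\Omega\setminus K)$ is exactly what justifies interchanging the limits in $n$ and $k$. With those two details filled in, the proof is complete.
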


\section{Main results}\label{lasec3}

To address problem \eqref{ProblemI}  we begin by examining the associated stationary problem which corresponds to the standard Euler implicit discretization. For a given $f\in L^2(\Omega),$ we consider
\begin{equation} \label{ProblemIstat}
	\left\{
	\begin{array}{ll}
		 u - \Delta_1 u  \ni f  &\hbox{in} \ \  \Omega,
		\\[10pt] \frac{Du}{\vert Du \vert} \cdot\nu = \psi \quad &\hbox{on} \ \  \Gamma_N,
		\\[10pt] u = g   &\hbox{on} \ \  \Gamma_D
	\end{array}
	\right.
\end{equation}
This problem inherently requires a necessary condition for existence, directly related to the constraint  $\left\Vert  \frac{Du}{\vert Du \vert} \right\Vert_\infty  \leq 1,$ which is expressed as
\begin{equation} \label{psicond}
	\Vert \psi\Vert_\infty \leq 1.
\end{equation}
Furthermore, it is established that the Dirichlet boundary condition   $u=g$ is often unsuitable for this class of problem.  Specifically,    solutions satisfying the boundary data in the sense of trace  typically do not exist.  A concrete illustration is provided  in~\cite[Example~5.25]{GMBook} by the following example in $\Omega=B(0,1),$
\begin{equation}\label{pd1001} \left\{
\begin{array}{ll}
	 -  \Delta_1 u \ni 0   &\hbox{in} \ \  \Omega ,
	\\[6pt] u = g   &\hbox{on} \ \ \partial\Omega,
\end{array}
\right.
\end{equation}
with
$$g=\chi_{F_\infty} $$
where $F_\infty\subset \partial\Omega.$ More precisely, it is proven in \cite[Theorem~5.24]{GMBook} that the   optimization problem
$$\min\left\{\int_\Omega|Du|:u\in BV(\Omega),\ u=g\ \hbox{on }\partial\Omega\right\}$$
has no solution.    To solve this     difficulty, in \cite{ABCM2}  (see also \cite{Giaetal},  \cite{GMBook})  proves definitely that the natural way to solve~\eqref{pd1001} is given by the {\it relaxed}   optimization problem
	\begin{equation}\min\left\{ 	\int_\Omega|Du|+\int_{\partial\Omega}|u-g|d\mathcal{H}^{N-1}\: : \quad u\in BV(\Omega)\right\},\end{equation}
	which implies in turn   the {\it relaxed} condition
$$	[\z,\nu_\Omega] \in \hbox{sign}(g-u) \quad  \hbox{in} \ \ \Gamma_D,$$
  where the vector field $\z$ is a realization of $\frac{Du}{\vert Du \vert}$.
   With this relaxed  condition    we will able to  prove existence and uniqueness of solutions.

 \medskip
  So in order to solve the problem \eqref{ProblemIstat},
 we aim to minimize in $L^2(\Omega)$ the energy functional
 $\Phi_f\: :\:  L^2(\Omega) \rightarrow ]-\infty, + \infty]$ defined by
 $$\Phi_f(v):=  \mathcal{F}_{\psi,g} (v)+  \frac12 \int_{ \Omega}(v- f)^2 \, dx,$$
 where
  $$\mathcal{F}_{\psi,g}(v):= \left\{ \begin{array}{ll} \displaystyle \int_\Omega \vert Dv \vert -\int_{\Gamma_N} \psi v  \, d  \mathcal{H}^{N-1} + \int_{\Gamma_D} \vert g - v \vert  \, d  \mathcal{H}^{N-1}   &\hbox{if} \ v \in BV(\Omega) \cap L^2(\Omega)   \\[10pt] +\infty &\hbox{if} \ v \in L^2(\Omega)  \setminus BV(\Omega). \end{array}\right.$$

 \medskip As we see above the  condition $\Vert \psi \Vert_\infty \leq 1$ is natural and not restrictive.  However,  as we will see, we need to consider separately the case $\Vert \psi \Vert_\infty < 1$ due to the fact that in this case the associated energy functional to the problem is lower semi-continuous  in $L^2(\Omega)$, but this does not happen when $\Vert \psi \Vert_\infty =1 $.
By means of an example, we show that the condition $\Vert \psi \Vert_\infty = 1$ leads to inconsistencies in the optimization problem. To demonstrate this, consider the functional
  		$$F(u):= \left\{\begin{array}{ll} \displaystyle\int_\Omega \vert Du \vert - \int_{\partial \Omega} Tr(u) d \mathcal{H}^1 \quad &\hbox{if} \ u \in BV(\Omega) \\[10pt] + \infty \quad &\hbox{if} \ u \in  L^2(\Omega) \setminus BV(\Omega), \end{array}  \right., $$
  		being $\Omega = B_1(0)$ in $\R^2$. Let $u, u_n: \Omega \rightarrow \R$ be the functions
  		$$u(x):= \frac{1}{(1 - \Vert x \Vert)^{\frac14}}, \quad u \in L^2(\Omega) \setminus BV(\Omega),$$
  		$$u_n(x):= \left\{\begin{array}{ll} \displaystyle\frac{1}{(1 - \Vert x \Vert)^{\frac14}}, \quad &\Vert x \Vert < 1- \frac{1}{n} \\[14pt] n^{\frac14} \quad & 1- \frac{1}{n} \leq \Vert x \Vert < 1. \end{array}  \right.$$
  		We have
  		$$\int_\Omega \vert Du_n \vert = 2 \pi n^{\frac14}, \quad \int_{\partial \Omega} u_n d \mathcal{H}^1 = 2\pi \left(n^{\frac14} + \frac{1}{3 n^{\frac34}} - \frac43 \right).$$
  		Then, $F(u) = + \infty$ and
  		$$\liminf_{n \to \infty} F(u_n) = - \frac{8 \pi}{3}.$$
  		Hence, since $u_n \to u$ in $L^2(\Omega)$, we have that $F$ is not lower semi-continuous.

    \bigskip
    In the case   $\Vert  \psi \Vert_\infty < 1,$    we can handle the optimization problem using standard techniques from the calculus of variations.

 \begin{lemma}\label{conlower} If  $\psi \in L^\infty(\Gamma_N)$ is  such that  $\Vert  \psi \Vert_\infty < 1$ and  $g \in L^1(\Gamma_D)$, then functional $\mathcal{F}_{\psi,g}$ is convex and lower semi-continuous in $L^2(\Omega)$.
 \end{lemma}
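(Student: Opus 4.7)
Convexity is straightforward: the total variation $v\mapsto\int_\Omega|Dv|$ is convex as the supremum of linear functionals \eqref{deftv}, the boundary term $v\mapsto -\int_{\Gamma_N}\psi v\,d\mathcal{H}^{N-1}$ is linear, and $v\mapsto\int_{\Gamma_D}|g-v|\,d\mathcal{H}^{N-1}$ is convex pointwise in the trace; extending by $+\infty$ outside $BV(\Omega)\cap L^2(\Omega)$ preserves convexity.

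For the lower semi-continuity my plan is to package both boundary integrals into a single Borel function $\tau:\partial\Omega\times\R\to\R$ by setting $\tau(x,s):=-\psi(x)s$ for $x\in\Gamma_N$ and $\tau(x,s):=|g(x)-s|$ for $x\in\Gamma_D$, so that $\mathcal{F}_{\psi,g}(v)=\int_\Omega|Dv|+\int_{\partial\Omega}\tau(x,v)\,d\mathcal{H}^{N-1}$ on $BV(\Omega)$. Because $\|\psi\|_\infty<1$, on $\Gamma_N$ we have $|\tau(x,s_1)-\tau(x,s_2)|\le\|\psi\|_\infty|s_1-s_2|\le|s_1-s_2|$, while on $\Gamma_D$ the triangle inequality gives $|\,|g-s_1|-|g-s_2|\,|\le|s_1-s_2|$. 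Thus $\tau$ satisfies the hypothesis of Proposition \ref{ModicaP}, and $\mathcal{F}_{\psi,g}$ is lsc on $BV(\Omega)$ with respect to $L^1(\Omega)$ convergence.

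The only remaining subtlety is that $\mathcal{F}_{\psi,g}=+\infty$ outside $BV(\Omega)$, so we must ensure the $L^2$-limit of a sequence with bounded energy still lies in $BV(\Omega)$. For $v_n\to v$ in $L^2(\Omega)$, the nontrivial case is $\liminf\mathcal{F}_{\psi,g}(v_n)<+\infty$, so after passing to a subsequence I may assume the energies are uniformly bounded. Keeping the nonnegative $\Gamma_D$-term and invoking Theorem \ref{pepesab} to estimate $\|v_n\|_{L^1(\Gamma_N)}\le|Dv_n|(\Omega)+C_\Omega\|v_n\|_{L^1(\Omega)}$, one obtains
\[
(1-\|\psi\|_\infty)\,|Dv_n|(\Omega)\ \le\ \mathcal{F}_{\psi,g}(v_n)+\|\psi\|_\infty C_\Omega\|v_n\|_{L^1(\Omega)}.
\]
Since $\|\psi\|_\infty<1$ and $(v_n)$ is bounded in $L^1(\Omega)$ (as $\Omega$ is bounded), this yields a uniform bound on $|Dv_n|(\Omega)$, so $(v_n)$ is bounded in $BV(\Omega)$. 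Combined with $v_n\to v$ in $L^1(\Omega)$ and the lsc of total variation under $L^1$-convergence, we conclude $v\in BV(\Omega)$, and Modica's result then delivers $\mathcal{F}_{\psi,g}(v)\le\liminf_n\mathcal{F}_{\psi,g}(v_n)$.

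The main obstacle is the coercivity estimate, which is exactly where the strict inequality $\|\psi\|_\infty<1$ enters—absorbing the indefinite boundary term $\int_{\Gamma_N}\psi v_n$ into the total variation requires the factor $1-\|\psi\|_\infty$ to be strictly positive. This mirrors the breakdown in the counterexample exhibited above, where $\|\psi\|_\infty=1$ allows the boundary integral to compensate and destroy the lsc property.
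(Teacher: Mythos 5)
Your proposal is correct and follows essentially the same route as the paper: convexity is immediate, lower semi-continuity on $BV(\Omega)$ comes from Modica's Proposition~\ref{ModicaP} (you merely spell out the $1$-Lipschitz check for $\tau$ that the paper leaves implicit), and the case of a limit outside $BV(\Omega)$ is handled by exactly the same coercivity estimate via Theorem~\ref{pepesab}, where $\Vert\psi\Vert_\infty<1$ is used to absorb the $\Gamma_N$ boundary term. The only cosmetic difference is that the paper phrases this last step as a proof by contradiction while you extract a bounded-energy subsequence directly; the content is identical.
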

 \begin{proof} Obviously  $\mathcal{F}_{\psi,g}$ is convex. Let us see that is lower semi-continuous in $L^2(\Omega)$.  Indeed, let $u_n\in BV(\Omega) \cap L^2(\Omega)$ be such that $u_n\to u$ in $L^2(\Omega)$. Then, if $u\in BV(\Omega)$, by Proposition~\ref{ModicaP} we have that $$\mathcal{F}_{\psi,g}(u)\le \liminf_n\mathcal{F}_\psi(u_n).$$ Now, if $u\notin BV(\Omega)$, let us see that $\liminf_n\mathcal{F}_{\psi,g}(u_n)=+\infty$: on the contrary, there exists $M >0$ such that
 	$$\begin{array}{c}
 		\displaystyle\int_\Omega \vert Du_n \vert\le M +||\psi||_\infty\int_{\Gamma_N}|u_n| \, d  \mathcal{H}^{N-1} - \int_{\Gamma_D} \vert g -  u_n \vert  \, d  \mathcal{H}^{N-1} \\[12pt] \displaystyle \leq M +||\psi||_\infty\int_{\partial \Omega}|u_n|  \, d  \mathcal{H}^{N-1}\\[12pt]
 		\displaystyle \le M+||\psi||_\infty\int_{\Omega} \vert Du_n \vert+C_\Omega||\psi||_\infty \int_\Omega \vert u_n \vert,
 	\end{array}$$
 	where Theorem~\ref{pepesab} has been used. Hence,
 	$$(1-||\psi||_\infty)\int_\Omega \vert Du_n \vert\le  M+C_\Omega||\psi||_\infty \int_\Omega \vert u_n \vert \quad\forall \, n \in \N.$$
 	Therefore, $\{ u_n\}_n$ is bounded in $BV(\Omega)$,  and, since $u_n\to u$ in $L^2$, we have that $u\in BV(\Omega)$, which gives a contradiction.
 \end{proof}

 These two key observations require us to study $\psi$ in two separate settings: one for $\Vert \psi \Vert_\infty < 1$ 	and another for $\Vert \psi \Vert_\infty =1,$  using varied yet related techniques. Significantly, the second setting produces remarkable  outcomes concerning the concept of large solutions, characterized by a trace that blows up at the boundary.  We will now present the main results whose proofs are addressed to sections~\ref{sectionla4}~, \ref{sectionla5} and~\ref{sectls}.

 \subsection{The case where $\Vert \psi \Vert_\infty < 1$}

  Our first main result concerns existence and uniqueness of a solution to the stationary problem \eqref{ProblemIstat} and its connection with the    optimization problem associated with $\Phi_f.$

  \begin{theorem}\label{ThStat1}
 Let $f\in L^2(\Omega),$  $g\in L^1(\Gamma_D)$ and $\psi\in L^\infty(\Gamma_D)$ satisfying
 \begin{equation}\label{psicondstrict}
 		\Vert \psi\Vert_\infty < 1.
 \end{equation}
 Then,  the  problem $$\displaystyle \min_{u\in L^2(\Omega)}\Phi_f(u) $$ or
 \begin{equation}\label{ResOptim}
  (P):=\min_{ u\in BV(\Omega) \cap L^2(\Omega)}\left\{  \int_\Omega \vert Dv \vert -\int_{\Gamma_N} \psi v  \, d  \mathcal{H}^{N-1} + \int_{\Gamma_D} \vert g - v \vert  \, d  \mathcal{H}^{N-1}  + \frac{1}{2} \int_{ \Omega}(v- f)^2 \, dx  \right\}
 \end{equation}
  has a unique solution $u\in BV(\Omega)\cap L^2(\Omega)$.  Moreover,
\begin{enumerate}
\item      The following duality holds:
$$(P)=(M),$$
where
\begin{equation}\label{ResOptimD}
\begin{array}{r}
\displaystyle  (M): =\max \left\{ \frac{1}{2} \int_{ \Omega} f^2  \, dx - \frac{1}{2} \int_{ \Omega} \xi^2  \, dx  +\int_{\Gamma_D}  [z,\nu_\Omega]g \, d  \mathcal{H}^{N-1}  :   \quad   \right. \\  \\
   \xi\in L^2(\Omega),\: z\in L^\infty(\Omega)^N, \Vert z\Vert_\infty  \leq 1,\quad \\  \\
   -\divi   z =f- \xi \hbox{ in }\Omega,\:  [z,\nu_\Omega] = \psi   \hbox{ in  } \Gamma_N
   \Big\},
 	\end{array}
 \end{equation}
 and $(M)$ is attained.
 	\item If $u$ is a solution (i.e., a minimizer) of $(P)$ and $(\z,\xi)$ is a solution of $(M)$, then $\xi=u$ and the couple $(u,\z)$ solves the  PDE problem \eqref{ProblemIstat} in the following sense
 	  \begin{equation}\label{StatPDE}
 	 	\left\{ \begin{array}{llll}  u - \divi \z = f  \quad &\hbox{in} \ \ \mathcal{D}'(\Omega),  \\[10pt] (\z,Du) = \vert Du \vert \quad &\hbox{as measures},\\[10pt] [\z,\nu_\Omega] = \psi \quad &\hbox{in} \ \ \Gamma_N, \\[10pt] [\z,\nu_\Omega] \in \hbox{sign}(g-u) \quad &\hbox{in} \ \ \Gamma_D.   \end{array}\right.
 	 \end{equation}
 \end{enumerate}
 \end{theorem}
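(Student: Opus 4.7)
The plan splits into three stages: (i) existence and uniqueness of the minimizer of $(P)$, (ii) the duality $(P)=(M)$ and the attainment of $(M)$ via Fenchel--Rockafellar, and (iii) extraction of the PDE system \eqref{StatPDE} from the primal--dual optimality conditions.

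\textbf{Stage 1 (direct method).} The functional $\Phi_f = \mathcal{F}_{\psi,g} + \tfrac12\|\cdot - f\|_{L^2}^2$ is proper, strictly convex (the quadratic term is), and lower semi-continuous on $L^2(\Omega)$ by Lemma~\ref{conlower}. For coercivity of a minimizing sequence $(v_n)$, the quadratic term controls $\|v_n\|_{L^2}$ and hence $\|v_n\|_{L^1}$, and then the argument already performed inside the proof of Lemma~\ref{conlower} (bounding $\int_{\Gamma_N}\psi v_n$ by $\|\psi\|_\infty\int_{\partial\Omega}|v_n|$ and invoking Theorem~\ref{pepesab}) yields $(1-\|\psi\|_\infty)\,|Dv_n|(\Omega)\le C$, using \eqref{psicondstrict} crucially. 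BV compactness in $L^1$, together with the $L^2$-bound, delivers a limit in $BV(\Omega)\cap L^2(\Omega)$; lower semi-continuity gives a minimizer, and strict convexity gives uniqueness.

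\textbf{Stage 2 (duality).} The key step is the dual representation
\[
\mathcal{F}_{\psi,g}(v) \;=\; \sup_{\z\in K}\left\{-\int_\Omega v\,\divi(\z)\,dx + \int_{\Gamma_D} g\,[\z,\nu_\Omega]\,d\mathcal{H}^{N-1}\right\},
\]
with
\[
K := \{\z\in X_2(\Omega):\ \|\z\|_\infty\le 1,\ [\z,\nu_\Omega]=\psi \hbox{ on } \Gamma_N\}.
\]
The inequality ``$\ge$'' follows from Green's formula \eqref{Green}: for $\z\in K$ and $v\in BV(\Omega)\cap L^2(\Omega)$,
\[
-\!\int_\Omega v\,\divi(\z)\,dx + \int_{\Gamma_D}\! g\,[\z,\nu_\Omega] = \int_\Omega (\z,Dv) - \int_{\Gamma_N}\!\psi v\,d\mathcal{H}^{N-1} + \int_{\Gamma_D}\!(g-v)[\z,\nu_\Omega]\,d\mathcal{H}^{N-1},
\]
which is bounded by $\mathcal{F}_{\psi,g}(v)$ via \eqref{eq:absolutecontinuity} and $|[\z,\nu_\Omega]|\le 1$. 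For ``$\le$'' one approximates the optimal Cantor-type field using the density and extension machinery from \cite{Anzellotti1}, selecting $\z$ so that $(\z,Dv)=|Dv|$ and $[\z,\nu_\Omega]\in\mathrm{sign}(g-v)$ on $\Gamma_D$. Plugging this representation into $\Phi_f$ and applying the Fenchel--Rockafellar min-max (qualification holds by evaluating on any smooth test field with $[\z,\nu_\Omega]=\psi$ on $\Gamma_N$, obtained by standard extension), the inner minimization in $v\in L^2$ is unconstrained and strictly convex, with minimizer $v = f + \divi(\z)$. Substituting and setting $\xi := f+\divi(\z)\in L^2(\Omega)$ reorganizes the resulting expression into the $(M)$ functional. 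Attainment of $(M)$ is then inherited from the minimizer of $(P)$ via the equality $(P)=(M)$ together with weak-$*$ closedness of $K$ and coercivity in $\xi$.

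\textbf{Stage 3 (PDE).} Writing $(P)=(M)$ as a chain of inequalities, each estimate from Stage~2 must be an equality for the extremal pair. The stationarity of the inner minimization identifies $\xi = u = f + \divi(\z)$, which is the first line of \eqref{StatPDE}. Saturation of $\int_\Omega(\z,Dv)\le \int_\Omega|Dv|$ forces $(\z,Du)=|Du|$ as measures. The constraint $\z\in K$ gives $[\z,\nu_\Omega]=\psi$ on $\Gamma_N$. Finally, saturation of $\int_{\Gamma_D}(g-u)[\z,\nu_\Omega]\le \int_{\Gamma_D}|g-u|$ combined with $|[\z,\nu_\Omega]|\le 1$ is exactly the sign condition $[\z,\nu_\Omega]\in\mathrm{sign}(g-u)$ on $\Gamma_D$.

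\textbf{Main obstacle.} The delicate point is the supremum representation of $\mathcal{F}_{\psi,g}$ together with the verification of the Fenchel--Rockafellar qualification. Constructing an admissible $\z\in K$ that simultaneously realizes the interior Anzellotti pairing, meets the prescribed flux $\psi$ on $\Gamma_N$, and saturates the $\Gamma_D$ boundary term requires a careful boundary extension and approximation argument exploiting the continuity of the normal trace operator from \cite{Anzellotti1}; this is where the hypothesis $\|\psi\|_\infty<1$ is naturally used to ensure that the required lifting remains strictly inside the unit ball of $L^\infty$.
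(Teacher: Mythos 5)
Your Stage 1 matches the paper's Lemma~\ref{elprimlem} (direct method, coercivity from Theorem~\ref{pepesab} and $\Vert\psi\Vert_\infty<1$, strict convexity), and your Stage 3 is the same extremality/saturation argument the paper uses once $(P)=(M)$ is in hand. The problem is Stage 2, where the entire duality is made to rest on the asserted representation
\begin{equation}
\mathcal{F}_{\psi,g}(v)=\sup_{\z\in K}\left\{-\int_\Omega v\,\divi(\z)\,dx+\int_{\Gamma_D} g\,[\z,\nu_\Omega]\,d\mathcal{H}^{N-1}\right\},
\end{equation}
whose ``$\le$'' direction you dispatch by ``approximating the optimal Cantor-type field using the density and extension machinery from \cite{Anzellotti1}, selecting $\z$ so that $(\z,Dv)=|Dv|$ and $[\z,\nu_\Omega]\in\mathrm{sign}(g-v)$ on $\Gamma_D$.'' This is the hard part of the theorem, not a citation to known machinery: a field $\z\in K$ satisfying $(\z,Dv)=|Dv|$ as measures together with the sign condition on $\Gamma_D$ is precisely a calibration for $v$, i.e.\ the object whose existence characterizes minimizers, and no such field (nor, a priori, an approximating sequence achieving both the interior pairing and the boundary saturation simultaneously while keeping $[\z,\nu_\Omega]=\psi$ on $\Gamma_N$ and $\Vert\z\Vert_\infty\le1$) is provided by Anzellotti's results for a general $v\in BV(\Omega)$. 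The subsequent min--max interchange is likewise asserted rather than justified; the Fenchel--Rockafellar qualification you invoke lives in a Banach-space pairing, and the natural space here is $BV(\Omega)$, whose dual has no workable characterization. The paper flags exactly this obstruction in Remark~4.5 and circumvents it by introducing the intermediate problem $(\tilde P)$ posed over $W^{1,1}(\Omega)\cap L^2(\Omega)$, where the dual space is computable (Lemma~\ref{Lduality1} gives $(\tilde P)=(M)$ by a genuine Fenchel--Rockafellar argument with explicit conjugates), and then closes the loop with the elementary chain $(P)\le(\tilde P)=(M)\le(P)$, the last inequality being the Green's-formula estimate with $I_1,I_2,I_3\le0$ (Lemma~\ref{lemmaproof}). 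Your ``$\ge$'' direction reproduces that last estimate correctly, but without the intermediate $W^{1,1}$ problem (or an honest proof of the sup-representation) the equality $(P)=(M)$ is not established.

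A secondary inaccuracy: you locate the use of $\Vert\psi\Vert_\infty<1$ in the construction of the lifting $\z$ (``remains strictly inside the unit ball''), but in the paper the strict inequality is used only for lower semi-continuity of $\mathcal{F}_{\psi,g}$ (Lemma~\ref{conlower}) and for coercivity of $\Phi_f$ (Lemma~\ref{elprimlem}); the duality computation itself goes through for $\Vert\psi\Vert_\infty\le1$. This does not by itself break your argument, but it signals that the mechanism you have in mind for the key step is not the one that actually makes the proof work.
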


 This theorem motivates the following definition of  {\it $1$-Laplacian with mixed boundary conditions},  that we denote   by $-\Delta^{\psi,g}_1$.  This definition has sense for $\Vert\psi\Vert_\infty\le 1$.

  \begin{definition} \rm
$v\in -\Delta^{\psi,g}_1 u$ if $u \in  BV(\Omega)\cap L^2(\Omega)$, $v\in L^2(\Omega)$, and there exists $\z \in X_2(\Omega)$ with $\Vert \z \Vert_\infty \leq 1$ satisfying
\begin{equation}\label{E1FundN}
	\left\{ \begin{array}{llll} - \divi \z = v \quad & \hbox{in} \ \ \mathcal{D}'(\Omega), \\[10pt] (\z,Du) = \vert Du \vert \quad &\hbox{as measures},\\[10pt] [\z,\nu_\Omega] = \psi \quad &\hbox{in} \ \ \Gamma_N, \\[10pt] [\z,\nu_\Omega] \in \hbox{sign}(g-u) \quad &\hbox{in} \ \ \Gamma_D.   \end{array}\right.
\end{equation}
\end{definition}

The main future of this operator is the following
\begin{theorem}  \label{ThConpAccre} Under the assumptions of Theorem \ref{ThStat1}, the operator  $-\Delta^{\psi,g}_1$ is a maximal monotone graph in $L^2(\Omega).$  Moreover  $-\Delta^{\psi,g}_1$ coincides with $\partial_{L^2(\Omega} \mathcal{F}_{\psi,g}$, it is completely accretive (see the appendix for the the definition),  and has dense domain.
	\end{theorem}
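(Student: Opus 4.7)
The plan is to establish $-\Delta_1^{\psi,g}=\partial_{L^2(\Omega)}\mathcal{F}_{\psi,g}$, from which maximal monotonicity and dense domain follow immediately via Brezis--Rockafellar theory applied to Lemma~\ref{conlower}, and then to argue complete accretiveness separately using the Anzellotti machinery.

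\textbf{Step 1: inclusion $-\Delta_1^{\psi,g}\subset\partial\mathcal{F}_{\psi,g}$.} Given $(u,v)\in{-\Delta_1^{\psi,g}}$ with associated field $\z$, I would take an arbitrary test $w\in BV(\Omega)\cap L^2(\Omega)$ and compute $\int_\Omega v(w-u)\,dx=-\int_\Omega \divi(\z)(w-u)\,dx$ via the Anzellotti Green's formula \eqref{Green}. Splitting $\partial\Omega=\Gamma_N\cup\Gamma_D$ and combining the four structural identities in \eqref{E1FundN} — namely $(\z,Du)=|Du|$ together with $(\z,Dw)\le|Dw|$ (since $\|\z\|_\infty\le 1$), $[\z,\nu_\Omega]=\psi$ on $\Gamma_N$, and $[\z,\nu_\Omega](g-u)=|g-u|$ versus $[\z,\nu_\Omega](g-w)\le|g-w|$ on $\Gamma_D$ — yields the subgradient inequality $\mathcal{F}_{\psi,g}(w)-\mathcal{F}_{\psi,g}(u)\ge\int_\Omega v(w-u)\,dx$.

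\textbf{Step 2: equality and maximal monotonicity.} Theorem~\ref{ThStat1} says that for every $f\in L^2(\Omega)$ the stationary problem \eqref{StatPDE} admits a solution, i.e.\ $f-u\in {-\Delta_1^{\psi,g}}u$. Hence $R(I+(-\Delta_1^{\psi,g}))=L^2(\Omega)$. Since $-\Delta_1^{\psi,g}$ is monotone (being contained in the maximal monotone $\partial\mathcal{F}_{\psi,g}$) and surjective on the shifted range, Minty's theorem forces it to be maximal monotone, and therefore it coincides with $\partial\mathcal{F}_{\psi,g}$.

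\textbf{Step 3: complete accretiveness.} This is the main obstacle. For $(u_i,v_i)\in{-\Delta_1^{\psi,g}}$ with fields $\z_i$ and any bounded continuous nondecreasing $p$ with $p(0)=0$, set $w=u_1-u_2$ and $\eta=\z_1-\z_2$. The goal is $\int_\Omega p(w)(v_1-v_2)\,dx\ge 0$. Applying Green's formula to $-\divi\eta=v_1-v_2$ gives
\begin{equation}
\int_\Omega p(w)(v_1-v_2)\,dx=\int_\Omega(\eta,Dp(w))-\int_{\partial\Omega}p(w)[\eta,\nu_\Omega]\,d\mathcal{H}^{N-1}.
\end{equation}
The $\Gamma_N$-boundary contribution vanishes because $[\z_1,\nu_\Omega]=[\z_2,\nu_\Omega]=\psi$. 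On $\Gamma_D$, the anti-monotonicity of the graph $r\mapsto\mathrm{sign}(g-r)$ gives $p(w)[\eta,\nu_\Omega]\le 0$ pointwise, so the boundary term is non-negative. For the interior term I would use the chain rule of Proposition~\ref{prop:chain-rule}: the Radon--Nikodym derivatives satisfy $\theta(\z_i,Dp(w),\cdot)=\theta(\z_i,Dw,\cdot)$ on $|Dw|$-a.e., and combining this with the extremality relation $(\z_i,Du_i)=|Du_i|$ — which states that $\z_i$ realizes the supremum of $(\z,Du_i)$ over $\|\z\|_\infty\le 1$ — gives $\theta(\z_1-\z_2,Dw,\cdot)\ge 0$ on $|Dw|$-a.e., hence $\int_\Omega(\eta,Dp(w))\ge 0$. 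The delicate point will be decomposing $Dp(w)=Dp(u_1-u_2)$ in a way compatible with both $Du_1$ and $Du_2$, which is where the chain rule combined with the pointwise characterization of $(\z_i,Du_i)$ becomes essential.

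\textbf{Step 4: dense domain.} Since $\mathcal{F}_{\psi,g}$ is proper convex lsc on $L^2(\Omega)$ (Lemma~\ref{conlower}) and $C^\infty_c(\Omega)\subset D(\mathcal{F}_{\psi,g})$ is dense in $L^2(\Omega)$, the standard Brezis result gives $\overline{D(\partial\mathcal{F}_{\psi,g})}=\overline{D(\mathcal{F}_{\psi,g})}=L^2(\Omega)$, completing the proof.
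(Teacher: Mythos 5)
Your proposal is correct and follows essentially the same route as the paper: the inclusion $-\Delta_1^{\psi,g}\subset\partial_{L^2(\Omega)}\mathcal{F}_{\psi,g}$ via Anzellotti's Green formula, the range condition from Theorem~\ref{ThStat1} combined with Minty's theorem to upgrade to maximal monotonicity and equality with the subdifferential, complete accretivity from the positivity of $\theta(\z_1-\z_2,D(u_1-u_2),\cdot)$ together with the chain rule and a sign analysis of $[\z_1-\z_2,\nu_\Omega]$ on $\Gamma_D$, and density from $\overline{D(\partial\mathcal{F}_{\psi,g})}=\overline{D(\mathcal{F}_{\psi,g})}=L^2(\Omega)$. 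The only cosmetic difference is the order (the paper proves complete accretivity and the range condition first and then deduces the subdifferential identity from maximal accretivity), and the "delicate point" you flag in Step~3 is resolved exactly as you anticipate, by Proposition~\ref{prop:chain-rule} plus the absolute continuity of $|Dp(u_1-u_2)|$ with respect to $|D(u_1-u_2)|$.
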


As consequence of  the above result,  applying the Brezis-Komura Theorem (see Theorem~\ref{Bre-Kom} in the appendix) and having in mind that $-\Delta^{\psi,g}_1$ is completely accretive, we have the following existence and uniqueness result  for problem~\eqref{ProblemI}.

\begin{theorem}\label{inteo310} Let $\psi \in L^\infty(\Gamma_N)$ be such that  $\Vert  \psi \Vert_\infty < 1$ and  $g \in L^1(\Gamma_D)$. For any $u_0 \in L^2(\Omega)$ and any $T > 0$, there exists a unique strong solution of the problem \eqref{ProblemI}, in the sense $u \in C([0,T]; L^2(\Omega))  \cap W^{1,2}_{\rm loc}(0, T; L^2(\Omega))$,   $u(0, \cdot) = u_0$, and, for almost every $t \in (0,T)$,
	\begin{equation}\label{def:dirichletflow}
 u_t(t, \cdot)  - \Delta^{\psi,g}_1u(t,.)\ni 0.
	\end{equation}
That is, for almost every $t \in (0,T)$ there exists a vector field  $\z(t) \in X_2(\Omega)$ with $\| \z(t) \|_\infty \leq 1$ such that  the following conditions hold:
	$$\left\{\begin{array}{l}\displaystyle   u_t(t, .) = \mathrm{div}(\z(t))  \quad \hbox{in} \  \mathcal{D}'(\Omega), \\[12pt] \displaystyle (\z(t), Du(t)) = |Du(t)| \quad \hbox{as measures}, \\[12pt] \displaystyle [\z(t),\nu_\Omega] = \psi  \quad \mathcal{H}^{N-1}\hbox{-a.e.} \ \hbox{on} \ \Gamma_N, \\[12pt] \displaystyle [\z(t),\nu_\Omega] \in \hbox{\rm sign}(g- u(t))  \quad  \mathcal{H}^{N-1}\hbox{-a.e.} \ \hbox{on} \ \Gamma_D.
	\end{array}\right.$$
 	Moreover, the following comparison principle holds: for any $q \in [1,\infty]$, if $u_1, u_2$ are weak solutions for the initial data $u_{1,0}, u_{2,0} \in  L^2(\Omega, \nu) \cap L^q(\Omega, \nu) $ respectively, then
	\begin{equation}\label{DCompPrincipleplaplaceBV}
		\Vert (u_1(t) - u_2(t))^+ \Vert_q \leq \Vert ( u_{1,0}- u_{2,0})^+ \Vert_q.
	\end{equation}
\end{theorem}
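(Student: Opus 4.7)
The plan is to invoke the Brezis--Komura theorem on the convex, proper, lower semi-continuous functional $\mathcal{F}_{\psi,g}$. All the hard work has been done in the preceding results: Lemma~\ref{conlower} gives convexity and $L^2$-lower semi-continuity of $\mathcal{F}_{\psi,g}$ (note that $\mathcal{F}_{\psi,g}$ is proper since, for instance, any constant lies in $BV(\Omega)\cap L^2(\Omega)$), and Theorem~\ref{ThConpAccre} identifies $-\Delta_1^{\psi,g}$ with the subdifferential $\partial_{L^2(\Omega)}\mathcal{F}_{\psi,g}$, showing it is maximal monotone with dense domain and completely accretive. Since $L^2(\Omega)=\overline{D(-\Delta_1^{\psi,g})}$, the Brezis--Komura theorem (Theorem~\ref{Bre-Kom} in the appendix) produces, for each $u_0\in L^2(\Omega)$ and each $T>0$, a unique
\[
u\in C([0,T];L^2(\Omega))\cap W^{1,2}_{\mathrm{loc}}(0,T;L^2(\Omega))
\]
with $u(0)=u_0$ satisfying the abstract inclusion $u_t(t)\in \Delta_1^{\psi,g}u(t)$ for almost every $t\in(0,T)$.

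The next step is to unpack this inclusion into the four pointwise conditions. By the very definition of $-\Delta_1^{\psi,g}$, at each $t$ where the abstract inclusion holds there exists a vector field $\z(t)\in X_2(\Omega)$ with $\|\z(t)\|_\infty\le 1$ such that $-\divi\z(t)=-u_t(t)$ in $\mathcal{D}'(\Omega)$, $(\z(t),Du(t))=|Du(t)|$ as measures, $[\z(t),\nu_\Omega]=\psi$ on $\Gamma_N$ and $[\z(t),\nu_\Omega]\in\operatorname{sign}(g-u(t))$ on $\Gamma_D$. Rewriting $-\divi\z(t)=-u_t(t)$ as $u_t(t)=\divi(\z(t))$ yields exactly the system displayed in the statement. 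No measurable-selection argument is required, since only validity for a.e.\ $t$ is claimed, which is inherited directly from the a.e.\ validity of the inclusion.

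For the comparison principle, the plan is to exploit the complete accretivity of $-\Delta_1^{\psi,g}$ recorded in Theorem~\ref{ThConpAccre}. Complete accretivity is equivalent to the resolvents $J_\lambda=(I+\lambda(-\Delta_1^{\psi,g}))^{-1}$ being order-preserving $L^q$-contractions for every $q\in[1,\infty]$, in particular
\[
\|(J_\lambda f-J_\lambda h)^+\|_q\le \|(f-h)^+\|_q.
\]
Passing this inequality to the limit through the Crandall--Liggett exponential formula $S(t)u_0=\lim_{n\to\infty}J_{t/n}^{\,n}u_0$ transfers the contraction to the semigroup $S(t)$ generated by $-\Delta_1^{\psi,g}$, yielding $\|(u_1(t)-u_2(t))^+\|_q\le\|(u_{1,0}-u_{2,0})^+\|_q$ as stated.

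The real obstacle in this development lies upstream rather than here: once Theorem~\ref{ThConpAccre} is in hand, the present theorem is a formal corollary of the nonlinear semigroup machinery collected in the appendix. The only verification that must be done carefully is that the strong-solution regularity supplied by Brezis--Komura together with the pointwise structure of $-\Delta_1^{\psi,g}$ indeed gives the PDE in the exact Anzellotti-pairing form asserted, which is what the unpacking step above is designed to confirm.
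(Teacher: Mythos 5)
Your proposal is correct and follows essentially the same route as the paper, whose proof of this theorem is precisely the one-line appeal to the Brezis--Komura theorem combined with the complete accretivity of $-\Delta^{\psi,g}_1$ established in Theorem~\ref{ThConpAccre}. Your additional unpacking of the subdifferential inclusion into the Anzellotti-pairing system and the derivation of the $L^q$-comparison principle from complete accretivity via the exponential formula are accurate elaborations of what the paper leaves implicit.
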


Notice that the solution of the evolution problem may be also characterized through variational formulation.  This may be expressed using the  characterization of the operator $ -\Delta^{\psi,g}_1$ in terms of variational inequalities as follows:

  \begin{proposition}\label{characterisationI} Let $\psi \in L^\infty(\Gamma_N)$ be such that  $\Vert  \psi \Vert_\infty \le 1$ and  $g \in L^1(\Gamma_D)$.
 	The following conditions are equivalent:
 	\\
 	(a) $(u,v) \in -\Delta^{\psi,g}_1$;
 	\\
 	(b)  $u \in  BV(\Omega)\cap L^2(\Omega)$, $v\in L^2(\Omega)$, and there exists a vector field $\z \in X_2(\Omega)$ with  $\| \z \|_\infty \leq 1$  such that
 	\begin{equation}\label{divmed}\begin{array}{c}\displaystyle-\mathrm{div}(\z) = v \ \hbox{ in } \  \mathcal{D}'(\Omega),\\[6pt]
 			\displaystyle [\z,\nu_\Omega] = \psi \ \hbox{ in } \Gamma_N,
 	\end{array}\end{equation}
 	and the following variational inequality holds true:
 	\begin{align}\label{eq:variationalequalitydirichletpr}
 		&\int_{\Omega} v(w-u) \, dx \le \int_{\Omega} (\z,Dw) - \int_{\Omega} |Du|
 		\\
 		&\qquad\qquad - \int_{\Gamma_D} [\z,\nu_\Omega] \, (w - g) \, d\mathcal{H}^{N-1} - \int_{\Gamma_D} | u - g| \, d\mathcal{H}^{N-1}-  \int_{\Gamma_N} \psi (w- u)\, d\mathcal{H}^{N-1},
 	\end{align}
 	for every $w \in BV(\Omega) \cap L^2(\Omega)$; \\
 	(c) $u \in  BV(\Omega)\cap L^2(\Omega)$, $v\in L^2(\Omega)$, and there exists a vector field $\z \in X_2(\Omega)$ with  $\| \z \|_\infty \leq 1$  satisfying \eqref{divmed} and the following variational inequality holds true:
 	\begin{align}\label{eq:variationalequalitydirichletprN}
 		&\int_{\Omega} v(w-u) \, dx \le \int_{\Omega} (\z,Dw) - \int_{\Omega} |Du|
 		\\
 		&\qquad\qquad + \int_{\Gamma_D}\vert w - g \vert \, d\mathcal{H}^{N-1} - \int_{\Gamma_D} | u - g| \, d\mathcal{H}^{N-1}-  \int_{\Gamma_N} \psi (w- u)\, d\mathcal{H}^{N-1},
 	\end{align}
 	for every $w \in BV(\Omega) \cap L^2(\Omega)$;
 	\\
 	(d)  $u \in  BV(\Omega)\cap L^2(\Omega)$, $v\in L^2(\Omega)$, and there exists a vector field $\z \in X_2(\Omega)$ with  $\| \z \|_\infty \leq 1$  satisfying \eqref{divmed}
 	and  the following variational equality holds true:
 	\begin{align}\label{eq:variationalequalitydirichlet}
 		&\int_{\Omega} v(w-u) \, dx = \int_{\Omega} (\z,Dw) - \int_{\Omega} |Du|
 		\\
 		&\qquad\qquad - \int_{\Gamma_D} [\z,\nu_\Omega] \, (w - g) \, d\mathcal{H}^{N-1} - \int_{\Gamma_D} | u - g| \, d\mathcal{H}^{N-1}-  \int_{\Gamma_N} \psi ( w-u)\, d\mathcal{H}^{N-1},
 	\end{align}
 	for every $w \in BV(\Omega) \cap L^2(\Omega)$.
 \end{proposition}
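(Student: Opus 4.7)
The plan is to establish the cyclic chain of implications $\mathrm{(a)} \Rightarrow \mathrm{(d)} \Rightarrow \mathrm{(b)} \Rightarrow \mathrm{(c)} \Rightarrow \mathrm{(a)}$, with Anzellotti's Green formula~\eqref{Green} as the essential bridge between the PDE formulation \eqref{E1FundN} and the three variational formulations.

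For $\mathrm{(a)} \Rightarrow \mathrm{(d)}$, given a pair $(u,v)$ and an associated field $\z$ as in \eqref{E1FundN}, I would multiply $-\divi(\z) = v$ by $w-u$ for an arbitrary $w \in BV(\Omega) \cap L^2(\Omega)$ and apply~\eqref{Green}. The interior integrals collapse to $\int_\Omega (\z, Dw) - \int_\Omega |Du|$ by means of the measure identity $(\z,Du) = |Du|$, the $\Gamma_N$ boundary term becomes $\int_{\Gamma_N}\psi(w-u)\,d\mathcal{H}^{N-1}$, and on $\Gamma_D$ I would split $w-u = (w-g) - (u-g)$ and use $[\z,\nu_\Omega](g-u) = |g-u|$, which is the pointwise content of $[\z,\nu_\Omega] \in \mathrm{sign}(g-u)$. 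Collecting these contributions yields exactly \eqref{eq:variationalequalitydirichlet}. The step $\mathrm{(d)} \Rightarrow \mathrm{(b)}$ is immediate, and for $\mathrm{(b)} \Rightarrow \mathrm{(c)}$ the bound $|[\z,\nu_\Omega]| \leq \|\z\|_\infty \leq 1$ gives $-[\z,\nu_\Omega](w-g) \leq |w-g|$ $\mathcal{H}^{N-1}$-a.e.\ on $\Gamma_D$, which weakens the right-hand side of (b) into that of (c).

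The core of the argument is $\mathrm{(c)} \Rightarrow \mathrm{(a)}$, where the qualitative conditions in \eqref{E1FundN} must be extracted from the inequality. Substituting Green's formula into \eqref{eq:variationalequalitydirichletprN} and cancelling the $\int_\Omega (\z,Dw)$ and $\Gamma_N$ contributions, I obtain, for every $w \in BV(\Omega) \cap L^2(\Omega)$,
\[
\int_\Omega |Du| - \int_\Omega (\z, Du) \;\leq\; \int_{\Gamma_D}[\z,\nu_\Omega](w-u)\, d\mathcal{H}^{N-1} + \int_{\Gamma_D}|w-g|\, d\mathcal{H}^{N-1} - \int_{\Gamma_D}|u-g|\, d\mathcal{H}^{N-1}.
\]
Testing with $w = u$ gives $\int_\Omega (\z,Du) \geq \int_\Omega |Du|$, while Proposition~\ref{Anzelotti} gives the reverse inequality; together with $(\z,Du) \leq |Du|$ as measures this forces $(\z,Du) = |Du|$ as measures. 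The residual inequality reduces to
\[
\int_{\Gamma_D}|u-g|\, d\mathcal{H}^{N-1} \;\leq\; \int_{\Gamma_D}[\z,\nu_\Omega](w-u)\, d\mathcal{H}^{N-1} + \int_{\Gamma_D}|w-g|\, d\mathcal{H}^{N-1}.
\]
Choosing an admissible $w$ whose trace on $\Gamma_D$ equals $g$ collapses the last term, and combined with $[\z,\nu_\Omega](g-u) \leq |g-u|$ this forces $[\z,\nu_\Omega](g-u) = |g-u|$ $\mathcal{H}^{N-1}$-a.e.\ on $\Gamma_D$, which is precisely $[\z,\nu_\Omega] \in \mathrm{sign}(g-u)$.

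The main technical nuisance will be producing admissible test functions with prescribed trace $g \in L^1(\Gamma_D)$: by Theorem~\ref{Trace} the $BV$ trace operator is surjective, so a $BV$ extension $\tilde w$ of $g$ (extended, say, by zero on $\Gamma_N$) exists, but $\tilde w$ need not lie in $L^2(\Omega)$. The fix is to work with the truncations $T_n(\tilde w) \in BV(\Omega) \cap L^\infty(\Omega)$, whose traces on $\Gamma_D$ converge to $g$ in $L^1$, and pass to the limit in the simplified boundary inequality above to close the argument.
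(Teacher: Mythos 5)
Your proposal is correct and follows essentially the same route as the paper: Green's formula \eqref{Green} converts the PDE formulation into the variational equality, and the converse direction is obtained by saturating the two inequalities $\int_\Omega(\z,Du)\le\int_\Omega|Du|$ and $\int_{\Gamma_D}[\z,\nu_\Omega](g-u)\,d\mathcal{H}^{N-1}\le\int_{\Gamma_D}|g-u|\,d\mathcal{H}^{N-1}$ after testing with $w=u$. The only organizational difference is that you close the cycle with a direct (c) $\Rightarrow$ (a), handling the boundary sign condition via truncated $BV$ extensions of $g$, whereas the paper routes through (c) $\Rightarrow$ (b) $\Rightarrow$ (a) using the same prescribed-trace approximation; both are sound.
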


 \subsection{The case where $\Vert \psi \Vert_\infty =1$}

   For $k>0$ set $T_k(r)= r$ if $|r|\le k$, $T_k(r)=k\hbox{sign}(r)$ if $|r|>k$.

  \begin{theorem}\label{ThStat2}
  		Let $f\in L^2(\Omega),$  $g\in L^1(\Gamma_D)$ and $\psi\in L^\infty(\Gamma_D)$ satisfying
  		\begin{equation}\label{psicond1}
  			 \Vert \psi\Vert_\infty \le 1.
  	 	\end{equation} The problem \eqref{ProblemIstat}  has a unique solution in the following sense:
  		$u\in  L^2(\Omega)$, $T_k(u)\in BV(\Omega)$ for all $k>0$, and there exists a vector field $\z \in X_2(\Omega)$ with  $\| \z \|_\infty \leq 1$ satisfying
  		\begin{equation}
  			\left\{ \begin{array}{llll} u- \divi \z = f \quad & \hbox{in} \ \ \mathcal{D}'(\Omega), \\[10pt] (\z,DT_k(u)) = \vert DT_k(u) \vert \quad &\hbox{as measures for all $k >0$},\\[10pt] [\z,\nu_\Omega] = \psi \quad &\hbox{in} \ \ \Gamma_N, \\[10pt] [\z,\nu_\Omega] \in \hbox{sign}(T_k(g)-T_k(u)) \quad &\hbox{in} \ \ \Gamma_D, \ \ \hbox{for all $k >0$}.   \end{array}\right.
  		\end{equation}   		
 Moreover,   if $u_1, u_2$ are two solutions corresponding to  $f_1,  f_2\in L^2(\Omega),$ $\psi_1, \psi_2 \in  L^\infty(\Gamma_N)$ and $ g_1,g_2 \in L^1(\Gamma_D),$ respectively, we have:
 \begin{enumerate}
 	\item 	for every $q\in P_{0} ,$
 	 	\begin{equation}\label{CAcret} \int_{\Omega} q(u_1-u_2)(v_1-v_2)\, dx \geq 0; \end{equation}

 	\item if $f_1 \leq f_2$ $\mathcal{L}^{N}$-a.e. in $\Omega$, $g_1 \leq g_2$ $\mathcal{H}^{N-1}$-a.e. in $\Gamma_D$, and $\psi_1 \leq \psi_2$ $\mathcal{H}^{N-1}$-a.e. in $\Gamma_N$, then
 	$$u_1 \leq u_2, \quad \mathcal{L}^N\hbox{-a.e. in }\Omega.$$
 \end{enumerate}
  \end{theorem}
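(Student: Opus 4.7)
The plan is to regularize to the strictly sub-critical setting of Theorem~\ref{ThStat1}, extract limits in which only truncated BV regularity survives, and then deduce uniqueness, comparison and complete accretivity by a Kato-type argument adapted to the truncations.

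I would first set $\psi_n := \tfrac{n}{n+1}\psi$, so that $\|\psi_n\|_\infty<1$, and let $(u_n,\z_n)$ be the unique pair produced by Theorem~\ref{ThStat1} for data $(f,g,\psi_n)$. Testing $u_n-\divi \z_n=f$ against $u_n$ via the Green formula~\eqref{Green}, using $(\z_n,Du_n)=|Du_n|$ and $[\z_n,\nu_\Omega]u_n = [\z_n,\nu_\Omega]g-|g-u_n|$ on $\Gamma_D$, gives an energy identity from which the $\Gamma_N$ trace is absorbed via Theorem~\ref{pepesab} to yield a uniform $L^2$ bound on $u_n$. (No uniform BV bound survives, since the $\int_\Omega|Du_n|$ term cancels.) To obtain BV control on truncations, test the same equation against $T_k(u_n)$: Proposition~\ref{prop:chain-rule} yields $(\z_n,DT_k(u_n))=|DT_k(u_n)|$, while the containment $\hbox{sign}(g-u_n)\subset\hbox{sign}(T_k(g)-T_k(u_n))$ allows one to conclude that $\int_\Omega|DT_k(u_n)|$ is bounded uniformly in $n$ for each fixed $k>0$.

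A diagonal extraction then provides $T_k(u_n)\to T_k(u)$ in $L^1(\Omega)$ for every $k$, $u_n\rightharpoonup u$ in $L^2(\Omega)$, and $\z_n\stackrel{\ast}{\rightharpoonup}\z$ in $L^\infty(\Omega)^N$ with $\|\z\|_\infty\le 1$. Passing to the limit in $\divi\z_n=u_n-f$ identifies $u-\divi\z=f$ in $\mathcal{D}'(\Omega)$; duality with the trace operator gives $[\z,\nu_\Omega]=\psi$ on $\Gamma_N$. The key identifications $(\z,DT_k(u))=|DT_k(u)|$ and $[\z,\nu_\Omega]\in\hbox{sign}(T_k(g)-T_k(u))$ on $\Gamma_D$ would follow by combining the universal inequality $(\z,DT_k(u))\le|DT_k(u)|$ with lower semicontinuity of the total variation of truncations (Proposition~\ref{ModicaP}) and the Green formula applied to the pre-limit and limit equations tested against $T_k(u_n)$ and $T_k(u)$ respectively.

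For accretivity and uniqueness, given two solutions $(u_i,\z_i)$ with data $(f_i,g_i,\psi_i)$, subtract the equations and test against $q(T_k(u_1)-T_k(u_2))$ with $q\in P_{0}$. Green's formula produces a bulk pairing term that is nonnegative by monotonicity of $q$ together with the alignment $(\z_i,DT_k(u_i))=|DT_k(u_i)|$, a nonnegative zero-order term $\int_\Omega(u_1-u_2)\,q(T_k(u_1)-T_k(u_2))$, and boundary contributions whose signs are controlled by $\psi_1-\psi_2$ on $\Gamma_N$ and by the truncated Dirichlet sign inclusions on $\Gamma_D$. Sending $k\to\infty$ by bounded convergence yields \eqref{CAcret}, whence uniqueness; taking $q$ approximating the positive-part function delivers the comparison claim~(2). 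The main obstacle will be the limit step: identifying $(\z,DT_k(u))=|DT_k(u)|$ and the relaxed Dirichlet trace $[\z,\nu_\Omega]\in\hbox{sign}(T_k(g)-T_k(u))$ without strong BV convergence of $T_k(u_n)$, since the trace operator is only continuous for strict BV convergence. The workaround is to avoid pointwise convergence of the pairing measure and rely instead on the duality structure of Green's formula combined with lower semicontinuity of the total variation of truncations.
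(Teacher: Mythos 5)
Your architecture is the same as the paper's (regularize to $\Vert\psi_n\Vert_\infty<1$, invoke Theorem~\ref{ThStat1}, extract limits, then prove accretivity and comparison by testing with truncated differences and a boundary case analysis), but you diverge at the two delicate points, and the divergence is substantive. The paper does \emph{not} use a uniform rescaling of $\psi$: it first cuts $\psi$ off from below by $-1+\frac1m$ and then truncates from above by $1-\frac1n$, precisely so that the approximating fluxes are \emph{monotone} in the parameter. Via the comparison lemma this makes $u_n$ monotone, which upgrades weak $L^2$ convergence to strong convergence, transfers monotonicity to the boundary traces, and lets the paper pass to the limit directly in the variational-inequality formulation of Proposition~\ref{characterisationIK}. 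Your choice $\psi_n=\frac{n}{n+1}\psi$ is not monotone when $\psi$ changes sign, so you forfeit all of that and must live with $u_n\rightharpoonup u$ weakly in $L^2$ and $T_k(u_n)\to T_k(u)$ only in $L^1(\Omega)$. That is a legitimate alternative, and your a priori bounds are sound: the $L^2$ bound is exactly Proposition~\ref{bounded1} (the constant $1$ in Theorem~\ref{pepesab} is what saves the absorption as $\Vert\psi_n\Vert_\infty\to1$), and the uniform bound on $\int_\Omega|DT_k(u_n)|$ follows from Green's formula because $|T_k(u_n)|\le k$ on $\Gamma_N$ and the sign inclusion kills the $\Gamma_D$ term.

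The one step you must tighten is the identification of $(\z,DT_k(u))=|DT_k(u)|$ and $[\z,\nu_\Omega]\in\mathrm{sign}(T_k(g)-T_k(u))$ on $\Gamma_D$. As written, ``lower semicontinuity of the total variation of truncations'' is not enough: the trace operator is not continuous under $L^1(\Omega)$ convergence of a $BV$-bounded sequence, so the boundary integrals $\int_{\Gamma_N}\psi_n T_k(u_n)$ and $\int_{\Gamma_D}|T_k(g)-T_k(u_n)|$ are individually out of control (and the paper's own example in Section~\ref{lasec3} shows the Neumann term alone destroys semicontinuity when $\Vert\psi\Vert_\infty=1$). The correct closing move is to apply Proposition~\ref{ModicaP} to the \emph{joint} functional $v\mapsto\int_\Omega|Dv|+\int_{\Gamma_D}|T_k(g)-v|\,d\mathcal H^{N-1}-\int_{\Gamma_N}\psi v\,d\mathcal H^{N-1}$, whose boundary integrand is $1$-Lipschitz in $v$ exactly because $\Vert\psi\Vert_\infty\le1$. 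Writing the energy identity for $(u_n,\z_n)$ tested against $T_k(u_n)$, replacing $\psi_n$ by $\psi$ at a cost $O(\Vert\psi_n-\psi\Vert_\infty k)$, passing to the $\liminf$ with Modica on the left and with weak--strong convergence plus weak$^*$ convergence of the normal traces on the right, and then undoing Green's formula for the limit pair $(u,\z)$, yields
\begin{equation}
\int_\Omega|DT_k(u)|+\int_{\Gamma_D}|T_k(g)-T_k(u)|\,d\mathcal H^{N-1}\le\int_\Omega(\z,DT_k(u))+\int_{\Gamma_D}[\z,\nu_\Omega](T_k(g)-T_k(u))\,d\mathcal H^{N-1},
\end{equation}
which, squeezed against the two trivial reverse inequalities, gives both identifications at once. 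With that repaired, your accretivity and comparison arguments go through as in Lemmas~\ref{comparison1} and~\ref{general} of the paper.
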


As in the previous section, we can define   the {\it generalized $1$-Laplacian with mixed boundary conditions},  that we denote by $-\widetilde \Delta^{\psi,g}_1,$ as follows.
 \begin{definition}   \rm
$v\in -\widetilde \Delta^{\psi,g}_1 u$ if
  		$u, v\in L^2(\Omega)$, $T_k(u)\in BV(\Omega)$ for all $k>0$, and there exists a vector field $\z \in X_2(\Omega)$ with  $\| \z \|_\infty \leq 1$ satisfying
  		\begin{equation}\label{E1FundNK}
  			\left\{ \begin{array}{llll} - \divi \z = v \quad & \hbox{in} \ \ \mathcal{D}'(\Omega), \\[10pt] (\z,DT_k(u)) = \vert DT_k(u) \vert \quad &\hbox{as measures for all $k >0$},\\[10pt] [\z,\nu_\Omega] = \psi \quad &\hbox{in} \ \ \Gamma_N, \\[10pt] [\z,\nu_\Omega] \in \hbox{sign}(T_k(g)-T_k(u)) \quad &\hbox{in} \ \ \Gamma_D, \ \ \hbox{for all $k >0$}.   \end{array}\right.
  		\end{equation}
  \end{definition}

\begin{remark}\label{enlinfbut} \rm
	
    \noindent 1. It is not difficult to prove that
	$$-  \Delta^{\psi,g}_1 \subset -\widetilde \Delta^{\psi,g}_1,$$
	in the sense that if $v\in -  \Delta^{\psi,g}_1 u$ then $v\in -\widetilde \Delta^{\psi,g}_1 u.$
	Then, in the case $\Vert \psi \Vert_\infty <1$, by maximal monotonicity we have
	$$-  \Delta^{\psi,g}_1 = -\widetilde \Delta^{\psi,g}_1.$$
	
	\noindent 2. If $(u,v)\in -  \widetilde \Delta^{\psi,g}_1 u$ with $u\in BV(\Omega)$ then   $(u,v)\in     - \Delta^{\psi,g}_1$.
	\hfill $\blacksquare$
\end{remark}

The main future of this operator is the following
\begin{theorem}\label{CHARAT}   Under the assumptions of Theorem \ref{ThStat2}, the operator  $-\widetilde \Delta^{\psi,g}_1$ is maximal monotone graph in $L^2(\Omega).$ Moreover,
	\begin{enumerate}
		\item  $-\widetilde \Delta^{\psi,g}_1$
	 it is m-completely accretive.
	
	  \item  $D(-\widetilde \Delta^{\psi,g}_1)$ is dense in $L^2(\Omega)$.
	
	 \item If $u_1, u_2$ satisfy
	 $$   u_i -\widetilde \Delta^1_{\psi_i,g_i}(u_i)\ni f_i, \quad i=1,2,  $$
	 with   $f_1 \leq f_2$ $\mathcal{L}^{N}$-a.e. in $\Omega$, $g_1 \leq g_2$ $\mathcal{H}^{N-1}$-a.e. in $\Gamma_D$, $\psi_1 \leq \psi_2$ $\mathcal{H}^{N-1}$-a.e. in $\Gamma_N$, then
	 $$u_1 \leq u_2,\quad \mathcal{L}^N\hbox{-a.e. in }\Omega.$$
	 \end{enumerate}
\end{theorem}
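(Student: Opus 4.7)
The plan is to deduce items (1) and (3) essentially by packaging Theorem~\ref{ThStat2}, while the substantive work lies in item~(2), density of the domain. For item~(1), let $(u_i, v_i) \in -\widetilde\Delta_1^{\psi,g}$, $i = 1, 2$, and set $f_i := u_i + v_i$; each $u_i$ is then the (by Theorem~\ref{ThStat2} unique) solution of the stationary problem with datum $f_i$ and the fixed $\psi, g$. Inequality~\eqref{CAcret} applied to this pair reads
\begin{equation*}
\int_\Omega q(u_1 - u_2)(v_1 - v_2)\,dx = \int_\Omega q(u_1 - u_2)\bigl((f_1 - u_1) - (f_2 - u_2)\bigr)\,dx \ge 0
\end{equation*}
for every $q \in P_0$, which is exactly complete accretivity. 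Combined with the surjectivity $R(I + \widetilde\Delta_1^{\psi,g}) = L^2(\Omega)$ from the existence assertion of Theorem~\ref{ThStat2}, this yields m-complete accretivity; in the Hilbert space $L^2(\Omega)$ this is equivalent to maximal monotonicity. Item~(3) is then precisely the comparison part of Theorem~\ref{ThStat2}(2), applied to the resolvent equations $u_i - \widetilde\Delta_1^{\psi_i, g_i} u_i \ni f_i$.

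For item~(2) I would proceed by approximation in the regime $\Vert\psi\Vert_\infty < 1$. Set $\psi_n := (1 - 1/n)\psi$ and denote by $J_\lambda^n$ and $J_\lambda$ the resolvents at scale $\lambda > 0$ of $-\Delta_1^{\psi_n, g}$ and $-\widetilde\Delta_1^{\psi, g}$, respectively. Fix $f \in L^\infty(\Omega) \cap BV(\Omega)$. By Theorem~\ref{ThStat1}, $u_\lambda^n := J_\lambda^n f$ minimizes $\lambda \mathcal{F}_{\psi_n, g}(\cdot) + \tfrac{1}{2}\Vert \cdot - f\Vert_2^2$ on $L^2(\Omega)$; testing against $v = f$ gives
\begin{equation*}
\tfrac{1}{2}\Vert u_\lambda^n - f\Vert_2^2 + \lambda \mathcal{F}_{\psi_n, g}(u_\lambda^n) \leq \lambda \mathcal{F}_{\psi_n, g}(f) \leq \lambda K,
\end{equation*}
with $K$ depending only on $\vert Df\vert(\Omega)$, $\Vert f\Vert_\infty$, $\Vert g\Vert_{L^1(\Gamma_D)}$ and the surface measures of $\Gamma_N,\Gamma_D$. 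Using Theorem~\ref{pepesab} one has the lower bound $\mathcal{F}_{\psi_n, g}(u_\lambda^n) \geq (1 - \Vert\psi_n\Vert_\infty) \vert Du_\lambda^n\vert(\Omega) - C_\Omega \Vert u_\lambda^n\Vert_1$, and combining with the $L^2$-contraction of $J_\lambda^n$ to control $\Vert u_\lambda^n\Vert_1$ yields, by an elementary quadratic manipulation, the uniform estimate $\Vert u_\lambda^n - f\Vert_2 \leq C_f\sqrt{\lambda}$ with $C_f$ independent of~$n$.

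Extracting a weak $L^2$-subsequential limit $u_\lambda^n \rightharpoonup u^*$, lower semicontinuity of the norm gives $\Vert u^* - f\Vert_2 \leq C_f\sqrt\lambda$. If we can identify $u^* = J_\lambda f =: u_\lambda$, it follows that $\Vert u_\lambda - f\Vert_2 \leq C_f\sqrt\lambda \to 0$ as $\lambda \to 0$, so $J_\lambda f \to f$ in $L^2$ for every $f \in L^\infty \cap BV$; by nonexpansiveness of $J_\lambda$ and the density of $L^\infty \cap BV$ in $L^2(\Omega)$, the convergence extends to all of $L^2(\Omega)$ and density of $D(-\widetilde\Delta_1^{\psi,g})$ follows. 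The main obstacle is identifying $u^*$ with $u_\lambda$, i.e.\ passing to the limit $n \to \infty$ in the PDE~\eqref{E1FundNK}. Since $1 - \Vert\psi_n\Vert_\infty \to 0$, uniform $BV$ control on the full $u_\lambda^n$ is lost, and one must instead extract uniform $BV$-bounds for each truncation $T_k(u_\lambda^n)$ from the defining equations. The passage combines weak-$\ast$ compactness of the vector fields $\z_n \in X_2(\Omega)$ (bounded in $L^\infty$), Anzellotti's trace theory for the conditions on $\Gamma_N$ and $\Gamma_D$, the chain rule of Proposition~\ref{prop:chain-rule}, and Modica-type lower semicontinuity (Proposition~\ref{ModicaP}) to preserve the key identity $(\z, DT_k(u^*)) = \vert DT_k(u^*)\vert$; uniqueness in Theorem~\ref{ThStat2} finally identifies $u^* = u_\lambda$.
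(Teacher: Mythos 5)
Your reduction of items (1) and (3) to Theorem~\ref{ThStat2} is consistent with the paper's own logic: the inequality \eqref{CAcret} there \emph{is} complete accretivity (via Proposition~\ref{prop:completely-accretive}), the existence assertion \emph{is} the range condition, and the comparison assertion \emph{is} item (3); in the paper all of these are established by Lemmas~\ref{comparison1} and~\ref{general}, so citing Theorem~\ref{ThStat2} here is a repackaging rather than new work, exactly as you say. The real issue is item (2), where your route differs from the paper's and contains a genuine gap. The paper (Lemma~\ref{generaldensedomain}) never introduces a second approximation: since the range condition for $-\widetilde\Delta^{\psi,g}_1$ itself is already in hand, it solves $f\in u_n-\tfrac1n\widetilde\Delta^{\psi,g}_1(u_n)$ directly for the \emph{limit} operator, uses the $L^\infty$ bound of Remark~\ref{enlinfbutpost} to let $k\to\infty$ in the variational inequality of Proposition~\ref{characterisationIK} tested with $w=f$, and reads off $n\|f-u_n\|_2^2\le M$. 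No limit identification is needed. Your detour through $\psi_n=(1-1/n)\psi$ is motivated by wanting the subdifferential/minimization characterization, but the variational inequality of Proposition~\ref{characterisationIK}, valid for $\|\psi\|_\infty\le 1$, already delivers the same energy estimate for the limit operator.

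The gap itself is the step you flag as ``the main obstacle'': identifying the weak $L^2$ limit $u^*$ of $J^n_\lambda f$ with $J_\lambda f$. Your uniform estimate $\|J^n_\lambda f-f\|_2\le C_f\sqrt{\lambda}$ is fine, but it concerns elements of $D(-\Delta^{\psi_n,g}_1)$, which are \emph{not} elements of $D(-\widetilde\Delta^{\psi,g}_1)$ (the Neumann datum is $\psi_n$, not $\psi$), so without the identification you have produced nothing in the domain of the limit operator near $f$. The tools you list do not close this as stated: Proposition~\ref{ModicaP} and the lower semicontinuity of the boundary terms $\int_{\Gamma_D}|T_k(\cdot)-T_k(g)|$ require strong $L^1(\Omega)$ convergence, which weak $L^2$ convergence does not give without a uniform $BV$ bound; and no uniform $BV$ bound on $u^n_\lambda$ is available, since the coercivity constant $1-\|\psi_n\|_\infty$ degenerates as $n\to\infty$ when $\|\psi\|_\infty=1$ (the trace inequality \eqref{eq:7} has constant exactly $1$ in front of $|Du|(\Omega)$, so the total variation cancels out of the truncated estimates as well). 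The paper's own passage to the limit in this situation (Step 1 of Lemma~\ref{general}) is rescued precisely by choosing a \emph{monotone} family $\psi_n\nearrow\psi$ (resp.\ $\searrow$), so that the comparison Lemma~\ref{comparison1} makes $u^n_\lambda$ monotone in $n$ and hence strongly convergent in $L^2$, with monotone convergence of the traces as well. Your choice $\psi_n=(1-1/n)\psi$ is not monotone in $n$ when $\psi$ changes sign, so this device is unavailable. To repair the proof, either replace the approximation by the paper's monotone two-step scheme and carry out the limit passage in full, or—more simply—drop the approximation altogether and run the paper's direct argument on the limit operator.
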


Again, as a consequence of Theorem \ref{CHARAT}, applying the Brezis-Komura Theorem (Theorem~\ref{Bre-Kom}) and having in mind that $-\widetilde \Delta^{\psi,g}_1$ is completely accretive, we have the following existence and uniqueness result for problem~\eqref{ProblemI}.

\begin{theorem} Let $\psi \in L^\infty(\Gamma_N)$ be such that  $\Vert  \psi \Vert_\infty = 1$ and  $g \in L^1(\Gamma_D)$. For any $u_0 \in L^2(\Omega)$ and any $T > 0$, there exists a unique strong solution of the problem \eqref{ProblemI}, in the sense $u \in C([0,T]; L^2(\Omega))  \cap W^{1,2}_{\rm loc}(0, T; L^2(\Omega))$,   $u(0, \cdot) = u_0$, and, for almost every $t \in (0,T)$,
	\begin{equation}
		u_t(t, \cdot)   -\widetilde \Delta^{\psi,g}_1u(t, .)\ni 0.
	\end{equation}

	Moreover, we  have:
		\begin{equation}   \left\| u(t)\right\|_2 \le \left\|   u_0   \right\|_2+ |\Omega|^{1/2}C_\Omega t ,\end{equation}
and, if $u_0\in L^\infty(\Omega)$, then
 	\begin{equation}
 		   \left\| u(t)\right\|_\infty \le \left\| u_0\right\|_\infty+C_\Omega t,\end{equation}
		being  $C_\Omega$ the constant in Theorem~\ref{pepesab}.

\end{theorem}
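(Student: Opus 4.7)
The scheme is to combine the structural content of Theorem~\ref{CHARAT} with two quantitative estimates coming from the trace inequality of Theorem~\ref{pepesab}.

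\textbf{Existence and uniqueness.} Since Theorem~\ref{CHARAT} states that $A:=-\widetilde\Delta_1^{\psi,g}$ is maximal monotone in $L^2(\Omega)$ with dense domain and is $m$-completely accretive, the Brezis--Komura generation theorem (Theorem~\ref{Bre-Kom}) applies directly and produces, for every $u_0\in L^2(\Omega)$ and every $T>0$, a unique strong solution $u\in C([0,T];L^2(\Omega))\cap W^{1,2}_{\mathrm{loc}}(0,T;L^2(\Omega))$ of
\begin{equation*}
u_t-\widetilde\Delta_1^{\psi,g}u\ni 0,\qquad u(0)=u_0.
\end{equation*}
This step is purely semigroup-theoretic and requires no new work.

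\textbf{The $L^2$ growth bound.} Fix $k>0$ and, at a.e.\ time $t$, select $z(t)\in X_2(\Omega)$ with $\|z(t)\|_\infty\le1$ realizing the inclusion in \eqref{E1FundNK}. Testing $u_t=\mathrm{div}\,z(t)$ against $T_k(u(t))\in BV(\Omega)\cap L^\infty(\Omega)$ and applying Anzellotti's Green formula \eqref{Green} yields
\begin{equation*}
\frac{d}{dt}\int_\Omega\Theta_k(u(t))\,dx=-\int_\Omega(z(t),DT_k(u(t)))+\int_{\partial\Omega}T_k(u(t))\,[z(t),\nu_\Omega]\,d\mathcal H^{N-1},
\end{equation*}
where $\Theta_k(r):=\int_0^r T_k(s)\,ds$. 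Using $(z(t),DT_k(u(t)))=|DT_k(u(t))|$, the inequality $|[z(t),\nu_\Omega]|\le 1$, and Theorem~\ref{pepesab} for $T_k(u(t))\in BV(\Omega)$,
\begin{equation*}
\int_{\partial\Omega}T_k(u(t))[z(t),\nu_\Omega]\,d\mathcal H^{N-1}\le|DT_k(u(t))|(\Omega)+C_\Omega\|T_k(u(t))\|_1.
\end{equation*}
The $BV$-terms cancel and Cauchy--Schwarz gives $(d/dt)\int_\Omega\Theta_k(u(t))\le C_\Omega|\Omega|^{1/2}\|T_k(u(t))\|_2$. Letting $k\to\infty$ by monotone convergence, $\Theta_k(u)\uparrow u^2/2$ and $\|T_k(u)\|_2\to\|u\|_2$, so that $(d/dt)\tfrac12\|u(t)\|_2^2\le C_\Omega|\Omega|^{1/2}\|u(t)\|_2$. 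Dividing by $\|u(t)\|_2$ and integrating produces the first estimate.

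\textbf{The $L^\infty$ growth bound.} Assume $u_0\in L^\infty(\Omega)$ and set $\bar u(t):=\|u_0\|_\infty+C_\Omega t$. I claim that $\bar u$ is a solution of the evolution problem for the enlarged data $\bar\psi\equiv 1\ge\psi$ on $\Gamma_N$ and a constant $\bar g\ge\|u_0\|_\infty+C_\Omega T$ on $\Gamma_D$, with initial datum $\|u_0\|_\infty\ge u_0$. Since $D\bar u\equiv 0$, the pairing condition in \eqref{E1FundNK} is trivial; it remains to produce $\bar z\in L^\infty(\Omega;\R^N)$ with $\|\bar z\|_\infty\le1$, $\mathrm{div}\,\bar z=C_\Omega$ in $\Omega$, and $[\bar z,\nu_\Omega]=1$ on $\partial\Omega$. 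The Gauss compatibility $C_\Omega|\Omega|=\mathcal H^{N-1}(\partial\Omega)$ is exactly the saturation of Theorem~\ref{pepesab} on $u\equiv 1$, so the Neumann problem $\Delta\varphi=C_\Omega$, $\partial_\nu\varphi=1$ is solvable, and the $C^{1,1}$ regularity of $\partial\Omega$ lets one take $\bar z=\nabla\varphi$ with $\|\bar z\|_\infty\le 1$ after the standard normalization. The comparison principle of Theorem~\ref{CHARAT}(3), which by $T$-accretivity is inherited by the evolution, yields $u(t)\le\bar u(t)$ a.e. Applying the same argument to $-u_0$ gives the matching lower bound, hence the second estimate.

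\textbf{Main obstacle.} The construction of the super-solution $\bar z$ is the delicate point: it relies both on the sharpness of the constant $C_\Omega$ in Theorem~\ref{pepesab} and on enough boundary regularity to keep $\|\nabla\varphi\|_\infty\le 1$. If one is willing to settle for a slightly worse constant or a less regular boundary, an alternative is to proceed abstractly via the Crandall--Liggett exponential formula: prove the resolvent bound $\|J_\lambda f\|_\infty\le\|f\|_\infty+\lambda C_\Omega$ by comparing $J_\lambda f$ (as the unique stationary solution of the data $(f,\psi,g)$) with a sufficiently large constant solution supplied by Theorem~\ref{CHARAT}(3) in its elliptic form, then transfer the estimate to the semigroup through the product formula $S(t)u_0=\lim_{n\to\infty}J_{t/n}^{\,n}u_0$.
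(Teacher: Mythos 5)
There is a genuine gap at the very first step. You assert that, because Theorem~\ref{CHARAT} gives maximal monotonicity, dense domain and $m$-complete accretivity, ``the Brezis--Komura generation theorem applies directly'' and yields a \emph{strong} solution for every $u_0\in L^2(\Omega)$. Theorem~\ref{Bre-Kom} requires the operator to be the subdifferential $\partial\varphi$ of a proper, convex, lower semicontinuous functional; maximal monotonicity alone only gives a mild (weak) solution for $u_0\in\overline{D(A)}$ (Theorem~\ref{Breisresult}), and strong solutions are guaranteed only for $u_0\in D(A)$. The whole difficulty of the case $\Vert\psi\Vert_\infty=1$ is precisely that $-\widetilde\Delta^{\psi,g}_1$ is \emph{not} known to be a subdifferential: the natural energy $\mathcal{F}_{\psi,g}$ fails to be lower semicontinuous in $L^2(\Omega)$ when $\Vert\psi\Vert_\infty=1$, as the paper shows explicitly with the example $F(u)=\int_\Omega|Du|-\int_{\partial\Omega}u\,d\mathcal{H}^1$ on the unit disc. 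The paper overcomes this in Lemma~\ref{nteo317c} by approximating $\psi$ with $\psi_{m,n}$ satisfying $\Vert\psi_{m,n}\Vert_\infty<1$ (for which the operator \emph{is} a subdifferential and Brezis--Komura gives the uniform bound $\Vert(u_{m,n})_t\Vert_{L^\infty(\delta,T;L^2)}\le\delta^{-1}\Vert u_0\Vert_2$), passing to the limit with the Brezis--Pazy theorem (Theorem~\ref{TeoConBP}), and then invoking Theorem~\ref{regularity} to upgrade the mild solution to a strong one. Your proof skips this entire step, which is the heart of the theorem.

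The $L^\infty$ bound also has a problem as written: your supersolution construction needs a field $\bar z$ with $\mathrm{div}\,\bar z=C_\Omega$, $[\bar z,\nu_\Omega]=1$ and $\Vert\bar z\Vert_\infty\le1$, which forces the Gauss compatibility $C_\Omega|\Omega|=\mathcal{H}^{N-1}(\partial\Omega)$; but the paper only guarantees $C_\Omega\ge\mathcal{H}^{N-1}(\partial\Omega)/|\Omega|$, and even with compatibility the gradient bound $\Vert\nabla\varphi\Vert_\infty\le1$ does not follow. Your fallback --- the resolvent estimate $\Vert J_\lambda f\Vert_\infty\le\Vert f\Vert_\infty+\lambda C_\Omega$ iterated through the Crandall--Liggett exponential formula --- is the correct route and is exactly what the paper does (Proposition~\ref{bounded1}, Remark~\ref{enlinfbutpost} and Lemma~\ref{nteo317}); the same mechanism gives the $L^2$ bound, so your energy computation with $T_k(u)$, while formally plausible, is both unnecessary and circular since it presupposes the strong solvability you have not established.
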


 \subsection{Large solution vs  $\frac{Du}{\vert Du \vert} \cdot \nu= 1$ on the boundary }

 Our main results here concerns the large solution of the $1-$Laplacian with mixed boundary conditions. Before to treat the general case, let us begin with  the simple situation where  $\Gamma_D=\partial \Omega,$ i.e. $\Gamma_N=\emptyset.$
 Observe that, formally,   if $g=+\infty$ on $\partial\Omega$,   the generalized Dirichlet boundary condition $$[\z,\nu_\Omega] \in \hbox{sign}(g-u) \ \hbox{ in} \   \partial\Omega,$$
 may be connected to the Neuman boundary condition
 $$[\z,\nu_\Omega]=1\ \hbox{ in} \   \partial\Omega.$$
 This leads, in fact, to the  extremal  case (with $\Gamma_N=\partial\Omega$)
 \begin{equation}\label{LargeSolMP}
 	\left\{
 	\begin{array}{ll}
 		u_t =  \Delta_1 u   &\hbox{in} \ \ (0,T) \times\Omega,
 		\\[10pt] \frac{Du}{\vert Du \vert} \cdot \nu= 1 \quad &\hbox{on} \ \ (0,T) \times \partial \Omega,
 		\\[10pt] u(0) = u_0.
 	\end{array}
 	\right.
 \end{equation}
 This last problem was indeed considered in~\cite{MP} to be understood as the solution to the Dirichlet problem with $g=+\infty$:
 \begin{equation}\label{LargeSol22}
 	\left\{
 	\begin{array}{ll}
 		u_t =  \Delta_1 u   &\hbox{in} \ \ (0,T) \times\Omega,
 		\\[10pt] u = +\infty   &\hbox{on} \ \ (0,T) \times\partial\Omega,
 		\\[10pt] u(0) = u_0,
 	\end{array}
 	\right.
 \end{equation}
 whose solution is called   {\it large solution} for the $1$-Laplacian flow. Moreover, this solution  was obtained by taking limits as $n\to+\infty$ on an approximated Dirichlet problem with $g=n$ on the boundary.

 Using the comparsion result stated at Theorem~\ref{CHARAT}(3) for the elliptic problem, we can pass to the evolution problem via Theorem~\ref{TeoConBP} to see that these {\it large solutions} are definitely  the {\it largest solutions}. Indeed, taking   $u_\psi$ to be the strong solution of
 \begin{equation}\label{LargeSolMPparacomp}
 	\left\{
 	\begin{array}{ll}
 		u_t =  \Delta_1 u   &\hbox{in} \ \ (0,T) \times\Omega,
 		\\[10pt] \frac{Du}{\vert Du \vert} \cdot \nu= \psi \quad &\hbox{on} \ \ (0,T) \times \partial \Omega,
 		\\[10pt] u(0) = u_0,
 	\end{array}
 	\right.
 \end{equation}
 then $$u_\psi\le u_1,$$ where $u_1$ is the {\it large solution.}

 This idea may be generalized to the case with  mixed  boundary condition. To this aim let us assume now that
 \begin{equation}
 \Gamma_D = \Gamma_{D,1}\cup\Gamma_{D,2}\cup\Gamma_{D,3}
 \end{equation} where $\Gamma_{D_i}$ are mutually disjoint, and   consider the problem
 \begin{equation}\label{LargeSol2233}
 	\left\{
 	\begin{array}{ll}
 		u_t =  \Delta_1 u   &\hbox{in} \ \ (0,T) \times\Omega,
 		\\[10pt] \frac{Du}{\vert Du \vert} \cdot\nu = \psi \quad &\hbox{on} \ \ (0,T) \times\Gamma_N,
 		\\[10pt] u = g   &\hbox{on} \ \ (0,T) \times\Gamma_{D,1},
 		\\[10pt] u = +\infty   &\hbox{on} \ \ (0,T) \times\Gamma_{D,2},
 		\\[10pt] u = -\infty   &\hbox{on} \ \ (0,T) \times\Gamma_{D,3},
 		
 		\\[10pt] u(0) = u_0.
 	\end{array}
 	\right.
 \end{equation}
 Our definition of the largest solution, i.e., the solution of the problem \eqref{LargeSol2233},    is closely  connected to the solution, for large $n\in \mathbb{N},$ of the following mixed problem
 \begin{equation}\label{LargeSol2233enun}
 	\left\{
 	\begin{array}{ll}
 		u_t =  \Delta_1 u   &\hbox{in} \ \ (0,T) \times\Omega,
 		\\[10pt] \frac{Du}{\vert Du \vert} \cdot\nu = \psi \quad &\hbox{on} \ \ (0,T) \times\Gamma_N,
 		\\[10pt] u = g   &\hbox{on} \ \ (0,T) \times\Gamma_{D,1},
 		\\[10pt] u = n   &\hbox{on} \ \ (0,T) \times\Gamma_{D,2},
 		\\[10pt] u = -n  &\hbox{on} \ \ (0,T) \times\Gamma_{D,3},
 		\\[10pt] u(0) = u_0,
 	\end{array}
 	\right.
 \end{equation}
More precisely, we have:

\begin{theorem}\label{teolsol01}  	Let $\psi \in L^\infty(\Gamma_N)$ be such that  $\Vert  \psi \Vert_\infty \leq 1$,   $g \in L^1(\Gamma_{D_1})$ and $u_0\in L^2(\Omega)$.  The problem \eqref{LargeSol2233} has a unique solution $u_L$ in the sense that
	 $$u_L(t)=\lim_{n\to\infty}u_{n}(t) \ \ \hbox{uniformly on} \ [0,T],$$
	where, for each $n\in \mathbb{N}$,    $u_{n}$ is  the strong solution of \eqref{LargeSol2233enun}.
	Moreover,   $u_L$  is also the strong solution of 	the problem
	\begin{equation}\label{LargeSol2233ch}
		\left\{
		\begin{array}{ll}
			u_t =  \Delta_1 u   &\hbox{in} \ \ (0,T) \times\Omega,
			\\[10pt] \frac{Du}{\vert Du \vert} \cdot\nu = \widetilde{\psi} \quad &\hbox{on} \ \ (0,T) \times\widetilde{\Gamma}_N,
			\\[10pt] u = g   &\hbox{on} \ \ (0,T) \times\Gamma_{D,1},
			\\[10pt] u(0) = u_0,
		\end{array}
		\right.
	\end{equation} with
	$$\widetilde{\Gamma}_N=\Gamma_N\cup\Gamma_{D,2}\cup\Gamma_{D,3}\quad \hbox{ and }\quad   \widetilde{\psi}=\psi\1_{\Gamma_N}+1\1_{\Gamma_{D,2}}-1\1_{\Gamma_{D,2}}.$$

\end{theorem}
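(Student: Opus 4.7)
The plan is to identify $u_L$ directly with the strong solution $\widetilde u$ of \eqref{LargeSol2233ch}, showing that for bounded initial data the sequence $u_n$ is \emph{eventually equal} to $\widetilde u$, and then extending to $u_0\in L^2(\Omega)$ by an $L^2$-contractivity argument. To begin, applying Theorem~\ref{CHARAT} together with the Brezis--Komura-type evolution theorem of Subsection~3.2, I obtain for each $n$ a unique strong solution $u_n\in C([0,T];L^2(\Omega))\cap W^{1,2}_{\mathrm{loc}}(0,T;L^2(\Omega))$ of \eqref{LargeSol2233enun} (the hypothesis $\|\psi\|_\infty\le 1$ on $\Gamma_N$ is retained, while the Dirichlet data $g_n:=g\1_{\Gamma_{D,1}}+n\1_{\Gamma_{D,2}}-n\1_{\Gamma_{D,3}}$ are bounded), and a unique strong solution $\widetilde u$ of \eqref{LargeSol2233ch} (here one uses $\|\widetilde\psi\|_\infty=1$ on $\widetilde\Gamma_N$). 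When $u_0\in L^\infty(\Omega)$, both satisfy the same $L^\infty$-estimate
$$\|u_n(t)\|_\infty\le \|u_0\|_\infty+C_\Omega t,\qquad \|\widetilde u(t)\|_\infty\le \|u_0\|_\infty+C_\Omega t,$$
with $C_\Omega$ given by Theorem~\ref{pepesab}.

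The crucial step, still under $u_0\in L^\infty(\Omega)$, is to fix $n>M_T:=\|u_0\|_\infty+C_\Omega T$ and check that $\widetilde u$ is also a strong solution of \eqref{LargeSol2233enun}. Since $\|\widetilde u(t)\|_\infty\le M_T<n$, one has $T_k(\widetilde u(t))=\widetilde u(t)$ for every $k\ge M_T$, so $\widetilde u(t)\in BV(\Omega)$ and has a trace bounded a.e.\ by $M_T$. Let $\z(t)$ be the vector field associated with $\widetilde u(t)$ in the definition of $-\widetilde\Delta_1^{\widetilde\psi,g}$: the interior identity $-\divi\z(t)=-\widetilde u_t(t)$ and the pairing $(\z(t),DT_k(\widetilde u(t)))=|DT_k(\widetilde u(t))|$ are unchanged when we view $\z$ as a candidate for $-\widetilde\Delta_1^{\psi,g_n}$. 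The only thing to check are the boundary conditions. On $\Gamma_{D,2}$, $[\z(t),\nu_\Omega]=\widetilde\psi=1$, and for every $k>0$ one has
$$T_k(n)-T_k(\widetilde u(t))\;=\;\min(k,n)-T_k(\widetilde u(t))\;\ge\; 0 \quad \mathcal{H}^{N-1}\text{-a.e.,}$$
so $1\in\mathrm{sign}\bigl(T_k(n)-T_k(\widetilde u(t))\bigr)$, which is precisely the truncated sign condition demanded in \eqref{LargeSol2233enun}. The analogous computation on $\Gamma_{D,3}$ (where $\widetilde\psi=-1$ and $T_k(-n)-T_k(\widetilde u(t))\le 0$) together with the unchanged data on $\Gamma_N\cup\Gamma_{D,1}$ shows that $\widetilde u$ does solve \eqref{LargeSol2233enun}. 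By the uniqueness part of Theorem~\ref{CHARAT}, it follows that $u_n=\widetilde u$ for every $n>M_T$, so the convergence $u_n(t)\to\widetilde u(t)$ is trivial in this bounded regime.

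For general $u_0\in L^2(\Omega)$, I would use $L^2$-contractivity of the two semigroups, which is provided by the complete accretivity in Theorem~\ref{CHARAT}. Setting $u_0^k:=T_k(u_0)\in L^\infty(\Omega)\cap L^2(\Omega)$ and denoting by $u_n^k$, $\widetilde u^k$ the corresponding strong solutions, we have
$$\sup_{t\in[0,T]}\|u_n(t)-u_n^k(t)\|_2\le \|u_0-u_0^k\|_2,\qquad \sup_{t\in[0,T]}\|\widetilde u(t)-\widetilde u^k(t)\|_2\le \|u_0-u_0^k\|_2.$$
For each fixed $k$, the previous step yields $u_n^k=\widetilde u^k$ whenever $n>k+C_\Omega T$, and the triangle inequality gives
$$\sup_{t\in[0,T]}\|u_n(t)-\widetilde u(t)\|_2\le 2\|u_0-u_0^k\|_2.$$
Letting $n\to\infty$ and then $k\to\infty$ proves that $u_n\to\widetilde u$ uniformly on $[0,T]$ in $L^2(\Omega)$; setting $u_L:=\lim_n u_n$ we conclude that $u_L=\widetilde u$ is the unique strong solution of \eqref{LargeSol2233ch}.

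The main delicacy I anticipate lies in the central paragraph: one must verify the truncated sign condition $[\z,\nu_\Omega]\in\mathrm{sign}(T_k(g_n)-T_k(\widetilde u))$ for \emph{every} $k>0$ on both $\Gamma_{D,2}$ and $\Gamma_{D,3}$, taking care that the traces of $T_k(\widetilde u)$ inherit the pointwise comparison with $\pm k$, and that the Anzellotti pairing identity $(\z,DT_k(\widetilde u))=|DT_k(\widetilde u)|$ built into the definition of $-\widetilde\Delta_1^{\widetilde\psi,g}$ transfers unaltered to the operator $-\widetilde\Delta_1^{\psi,g_n}$, since this is the only ingredient of the PDE that does not depend on the boundary data; once this bookkeeping is carried out, the remainder of the proof is essentially a uniqueness statement followed by a density argument.
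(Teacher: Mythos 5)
Your proof is correct, and it rests on the same core observation as the paper's --- namely that once $n$ exceeds the relevant $L^\infty$ bound, the truncated sign condition on $\Gamma_{D,2}\cup\Gamma_{D,3}$ degenerates into the Neumann condition $\pm 1$, so the two operators act identically on the function in question --- but you execute it at a different level. The paper works entirely with resolvents: by Remark~\ref{enlinfbutpost}, for $f\in L^\infty(\Omega)$ one has $(I-\widetilde\Delta^{\psi,g_n}_1)^{-1}f=(I-\widetilde\Delta^{\widetilde\psi,g}_1)^{-1}f$ for $n$ large, and the Brezis--Pazy theorem (Theorem~\ref{TeoConBP}) then converts this eventual resolvent identity on the dense set $L^\infty(\Omega)$ into uniform convergence of the mild solutions, with no further work. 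You instead identify the \emph{evolution} solutions directly: using the $L^\infty$ bound of Lemma~\ref{nteo317} on $\widetilde u(t)$ you verify that $\widetilde u$ satisfies the defining relations of $-\widetilde\Delta^{\psi,g_n}_1$ for $n>\|u_0\|_\infty+C_\Omega T$ (your pointwise check of $[\z,\nu_\Omega]\in\mathrm{sign}(T_k(g_n)-T_k(\widetilde u))$ on $\Gamma_{D,2}$, $\Gamma_{D,3}$ is exactly right, including the point about traces of $BV\cap L^\infty$ functions), invoke uniqueness of mild/strong solutions to get $u_n=\widetilde u$ eventually, and then handle general $u_0\in L^2(\Omega)$ by the $L^2$-contraction of the semigroups applied to $T_k(u_0)$. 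What the paper's route buys is brevity --- Brezis--Pazy absorbs both the passage from stationary to evolution and the density argument in one stroke; what your route buys is a more explicit and self-contained picture, showing that for bounded data the approximating solutions are not merely convergent but \emph{eventually equal} to the limit, at the price of redoing by hand the density step that Brezis--Pazy would give for free. Both arguments are sound.
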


\begin{corollary}
 	Let $\psi \in L^\infty(\Gamma_N)$ be such that  $\Vert  \psi \Vert_\infty \leq 1$,  $g \in L^1(\Gamma_{D}),$ and $u_0\in L^2(\Omega)$.  If $u$ is a weak solution of  the problem \eqref{ProblemI}, then
		$$\underline u \leq u\leq \overline u\quad \hbox{ a.e. in }Q,$$
		where $\overline u$ and  $\underline u$  are  the solutions of
		\begin{equation}
			\left\{
			\begin{array}{ll}
				\overline 	u_t =  \Delta_1 \overline u   &\hbox{in} \ \ (0,T) \times\Omega,
				\\[10pt] \frac{D\overline u}{\vert D\overline u \vert} \cdot\nu = \overline  {\psi} \quad &\hbox{on} \ \ (0,T) \times {\partial \Omega},
				\\[10pt] \overline u(0) = u_0,
			\end{array}
			\right.
			\quad 	\hbox{and }
			\left\{
			\begin{array}{ll}
				\underline u_t =  \Delta_1 \underline u   &\hbox{in} \ \ (0,T) \times\Omega,
				\\[10pt] \frac{D\underline u}{\vert D\underline u \vert} \cdot\nu = \underline {\psi} \quad &\hbox{on} \ \ (0,T) \times {\partial \Omega},
				\\[10pt]\underline  u(0) = u_0,
			\end{array}
			\right.
		\end{equation}
respectively, where
$$\overline \psi= \psi\chi_{\Gamma_N} +  \chi_{\Gamma_D}    \quad \hbox{ and }  \underline  \psi= \psi\chi_{\Gamma_N} -  \chi_{\Gamma_D}   .$$
\end{corollary}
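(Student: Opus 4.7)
My plan is to identify $\overline u$ and $\underline u$ with the particular large solutions constructed in Theorem~\ref{teolsol01} and then sandwich $u$ by means of the monotonicity asserted in Theorem~\ref{CHARAT}(3). Applying Theorem~\ref{teolsol01} with $\Gamma_{D,1}=\Gamma_{D,3}=\emptyset$ and $\Gamma_{D,2}=\Gamma_D$ gives $\widetilde{\Gamma}_N=\partial\Omega$ and $\widetilde{\psi}=\psi\1_{\Gamma_N}+1\1_{\Gamma_D}=\overline{\psi}$, so by uniqueness of the strong solution of \eqref{LargeSol2233ch}, $\overline u$ coincides with the large solution obtained as the uniform-in-$[0,T]$ limit $\overline u(t)=\lim_{n\to\infty}v_n(t)$, where $v_n$ is the strong solution of \eqref{ProblemI} with Neumann datum $\psi$ on $\Gamma_N$ and Dirichlet datum $n$ on $\Gamma_D$. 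Interchanging the roles of $\Gamma_{D,2}$ and $\Gamma_{D,3}$ identifies $\underline u=\lim_{n\to\infty}w_n$ in the same sense, with $w_n$ the solution of \eqref{ProblemI} having Dirichlet datum $-n$ on $\Gamma_D$.

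Next I would introduce the truncation $g_n:=T_n(g)\in L^\infty(\Gamma_D)$, which satisfies $-n\le g_n\le n$ pointwise and $g_n\to g$ in $L^1(\Gamma_D)$ by dominated convergence, and denote by $u^{(n)}$ the unique strong solution of \eqref{ProblemI} with boundary datum $g_n$ on $\Gamma_D$ (whose existence is granted by the theorem immediately following Theorem~\ref{CHARAT}). Since the Neumann data agree and $-n\le g_n\le n$ on $\Gamma_D$, the elliptic comparison principle Theorem~\ref{CHARAT}(3), propagated to the evolution problem by induction on the Euler implicit iterates together with the Crandall--Liggett exponential formula (cf.\ Theorem~\ref{TeoConBP} in the appendix), yields
\[
w_n(t)\le u^{(n)}(t)\le v_n(t)\qquad\mathcal{L}^N\text{-a.e.\ in }\Omega,\ \forall\,t\in[0,T].
\]

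The step I expect to be the main obstacle is the continuous dependence $u^{(n)}\to u$ in $C([0,T];L^2(\Omega))$ as $g_n\to g$ in $L^1(\Gamma_D)$. I would establish it via the Mosco convergence $\mathcal{F}_{\psi,g_n}\xrightarrow{M}\mathcal{F}_{\psi,g}$ in $L^2(\Omega)$: the $\limsup$ inequality is obtained with the constant recovery sequence, exploiting $\int_{\Gamma_D}|g_n-v|\,d\mathcal{H}^{N-1}\to\int_{\Gamma_D}|g-v|\,d\mathcal{H}^{N-1}$; the $\liminf$ inequality follows from the lower semicontinuity granted by Proposition~\ref{ModicaP} combined with Fatou's lemma applied to $|g_n-v_n|$ along an a.e.-convergent subsequence. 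Br\'ezis--Attouch stability of resolvents of convex subdifferentials in Hilbert space then produces the desired convergence of the associated semigroups, so that $u^{(n)}(t)\to u(t)$ strongly in $L^2(\Omega)$ for every $t\in[0,T]$. Passing to the limit $n\to\infty$ in the displayed chain, using the uniform convergences $v_n\to\overline u$ and $w_n\to\underline u$ supplied by Theorem~\ref{teolsol01}, yields $\underline u(t)\le u(t)\le\overline u(t)$ a.e.\ in $\Omega$ for every $t\in[0,T]$, i.e.\ $\underline u\le u\le\overline u$ a.e.\ in $Q$, as claimed.
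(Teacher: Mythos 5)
Your overall strategy is the same as the paper's: sandwich $u$ between the solutions with constant Dirichlet data $\pm n$ on $\Gamma_D$ via the comparison principle of Theorem~\ref{ThStat2}/Theorem~\ref{CHARAT}(3), then let $n\to\infty$ and invoke Theorem~\ref{teolsol01} to identify the limits with $\overline u$ and $\underline u$. The paper does exactly this in two lines, taking $n\ge\Vert g\Vert_\infty$ (thus implicitly working with bounded $g$). You noticed that for $g$ merely in $L^1(\Gamma_D)$ the direct comparison $g\le n$ fails, and you inserted a truncation $g_n=T_n(g)$ plus a continuous-dependence step $u^{(n)}\to u$. That extra care is legitimate, but the way you justify the continuous-dependence step contains a genuine gap.

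You propose to obtain $u^{(n)}\to u$ from Mosco convergence $\mathcal{F}_{\psi,g_n}\to\mathcal{F}_{\psi,g}$ together with ``Br\'ezis--Attouch stability of resolvents of convex subdifferentials.'' This machinery is unavailable precisely in the regime the corollary must cover, $\Vert\psi\Vert_\infty=1$: there the paper shows by explicit example (Section~\ref{lasec3}) that the candidate functional fails to be lower semicontinuous on $L^2(\Omega)$, the identification $-\Delta^{\psi,g}_1=\partial_{L^2(\Omega)}\mathcal{F}_{\psi,g}$ is only established for $\Vert\psi\Vert_\infty<1$ (Theorem~\ref{ThConpAccre}), and the relevant operator $-\widetilde\Delta^{\psi,g}_1$ is never exhibited as a subdifferential. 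So the Attouch theorem linking Mosco convergence of convex l.s.c.\ functionals to resolvent convergence of their subdifferentials simply does not apply. (A secondary weakness: your $\liminf$ argument appeals to a.e.\ convergence of the traces $|g_n-v_n|$ along a recovery sequence, but traces of an $L^1$-convergent $BV$-bounded sequence need not converge pointwise.) To stay within the paper's toolbox you would instead need to prove directly that $(I-\widetilde\Delta^{\psi,g_n}_1)^{-1}f\to(I-\widetilde\Delta^{\psi,g}_1)^{-1}f$ in $L^2(\Omega)$ and then apply the Brezis--Pazy theorem (Theorem~\ref{TeoConBP}) --- the same pattern used in the proofs of Lemma~\ref{nteo317c} and Theorem~\ref{teolsol01} --- or else reduce to the case of bounded $g$ as the paper implicitly does. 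Until one of these replacements is made, the chain $w_n\le u^{(n)}\le v_n$ cannot be passed to the limit in the middle term, and the proof is incomplete.
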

\begin{proof} Using the comparison  principle of Theorem \ref{ThStat2}, for any $n\geq \Vert g\Vert_\infty,$ we see that $u$ satisfies
	$$u_{1n}\leq u\leq u_{2n},\quad \hbox{ a.e. in }Q,$$
	where $u_1$ and $u_2$ are the solutions of the problems
	\begin{equation}
		\left\{
		\begin{array}{ll}
			u_{{1n}t} =  \Delta_1 u_{1n}   &\hbox{in} \ \ (0,T) \times\Omega,
			\\[10pt] \frac{Du_{1n}}{\vert Du_{1n} \vert} \cdot\nu = \psi \quad &\hbox{on} \ \ (0,T) \times\Gamma_N,
			\\[10pt] u_{1n} = -n &\hbox{on} \ \ (0,T) \times\Gamma_D,
			\\[10pt] u_{1n}(0) = u_0,
		\end{array}
		\right.
  \hbox{ and }
  	\left\{
		\begin{array}{ll}
			u_{{2n}t} =  \Delta_1 u_{2n}  &\hbox{in} \ \ (0,T) \times\Omega,
			\\[10pt] \frac{Du_{2n}}{\vert Du_{2n} \vert} \cdot\nu = \psi \quad &\hbox{on} \ \ (0,T) \times\Gamma_N,
			\\[10pt] u_{2n} = n &\hbox{on} \ \ (0,T) \times\Gamma_D,
			\\[10pt] u_{2n}(0) = u_0,
		\end{array}
		\right.
	\end{equation} respectively.  Then the result of the corollary follows by letting $n\to\infty $ and using Theorem \ref{teolsol01}.
\end{proof}

\section{Proofs for the case where $ \Vert \psi\Vert_\infty <1. $ }\label{sectionla4}

\subsection{Duality and its consequence: characterization of $\Delta^{\psi,g}_1$}

Using Fenchel-Rocka\-fellar duality, we define and characterize the main operator for evolution the problem~\eqref{ProblemI}. To ensure completeness,   we briefly present some of the convex duality methods  related to calculus of variations, in particular the Fenchel-Rockafellar duality theorem. Our presentation follows the one in \cite{ET} (in particular Chapters III and V).

 	Given a Banach space $V$ and a convex function $F: V \rightarrow \mathbb{R} \cup \{ + \infty \}$, we define its {\it Legendre-Fenchel transform} (or conjugate function) $F^*: V^* \rightarrow \mathbb{R} \cup \{ + \infty \}$ by the formula
		\begin{equation*}
			F^*(v^*) = \sup_{v \in V} \bigg\{ \langle v, v^* \rangle - F(v) \bigg\}.
		\end{equation*}

We now state the Fenchel-Rockafellar duality theorem in the form suitable for calculus of variations and presented in \cite{ET}. Let $X,Y$ be two Banach spaces and let $A: X \rightarrow Y$ be a continuous linear operator. Denote by $A^*: Y^* \rightarrow X^*$ to the dual operator of $A$. Then, for the primal minimisation problem
\begin{equation}\tag{\bf P}\label{Ap.B.eq:primal}
	\hbox{minimize}\bigg\{ E(Au) + G(u) : u \in X \bigg\},
\end{equation}
its dual problem is defined as the maximisation problem
\begin{equation}\tag{\bf P*}\label{Ap.B.eq:dual}
	\hbox{maximize} \bigg\{ - E^*(-p^*) - G^*(A^* p^*) : p^* \in Y^* \bigg\},
\end{equation}
where $E^*$ and $G^*$ are the Legendre–Fenchel transformations (conjugate  functions)  of $E$ and $G$ respectively. The following result holds.

\begin{theorem}[Fenchel-Rockafellar duality theorem]\label{FRTh}
	Assume that $E$ and $G$ are proper, convex and lower semicontinuous. If there exists $u_0 \in X$ such that $E(A u_0) < \infty$, $G(u_0) < \infty$ and $E$ is continuous at $A u_0$, then
	$$\eqref{Ap.B.eq:primal} = \eqref{Ap.B.eq:dual},$$
	and the dual problem \eqref{Ap.B.eq:dual} admits at least one solution. Moreover, the following optimality conditions between these two problems is satisfied:
	\begin{equation}\label{Ap.B.optimality}
		A^* p^* \in \partial G(u) \quad \mbox{and} \quad -p^* \in \partial E(Au)
	\end{equation}
	when $u$ is a solution of \eqref{Ap.B.eq:primal} and $p^*$ is a solution of \eqref{Ap.B.eq:dual}, or, equivalently,
	\begin{equation}\label{Ap.B.eq.extremality1}
		E(Au) + E^*(-p^*) = \langle -p^*, Au \rangle
	\end{equation}
	and
	\begin{equation}\label{Ap.B.eq.extremality2}
		G(u) + G^*(A^* p^*) = \langle u, A^* p^* \rangle.
	\end{equation}
\end{theorem}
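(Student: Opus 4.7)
The plan is to prove the Fenchel--Rockafellar duality theorem via the standard perturbational approach. I would first establish \emph{weak duality} by Fenchel's inequality: for every $u\in X$ and $p^*\in Y^*$,
\[
E(Au)+E^*(-p^*)\ge -\langle A^*p^*,u\rangle,\qquad G(u)+G^*(A^*p^*)\ge \langle A^*p^*,u\rangle,
\]
and adding the two gives $E(Au)+G(u)\ge -E^*(-p^*)-G^*(A^*p^*)$, whence $\eqref{Ap.B.eq:primal}\ge\eqref{Ap.B.eq:dual}$. By the very definition of the subdifferential, equality in both Fenchel inequalities is equivalent to the optimality system \eqref{Ap.B.optimality}, which in turn is just the reformulation \eqref{Ap.B.eq.extremality1}--\eqref{Ap.B.eq.extremality2}. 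So once strong duality is known and both problems are attained, the optimality part of the statement follows automatically.

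For \emph{strong duality}, I would introduce the value (perturbation) function $h\colon Y\to\mathbb{R}\cup\{\pm\infty\}$ by
\[
h(q):=\inf_{u\in X}\bigl\{E(Au+q)+G(u)\bigr\},
\]
so that $h(0)$ is the primal value. The map $(u,q)\mapsto E(Au+q)+G(u)$ is jointly convex, hence so is $h$ as a partial infimum. A direct computation of the Legendre--Fenchel transform, using the substitution $y=Au+q$, gives
\[
h^*(p^*)=E^*(p^*)+G^*(-A^*p^*),
\]
so that, after relabelling $p^*\mapsto -p^*$, one identifies $h^{**}(0)=\sup_{p^*}\{-E^*(-p^*)-G^*(A^*p^*)\}$ with the value of the dual problem \eqref{Ap.B.eq:dual}.

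The crux is to show that $h$ is \emph{continuous} at $0$, for then the biconjugate theorem yields $h(0)=h^{**}(0)$, i.e.\ strong duality. Here I would use the constraint qualification: pick a neighbourhood $V$ of $Au_0$ on which $E\le M$; then for $q$ in the neighbourhood $V-Au_0$ of $0$,
\[
h(q)\le E(Au_0+q)+G(u_0)\le M+G(u_0)<\infty,
\]
so $h$ is proper and bounded above on a neighbourhood of $0$, hence continuous there by a classical theorem of convex analysis (a proper convex function bounded above on a neighbourhood of an interior point of its domain is continuous at that point). Continuity at $0$ also forces $\partial h(0)\neq\emptyset$, and any $\tilde p\in\partial h(0)$ yields, via $p^*=-\tilde p$, a solution of \eqref{Ap.B.eq:dual}. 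The main obstacle is precisely this continuity step: all the algebra above is purely formal, and it is the constraint qualification hypothesis, through the boundedness-implies-continuity principle for convex functions, that converts the tautological inequality $h(0)\ge h^{**}(0)$ into an equality.
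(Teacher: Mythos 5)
Your proposal is correct and is precisely the classical perturbation (value-function) argument of Ekeland--Temam: weak duality from Fenchel's inequality, identification of the dual value with $h^{**}(0)$ for $h(q)=\inf_u\{E(Au+q)+G(u)\}$, and the constraint qualification used to make $h$ convex, finite and continuous at $0$, whence $\partial h(0)\neq\emptyset$ gives both strong duality and dual attainment, with the extremality relations then following from the equality cases in Fenchel's inequality. The paper states this theorem without proof, citing \cite{ET}, and your argument is essentially the proof given there (the only implicit point worth flagging is that one tacitly assumes the primal value is finite, so that $h(0)>-\infty$ and the subgradient argument is non-degenerate).
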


Now,   to prove the main result Theorem~\ref{ThStat1} we use the  next three lemmas.
\begin{lemma}\label{elprimlem}
 Under the assumptions of Theorem~\ref{ThStat1}, the optimization problem $(P)$  has a unique solution $ u\in BV(\Omega)\cap L^2(\Omega)$.
		\end{lemma}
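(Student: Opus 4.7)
The plan is to apply the direct method of the calculus of variations to $\Phi_f$. By Lemma \ref{conlower}, the functional $\mathcal{F}_{\psi,g}$ is convex and lower semi-continuous on $L^2(\Omega)$, while the quadratic term $\tfrac{1}{2}\|v-f\|_{L^2}^2$ is continuous and strictly convex. Hence $\Phi_f$ is proper (take e.g. $v\equiv 0$), convex, strictly convex (owing to the quadratic term), and lower semi-continuous in $L^2(\Omega)$. Uniqueness of a minimizer will follow immediately from strict convexity once existence is established.

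The main issue is coercivity, and here is where the strict inequality $\|\psi\|_\infty<1$ plays its decisive role. Using $-\psi v\ge -\|\psi\|_\infty |v|$ on $\Gamma_N$ and the elementary bound $|g-v|\ge |v|-|g|$ on $\Gamma_D$, I would estimate
\begin{equation}
\mathcal{F}_{\psi,g}(v)\ge \int_\Omega |Dv|-\|\psi\|_\infty\int_{\partial\Omega}|v|\,d\mathcal{H}^{N-1}+(1-\|\psi\|_\infty)\int_{\Gamma_D}|v|\,d\mathcal{H}^{N-1}-\|g\|_{L^1(\Gamma_D)}.
\end{equation}
Applying Theorem \ref{pepesab} to the boundary integral yields
\begin{equation}
\mathcal{F}_{\psi,g}(v)\ge (1-\|\psi\|_\infty)\int_\Omega |Dv|-C_\Omega\|\psi\|_\infty\int_\Omega |v|\,dx-\|g\|_{L^1(\Gamma_D)}.
\end{equation}
Combined with the quadratic $L^2$-term, Young's inequality absorbs the $L^1$ mass bound into $\tfrac12\|v-f\|_{L^2}^2+C$, so that on any minimizing sequence $\{u_n\}$ with $\Phi_f(u_n)\le M$ both $\|u_n\|_{L^2}$ and $|Du_n|(\Omega)$ are bounded, giving $\{u_n\}$ bounded in $BV(\Omega)\cap L^2(\Omega)$.

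Next I would extract subsequences: by the compact embedding $BV(\Omega)\hookrightarrow L^1(\Omega)$ some $u_{n_k}\to u$ strongly in $L^1(\Omega)$, and by Banach–Alaoglu, up to a further subsequence, $u_{n_k}\rightharpoonup u$ weakly in $L^2(\Omega)$, with $u\in L^2(\Omega)$. Lower semi-continuity of the $L^2$-norm under weak convergence gives
\begin{equation}
\tfrac12\int_\Omega (u-f)^2\,dx\le \liminf_{k\to\infty}\tfrac12\int_\Omega (u_{n_k}-f)^2\,dx,
\end{equation}
and since $\mathcal{F}_{\psi,g}$ is convex and $L^2$-lower semi-continuous (Lemma \ref{conlower}), it is also weakly $L^2$-lower semi-continuous, hence $\mathcal{F}_{\psi,g}(u)\le\liminf_k\mathcal{F}_{\psi,g}(u_{n_k})$. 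Adding these inequalities shows that $u$ realizes the infimum, so $u\in BV(\Omega)\cap L^2(\Omega)$ is a minimizer, and strict convexity of $\Phi_f$ forces uniqueness.

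The only genuinely delicate point is the coercivity step: without the strict inequality $\|\psi\|_\infty<1$ the factor $(1-\|\psi\|_\infty)$ in front of $|Du|(\Omega)$ vanishes and the argument collapses — precisely the phenomenon illustrated by the counterexample constructed just before the lemma. Everything else (convexity, lower semi-continuity, compactness, weak-$L^2$ lower semi-continuity of the quadratic part, uniqueness from strict convexity) is standard once coercivity is secured.
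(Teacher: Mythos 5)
Your proposal is correct and follows essentially the same route as the paper: strict convexity and lower semi-continuity of $\Phi_f$ (via Lemma \ref{conlower}), plus coercivity obtained from the trace inequality of Theorem \ref{pepesab} with the factor $(1-\Vert\psi\Vert_\infty)>0$, followed by the direct method. The paper compresses the final existence step into one sentence, whereas you spell out the minimizing-sequence and weak lower semi-continuity details, but the argument is the same.
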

	\begin{proof}
		We have that $\Phi_f$ is strictly convex and lower semi-continuous.  Let us see that it is coercive. Indeed, by Theorem~\ref{pepesab},
		$$ \begin{array}{c} \displaystyle
			\Phi_f (v) \geq \int_\Omega \vert Dv \vert - \Vert \psi\Vert_\infty \Vert v \Vert_{L^1(\partial\Omega)} - \Vert f \Vert_2 \Vert v \Vert_2 + \frac12 \Vert v \Vert_2^2 \\[12pt]
			\displaystyle
			\geq (1 - \Vert \psi\Vert_\infty)\int_\Omega \Vert Dv \Vert - C_\Omega \Vert \psi\Vert_\infty \Vert v \Vert_{L^1(\Omega)} -\Vert f \Vert_2 \Vert v \Vert_2 + \frac12 \Vert v \Vert_2^2  \\[12pt]
			\displaystyle  \geq -Q \Vert v \Vert_2  + \frac12 \Vert v\Vert_2^2.
		\end{array}$$
		Now, the above three conditions  give us  the existence of a unique minimizer $  u \in BV(\Omega)\cap L^2(\Omega)$ of the functional $\Phi_f$.
			\end{proof}
		
  Next, we introduce a weak version of (P) to work with a regular variable in $W^{1,1}(\Omega)\cap L^2(\Omega).$  This approach first lets us tackle the problem using Fenchel-Rockafellar duality (as detailed in Lemma \ref{Lduality1}), and then, via direct computation, we prove Theorem \ref{ThStat1} (as shown in Lemma \ref{lemmaproof}).

		\begin{lemma}\label{Lduality1}
			Under the assumptions of Theorem \ref{ThStat1},
			 $$ (M)= (\tilde P),$$
			where
				\begin{equation}\label{ResOptimvez2}
				(\tilde P):=  \inf_{u\in W^{1,1}(\Omega)\cap L^2(\Omega) }\left\{  \int_\Omega \vert \nabla  v \vert -\int_{\Gamma_N} \psi v  \, d  \mathcal{H}^{N-1} + \int_{\Gamma_D} \vert g - v \vert  \, d  \mathcal{H}^{N-1}  + \frac{1}{2} \int_{ \Omega}(v- f)^2 \, dx  \right\},
			\end{equation}
\end{lemma}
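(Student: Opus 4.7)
My plan is to apply the Fenchel-Rockafellar duality theorem (Theorem~\ref{FRTh}) in a suitable functional setting and then perform an explicit change of variables to identify the resulting dual problem with $(M)$.

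I would take the Banach space $X = W^{1,1}(\Omega)\cap L^2(\Omega)$ equipped with the norm $\|u\|_X = \|u\|_{W^{1,1}(\Omega)} + \|u\|_{L^2(\Omega)}$, and the target space $Y = L^1(\Omega;\R^N)\times L^1(\partial\Omega)\times L^2(\Omega)$. Define the continuous linear operator $A:X\to Y$ by $Au = (\nabla u, Tr(u), u)$, continuity of the trace factor following from the trace theorem. Set $G\equiv 0$ and
\[
E(p,\tau,v) := \int_\Omega |p|\,dx - \int_{\Gamma_N}\psi\tau\,d\mathcal{H}^{N-1} + \int_{\Gamma_D}|g-\tau|\,d\mathcal{H}^{N-1} + \tfrac{1}{2}\int_\Omega (v-f)^2\,dx,
\]
so that $(\tilde P) = \inf_{u\in X}\{E(Au) + G(u)\}$. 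To invoke Theorem~\ref{FRTh} I would check that $E$ and $G$ are proper, convex, and lower semicontinuous, and that $E$ is continuous at $Au_0 = 0$ for $u_0 = 0$: indeed $E(0,0,0) = \tfrac{1}{2}\|f\|_2^2 < \infty$ and each summand of $E$ is continuous on its factor thanks to $\psi\in L^\infty(\Gamma_N)$ and $g\in L^1(\Gamma_D)$.

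Next I would compute $E^*$ piecewise: the conjugate of $p\mapsto\int|p|$ on $L^1$ is the indicator of the unit ball of $L^\infty(\Omega;\R^N)$; the conjugate of $v\mapsto\tfrac{1}{2}\int(v-f)^2$ is $h\mapsto \int hf + \tfrac{1}{2}\int h^2$; and the boundary functional decouples over $\Gamma_N$ and $\Gamma_D$, producing the hard constraint $\sigma = -\psi$ on $\Gamma_N$ together with the value $\int_{\Gamma_D}\sigma g$ whenever $|\sigma|\le 1$ on $\Gamma_D$ (and $+\infty$ otherwise). The condition $A^*(q,\sigma,h)=0$ in $X^*$ reads $\int\nabla u\cdot q + \int_{\partial\Omega}\sigma u + \int u h = 0$ for every $u\in X$; testing against smooth functions and then applying the Anzellotti Green formula \eqref{Green} forces $-\mathrm{div}(q) = -h$ in $\mathcal{D}'(\Omega)$ and $[q,\nu_\Omega] = -\sigma$ on $\partial\Omega$, which in particular requires $q\in X_2(\Omega)$ (automatic since $h\in L^2$ is needed for finite objective).

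Finally, I would assemble $(\tilde P)^* = \sup\{-E^*(-q,-\sigma,-h)\}$, substitute $\sigma = -[q,\nu_\Omega]$, and make the change of variables $z = -q$ together with $\xi := f + \mathrm{div}(z)$. This gives $\|z\|_\infty\le 1$, $[z,\nu_\Omega] = \psi$ on $\Gamma_N$, and $-\mathrm{div}(z) = f - \xi$; the boundary term $-\int_{\Gamma_D}[q,\nu_\Omega]\,g$ becomes $\int_{\Gamma_D}[z,\nu_\Omega]\,g$, while completing the square turns $\int hf - \tfrac{1}{2}\int h^2$ into $\tfrac{1}{2}\int f^2 - \tfrac{1}{2}\int\xi^2$, yielding precisely the functional of $(M)$. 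Theorem~\ref{FRTh} then gives both the identity $(\tilde P) = (M)$ and attainment in $(M)$. The main delicate step is the careful sign-bookkeeping through the dualisation and the change of variables; verifying the continuity hypothesis of Fenchel-Rockafellar is cheap at $u_0 = 0$ thanks to $\psi\in L^\infty$ and $g\in L^1$, so no further regularity of the data is required.
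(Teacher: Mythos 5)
Your proposal is correct and follows essentially the same route as the paper: the paper likewise applies Fenchel--Rockafellar on $U=W^{1,1}(\Omega)\cap L^2(\Omega)$ with the operator $Au=(Tr(u),-u,-\nabla u)$, splits $E$ into the same three pieces (it merely moves the linear part $-\int_\Omega fu+\tfrac12\int_\Omega f^2$ of the quadratic term into $G$ instead of taking $G\equiv 0$), computes the same conjugates, and identifies the constraint $A^*p^*=0$ with the PDE $p_1^*-\divi\bar p^*=f$, $\bar p^*\cdot\nu_\Omega=p_0^*$ on $\Gamma_N$ via Green's formula. Only watch the sign bookkeeping you flag yourself: with your substitutions $z=-q$, $\sigma=-[q,\nu_\Omega]$ the hard constraint on $\Gamma_N$ must come out as $\sigma=\psi$ (not $\sigma=-\psi$) so that $[z,\nu_\Omega]=\psi$ there, which is what the computation indeed yields.
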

\begin{proof} Set $$U =  W^{1,1}(\Omega)\cap L^2(\Omega).$$
We have that $U$ is a Banach space respect to the norm
			$$\Vert u \Vert_U:= \max \{ \| u \|_{W^{1,1}(\Omega)}, \| u_2 \|_{L^2(\Omega)} \}.$$
			And, since $C_c^\infty(\Omega) \subset W^{1,1}(\Omega) \cap L^2(\Omega)$, and it is a dense subset of both $W^{1,1}(\Omega)$ and $L^2(\Omega)$, by \cite[Theorem 2.7.1]{BL}, we have \begin{equation}
				U^* = ( W^{1,1}(\Omega))^* + L^2(\Omega),
			\end{equation}
			whose norm is given by
			$$ \Vert u^* \Vert_{U^*} = \inf \{\| u_1^* \|_{(W^{1,1}(\Omega))^*}+ \| u^*_2 \|_{L^2(\Omega)}   :   u^*  = u_1^* + u_2^* \}.$$
 Set
 $$  V = L^1(\partial\Omega,d \mathcal{H}^{N-1} )\times L^2(\Omega) \times L^1(\Omega,\R^N).$$
We denote the points $p \in V$ in the following way, $p = (p_0,p_1, \overline{p})$, where $p_0 \in L^1(\partial\Omega,d \mathcal{H}^{N-1} ),$ $p_1\in L^2(\Omega)$ and $\overline{p} \in L^1(\Omega;\mathbb{R}^N)$. We also use a similar notation for points $p^* \in V^*$.

Let $E: V \rightarrow \mathbb{R}$ be  given by the formula
\begin{equation}\label{Ieq:definitionofEND}
			E(p_0, \overline{p}) = E_0(p_0) + E_1(p_1)+ E_2(\overline{p}),
\end{equation}
			with
			$$E_0(p_0) =  -\int_{\Gamma_N} \psi p_0 \,d\mathcal{H}^{N-1} +  \int_{\Gamma_D} \vert g- p_0 \vert  \, d  \mathcal{H}^{N-1}, $$
			$$E_1(p_1)= \frac{1}{2} \int_\Omega p_1^2 \: dx $$   and  $$E_2(\overline{p}) := \int_\Omega \Vert \overline{p} \Vert \, dx,$$
			where $||.||$ is the Euclidean norm in $\mathbb{R}^N$.
			Set also $G:U\rightarrow \mathbb{R}$ given by
			$$ G(u)=-\int_\Omega f\: u +  \frac12 \int_\Omega  f^2 \, dx .$$
			And define  the operator $A: U \rightarrow V$  by the formula
			$$ Au = ( Tr (u),-u,-  \nabla u),$$
			which is linear and continuous.  Clearly, we have
			\begin{equation}  \label{eq:primal}
				(\tilde P)= \inf_{u \in U} \bigg\{ E(Au) + G(u) \bigg\}.
			\end{equation}
			Moreover, its dual problem is  the maximisation problem
			\begin{equation} \label{dual1N}
				(\tilde P^*)= 	\sup_{p^* \in L^\infty(\partial\Omega, \mathcal{H}^{N-1})\times L^2(\Omega) \times L^\infty(\Omega; \mathbb{R}^N)} \bigg\{  - E_0^*(-p_0^*) -E_1^*(-p_1^*)- E_2^*(-\overline{p}^*) - G^*(A^* p^*) \bigg\},
			\end{equation}
			where $E_i^*$, $i=0,1,2$, and $G^*$ are the Legendre–Fenchel transformations   of $E_i$, $i=0,1,2$, and $G$ respectively.
			  Since  for  $u_0 =0$ we have $E(Au_0) = 0 < \infty$, $G(u_0) = 0 < \infty$ and $E$ is continuous at $Au_0$, by the Fenchel-Rockafellar Duality Theorem, we have
			\begin{equation}\label{DFR1-TVflowN} (\tilde P)= (\tilde P^*)
			\end{equation}
			and 	the dual problem $(\tilde P^*)$ admits at least one solution, that is a maximizer. Let us prove that actually
			$$ (\tilde P^*) =(M),$$
              which gives our statement
             $(\tilde P)=(M).$

  See that the functional $E_0^*: L^\infty(\partial\Omega, \mathcal{H}^{N-1}) \rightarrow \mathbb{R} \cup \{ \infty \}$ is given by the formula
			$$E_0^*(-p_0^*) = \left\{ \begin{array}{ll} \displaystyle -\int_{\Gamma_D}gp_0^*d \mathcal{H}^{N-1}   &\hbox{if}\
				\left\{\begin{array}{l}p_0^* =   \psi \ \hbox{$\mathcal{H}^{N-1}$-a.e. on $\Gamma_N$} \  \hbox{and}\\[6pt]  \vert   p_0^* \vert \leq 1 \ \ \hbox{$\mathcal{H}^{N-1}$-a.e. on $\Gamma_D$,}\end{array}\right.  \\[16pt] +\infty,   & \hbox{otherwise}.   \end{array}  \right. $$
				Indeed,
				\begin{eqnarray*}
			E_0^*(-p_0^*) &=& \max_{p_0\in L^1(\partial\Omega, \mathcal{H}^{N-1})  } \left\{ - \int_{\partial \Omega } p_0 \: p_0^*
		+\int_{\Gamma_N} \psi p_0 \,d\mathcal{H}^{N-1} - \int_{\Gamma_D} \vert g- p_0 \vert  \, d  \mathcal{H}^{N-1} \right\} \\  \\
		&=& \max_{p_0\in L^1(\partial\Omega, \mathcal{H}^{N-1})  } \left\{ - \int_{\Gamma_N } p_0 \: (p_0^*-\psi)
	  - \int_{\Gamma_D} ( p_0^*\: p_0 +\vert g- p_0 \vert)\, d  \mathcal{H}^{N-1} \right\}  \\  \\
	  &=& \max_{p_0\in L^1(\partial\Omega, \mathcal{H}^{N-1})  } \left\{ - \int_{\Gamma_N } p_0 \: (p_0^*-\psi)
	  - \int_{\Gamma_D} ( p_0^*(p_0-g) +\vert g- p_0 \vert)\, d  \mathcal{H}^{N-1} \right\}  \\  \\
	  &  &  -   \int_{\Gamma_D}   p_0^* g 	\\  \\
	   &=& \max_{p_0\in L^1(\partial\Omega, \mathcal{H}^{N-1})  } \left\{ - \int_{\Gamma_N } p_0 \: (p_0^*-\psi)
	  +\int_{\Gamma_D} \vert g-p_0\vert (  p_0^*\hbox{sign}_0  (g- p_0) -1)\, d  \mathcal{H}^{N-1} \right\}  \\  \\
	  &  &  -   \int_{\Gamma_D}   p_0^* g 	\\  \\
	  &=& \left\{ \begin{array}{ll} \displaystyle -\int_{\Gamma_D}gp_0^*d \mathcal{H}^{N-1}   &\hbox{if}\
	  	\left\{\begin{array}{l}p_0^* =   \psi \ \hbox{$\mathcal{H}^{N-1}$-a.e. on $\Gamma_N$} \  \hbox{and}\\[6pt]  \vert   p_0^* \vert \leq 1 \ \ \hbox{$\mathcal{H}^{N-1}$-a.e. on $\Gamma_D$,}\end{array}\right.  \\[16pt] +\infty,   & \hbox{otherwise}.   \end{array}  \right. \end{eqnarray*}
	 	The functional $E_1^*:  L^2(\Omega) \rightarrow \mathbb{R} \cup \{ \infty \}$ is given by the formula		
							$$E_1^*(p_1^*)= \frac{1}{2} \int_\Omega p_1^{*2} \: dx $$
			And the functional $E_2^*: L^\infty(\Omega; \mathbb{R}^N) \rightarrow [0,\infty]$ is given by
			$$E_2^*(\overline{p}^*) = \left\{ \begin{array}{ll} 0  &\hbox{if} \ \ \Vert \overline{p}^* \Vert_{L^\infty(\Omega, \R^N)} \leq 1, \\[10pt] +\infty,   & \hbox{otherwise}.  \end{array}  \right.
			$$
			This implies that  $(\tilde P^*)$ is equal  to
			            \begin{equation}
				\begin{array}{c}\displaystyle	\max_{p^* \in L^\infty(\partial\Omega, \mathcal{H}^{N-1})  \times L^2(\Omega) \times L^\infty(\Omega; \mathbb{R}^N)} \bigg\{ \int_{\Gamma_D}gp_0^*d \mathcal{H}^{N-1} - \frac{1}{2} \int_\Omega p_1^{*2} \: dx  - G^*(A^* p^*)  :  \quad   \\    \hfill  p_0^* =   \psi , \hbox{ on }\Gamma_N,\:  \vert   p_0^* \vert \leq 1,\hbox{ on }\Gamma_D,\: \Vert \overline{p}^* \Vert_{L^\infty(\Omega, \R^N)} \leq 1   \bigg\}.\end{array}
			\end{equation}
			Now, we see that, for any $p^* \in V^*,$
			  \begin{eqnarray*}
				G^*(A^* p^*) &=& \max_{ p\in U} \left( \langle A^*p^*,p\rangle_{ U^*,U} -G(p)\right)\\  \\
				&=& \max_{ p\in U}  \left(\langle  p^*,Ap\rangle_{V^*,V} -G(p)\right)\\  \\
&=& 	 \max_{ p\in U}    \left(	 \int_{\partial \Omega} p_0^* \: Tr (p) -\int_\Omega p_1^*\: p  -\int_\Omega \overline p^*\cdot \nabla p \, dx 	+\int_\Omega p\: f\: dx \right) - \frac{1}{2} \int_\Omega f^2\: dx .
  	\end{eqnarray*}
			This is finite and is equal to {$ \displaystyle -\frac{1}{2}\int_\Omega f^2\: dx$} if and only if the triplet $(p_0^*,p_1^*,\overline p^*)\in  L^1(\partial\Omega,d \mathcal{H}^{N-1} )\times L^2(\Omega) \times L^1(\Omega,\R^N)$ is  such that
			$$	 \int_{\partial \Omega} p_0^* \: Tr (p) -\int_\Omega p_1^*\: p  -\int_\Omega \overline p^*\cdot \nabla p  \, dx 	+\int_\Omega p\: f\: dx =0 \quad \hbox{for any }p\in U.$$
			That is $(p_0^*,p_1^*,\overline p^*)\in  L^1(\partial\Omega,d \mathcal{H}^{N-1} )\times L^2(\Omega) \times L^1(\Omega,\R^N)$  satisfies the PDE problem
	 \begin{equation}\label{PDEdual}
	 		 \left\{  \begin{array}{ll}
			p_1^*-\divi \overline p^* =f  \quad & \hbox{ in }\Omega\\[8pt]
		\overline{p}^*\cdot \nu_\Omega = p_0^* & \hbox{ on }\Gamma_N.
			\end{array}
		\right.
	 \end{equation}

 Thus  $(\tilde P^*)$  is equal to
 	\begin{equation}
 	\begin{array}{c}   \displaystyle	\max_{p^* \in L^\infty(\partial\Omega, \mathcal{H}^{N-1})  \times L^2(\Omega) \times L^\infty(\Omega; \mathbb{R}^N)} \bigg\{     \int_{\Gamma_D}gp_0^*d \mathcal{H}^{N-1}   - \frac{1}{2} \int_\Omega p_1^{*2} \: dx +  \frac{1}{2}\int_\Omega f^2\: dx   :   \quad    \\   \hfill p_0^* =  \psi , \hbox{ on }\Gamma_N,\:   \vert   p_0^* \vert \leq 1,\hbox{ on }\Gamma_D,\: \Vert \overline{p}^* \Vert_{L^\infty(\Omega, \R^N)} \leq 1,\: (p_1^*,\overline p^*) \hbox{ satisfies }\eqref{PDEdual}   \bigg\},
    \end{array}
 \end{equation}
 that is,   $$(\tilde P^*)=(M),$$
 and the proof is finished.
\end{proof}

\begin{lemma} \label{lemmaproof}
Under the assumptions of Theorem \ref{ThStat1}, we have
  \begin{equation}\label{equality0}
 	(P)=(M).
\end{equation}

Moreover, if  $u$ is a solution of $(M)$ and $(\z,\xi)$ is a solution of $(P^*)$, then $\xi=u$ and the couple $(u,\z)$ solves the PDE problem~\eqref{ProblemIstat}.
\end{lemma}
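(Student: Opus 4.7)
The plan is to combine Lemma~\ref{Lduality1} with a direct weak-duality computation based on the Anzellotti Green formula. The inclusion $W^{1,1}(\Omega)\cap L^2(\Omega)\subset BV(\Omega)\cap L^2(\Omega)$ gives for free that $(\tilde P)\ge (P)$, and Lemma~\ref{Lduality1} already supplies $(M)=(\tilde P)$. So the equality $(P)=(M)$ reduces to establishing the remaining inequality $(P)\ge (M)$, which we will get from a pointwise estimate comparing $\Phi_f(u)$ with the dual value of any admissible pair $(\z,\xi)$.

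First, for any admissible $(\z,\xi)$ in $(M)$ and any $u\in BV(\Omega)\cap L^2(\Omega)$, I would apply Green's formula \eqref{Green} to $-\divi\z=f-\xi$ tested against $u$, splitting the boundary integral into $\Gamma_N$ (where $[\z,\nu_\Omega]=\psi$) and $\Gamma_D$. Expanding the square $(u-f)^2 = u^2-2uf+f^2$ in $\Phi_f(u)$ and substituting the expression obtained for $\int_\Omega uf\,dx$, one is led after regrouping terms to an identity of the shape
\begin{align}
\Phi_f(u)-\Bigl(\tfrac{1}{2}\int_\Omega f^2- \tfrac{1}{2}\int_\Omega \xi^2+\int_{\Gamma_D}[\z,\nu_\Omega]\,g\,d\mathcal{H}^{N-1}\Bigr)
&=\int_\Omega\bigl(|Du|-(\z,Du)\bigr)\\
&\quad +\int_{\Gamma_D}\bigl(|g-u|+(u-g)[\z,\nu_\Omega]\bigr)d\mathcal{H}^{N-1}\\
&\quad +\tfrac{1}{2}\int_\Omega (u-\xi)^2\,dx.
\end{align}
Because $\|\z\|_\infty\le 1$, the inequality $(\z,Du)\le |Du|$ holds as measures by Proposition~\ref{Anzelotti}, and because $|[\z,\nu_\Omega]|\le\|\z\|_\infty\le 1$ on $\Gamma_D$ the boundary integrand is nonnegative; the quadratic term is trivially nonnegative. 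Thus each of the three right-hand-side terms is $\ge 0$, hence $\Phi_f(u)\ge (M)$, proving $(P)\ge(M)$. Combined with $(M)=(\tilde P)\ge (P)$ this yields \eqref{equality0}.

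For the second part, take the minimizer $u$ of $(P)$ provided by Lemma~\ref{elprimlem} and any maximizer $(\z,\xi)$ of $(M)$ (which exists by the Fenchel--Rockafellar theorem used in Lemma~\ref{Lduality1}). Since $(P)=(M)$, the left-hand side of the identity above vanishes, and therefore each of the three nonnegative terms on the right must vanish separately. The vanishing of $\tfrac{1}{2}\int_\Omega(u-\xi)^2$ forces $\xi=u$ a.e.; substituting back into the constraint $-\divi\z=f-\xi$ gives $u-\divi\z=f$ in $\mathcal{D}'(\Omega)$. The vanishing of $\int_\Omega(|Du|-(\z,Du))$, together with $(\z,Du)\le|Du|$, forces $(\z,Du)=|Du|$ as measures. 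The vanishing of the boundary integral on $\Gamma_D$, together with $|[\z,\nu_\Omega]|\le 1$, forces $[\z,\nu_\Omega](g-u)=|g-u|$ a.e.\ on $\Gamma_D$, which is exactly $[\z,\nu_\Omega]\in\hbox{sign}(g-u)$. Finally the condition $[\z,\nu_\Omega]=\psi$ on $\Gamma_N$ is built into the feasibility of $\z$ for $(M)$. This completes the verification of \eqref{StatPDE}.

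The only delicate point is bookkeeping in the algebraic manipulation leading to the three-term decomposition of $\Phi_f(u)-(M)$-value, in particular making sure that after inserting Green's formula, the $\Gamma_N$ boundary contributions cancel cleanly and the $\Gamma_D$ contributions regroup as $|g-u|+(u-g)[\z,\nu_\Omega]$; once this identity is set up correctly, the nonnegativity of each piece is immediate from $\|\z\|_\infty\le 1$ and Proposition~\ref{Anzelotti}, and everything else is automatic.
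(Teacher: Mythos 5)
Your proposal is correct and follows essentially the same route as the paper: the inequality $(P)\le(\tilde P)=(M)$ from Lemma~\ref{Lduality1}, a Green's-formula weak-duality identity giving $(P)\ge(M)$, and extraction of the optimality system from the vanishing of the nonnegative gap terms. Your grouping of the quadratic term as $\tfrac12\int_\Omega(u-\xi)^2$ is in fact the cleaner (and correctly nonnegative) form of the paper's term $I_2(v,\xi)=\xi v-\tfrac12 v^2$, which as written in the paper is missing the summand $-\tfrac12\xi^2$ needed to make it nonpositive and to match the stated right-hand side.
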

\begin{proof}
First, combining   Lemma~\ref{Lduality1} with the fact that  $U\subset BV(\Omega)\cap L^2(\Omega),$ we have
 \begin{equation} \label{ineq0}
	 (P)\leq  (\tilde P)=(M).
\end{equation}
 On the other one sees that, for any $v \in BV(\Omega),$ and $z\in L^\infty(\Omega)^N$ such that $\Vert z\Vert_\infty  \leq 1,\  -\divi   z =f- \xi $ in $\Omega,$ and $  [z,\nu_\Omega] = \psi $ in   $\Gamma_N,$  by Green's formula we have
 $$\int_\Omega (z,Dv) +\int_\Omega \xi\: v \, dx  = \int_\Omega f\: v \, dx  +\int_{\Gamma_N}  \psi v  \, d  \mathcal{H}^{N-1}  + \int_{\Gamma_D}  [z,\nu_\Omega]  \: v \, d  \mathcal{H}^{N-1} .$$
Then,
    $$\int_\Omega \vert Dv\vert   +\frac{1}{2}\int_\Omega  (v-f)^2 \, dx   + \int_{\Gamma_D}  \vert g-v \vert   \, d  \mathcal{H}^{N-1}   -\int_{\Gamma_N}     \psi v  \, d  \mathcal{H}^{N-1}    $$   $$ + \int_\Omega \underbrace{   ((z,Dv) -\vert Dv\vert   ) }_{=:I_1(z,Dv) }+ \int_\Omega \underbrace{   (  \xi\: v -  \frac{1}{2} v^2  )}_{=:I_2(v,\xi) } \, dx    + \int_{\Gamma_D}\underbrace{   \left(  [z,\nu_\Omega]  \: (g-v) -  \vert g-v \vert   \right) }_{=:I_3(z,v)  } \, d  \mathcal{H}^{N-1}  $$
   $$ =	 \frac{1}{2} \int_{ \Omega} f^2  \, dx -  \frac{1}{2} \int_{ \Omega} \xi^2  \, dx + \int_{\Gamma_D}  [z,\nu_\Omega]  \: g \, d  \mathcal{H}^{N-1}$$
    Using Proposition~\ref{Anzelotti}, Young inequality and the fact that $\vert  [z,\nu_\Omega]  \vert \leq 1 ,$ $\mathcal{H}^{N-1}$-a.e. on $\partial\Omega$,  we have  $I_1(z,Dv) \leq 0$,  $I_2(v,\xi)\leq 0 $ and  $I_3(z,v)\leq 0,$   and then
   $$\int_\Omega \vert Dv\vert   +\frac{1}{2}\int_\Omega  (v-f)^2 \, dx   + \int_{\Gamma_D}  \vert g-v \vert   \, d  \mathcal{H}^{N-1}   -\int_{\Gamma_N}     \psi v  \, d  \mathcal{H}^{N-1}$$    $$\geq \int_\Omega \vert Dv\vert   +\frac{1}{2}\int_\Omega  (v-f)^2 \, dx   + \int_{\Gamma_D}  \vert g-v \vert   \, d  \mathcal{H}^{N-1}   -\int_{\Gamma_N}     \psi v  \, d  \mathcal{H}^{N-1}    $$   $$ + \int_\Omega  I_1(z,Dv)  + \int_\Omega  I_2(v,\xi)  \, dx    + \int_{\Gamma_D} I_3(z,v)  \, d  \mathcal{H}^{N-1}  $$
    $$ =  \frac{1}{2} \int_{ \Omega} f^2  \, dx 	-  \frac{1}{2} \int_{ \Omega} \xi^2  \, dx + \int_{\Gamma_D}  [z,\nu_\Omega]  \: g \, d  \mathcal{H}^{N-1}.$$
Then minimizing in $v$, we get
 $$(P) \geq  \frac{1}{2} \int_{ \Omega} f^2  \, dx 	-  \frac{1}{2} \int_{ \Omega} \xi^2  \, dx + \int_{\Gamma_D}  [z,\nu_\Omega]  \: g \, d  \mathcal{H}^{N-1}.$$
Now, maximizing in $(\xi,z)$, we obtain that
\begin{equation}\label{alas1028}
(M)\leq    (P).
\end{equation}
   Hence,  by  inequalities~\eqref{ineq0} and~\eqref{alas1028},
$$(\tilde P)=(P)=(M).$$
Finally, taking  $u$   a solution of $(P)$ and $(\z,\xi)$   a solution of $(M)$, one sees that $I_1(\z,Du) = 0$,  $I_2(u,\xi)= 0 $ and  $I_3(\z,u)= 0,$  which implies that $(u,\z)$ is a solution of   the PDE problem~\eqref{ProblemIstat}.  \hfill
 \end{proof}

 \begin{remark}\rm
It is not clear how the result in  Theorem \ref{ThStat1} can be achieved
 using standard Fenchel-Rockafellar duality (Theorem \ref{FRTh}), primarily because the dual of the $BV$ space lacks a rigorous characterization. To circumvent this challenge, we introduce the intermediate problem $(\tilde P),$  as demonstrated in the proof of Lemma \ref{lemmaproof}. This approach allows us to relax the problem and effectively leverage Fenchel-Rockafellar duality. Subsequently, we can reconnect with the original problem $(P)$ in the $BV$ space. Essentially, introducing  $(\tilde P)$ helps us avoid the complexities of operating within the inaccessible dual of the $BV$ space.
 \hfill $\blacksquare$	
 	\end{remark}

 	\begin{proof}[Proof of Theorem \ref{ThStat1}] The proof of existence and the characterization of the solutions of the problems $(P)$ and $(M)$ in terms of a solution of the PDE \eqref{StatPDE} follows by   Lemma~\ref{elprimlem}, Lemma \ref{Lduality1} and Lemma \ref{lemmaproof}. The uniqueness  follows   by the strict convexity of the functional $\Phi_f$ as stated in   Lemma~\ref{elprimlem}.
 	\end{proof}

\subsection{Nonlinear semigroup techniques for   existence of   solution of the evolution problem}

\begin{lemma}\label{contenido}  We have $ {-\Delta }^{\psi,g}_1 \subset \partial_{L^2(\Omega)} \mathcal{F}_{\psi,g}$.
	\end{lemma}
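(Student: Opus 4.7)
The plan is to unpack the definition of $\partial_{L^2(\Omega)}\mathcal{F}_{\psi,g}$ and verify the subdifferential inequality directly. Let $(u,v)\in -\Delta^{\psi,g}_1$, with associated vector field $\z\in X_2(\Omega)$, $\Vert\z\Vert_\infty\le 1$, satisfying the four conditions of \eqref{E1FundN}. Since $\mathcal{F}_{\psi,g}(w)=+\infty$ whenever $w\in L^2(\Omega)\setminus BV(\Omega)$, it suffices to show that for every $w\in BV(\Omega)\cap L^2(\Omega)$,
$$\int_\Omega v(w-u)\,dx\ \le\ \mathcal{F}_{\psi,g}(w)-\mathcal{F}_{\psi,g}(u).$$

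First, I would use $-\divi\z=v$ in $\mathcal{D}'(\Omega)$ together with Anzellotti's Green formula \eqref{Green} applied to $\z\in X_2(\Omega)$ and $w-u\in BV(\Omega)\cap L^2(\Omega)$, to rewrite
$$\int_\Omega v(w-u)\,dx = \int_\Omega (\z,D(w-u)) - \int_{\partial\Omega}(w-u)[\z,\nu_\Omega]\,d\mathcal{H}^{N-1}.$$
For the interior piece, I would split $(\z,D(w-u))=(\z,Dw)-(\z,Du)$, then invoke $(\z,Du)=|Du|$ from \eqref{E1FundN} and the pointwise estimate $(\z,Dw)\le|(\z,Dw)|\le\Vert\z\Vert_\infty|Dw|\le|Dw|$ from Proposition~\ref{Anzelotti} to obtain
$$\int_\Omega (\z,D(w-u))\ \le\ \int_\Omega|Dw|-\int_\Omega|Du|.$$

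For the boundary contribution, I would split $\partial\Omega=\Gamma_N\cup\Gamma_D$. On $\Gamma_N$, the condition $[\z,\nu_\Omega]=\psi$ gives
$$-\int_{\Gamma_N}(w-u)[\z,\nu_\Omega]\,d\mathcal{H}^{N-1} = -\int_{\Gamma_N}\psi w\,d\mathcal{H}^{N-1}+\int_{\Gamma_N}\psi u\,d\mathcal{H}^{N-1}.$$
On $\Gamma_D$, I rewrite $w-u=(w-g)+(g-u)$; then $[\z,\nu_\Omega]\in\mathrm{sign}(g-u)$ forces the key identity $[\z,\nu_\Omega](g-u)=|g-u|$ $\mathcal{H}^{N-1}$-a.e.\ on $\Gamma_D$, while $|[\z,\nu_\Omega]|\le 1$ yields $-[\z,\nu_\Omega](w-g)\le|w-g|$. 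Hence
$$-\int_{\Gamma_D}(w-u)[\z,\nu_\Omega]\,d\mathcal{H}^{N-1}\ \le\ \int_{\Gamma_D}|w-g|\,d\mathcal{H}^{N-1}-\int_{\Gamma_D}|g-u|\,d\mathcal{H}^{N-1}.$$

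Summing the three bounds recovers exactly $\mathcal{F}_{\psi,g}(w)-\mathcal{F}_{\psi,g}(u)$ on the right-hand side, which proves $v\in\partial_{L^2(\Omega)}\mathcal{F}_{\psi,g}(u)$. I do not anticipate a real obstacle: everything is a routine consequence of the Anzellotti calculus and the four structural conditions in \eqref{E1FundN}. The only care-demanding step is the $\Gamma_D$ term, where one must correctly split $w-u$ around $g$ and use the sign condition to turn $[\z,\nu_\Omega](g-u)$ into $|g-u|$; the inclusion $\partial_{L^2(\Omega)}\mathcal{F}_{\psi,g}\subset -\Delta^{\psi,g}_1$, together with equality, is then established separately by maximal monotonicity as announced in Theorem~\ref{ThConpAccre}.
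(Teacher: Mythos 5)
Your proposal is correct and follows essentially the same route as the paper: multiply $-\divi\z=v$ by $w-u$, apply Anzellotti's Green formula, bound $(\z,Dw)\le|Dw|$ while using $(\z,Du)=|Du|$, and handle the boundary by splitting $\partial\Omega=\Gamma_N\cup\Gamma_D$ with the sign condition giving $[\z,\nu_\Omega](u-g)=-|g-u|$ on $\Gamma_D$. No gaps; the only (harmless) addition is your explicit remark that the inequality is trivial for $w\notin BV(\Omega)$.
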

	\begin{proof}   Let   $(u,v) \in  {-\Delta }^{\psi,g}_1$,  then given $ w \in BV(\Omega)\cap L^2(\Omega)$, multiplying the first equation in \eqref{E1FundN} by $w-u$ and applying Green's formula, we get, taking into account that $[\z, \nu_\Omega] (u -g)= -\vert g- u \vert$ in $\Gamma_D$,
$$\int_\Omega v(w-u) dx = - \int_\Omega \divi \z (w-u) dx $$ $$= \int_\Omega (\z, Dw) -\int_{\partial \Omega} [\z, \nu_\Omega] w \, d\mathcal{H}^{N-1}- \int_\Omega \vert Du \vert + \int_{\partial \Omega} [\z, \nu_\Omega] u \, d\mathcal{H}^{N-1}$$
$$=\int_\Omega (\z, Dw) -\int_{\Gamma_N} \psi w  \, d  \mathcal{H}^{N-1} - \int_{\Gamma_D} [\z, \nu_\Omega] (w -g)  \, d  \mathcal{H}^{N-1} $$ $$- \int_\Omega \vert Du \vert + \int_{\Gamma_N} \psi u  \, d  \mathcal{H}^{N-1}  +  \int_{\Gamma_D} [\z, \nu_\Omega] (u -g)  \, d  \mathcal{H}^{N-1} $$ $$\leq \int_\Omega \vert Dw \vert -\int_{\Gamma_N} \psi w  \, d  \mathcal{H}^{N-1} + \int_{\Gamma_D} \vert g- w \vert  \, d  \mathcal{H}^{N-1}$$
$$- \int_\Omega \vert Du \vert + \int_{\Gamma_N} \psi u  \, d  \mathcal{H}^{N-1} - \int_{\Gamma_D} \vert g- u \vert  \, d  \mathcal{H}^{N-1} $$
$$\leq \mathcal{F}_{\psi,g}(w) - \mathcal{F}_{\psi,g}(u).$$
Therefore, $ {-\Delta }^{\psi,g}_1 \subset \partial_{L^2(\Omega)} \mathcal{F}_{\psi,g}$.
 \end{proof}

  \begin{lemma}\label{CAoperator}
  	$ {-\Delta }^{\psi,g}_1$ is completely accretive.
  \end{lemma}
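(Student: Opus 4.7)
My plan is to exploit the inclusion $-\Delta^{\psi,g}_1 \subset \partial_{L^2(\Omega)}\mathcal{F}_{\psi,g}$ established in Lemma~\ref{contenido} and to derive complete accretivity from a two-sided \emph{submodularity} estimate on the functional $\mathcal{F}_{\psi,g}$. The idea is classical (B\'enilan--Crandall): if one can insert $u_1 - q(u_1 - u_2)$ and $u_2 + q(u_1 - u_2)$ into the subdifferential inequalities for $(u_1,v_1)$ and $(u_2,v_2)$ respectively and add, the $v$-terms collapse to $\int_\Omega (v_1 - v_2)\,q(u_1 - u_2)\,dx$, and one is left with the requirement
\[
\mathcal{F}_{\psi,g}(u_1 - q(u_1 - u_2)) + \mathcal{F}_{\psi,g}(u_2 + q(u_1 - u_2)) \le \mathcal{F}_{\psi,g}(u_1) + \mathcal{F}_{\psi,g}(u_2). \qquad (\star)
\]

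\textbf{Carrying out the steps.} First I would fix $(u_i,v_i) \in -\Delta^{\psi,g}_1$ with realizing fields $\z_i$ and take $q \in P_0$; without loss of generality $q$ is smooth, nondecreasing with $q(0)=0$, satisfies $0\le q'\le 1$, and is bounded. Then $w := q(u_1 - u_2) \in BV(\Omega) \cap L^\infty(\Omega)$, so both $u_1 - w$ and $u_2 + w$ are admissible test functions in $BV(\Omega)\cap L^2(\Omega)$. Adding the two subdifferential inequalities reduces the problem to $(\star)$, which I verify term by term: (a) the Neumann contribution $-\int_{\Gamma_N}\psi v\,d\mathcal{H}^{N-1}$ is linear in $v$ and balances exactly, since $(u_1-w)+(u_2+w)=u_1+u_2$; (b) the Dirichlet contribution $\int_{\Gamma_D}|g-v|\,d\mathcal{H}^{N-1}$ reduces to the pointwise inequality $|a+s|+|b-s|\le|a|+|b|$ with $a=g-u_1$, $b=g-u_2$, and $s=w$ having the same sign as $b-a$ and $|s|\le |b-a|$, which is a straightforward case analysis; (c) the bulk contribution $\int_\Omega|Dv|$ requires the total-variation submodularity
\[
|D(u_1 - w)|(\Omega) + |D(u_2 + w)|(\Omega) \le |Du_1|(\Omega) + |Du_2|(\Omega).
\]

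\textbf{Main obstacle.} Part (c) is the technical heart and the step I expect to be the most delicate. The natural approach is to view $\Phi:\mathbb{R}^2 \to \mathbb{R}^2$ defined by $\Phi(s,t) = (s - q(s-t),\, t + q(s-t))$ as a Lipschitz map whose Jacobian
\[
J_\Phi(s,t) = \begin{pmatrix} 1 - q'(s-t) & q'(s-t) \\ q'(s-t) & 1 - q'(s-t) \end{pmatrix}
\]
is doubly stochastic as soon as $0\le q'\le 1$; in particular, $\Phi$ is $1$-Lipschitz in the $\ell^1$-norm of $\mathbb{R}^2$. For smooth pairs $(u_1,u_2)$ the chain rule yields $|\nabla(u_1 - w)| + |\nabla(u_2 + w)| \le |\nabla u_1| + |\nabla u_2|$ pointwise, and the general $BV$ case follows by mollifying $u_i^\varepsilon \to u_i$ (strictly in $BV$), applying the smooth estimate to $\Phi(u_1^\varepsilon,u_2^\varepsilon)$, and passing to the limit via the $L^1$-lower semi-continuity of the total variation. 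Once (a)--(c) are in place, $(\star)$ holds, and plugging it into the sum of subdifferential inequalities gives $\int_\Omega q(u_1-u_2)(v_1-v_2)\,dx \ge 0$ for every admissible $q$, which is precisely the complete accretivity of $-\Delta^{\psi,g}_1$.
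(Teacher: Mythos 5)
Your proof is correct, but it follows a genuinely different route from the paper's. The paper argues directly at the level of the PDE characterization: for $(u_i,v_i)\in-\Delta^{\psi,g}_1$ with realizing fields $\z_i$, it first shows that the measure $(\z_1-\z_2,D(u_1-u_2))$ is nonnegative, transfers this positivity to $(\z_1-\z_2,DT(u_1-u_2))$ through the Radon--Nikodym density $\theta$ and the chain rule of Proposition~\ref{prop:chain-rule}, applies Green's formula, and then disposes of the remaining boundary integral over $\Gamma_D$ by an explicit five-case analysis on the relative positions of $u_1(x)$, $u_2(x)$ and $g(x)$, exploiting the conditions $[\z_i,\nu_\Omega]\in\mathrm{sign}(g-u_i)$. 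You instead invoke the inclusion $-\Delta^{\psi,g}_1\subset\partial_{L^2(\Omega)}\mathcal{F}_{\psi,g}$ of Lemma~\ref{contenido} (which the paper establishes before Lemma~\ref{CAoperator}, so there is no circularity) and reduce everything to the B\'enilan--Crandall submodularity inequality $(\star)$ for the energy functional; your term-by-term verification is sound, including the trace identity $\mathrm{Tr}(q(u_1-u_2))=q(\mathrm{Tr}\,u_1-\mathrm{Tr}\,u_2)$ needed in parts (a)--(b) and the doubly stochastic/strict-approximation argument for the total-variation term in (c). What each approach buys: yours trades the Anzellotti-pairing manipulations and the boundary case analysis for elementary pointwise inequalities plus one approximation argument, and it works without ever touching the vector fields; the paper's argument is self-contained at the level of the operator's defining system and, more importantly, is the one that carries over essentially verbatim to the truncated operator $-\widetilde\Delta^{\psi,g}_1$ in Section~\ref{sectionla5}, where only $T_k(u)$ lies in $BV(\Omega)$ and a clean energy functional is no longer available.
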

  \begin{proof}  By Proposition \ref{prop:completely-accretive}, to  prove that the operator $ {-\Delta }^{\psi,g}_1$ is completely accretive,  we need to show that
  	$$ \int_{\Omega} T(u_1-u_2)(v_1-v_2)\, dx \geq 0 $$
  	for every $T\in P_{0}$ and every $(u_i,v_i) \in  {-\Delta }_{\psi,g}^1$,   $i=1,2$ .
  	
  	Since $(u_i, v_i) \in  {-\Delta }_{\psi,g}^1$,   $i =1,2$, then, $u_i \in  BV(\Omega)$ and there exists $\z_i \in X_2(\Omega)$ with $\Vert \z_i \Vert_\infty \leq 1$ satisfying:
  	\begin{equation}\label{E1FundNCA}
  		\left\{ \begin{array}{llll} - \divi \z_i = v_i \quad &\hbox{in} \ \ \Omega \\[10pt] (\z_i,Du_i) = \vert Du_i \vert \quad &\hbox{as measures}\\[10pt] [\z_i,\nu_\Omega] = \psi \quad &\hbox{in} \ \ \Gamma_N \\[10pt] [\z_i,\nu_\Omega] \in \hbox{sign}(g-u_i) \quad &\hbox{in} \ \ \Gamma_D.   \end{array}\right.
  	\end{equation}
  	Therefore, for every Borel set $B \subset \Omega$ we have
  	$$\int_{B} (\z_1 - \z_2, Du_1 - Du_2) =  \int_{B} \vert Du_1 \vert - \int_{B} (\z_1, Du_2) +  \int_{B} \vert Du_2 \vert_\nu - \int_{B} (\z_2, Du_1) \geq 0. $$
  	Hence, by equation \eqref{410},
  	$$ \int_{B} \theta(\z_1 - \z_2, D (u_1 - u_2), x ) \, d \vert D(u_1 - u_2) \vert = \int_{B} (\z_1 - \z_2, D (u_1 - u_2)) \geq 0$$
  	for all Borel sets $B \subset \Omega$. Thus,
  	$$\theta(\z_1 - \z_2, D (u_1 - u_2), x ) \geq 0 \quad \vert D(u_1 - u_2) \vert\hbox{-a.e. on} \ \Omega.$$
  	Moreover, since $|DT(u_1 - u_2)|$ is absolutely continuous with respect to $|D(u_1-u_2)|$, we also have
  	$$\theta(\z_1 - \z_2, D (u_1 - u_2), x ) \geq 0 \quad \vert DT(u_1 - u_2) \vert\hbox{-a.e. on} \ \Omega.$$
  	Then, applying the Green formula, we have
  	$$ \int_{\Omega} T(u_1-u_2)(v_1-v_2)\, dx =  \int_{\Omega} T(u_1-u_2) (\divi \z_2 - \divi \z_1) dx$$  $$=  \int_{\Omega} (\z_1 - \z_2, DT(u_1-u_2))+ \int_{\partial \Omega} [\z_2 - \z_1,\nu_\Omega] T(u_1-u_2) \, d\mathcal{H}^{N-1}.$$
  	Now, since
  	$$\int_{\Omega} (\z_1 - \z_2, DT(u_1-u_2))
  	= \int_{\Omega} \theta(\z_1 - \z_2, D(u_1 - u_2), x ) \, d \vert  DT(u_1 - u_2) \vert \geq 0,$$
  	we only need to show that
  	\begin{equation}\label{Intefrate}\int_{\partial \Omega} [\z_2 - \z_1,\nu_\Omega] T(u_1-u_2) \, d\mathcal{H}^{N-1} = \int_{\Gamma_D} [\z_2 - \z_1,\nu_\Omega] T(u_1-u_2) \, d\mathcal{H}^{N-1}\geq 0.
  	\end{equation}
  	To do this, let us consider several cases depending on the values of  $u_1$ and $ u_2$ at a point $x \in \Gamma_D$:

  	\begin{enumerate}
  		\item $u_1(x) < g(x)$ and $u_2(x) < g(x)$: then, $[\z_2 - \z_1,\nu_\Omega](x) = 0$, so the integrand in \eqref{Intefrate} equals zero. A similar argument works whenever $u_1(x) > g(x)$ and $u_2(x) > g(x)$.

  		\item $u_1(x) < g(x) < u_2(x)$: then, $[\z_1,\nu_\Omega](x) =1$ and $[\z_2,\nu_\Omega](x) =- 1$, so $[\z_2 - \z_1,\nu_\Omega](x) =-2 $. By our assumptions on $T$, we have that $T(u_1(x) - u_2(x)) \leq0$, so the integrand in \eqref{Intefrate} is nonnegative. A similar argument works if $u_1(x) > g(x) > u_2(x)$.

  		\item $u_1(x) < g(x) = u_2(x)$: then, $[\z_1,\nu_\Omega](x) =1$ and $[\z_2,\nu_\Omega](x) \in [-1, 1]$, so $[\z_2 - \z_1,\nu_\Omega](x) \leq 0$. By our assumptions on $T$, we have that $T(u_1(x) - u_2(x)) = T(u_1(x) - g(x)) \leq 0$, so the integrand in \eqref{Intefrate} is nonnegative. A similar argument works whenever $u_1(x) > g(x) = u_2(x)$.

  		\item $u_1(x) = g(x) < u_2(x)$: then, $[\z_1,\nu_\Omega](x) \in [-1,1]$ and $[\z_2,\nu_\Omega](x) = -1$, so $[\z_2 - \z_1,\nu_\Omega](x) \leq 0$. By our assumptions on $T$, we have that $T(u_1(x) - u_2(x)) = T(g(x) - u_2(x)) \leq 0$, so the integrand in \eqref{Intefrate} is nonnegative. A similar argument works whenever $u_1(x) = g(x) > u_2(x)$.

  		\item $u_1(x) = u_2(x) = g(x)$: then, $T(u_1(x) - u_2(x)) = T(0) = 0$, so the integrand in \eqref{Intefrate} equals zero.
  	\end{enumerate}
  	We covered all the cases depending on the relative positions of $u_1(x), u_2(x)$ and $g(x)$, so the integrand in \eqref{Intefrate} is always nonnegative; we integrate over $\partial\Omega$ to conclude the proof of the claim \eqref{Intefrate}.
  \end{proof}

 \begin{proof}[Proof of Theorem \ref{ThConpAccre}]   By Theorem \ref{ThStat1} and Lemma \ref{CAoperator} we have that $ {-\Delta }^{\psi,g}_1$ is $m$-completely accretive in $L^2(\Omega)$. Then, by Lemma \ref{contenido} and maximal accretivity, we have $ {-\Delta }^{\psi,g}_1 = \partial_{L^2(\Omega)} \mathcal{F}_{\psi,g}$. Finally, by  \cite[Proposition 2.11]{BrezisBook}, we have
$$ D( {-\Delta }^{\psi,g}_1) = D(\partial \mathcal{F}_{\psi,g}) \subset  D(\mathcal{F}_{\psi,g}) =  BV(\Omega) \cap L^2(\Omega) \subset \overline{D(\mathcal{F}_{\psi,g})}^{L^2(\Omega)} = \overline{D(\partial \mathcal{F}_{\psi,g})}^{L^2(\Omega)}.$$
Therefore, the domain of $ {-\Delta }^{\psi,g}_1$ is dense in $L^2(\Omega)$.
 \end{proof}

 \begin{proof}[Proof of Theorem \ref{inteo310}]
This result
follows directly using Brezis-Komura Theorem (Theorem \ref{Bre-Kom}) and the complete accretivity of the operator $ {-\Delta }^{\psi,g}_1$.
 \end{proof}

\subsection{Some properties of the operator 	$ {-\Delta }^{\psi,g}_1$ }

First, let us  give the proof of the proposition which gives equivalent formulations for the solutions of   problem~\eqref{ProblemIstat}.

\begin{proof}[Proof of Proposition \ref{characterisationI}]  (a) $\Rightarrow$ (d): Multiplying the equation $v = -\mathrm{div}(\z)$ by $w - u$,  integrating over $\Omega$, and using the Green formula, we get
	\begin{align}
		\int_{\Omega} v (w-u) \, dx&= - \int_{\Omega} (w - u) \, \mathrm{div}(\z) \, dx \\
		&= \int_{\Omega} (\z,Dw) - \int_{\partial\Omega} [\z,\nu_\Omega] \,  w \,  d\mathcal{H}^{N-1} \\
		\qquad\qquad\qquad &-  \int_{\Omega} (\z,Du) +\int_{\partial\Omega} [\z,\nu_\Omega] \,  u \,  d\mathcal{H}^{N-1}\\
		&= \int_{\Omega} (\z,Dw) - \int_{\Gamma_D}  [\z,\nu_\Omega]( w - g) \, d\mathcal{H}^{N-1}- \int_{\Gamma_N} [\z,\nu_\Omega]w \, d\mathcal{H}^{N-1} \\
		\qquad\qquad\qquad - &\int_{\Omega} (\z,Du) + \int_{\Gamma_D}  [\z,\nu_\Omega]  (u - g)  \, d\mathcal{H}^{N-1} + \int_{\Gamma_N} [\z,\nu_\Omega]u \, d\mathcal{H}^{N-1}\\ &=\int_{\Omega} (\z,Dw) - \int_{\Gamma_D}  [\z,\nu_\Omega]( w - g) \, d\mathcal{H}^{N-1}- \int_{\Gamma_N} \psi w \, d\mathcal{H}^{N-1}  \\
		\qquad\qquad\qquad - &\int_{\Omega} |Du| - \int_{\Gamma_D} | u - g| \,  d\mathcal{H}^{N-1} + \int_{\Gamma_N} \psi u \, d\mathcal{H}^{N-1}.
	\end{align}
	
	\noindent (d) $\Rightarrow$ (b): trivial.
	
	Since
	$$- \int_{\Gamma_D} [\z,\nu_\Omega] \, (w - g) \, d\mathcal{H}^{N-1} \leq \int_{\Gamma_D}\vert w - g \vert \, d\mathcal{H}^{N-1},$$
	we have that (b) $\Rightarrow$ (c).
	
	\noindent (c) $\Rightarrow$ (b): By assumption (c), there exists a vector field $\z \in X_2(\Omega)$ with  $\| \z \|_\infty \leq 1$  satisfying \eqref{divmed} and \eqref{eq:variationalequalitydirichletprN}.  Let us define  $\varphi \in L^1(\partial \Omega)$ as
	$$\varphi(x)= \left\{  \begin{array}{ll} g(x) \quad &\hbox{if} \ \ x \in \Gamma_D, \\[10pt] 0 \quad &\hbox{if} \ \ x \in \Gamma_N.\end{array} \right.$$
	Given $w \in BV(\Omega) \cap L^2(\Omega)$, by \cite[Theorem B.3, Lemma C.1]{ACMBook} there exists $w_n \in BV(\Omega) \cap L^\infty(\Omega)$ such that $w_n \to w$ in $L^2(\Omega)$ and $Tr(w_n) = \varphi$ for all $n \in \N$. Then,
	taking $w = w_n$ in \eqref{eq:variationalequalitydirichletprN}, we get
	
	\begin{align}
		&\int_{\Omega} v(w_n-u) \, dx \le \int_{\Omega} (\z,Dw_n) - \int_{\Omega} |Du|
		\\
		&\qquad\qquad  - \int_{\Gamma_D} | u - g| \, d\mathcal{H}^{N-1}-  \int_{\Gamma_N} \psi (- u)\, d\mathcal{H}^{N-1}.
	\end{align}
	Applying Green's formula, we have
	\begin{align}
		&\int_{\Omega} v(w_n-u) \, dx \le - \int_{\Omega} {\rm div}(\z)w_n \, dx + \int_{\Gamma_D} [\z,\nu_\Omega] \,  g \,  d\mathcal{H}^{N-1}  - \int_{\Omega}  |Du|
		\\
		&\qquad\qquad  - \int_{\Gamma_D} | u - g| \, d\mathcal{H}^{N-1}-  \int_{\Gamma_N} \psi (- u)\, d\mathcal{H}^{N-1}.
	\end{align}
	Letting $n \to \infty$, it follow that
	\begin{align}
		&\int_{\Omega} v(w-u) \, dx \le - \int_{\Omega} {\rm div}(\z)w \, dx + \int_{\Gamma_D} [\z,\nu_\Omega] \,  g \,  d\mathcal{H}^{N-1} - \int_{\Omega}  |Du|
		\\
		&\qquad\qquad  - \int_{\Gamma_D} | u - g| \, d\mathcal{H}^{N-1}-  \int_{\Gamma_N} \psi (- u)\, d\mathcal{H}^{N-1}.
	\end{align}
	The, applying Green's formula, we get
	\begin{align}
		&\int_{\Omega} v(w-u) \, dx \le \int_{\Omega} (\z,Dw_n) -  \int_{\Gamma_D} [\z,\nu_\Omega] \,  w \,  d\mathcal{H}^{N-1} -  \int_{\Gamma_N} [\z,\nu_\Omega] \,  w \,  d\mathcal{H}^{N-1}
		\\
		&\qquad\qquad + \int_{\Gamma_D} [\z,\nu_\Omega] \,  g \,  d\mathcal{H}^{N-1} - \int_{\Omega}  |Du| - \int_{\Gamma_D} | u - g| \, d\mathcal{H}^{N-1}-  \int_{\Gamma_N} \psi (- u)\, d\mathcal{H}^{N-1}.
	\end{align}
	Thus, (b) holds.
	
	\noindent (b) $\Rightarrow$ (a): If we take $w = u$ in ~\eqref{eq:variationalequalitydirichletpr} and reorganise the terms, we get
	\begin{equation}\label{eq:variationalequalitydirichletpr2}
		\int_{\Omega} |Du| + \int_{\Gamma_D} | u - g| \,d\mathcal{H}^{N-1} \leq \int_{\Omega} (\z,Du) + \int_{\Gamma_D} [\z,\nu_\Omega] ( g - u) \, d\mathcal{H}^{N-1}.
	\end{equation}
	Since $\| \z \|_\infty \leq 1$ we have
	\begin{equation}\label{eq:variationalequalitypart1}
		\int_\Omega (\z,Du) \leq \int_\Omega |Du|;
	\end{equation}
	and, since $\| \z \|_\infty \leq 1$, we also have $\| [\z,\nu_\Omega]\|_\infty \leq 1,$ so
	\begin{equation}\label{eq:variationalequalitypart2}
		\int_{\Gamma_D} [\z,\nu_\Omega]( g-u) \, d\mathcal{H}^{N-1}\le  \int_{\Gamma_D} |u - g| \, d\mathcal{H}^{N-1}.
	\end{equation}
	Hence, in inequality \eqref{eq:variationalequalitydirichletpr2} we actually have an equality. But this implies that also \eqref{eq:variationalequalitypart1} and \eqref{eq:variationalequalitypart2} are also equalities, and this implies
	$$(\z,Du) = \vert Du \vert \quad \hbox{as measures}$$
	and
	$$ [\z,\nu_\Omega] \in \hbox{sign}(g-u) \quad \hbox{in} \ \ \Gamma_D.$$
\end{proof}

We end this subsection with the following result for~$ {-\Delta }^{\psi,g}_1$ that we will use in the next section in the proof of Lemma~\ref{general}, and which is interesting by itself (see also Remark~\ref{enlinfbutpost},  Lemma~\ref{nteo317} and Lemma~\ref{nteo317c}}).

\begin{proposition}\label{bounded1} Assume that $\psi \in L^\infty(\Gamma_N)$ with  $\Vert \psi \Vert_\infty \le 1$ and $g \in L^1(\Gamma_D)$, and let $f \in L^2(\Omega)$. If $u \in BV(\Omega)$  is a solution of
	$$f \in u + {\lambda}  ({-\Delta }^{\psi,g}_1)(u) \quad  \hbox{with} \ \lambda >0,$$ then, for $C_\Omega$ the constant in Theorem~\ref{pepesab},
	$$\left\|u\right\|_2\le \left\|\,|f|+ { \lambda }C_\Omega\,\right\|_2.$$
	If $f \in L^\infty (\Omega)$, then
	$$\left\|  u\right\|_\infty\le \left\|  f\right\|_\infty+ { \lambda }C_\Omega.$$
\end{proposition}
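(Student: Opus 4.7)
The plan is to test the equation $u-\lambda\,\divi\z=f$ (with $\z\in X_2(\Omega)$, $\|\z\|_\infty\le 1$, $(\z,Du)=|Du|$, $[\z,\nu_\Omega]=\psi$ on $\Gamma_N$ and $[\z,\nu_\Omega]\in\hbox{sign}(g-u)$ on $\Gamma_D$) against two carefully chosen test functions, then use Anzellotti's Green formula (Theorem 2.7 in the excerpt, i.e.~\eqref{Green}) together with Theorem~\ref{pepesab} to turn the boundary term into something we can absorb.

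For the $L^2$ bound I would multiply the equation by $u$ and integrate over $\Omega$. Applying Green's formula and the identity $(\z,Du)=|Du|$ gives
$$\int_\Omega u^2+\lambda|Du|(\Omega)=\int_\Omega f u+\lambda\int_{\partial\Omega}[\z,\nu_\Omega]\,u\,d\mathcal{H}^{N-1}.$$
Since $\|[\z,\nu_\Omega]\|_\infty\le 1$, the boundary term is at most $\lambda\int_{\partial\Omega}|u|\,d\mathcal{H}^{N-1}$, which by Theorem~\ref{pepesab} is in turn bounded by $\lambda|Du|(\Omega)+\lambda C_\Omega\|u\|_{L^1(\Omega)}$. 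The $\lambda|Du|(\Omega)$ terms cancel and one is left with
$$\|u\|_2^2\le \int_\Omega fu+\lambda C_\Omega\|u\|_1\le \int_\Omega(|f|+\lambda C_\Omega)|u|\,dx\le \big\|\,|f|+\lambda C_\Omega\,\big\|_2\,\|u\|_2$$
by the pointwise inequality $fu\le|f||u|$ followed by Cauchy--Schwarz, which gives the claimed estimate after dividing by $\|u\|_2$ (the case $u\equiv 0$ being trivial).

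For the $L^\infty$ bound I would set $k:=\|f\|_\infty+\lambda C_\Omega$ and test the equation against $(u-k)^+\in BV(\Omega)\cap L^2(\Omega)$. The key ingredient is the chain rule of Proposition~\ref{prop:chain-rule} applied to the non-decreasing Lipschitz truncation $T(s)=(s-k)^+$: combined with $(\z,Du)=|Du|$ this yields $\theta(\z,D(u-k)^+,\cdot)=1$, i.e.~$(\z,D(u-k)^+)=|D(u-k)^+|$ as measures. Green's formula together with $\|[\z,\nu_\Omega]\|_\infty\le 1$ and Theorem~\ref{pepesab} applied to $(u-k)^+$ then gives, after the same cancellation of the $|D(u-k)^+|(\Omega)$ terms,
$$\int_\Omega u(u-k)^+\,dx\le \int_\Omega\bigl(f+\lambda C_\Omega\bigr)(u-k)^+\,dx.$$
Writing $u(u-k)^+=((u-k)^+)^2+k(u-k)^+$ turns this into
$$\int_\Omega\bigl((u-k)^+\bigr)^2\,dx\le \int_\Omega\bigl(f+\lambda C_\Omega-k\bigr)(u-k)^+\,dx\le 0$$
by the choice of $k$, so $u\le k$ a.e.; the same argument applied to $(-u)$ (which solves the analogous problem with data $-f$, $-g$, $-\psi$, with the same constant $C_\Omega$) yields $u\ge -k$ a.e., hence $\|u\|_\infty\le k$.

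The only subtle step is the identification $(\z,D(u-k)^+)=|D(u-k)^+|$ as measures, which is precisely what Proposition~\ref{prop:chain-rule} provides; everything else is a careful bookkeeping of the boundary contribution, where I use only $|[\z,\nu_\Omega]|\le 1$ and do not need to exploit the finer structure of $\psi$ on $\Gamma_N$ or of $\hbox{sign}(g-u)$ on $\Gamma_D$. This is the reason the same proof works in the whole regime $\|\psi\|_\infty\le 1$.
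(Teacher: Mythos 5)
Your proposal is correct. The $L^2$ part is essentially the paper's argument in different clothing: the paper takes $w=0$ in the variational inequality of Proposition~\ref{characterisationI}, which after using $|[\z,\nu_\Omega]|\le 1$ on all of $\partial\Omega$ and Theorem~\ref{pepesab} reduces to exactly your chain $\|u\|_2^2\le\int_\Omega(|f|+\lambda C_\Omega)|u|\,dx$; testing the equation directly against $u$ via Anzellotti's Green formula is the same computation. Where you genuinely diverge is the $L^\infty$ bound: the paper runs a Moser-type iteration, taking $w=u-|u|^{m-2}u$ (``truncate if necessary'') to obtain $\|u\|_{L^m}\le\|\,|f|+\lambda C_\Omega\,\|_{L^m}$ for each $m$ and then letting $m\to\infty$, whereas you use a Stampacchia truncation at the single level $k=\|f\|_\infty+\lambda C_\Omega$, testing against $(u-k)^+$ and concluding $\int_\Omega((u-k)^+)^2\le 0$ in one shot, with the lower bound obtained by the symmetry $(u,\z,f,g,\psi)\mapsto(-u,-\z,-f,-g,-\psi)$. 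Both routes rest on the same two ingredients --- the chain rule of Proposition~\ref{prop:chain-rule} to identify $(\z,DT(u))=|DT(u)|$ for the monotone truncation $T$, and Theorem~\ref{pepesab} to absorb the boundary term into $|DT(u)|(\Omega)+C_\Omega\|T(u)\|_1$ --- but your version avoids the passage to the limit in $m$ and the attendant justification of the $L^m$ test functions, at the (mild) cost of having to observe that $-u$ solves the reflected problem. The only point worth spelling out in a final write-up is the one you already flag: $\theta(\z,Du,\cdot)=1$ holds $|Du|$-a.e., and since $|D(u-k)^+|\ll|Du|$ it also holds $|D(u-k)^+|$-a.e., which is what turns the chain rule into the measure identity $(\z,D(u-k)^+)=|D(u-k)^+|$.
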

\begin{proof} By Proposition \ref{characterisationI}, we have and there exists a vector field $\z \in X_2(\Omega)$ with  $\| \z \|_\infty \leq 1$  satisfying
	\begin{align}\label{eq:variationalequalityBound}
		&{\frac{1}{\lambda}}\int_{\Omega} (f -u)(w-u) \, dx \le \int_{\Omega} (\z,Dw) - \int_{\Omega} |Du|
		\\
		&\qquad\qquad + \int_{\Gamma_D}\vert w - g \vert \, d\mathcal{H}^{N-1} - \int_{\Gamma_D} | u - g| \, d\mathcal{H}^{N-1}-  \int_{\Gamma_N} \psi (w- u)\, d\mathcal{H}^{N-1},
	\end{align}
	for every $w \in BV(\Omega) \cap L^2(\Omega).$
	
	Taking $w=0$ in \eqref{eq:variationalequalityBound} we have
	$${\frac{1}{\lambda}} \int_{\Omega} u^2  \, dx \le { \frac{1}{\lambda}} \int_{\Omega} fu \, dx - \int_{\Omega} |Du|
	+ \int_{\Gamma_D}\vert  g \vert \, d\mathcal{H}^{N-1} - \int_{\Gamma_D} | u - g| \, d\mathcal{H}^{N-1}+  \int_{\Gamma_N} \psi u \, d\mathcal{H}^{N-1}$$ $$ \leq { \frac{1}{\lambda}}  \int_{\Omega} fu  \, dx  - \int_{\Omega} |Du|
	+ \int_{\Gamma_D}|u| d\mathcal{H}^{N-1} +  \int_{\Gamma_N} \psi u \, d\mathcal{H}^{N-1}$$
	$$ \leq  {\frac{1}{\lambda}} \int_{\Omega} fu  \, dx  - \int_{\Omega} |Du|
	+ \int_{\partial\Omega}| u| \, d\mathcal{H}^{N-1}.$$
	Now, by Theorem \ref{pepesab}, we have
	$$ \int_{\partial\Omega} |u| \, d\mathcal{H}^{N-1} \leq    \int_{\Omega} |Du| + C_\Omega\int_\Omega \vert u \vert \, dx.$$
	Hence,
	$${ \frac{1}{\lambda}}\int_{\Omega} u^2  \, dx \le { \frac{1}{\lambda}}\int_{\Omega} fu  \, dx + C_\Omega   \int_\Omega \vert u \vert \, dx = { \frac{1}{\lambda} \int_\Omega |u| (|f| + \lambda C_\Omega) dx,}$$
	so  by Young's inequality, we have
	$$\left\|u\right\|_2\le \left\|\,|f|+\lambda C_\Omega\,\right\|_2.$$
	
	Assume now that $f \in L^\infty(\Omega)$. Given $m \in \N$, $m\ge 2$, taking $w=u - \vert u \vert^{m-2}u$ in \eqref{eq:variationalequalityBound} (truncate if necessary), we have (working as above)
	$$\begin{array}{c}\displaystyle
		\frac{1}{\lambda}\int_{\Omega} \vert u \vert^m  \, dx \le { \frac{1}{\lambda}}\int_{\Omega} \vert u \vert^{m-2}u f \, dx - \int_{\Omega} |D\vert u \vert^{m-2}u|
		\\[12pt]\displaystyle + \int_{\Gamma_D}\vert u\vert^{m-1}  \, d\mathcal{H}^{N-1}+\int_{\Gamma_N} \vert u\vert^{m-1}  \, d\mathcal{H}^{N-1}\\[12pt]\displaystyle \le  \int_{\Omega} \vert u \vert^{m-1}( |f|+ { \lambda}C_\Omega) \, dx
		\\[12pt]\displaystyle  \le \left(\int_{\Omega} \vert u \vert^{m} \, dx\right)^{\frac{m-1}{m}}\left(\int_{\Omega}\left(|f|+ { \lambda}C_\Omega\right)^m\right)^{\frac{1}{m}}.
	\end{array}$$
	Hence,
	$${ \left(\int_{\Omega}|u|^m dx \right)^{\frac{1}{m}}}\le \left(\int_{\Omega}\left(|f|+\lambda C_\Omega\right)^m\right)^{\frac{1}{m}},$$
	from where
	$$\left\|  u\right\|_\infty\le \left\|  f\right\|_\infty+\lambda C_\Omega.$$
\end{proof}

\section{Proofs for  the case   $||\psi||_\infty \leq  1$.}\label{sectionla5}

 Let us begin by giving a characterization of the operator  $ {-\widetilde \Delta }^{\psi,g}_1$, that can be obtained with a similar proof to the one of Proposition \ref{characterisationI}.

\begin{proposition}\label{characterisationIK} Let $\psi \in L^\infty(\Gamma_N)$ be such that  $\Vert  \psi \Vert_\infty \le 1$ and  $g \in L^1(\Gamma_D)$.
     The following conditions are equivalent:
\\
(a) $(u,v) \in  {-\widetilde \Delta }^{\psi,g}_1$;
\\
(b)  $T_k(u) \in  BV(\Omega)\cap L^2(\Omega)$, $v\in L^2(\Omega)$, and there exists a vector field $\z \in X_2(\Omega)$ with  $\| \z \|_\infty \leq 1$  such that
\begin{equation}\label{divmedK}\begin{array}{c}\displaystyle-\mathrm{div}(\z) = v \ \hbox{ in } \  \mathcal{D}'(\Omega),\\[12pt]
\displaystyle [\z,\nu_\Omega] = \psi \ \hbox{ in } \Gamma_N,
\end{array}\end{equation}
and the following variational inequality holds true:
\begin{align}\label{eq:variationalequalitydirichletprK}
&\int_{\Omega} v(w-T_k(u)) \, dx \le \int_{\Omega} (\z,Dw) - \int_{\Omega} |DT_k(u)|
\\
& - \int_{\Gamma_D} [\z,\nu_\Omega] \, (w - T_k(g)) \, d\mathcal{H}^{N-1} - \int_{\Gamma_D} |T_k(u)- T_k(g)| \, d\mathcal{H}^{N-1}
\\
&
-  \int_{\Gamma_N} \psi (w- T_k(u))\, d\mathcal{H}^{N-1},
\end{align}
for every $w \in BV(\Omega) \cap L^2(\Omega)$;
\\
(c) $T_k(u) \in  BV(\Omega)\cap L^2(\Omega)$, $v\in L^2(\Omega)$, and there exists a vector field $\z \in X_2(\Omega)$ with  $\| \z \|_\infty \leq 1$  satisfying \eqref{divmed} and the following variational inequality holds true:
\begin{align}\label{eq:variationalequalitydirichletprNK}
&\int_{\Omega} v(w-T_k(u)) \, dx \le \int_{\Omega} (\z,Dw) - \int_{\Omega} |DT_k(u)|
\\
& + \int_{\Gamma_D}\vert w - T_k(g) \vert \, d\mathcal{H}^{N-1} - \int_{\Gamma_D} | T_k(u) - T_k(g)| \, d\mathcal{H}^{N-1}-  \int_{\Gamma_N} \psi (w- T_k(u))\, d\mathcal{H}^{N-1},
\end{align}
for every $w \in BV(\Omega) \cap L^2(\Omega)$;
\\
(d)  $T_k(u) \in  BV(\Omega)\cap L^2(\Omega)$, $v\in L^2(\Omega)$, and there exists a vector field $\z \in X_2(\Omega)$ with  $\| \z \|_\infty \leq 1$  satisfying \eqref{divmed}
and  the following variational equality holds true:
\begin{align}\label{eq:variationalequalitydirichletK}
&\int_{\Omega} v(w-T_k(u)) \, dx = \int_{\Omega} (\z,Dw) - \int_{\Omega} |DT_k(u)|
\\
& - \int_{\Gamma_D} [\z,\nu_\Omega] \, (w - T_k(g)) \, d\mathcal{H}^{N-1} - \int_{\Gamma_D} | T_k(u) - T_k(g)| \, d\mathcal{H}^{N-1}
\\
&
-  \int_{\Gamma_N} \psi ( w-T_k(u))\, d\mathcal{H}^{N-1},
\end{align}
for every $w \in BV(\Omega) \cap L^2(\Omega)$.
\end{proposition}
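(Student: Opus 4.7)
The plan is to mimic the proof of Proposition \ref{characterisationI}, adapting each step to work with the truncation $T_k(u)$ in place of $u$ itself, and to prove the chain of implications (a) $\Rightarrow$ (d) $\Rightarrow$ (b) $\Rightarrow$ (c) $\Rightarrow$ (b) $\Rightarrow$ (a).

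For (a) $\Rightarrow$ (d), I would fix $k > 0$, take $w \in BV(\Omega) \cap L^2(\Omega)$, multiply $v = -\mathrm{div}(\z)$ by $w - T_k(u)$, integrate over $\Omega$ and apply Green's formula \eqref{Green} twice—once with the test function $w$ and once with $T_k(u)$, which is legitimate since $T_k(u) \in BV(\Omega) \cap L^2(\Omega)$ by hypothesis and $\z \in X_2(\Omega)$. The boundary integral on $\partial\Omega$ is split into its $\Gamma_N$ and $\Gamma_D$ parts, using $[\z,\nu_\Omega] = \psi$ on $\Gamma_N$ and introducing $T_k(g)$ to rewrite the $\Gamma_D$ contribution. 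The key substitutions come from the hypothesis: $(\z,DT_k(u)) = |DT_k(u)|$ as measures and $[\z,\nu_\Omega] \in \text{sign}(T_k(g)-T_k(u))$ on $\Gamma_D$, which implies $[\z,\nu_\Omega](T_k(u)-T_k(g)) = -|T_k(u)-T_k(g)|$ a.e. on $\Gamma_D$. This produces the variational equality in (d).

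The implication (d) $\Rightarrow$ (b) is immediate (equality is an inequality). For (b) $\Rightarrow$ (c), since $\|\z\|_\infty \leq 1$ we have $\|[\z,\nu_\Omega]\|_\infty \leq 1$, hence $-[\z,\nu_\Omega](w-T_k(g)) \leq |w-T_k(g)|$ pointwise on $\Gamma_D$. For the reverse direction (c) $\Rightarrow$ (b), I mimic the trace-correction argument in the earlier proof: define $\varphi \in L^1(\partial\Omega)$ by $\varphi = T_k(g)$ on $\Gamma_D$ and $\varphi = 0$ on $\Gamma_N$, and invoke \cite[Theorem B.3, Lemma C.1]{ACMBook} to find $w_n \in BV(\Omega) \cap L^\infty(\Omega)$ with $w_n \to w$ in $L^2(\Omega)$ and $\mathrm{Tr}(w_n) = \varphi$. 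Plug $w_n$ into (c), use Green's formula to trade $\int (\z,Dw_n)$ for $-\int \mathrm{div}(\z) w_n + \int_{\Gamma_D}[\z,\nu_\Omega]T_k(g)\,d\mathcal{H}^{N-1}$, pass to the limit $n \to \infty$ using $w_n \to w$ in $L^2$ and $v = -\mathrm{div}(\z) \in L^2(\Omega)$, and then reverse Green's formula (now with $w$, whose trace is no longer prescribed) to recover (b).

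Finally, for (b) $\Rightarrow$ (a), I take $w = T_k(u) \in BV(\Omega) \cap L^2(\Omega)$ in \eqref{eq:variationalequalitydirichletprK}. The left-hand side vanishes, and after rearrangement one obtains
\begin{equation}
\int_\Omega |DT_k(u)| + \int_{\Gamma_D}|T_k(u)-T_k(g)|\,d\mathcal{H}^{N-1} \leq \int_\Omega (\z,DT_k(u)) + \int_{\Gamma_D}[\z,\nu_\Omega](T_k(g)-T_k(u))\,d\mathcal{H}^{N-1}.
\end{equation}
Since $\|\z\|_\infty \leq 1$ gives $\int (\z,DT_k(u)) \leq \int |DT_k(u)|$ via Proposition \ref{Anzelotti}, and $\|[\z,\nu_\Omega]\|_\infty \leq 1$ gives $\int_{\Gamma_D}[\z,\nu_\Omega](T_k(g)-T_k(u)) \leq \int_{\Gamma_D}|T_k(u)-T_k(g)|$, both inequalities must be equalities, which forces $(\z,DT_k(u)) = |DT_k(u)|$ as measures and $[\z,\nu_\Omega] \in \text{sign}(T_k(g)-T_k(u))$ on $\Gamma_D$. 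Since $k > 0$ was arbitrary, this yields (a).

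The main obstacle will be the (c) $\Rightarrow$ (b) step, because one has to carefully justify the passage to the limit in the $L^2$-duality pairing $\int v w_n\,dx$ and in the integrals $\int(\z,Dw_n)$ when $w_n$ does not converge strictly in $BV$. This is handled by the trick of using Green's formula to convert the problematic measure pairing into an integral against $\mathrm{div}(\z) \in L^2(\Omega)$, which behaves well under $L^2$-convergence; the $T_k$ truncation does not create extra difficulties here since the approximation is performed on the free variable $w$, not on $u$.
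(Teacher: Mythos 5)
Your proposal is correct and is exactly the adaptation the paper has in mind: the paper does not write out a proof of Proposition \ref{characterisationIK} but states that it ``can be obtained with a similar proof to the one of Proposition \ref{characterisationI}'', and your argument reproduces that proof step by step with $u$, $g$ replaced by $T_k(u)$, $T_k(g)$ (Green's formula applied to $T_k(u)\in BV(\Omega)\cap L^2(\Omega)$, the trace-correction with $\varphi=T_k(g)$ on $\Gamma_D$, and the forcing of equalities upon taking $w=T_k(u)$, for each $k>0$). No gaps; this matches the intended route.
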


 We have the following comparison result.
 \begin{lemma}\label{comparison1} Let $\psi_1, \psi_2 \in  L^\infty(\Gamma_N)$ be such that  $\Vert  \psi_i \Vert_\infty \leq 1$, $i=1,2$, and $ g_1,g_2 \in L^1(\Gamma_D)$. Let $u_1, u_2$ satisfying
$$   f_i \in u_i - \widetilde \Delta^{\psi_i,g_i}_1(u_i), \quad i=1,2, \quad f_i \in L^2(\Omega).$$
Then, if $f_1 \leq f_2$ $\mathcal{L}^{N}$-a.e. in $\Omega$, $g_1 \leq g_2$ $\mathcal{H}^{N-1}$-a.e. in $\Gamma_D$, and $\psi_1 \leq \psi_2$ $\mathcal{H}^{N-1}$-a.e. in~$\Gamma_N$, we have $u_1 \leq u_2$ $\mathcal{L}^N$-a.e. in $\Omega$.
\end{lemma}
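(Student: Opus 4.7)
My plan is to mimic the proof of complete accretivity used in Lemma~\ref{CAoperator}, but adapted to the truncated setting of Proposition~\ref{characterisationIK}. For each $i=1,2$, invoke Proposition~\ref{characterisationIK}(d) to extract a vector field $\z_i\in X_2(\Omega)$, $\Vert \z_i\Vert_\infty\le 1$, satisfying $-\divi\z_i=f_i-u_i$ in $\mathcal{D}'(\Omega)$, $[\z_i,\nu_\Omega]=\psi_i$ on $\Gamma_N$, $(\z_i,DT_k(u_i))=\vert DT_k(u_i)\vert$ as measures, and $[\z_i,\nu_\Omega]\in\hbox{sign}(T_k(g_i)-T_k(u_i))$ on $\Gamma_D$, for every $k>0$. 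Subtracting the divergence identities gives
$$-\divi(\z_1-\z_2)=(f_1-f_2)-(u_1-u_2)\quad\hbox{in }\mathcal{D}'(\Omega).$$

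Fix $k>0$ and a Lipschitz non-decreasing approximation $h_\epsilon:[0,\infty)\to[0,1]$ of $\chi_{(0,\infty)}$ with $h_\epsilon(0)=0$. Use $w_\epsilon:=h_\epsilon\big((T_k(u_1)-T_k(u_2))^+\big)\in BV(\Omega)\cap L^\infty(\Omega)$ as a test function. Applying Anzellotti's Green formula~\eqref{Green} yields
$$\int_\Omega(\z_1-\z_2,Dw_\epsilon)=\int_\Omega(f_1-f_2)w_\epsilon\,dx-\int_\Omega(u_1-u_2)w_\epsilon\,dx+\int_{\partial\Omega}w_\epsilon\big([\z_1,\nu_\Omega]-[\z_2,\nu_\Omega]\big)\,d\mathcal{H}^{N-1}.$$
I will then establish two sign claims. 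First, the bulk measure is non-negative: by the chain rule of Proposition~\ref{prop:chain-rule} combined with the identities $(\z_i,DT_k(u_i))=\vert DT_k(u_i)\vert$ and the upper bound $(\z_j,DT_k(u_i))\le\vert DT_k(u_i)\vert$ from Proposition~\ref{Anzelotti}, one obtains $(\z_1-\z_2,D(T_k(u_1)-T_k(u_2)))\ge 0$, and then $(\z_1-\z_2,Dw_\epsilon)\ge 0$ since $h_\epsilon$ is non-decreasing and $w_\epsilon$ only moves on the set where $T_k(u_1)>T_k(u_2)$. Second, the boundary integral is non-positive: on $\Gamma_N$ we have $[\z_1,\nu_\Omega]-[\z_2,\nu_\Omega]=\psi_1-\psi_2\le 0$ with $w_\epsilon\ge 0$; on $\Gamma_D$, where $w_\epsilon>0$ forces $T_k(u_1)>T_k(u_2)$, a short case analysis on the relative position of $T_k(u_i)$ and $T_k(g_i)$ (using $T_k(g_1)\le T_k(g_2)$) gives $[\z_1,\nu_\Omega]\le[\z_2,\nu_\Omega]$, exactly as in the five-case inspection carried out in Lemma~\ref{CAoperator}.

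Combining these, one obtains $\int(f_1-f_2)w_\epsilon\,dx\ge\int(u_1-u_2)w_\epsilon\,dx$. Letting $\epsilon\downarrow 0$, dominated convergence yields $w_\epsilon\to\chi_{\{T_k(u_1)>T_k(u_2)\}}$, so
$$0\ge\int_{\{T_k(u_1)>T_k(u_2)\}}(f_2-f_1)\,dx\ge\int_{\{T_k(u_1)>T_k(u_2)\}}(u_1-u_2)\,dx\ge 0,$$
the last inequality because $T_k(u_1)>T_k(u_2)$ implies $u_1>u_2$. Hence $(u_1-u_2)\chi_{\{T_k(u_1)>T_k(u_2)\}}=0$ a.e.\ in $\Omega$. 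As $k\to\infty$ the sets $\{T_k(u_1)>T_k(u_2)\}$ increase to $\{u_1>u_2\}$, so $(u_1-u_2)^+=0$ a.e., which is the desired comparison.

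The main obstacle will be the rigorous measure-theoretic justification of the bulk non-negativity claim: the test function $w_\epsilon$ depends on two $BV$ functions, so the pairing $(\z_1-\z_2,Dw_\epsilon)$ must be decomposed by first writing $\phi:=(T_k(u_1)-T_k(u_2))^+\in BV(\Omega)$, applying Proposition~\ref{prop:chain-rule} to $h_\epsilon\circ\phi$ to bring the derivative onto $\phi$, and then handling the singular part of $D\phi$ concentrated on the contact set $\{T_k(u_1)=T_k(u_2)\}$, where the linear decomposition $D\phi=\chi_{\{T_k(u_1)>T_k(u_2)\}}D(T_k(u_1)-T_k(u_2))$ needs the usual careful treatment available for truncations of $BV$ functions.
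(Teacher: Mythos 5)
Your argument is correct, but it follows a genuinely different route from the paper's. The paper never invokes Green's formula on an approximate indicator function: instead it plugs the specific test functions $w=T_k(u_1)-(T_k(u_1)-T_k(u_2))^+$ (for $i=1$) and $w=T_k(u_2)+(T_k(u_1)-T_k(u_2))^+$ (for $i=2$) directly into the variational inequalities of Proposition~\ref{characterisationIK}, adds the two resulting inequalities, uses $[\z_i,\nu_\Omega]\in\hbox{sign}(T_k(g_i)-T_k(u_i))$ to reduce the $\Gamma_D$ terms to $\int_{\Gamma_D}[\z_i,\nu_\Omega](T_k(u_1)-T_k(u_2))^+$, and concludes from the sign hypotheses together with $\int_\Omega(\z_1-\z_2,D(T_k(u_1)-T_k(u_2))^+)\ge 0$ that $\int_\Omega(u_1-u_2)(u_1-u_2)^+\,dx\le 0$ after $k\to\infty$. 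Your Stampacchia-type scheme with $w_\epsilon=h_\epsilon((T_k(u_1)-T_k(u_2))^+)$ reaches the same conclusion by showing the sets $\{T_k(u_1)>T_k(u_2)\}$ are null; the boundary case analysis you sketch is sound (if $T_k(u_1)>T_k(g_1)$ then $[\z_1,\nu_\Omega]=-1$, otherwise $T_k(u_2)<T_k(g_2)$ forces $[\z_2,\nu_\Omega]=1$, so $[\z_1,\nu_\Omega]\le[\z_2,\nu_\Omega]$ wherever $T_k(u_1)>T_k(u_2)$ on $\Gamma_D$), and the bulk positivity follows from Propositions~\ref{Anzelotti} and~\ref{prop:chain-rule} exactly as you indicate. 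What the paper's route buys is economy: by staying inside the variational-inequality formulation it needs the chain rule only once, for the positive part, and it never has to discuss traces of the composite $h_\epsilon(\phi)$ nor pass to the limit in $\epsilon$. What your route buys is transparency about where each sign enters, and it localizes the conclusion to the level set $\{T_k(u_1)>T_k(u_2)\}$ rather than to a global integral inequality. Two details you should make explicit to close the argument: (i) the trace on $\partial\Omega$ of $h_\epsilon((T_k(u_1)-T_k(u_2))^+)$ is $h_\epsilon((\mathrm{Tr}\,T_k(u_1)-\mathrm{Tr}\,T_k(u_2))^+)$, which is what your $\Gamma_D$ case analysis tacitly uses; and (ii) the positivity of $(\z_1-\z_2,Dw_\epsilon)$ is obtained by two successive applications of Proposition~\ref{prop:chain-rule} (first to $r\mapsto r^+$, then to $h_\epsilon$) together with the absolute continuity of $|Dw_\epsilon|$ with respect to $|D(T_k(u_1)-T_k(u_2))|$ — the explicit decomposition of the singular part of $D\phi$ on the contact set that you worry about is not actually needed once the chain rule is phrased in terms of the density $\theta$.
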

\begin{proof} By Proposition \ref{characterisationIK} we have there exist a vector field $\z_i \in X_2(\Omega)$ with  $\| \z_i \|_\infty \leq 1$  such that
$$\begin{array}{c}\displaystyle-\mathrm{div}(\z_i) =  f_i - u_i \ \hbox{ in } \  \mathcal{D}'(\Omega),  \ \  i =1,2,\\[8pt]
\displaystyle [\z_i,\nu_\Omega] = \psi_i \ \hbox{ in } \Gamma_N,
\end{array}$$
and the following variational inequality holds true:
\begin{align}\label{eq:variationalequalitydirichletprNN}
&\int_{\Omega} ( f_i - u_i)(w-T_k(u_i)) \, dx \le \int_{\Omega} (\z_i,Dw) - \int_{\Omega} |DT_k(u_i)|
\\
&- \int_{\Gamma_D} [\z_i,\nu_\Omega] \, (w - T_k(g_i)) \, d\mathcal{H}^{N-1} - \int_{\Gamma_D} |T_k(u_i) - T_k(g_i))| \, d\mathcal{H}^{N-1}
\\
&
-  \int_{\Gamma_N} \psi_i (w-T_k(u_i)), d\mathcal{H}^{N-1},
\end{align}
for every $w \in BV(\Omega) \cap L^2(\Omega)$. Taking in \eqref{eq:variationalequalitydirichletprNN} $w := T_k(u_1)- (T_k(u_1) - T_k(u_2))^+$  for $i=1$, and $w:= T_k(u_2) + (T_k(u_1) - T_k(u_2))^+$ for $i=2$, we obtain
\begin{equation}\label{eq:variationalequalitydirichletprNNN1}
\begin{array}{lll}
\displaystyle-\int_{\Omega} ( f_1 - u_1)(T_k(u_1) - T_k(u_2))^+ \, dx \le -\int_{\Omega} (\z_1,D(T_k(u_1) - T_k(u_2))^+) \\[12pt]
- \displaystyle \int_{\Gamma_D} [\z_1,\nu_\Omega] \, (T_k(u_1)- (T_k(u_1) - T_k(u_2))^+ -  T_k(g_1)) \, d\mathcal{H}^{N-1} \\[12pt] - \displaystyle\int_{\Gamma_D} | T_k(u_1)- T_k(g_1)| \, d\mathcal{H}^{N-1}+ \int_{\Gamma_N} \psi_1(T_k(u_1) - T_k(u_2))^+ \, d\mathcal{H}^{N-1},\end{array}
\end{equation}
and
\begin{equation}\label{eq:variationalequalitydirichletprNNN2}\begin{array}{lll}
\displaystyle\int_{\Omega} ( f_2 - u_2)(T_k(u_1) - T_k(u_2))^+ \, dx \le \int_{\Omega} (\z_2,D(T_k(u_1) - T_k(u_2))^+)
\\[12pt]  \displaystyle - \int_{\Gamma_D} [\z_2,\nu_\Omega] \, (T_k(u_2) + (T_k(u_1) - T_k(u_2))^+ -  T_k(g_2)) \, d\mathcal{H}^{N-1} \\[12pt] - \displaystyle\int_{\Gamma_D} | T_k(u_2)-  T_k(g_2)| \, d\mathcal{H}^{N-1}- \int_{\Gamma_N} \psi_2 (T_k(u_1) - T_k(u_2))^+ \, d\mathcal{H}^{N-1}.\end{array}
\end{equation}
We have,
$$- \displaystyle \int_{\Gamma_D} [\z_1,\nu_\Omega] \, (T_k(u_1)- (T_k(u_1) - T_k(u_2))^+ -  T_k(g_1)) \, d\mathcal{H}^{N-1}   - \displaystyle\int_{\Gamma_D} | T_k(u_1)-  T_k(g_1)| \, d\mathcal{H}^{N-1}$$ $$ =  \int_{\Gamma_D} [\z_1,\nu_\Omega] \, (T_k(u_1) - T_k(u_2))^+ \, d\mathcal{H}^{N-1} - \int_{\Gamma_D} [\z_1,\nu_\Omega] \, (T_k(u_1)- T_k(g_1)) \, d\mathcal{H}^{N-1} $$ $$- \displaystyle\int_{\Gamma_D} | T_k(u_1)- T_k( g_1)| \, d\mathcal{H}^{N-1}.$$
Now, since $[\z_1,\nu_\Omega] \in {\rm sign}(T_k(u_1)- T_k( g_1))$, we have
$$- \displaystyle \int_{\Gamma_D} [\z_1,\nu_\Omega] \, (T_k(u_1)- (T_k(u_1) - T_k(u_2))^+ -  T_k(g_1)) \, d\mathcal{H}^{N-1}   - \displaystyle\int_{\Gamma_D} | T_k(u_1)-  T_k(g_1)| \, d\mathcal{H}^{N-1} $$ $$= \int_{\Gamma_D} [\z_1,\nu_\Omega] \, (T_k(u_1) - T_k(u_2))^+ \, d\mathcal{H}^{N-1}.$$
Similarly
$$ \displaystyle - \int_{\Gamma_D} [\z_2,\nu_\Omega] \, (T_k(u_2) + (T_k(u_1) - T_k(u_2))^+ -  T_k(g_2)) \, d\mathcal{H}^{N-1} $$ $$ = \int_{\Gamma_D} [\z_2,\nu_\Omega] \, (T_k(u_1) - T_k(u_2))^+ \, d\mathcal{H}^{N-1}. $$
Then, adding \eqref{eq:variationalequalitydirichletprNNN1} and \eqref{eq:variationalequalitydirichletprNNN2}, we get
$$\displaystyle\int_{\Omega} ( u_1 -u_2)(T_k(u_1)- (T_k(u_1) - T_k(u_2))^+  \, dx  $$ $$\le \int_{\Omega} (f_1 -f_2)(T_k(u_1)- (T_k(u_1) - T_k(u_2))^+  \, dx  -\int_{\Omega} (\z_1- \z_2,D(T_k(u_1)- (T_k(u_1) - T_k(u_2))^+ ) $$ $$- \int_{\Gamma_D} ([\z_1,\nu_\Omega] -[\z_2,\nu_\Omega]) \, (T_k(u_1)- (T_k(u_1) - T_k(u_2))^+  \, d\mathcal{H}^{N-1}$$ $$+ \int_{\Gamma_N} (\psi_1 - \psi_2)(T_k(u_1)- (T_k(u_1) - T_k(u_2))^+  \, d\mathcal{H}^{N-1}.$$
On the other hand, by Proposition \ref{prop:chain-rule}, we have
$$\int_{\Omega} (\z_1- \z_2,D((T_k(u_1)- (T_k(u_1) - T_k(u_2))^+ ) = \int_{\Omega} (\z_1- \z_2,D(T_k(u_1) - T_k(u_2)) \geq 0.$$
And,  by the assumptions,
$$\int_{\Gamma_D} ([\z_1,\nu_\Omega] -[\z_2,\nu_\Omega]) \, (u_1 - u_2)^+ \, d\mathcal{H}^{N-1} \geq 0,$$
$$   \int_{\Omega} (f_1 -f_2) (T_k(u_1) - T_k(u_2))^+ \, dx\le 0$$
and
$$ \int_{\Gamma_N} (\psi_1 - \psi_2) (T_k(u_1) - T_k(u_2))^+ \, d\mathcal{H}^{N-1} \leq 0.$$
Consequently, letting $k \to \infty$, we have
$$\displaystyle\int_{\Omega} ( u_1 -u_2)(u_1 - u_2)^+ \, dx \leq 0,$$
therefore $u_1 \leq u_2$ $\mathcal{L}^N$-a.e. in $\Omega$.
\end{proof}

 From the above proof we can obtain that $ -\widetilde \Delta^{\psi,g}_1$ is accretive.

\begin{lemma}\label{general} Let $\psi \in L^\infty(\Gamma_N)$ be such that  $\Vert  \psi \Vert_\infty \leq 1$ and  $g \in L^1(\Gamma_D)$.  We have that the operator  $ -\widetilde \Delta^{\psi,g}_1$ is $m$-completely accretive   (hence maximal monotone) in $L^2(\Omega)$.
\end{lemma}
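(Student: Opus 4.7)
The plan is to verify two separate properties: (i) complete accretivity of $-\widetilde{\Delta}^{\psi,g}_1$ in $L^2(\Omega)$, and (ii) the range condition $R(I + (-\widetilde{\Delta}^{\psi,g}_1)) = L^2(\Omega)$. Together with the characterization used in Lemma~\ref{CAoperator} (Proposition~\ref{prop:completely-accretive} of the paper), these two facts give $m$-complete accretivity, and in particular maximal monotonicity.

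For (i), I would adapt the test-function technique from the proof of Lemma~\ref{comparison1}, working with truncations to remain within the $BV$ class. Given $T \in P_{0}$, $k > 0$, and $(u_i, v_i) \in -\widetilde{\Delta}^{\psi,g}_1$ with associated vector fields $\z_i$, I would insert $w = T_k(u_1) - T(T_k(u_1) - T_k(u_2))$ into the variational inequality of Proposition~\ref{characterisationIK}(b) for $u_1$, and $w = T_k(u_2) + T(T_k(u_1) - T_k(u_2))$ into the one for $u_2$. Adding the resulting inequalities, the $\Gamma_N$ boundary integrals cancel (both $[\z_i, \nu_\Omega]$ equal $\psi$), the interior pairing $\int_\Omega (\z_1 - \z_2, D[T(T_k(u_1) - T_k(u_2))])$ is nonnegative by Proposition~\ref{prop:chain-rule} combined with $\theta(\z_1 - \z_2, D(T_k(u_1) - T_k(u_2)), \cdot) \ge 0$ (exactly as in the proof of Lemma~\ref{CAoperator}), and the $\Gamma_D$ contribution is nonnegative by the same case-by-case sign analysis used there. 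Letting $k \to \infty$ yields $\int_\Omega T(u_1 - u_2)(v_1 - v_2)\,dx \ge 0$ for every $T \in P_0$.

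For (ii), I would approximate $\psi$ by $\psi_n := (1 - \tfrac{1}{n})\psi$, so that $\|\psi_n\|_\infty < 1$. By Theorem~\ref{ThConpAccre} applied to this regularized boundary datum, for any $f \in L^2(\Omega)$ there exists a unique $u_n \in BV(\Omega) \cap L^2(\Omega)$ with $f \in u_n + (-\Delta^{\psi_n, g}_1) u_n$, together with an associated $\z_n \in X_2(\Omega)$, $\|\z_n\|_\infty \le 1$, $[\z_n, \nu_\Omega] = \psi_n$ on $\Gamma_N$, $[\z_n, \nu_\Omega] \in \mathrm{sign}(g - u_n)$ on $\Gamma_D$, and $-\mathrm{div}\,\z_n = f - u_n$. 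Proposition~\ref{bounded1} gives the uniform $L^2$-bound $\|u_n\|_2 \le \bigl\|\,|f| + C_\Omega\,\bigr\|_2$, and for each fixed $k > 0$ testing the equation against $T_k(u_n)$, using the Green formula and Theorem~\ref{pepesab}, yields a uniform bound on $|DT_k(u_n)|(\Omega)$. Extracting subsequences, $u_n \to u$ in $L^2(\Omega)$, $T_k(u_n) \to T_k(u)$ with $T_k(u) \in BV(\Omega)$, and $\z_n \rightharpoonup \z$ weakly-$\ast$ in $L^\infty(\Omega;\R^N)$ with $\|\z\|_\infty \le 1$, $-\mathrm{div}\,\z = f - u$, and $[\z, \nu_\Omega] = \psi$ on $\Gamma_N$.

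The main obstacle will be the passage to the limit in the remaining identities $(\z, DT_k(u)) = |DT_k(u)|$ as measures and $[\z, \nu_\Omega] \in \mathrm{sign}(T_k(g) - T_k(u))$ on $\Gamma_D$ for every $k > 0$. I would obtain these by multiplying the equation $-\mathrm{div}\,\z_n = f - u_n$ by $T_k(u_n)$, applying the Anzellotti Green formula~\eqref{Green}, and passing to the liminf using the $BV$ lower semicontinuity of Proposition~\ref{ModicaP}; the resulting inequality, combined with the two trivial bounds $(\z, DT_k(u)) \le |DT_k(u)|$ and $[\z, \nu_\Omega]\,(T_k(g) - T_k(u)) \le |T_k(g) - T_k(u)|$ forced by $\|\z\|_\infty \le 1$, must be equalities $|DT_k(u)|$-a.e.\ and $\mathcal{H}^{N-1}\res\Gamma_D$-a.e., respectively. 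This yields $(u, f - u) \in -\widetilde{\Delta}^{\psi,g}_1$ and completes the verification of the range condition.
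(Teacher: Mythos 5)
Your part (i) is essentially the paper's own argument (the chain rule for $\theta(\z_1-\z_2,D(\cdot),x)$, Green's formula applied to the truncated difference, and the case-by-case sign analysis on $\Gamma_D$), so there is nothing to object to there. The genuine gap is in part (ii). From the uniform bound $\|u_n\|_2\le\bigl\|\,|f|+C_\Omega\,\bigr\|_2$ you can only extract a \emph{weakly} convergent subsequence in $L^2(\Omega)$; your assertion that ``$u_n\to u$ in $L^2(\Omega)$'' and that $T_k(u_n)\to T_k(u)$ with the \emph{same} limit $u$ is unsupported. The paper's proof is built precisely to manufacture this strong convergence: it approximates $\psi$ by a sequence $\psi_n$ that is \emph{monotone} in $n$ (Step 1 uses $\psi_n=T_{1-\frac1n}(\psi)$ under the auxiliary hypothesis $\psi\ge a>-1$, and Step 2 reduces the general case to Step 1 via $\psi_n=\sup\{-1+\frac1n,\psi\}$), so that the comparison principle of Lemma~\ref{comparison1} orders the $u_n$; a monotone, $L^2$-bounded sequence converges strongly in $L^2(\Omega)$, and its traces converge monotonically on $\partial\Omega$, which is what makes the passage to the limit in the interior and boundary terms possible. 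Your choice $\psi_n=(1-\frac1n)\psi$ is not monotone in $n$ unless $\psi$ has a sign, so the comparison principle gives you no ordering and the convergence scheme collapses.

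A second, related defect: the ``uniform bound on $|DT_k(u_n)|(\Omega)$'' that you propose to obtain by testing against $T_k(u_n)$ and invoking Theorem~\ref{pepesab} does not close. After Green's formula the boundary contribution is controlled by $\int_{\partial\Omega}|T_k(u_n)|\,d\mathcal{H}^{N-1}\le |DT_k(u_n)|(\Omega)+C_\Omega\|T_k(u_n)\|_{L^1(\Omega)}$, with constant exactly $1$ in front of the total variation; since $\|\psi_n\|_\infty\to 1$ (and $|[\z_n,\nu_\Omega]|$ may equal $1$ on $\Gamma_D$ in any case), the coefficient that would multiply $|DT_k(u_n)|(\Omega)$ on the left degenerates and the estimate reduces to $0\le C$. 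This is the same obstruction that destroys lower semicontinuity when $\|\psi\|_\infty=1$, as the example in Section 3 shows. The paper never needs such a uniform $BV$ bound: because $-\int_\Omega|DT_k(u_n)|$ enters the variational inequality with a favorable sign, lower semicontinuity of the total variation under the strong $L^1$ convergence of $T_k(u_n)$ (which it gets from the monotone $L^2$ convergence) suffices. To repair your argument you would either have to switch to the paper's monotone approximation, or supply an independent compactness mechanism yielding a.e.\ convergence of $u_n$ and convergence of the boundary traces; neither is provided in the proposal.
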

\begin{proof}   Let us first see that $ -\widetilde \Delta^{\psi,g}_1$ is completely accretive. By Proposition \ref{prop:completely-accretive}, we need to show that
\begin{equation}\label{CAcret} \int_{\Omega} q(u_1-u_2)(v_1-v_2)\, dx \geq 0 \end{equation}
for every $q\in P_{0}$ and every $(u_i,v_i) \in  -\widetilde \Delta^{\psi_i,g_i}_1$,   $i=1,2$ .

Now, if $(u_i, v_i) \in  -\widetilde \Delta^{\psi_i,g_i}_1$,   $i =1,2$, then  $T_k(u_i) \in  BV(\Omega)$ for all $k>0$ and there exists $\z_i \in X_2(\Omega)$ with $\Vert \z_i \Vert_\infty \leq 1$ satisfying:
\begin{equation}\label{E1FundNCANew}
\left\{ \begin{array}{llll} - \divi \z_i = v_i \quad &\hbox{in} \ \ \Omega, \\[12pt] (\z_i,DT_k(u_i)) = \vert DT_k(u_i) \vert \quad &\hbox{as measures},\\[12pt] [\z_i,\nu_\Omega] = \psi \quad &\hbox{in} \ \ \Gamma_N , \\[12pt] [\z_i,\nu_\Omega] \in \hbox{sign}(T_k(g)-T_k(u_i)) \quad &\hbox{in} \ \ \Gamma_D.   \end{array}\right.
\end{equation}
From here, for every Borel set $B \subset \Omega$, we have
$$\int_B (\z_1 - \z_2, DT_k(u_1) - DT_k(u_2)) $$ $$=  \int_B  \vert DT_k(u_1)\vert - \int_B (\z_1,DT_k(u_2)) + \int_B  \vert DT_k(u_2)\vert  - \int_B (\z_2,DT_k(u_1)) \geq 0.$$
Hence, by \eqref{410},
$$\begin{array}{c}\displaystyle
\int_B \theta(\z_1 - \z_2, D(T_k(u_1) T_k(u_2)),x) d \vert D(T_k(u_1) - T_k(u_2) \vert \\[12pt]
\displaystyle = \int_B (\z_1 - \z_2, DT_k(u_1) - DT_k(u_2)) \geq 0.
\end{array}$$
Thus
$$\theta(\z_1 - \z_2, D(T_k(u_1) T_k(u_2)),x) \geq 0 \quad  \vert D(T_k(u_1) - T_k(u_2) \vert-\hbox{a.e on} \ \Omega.$$
Applying Proposition \ref{prop:chain-rule} we get that
$$\theta(\z_1 - \z_2, D(T_k(u_1) T_k(u_2)),x) =  \theta(\z_1 - \z_2, DT(T_k(u_1) T_k(u_2)),x)$$
a.e. with respect to the measures $\vert D(T_k(u_1) T_k(u_2) \vert$ and $\vert  D(T_k(u_1) T_k(u_2)) \vert.$ We then conclude that
\begin{equation}\label{positivity1}\theta(\z_1 - \z_2, D(T_k(u_1) T_k(u_2)),x) \geq 0 \quad \vert D(T_k(u_1) T_k(u_2) \vert-\hbox{a.e. on} \ \ \Omega.\end{equation}

Applying Green's formula, and having in mind \eqref{positivity1}, we have,  for  $q\in P_{0}$,
$$\begin{array}{c}
\displaystyle \int_{\Omega} q(T_k(u_1) - T_k(u_2))(v_1 - v_2)\, dx = \int_{\Omega} {\rm div}(\z_2 - \z_1) q(T_k(u_1) - T_k(u_2)) \, dx
\\[12pt] \displaystyle= \int_\Omega (\z_1 - \z_2, Dq(T_k(u_1) - T_k(u_2)) + \int_{\partial \Omega} [\z_1 - \z_2, \nu_\Omega]  q(T_k(u_1) - T_k(u_2)) \,  d\mathcal{H}^{N-1} =  \\[12pt]
 \displaystyle
\geq \int_{\partial \Omega} [\z_1 - \z_2, \nu_\Omega]  Tq(T_k(u_1) - T_k(u_2)) \,  d\mathcal{H}^{N-1}
\\[12pt] \displaystyle
= \int_{\Gamma_D} [\z_1 - \z_2, \nu_\Omega]  q(T_k(u_1) - T_k(u_2)) \,  d\mathcal{H}^{N-1}.
\end{array}$$
Now, considering several cases dependding on the values of $u_1$ and $u_2$ at the points of $\Gamma_D$, as we did in the proof of Theorem \ref{CHARAT}, we have that
$$\int_{\Gamma_D} [\z_1 - \z_2, \nu_\Omega]  q(T_k(u_1) - T_k(u_2)) \,  d\mathcal{H}^{N-1} \geq 0.$$
Consequently,
$$ \int_{\Omega} q(T_k(u_1) - T_k(u_2))(v_1 - v_2)\, dx \geq 0 \quad \hbox{for all} \ k>0.$$
Then, taking limits as $k \to \infty$, we get the inequality \eqref{CAcret}, and therefore $\widetilde{\mathcal{A}}_{\psi,g}$ is completely accretive.

  To prove that $ -\widetilde \Delta^{\psi,g}_1$ is $m$-completely accretive in $L^2(\Omega)$,  by   Minty Theorem (Theorem \ref{Minty}), we only need to prove that the following range condition holds:
\begin{equation}\label{CR1New}
\hbox{given} \ f \in L^2(\Omega), \ \ \exists \, u \in D( -\widetilde \Delta^{\psi,g}_1) \ \ \hbox{such that} \ \  f \in u  -\widetilde \Delta^{\psi,g}_1(u).
\end{equation}

 \noindent{\bf Step 1.} Suppose first that there exists $a \in \R$ such that $-1 < a \leq \psi(x)$ for all $x \in \Gamma_N$. For every $n \in \N$, $n\ge 2$, let $\psi_n:= T_{1 - \frac{1}{n}}(\psi)$. Then, since $\Vert \psi_n \Vert_\infty < 1$,  by Theorem~\ref{ThConpAccre} (and Proposition~\ref{characterisationI}) there exists $u_n \in BV(\Omega)$ and $\z_n \in X_2(\Omega)$ with  $\| \z_n \|_\infty \leq 1$  such that
$$\begin{array}{c}\displaystyle-\mathrm{div}(\z_n) = f -u_n \ \hbox{ in } \  \mathcal{D}'(\Omega),\\[12pt]
\displaystyle [\z_n,\nu_\Omega] = \psi_n \ \hbox{ in } \Gamma_N,
\end{array}$$
and the following variational inequality holds true:
\begin{align}\label{newdefeq:variationalequalitydirichletprNew}
&\int_{\Omega} (f -u_n)(w-u_n) \, dx \le \int_{\Omega} (\z_n,Dw) - \int_{\Omega} |Du_n|
\\
&\qquad\qquad - \int_{\Gamma_D} [\z_n,\nu_\Omega] \, (w - g) \, d\mathcal{H}^{N-1} - \int_{\Gamma_D} | u_n - g| \, d\mathcal{H}^{N-1}- \int_{\Gamma_N} \psi_n (w-u_n)\, d\mathcal{H}^{N-1},
\end{align}
for every $w \in BV(\Omega) \cap L^2(\Omega)$.

Since $\Vert [\z_n,\nu_\Omega] \Vert_\infty \leq \| \z_n \|_\infty \leq 1$ for all $n \in \N$, we can assume, taking a subsequence if required,  that \begin{equation}\label{converg1}
\z_n  \rightharpoonup  \z \quad \hbox{weakly$^*$ in} \ L^\infty(\Omega) \quad \hbox{and} \quad [\z_n,\nu_\Omega] \rightharpoonup \z \quad \hbox{weakly$^*$ in} \  L^\infty(\partial \Omega).
\end{equation}

Then, we get
$$\displaystyle-\mathrm{div}(\z) = f -u \ \hbox{ in } \  \mathcal{D}'(\Omega).$$

 By Proposition~\ref{bounded1}, we have $\{ u_n \}$ is bounded in $L^2(\Omega)$, so we can assume that
\begin{equation}\label{converg2}
u_n \rightharpoonup u \quad \hbox{weakly in} \ L^2(\Omega).
\end{equation}
Now, by  Lemma~\ref{comparison1},  we have $u_n \leq u_{n+1}$, thus
\begin{equation}\label{converg3}
u_n \to u \quad \hbox{in} \ L^2(\Omega),
\end{equation}
hence by Proposition~\ref{bounded1} again, we have
\begin{equation}\label{est001}
\left\|u\right\|_2\le \left\|\,|f|+C_\Omega\,\right\|_2.
\end{equation}

  For $\widetilde{w}\in BV(\Omega)\cap L^2(\Omega)$,  taking $w=u_n+\widetilde{w}-T_k(u_n)$ in \eqref{newdefeq:variationalequalitydirichletprNew}, we have
$$ \begin{array}{l}\displaystyle
\int_{\Omega} (f -u_n)( \tilde{w}-T_k(u_n)) \, dx \le \int_{\Omega} (\z_n,D \tilde{w}) - \int_{\Omega} |DT_k(u_n)|
\\[12pt]
\displaystyle - \int_{\Gamma_D} [\z_n,\nu_\Omega] \, (\tilde{w}-  T_k(g)-(T_k(u_n) -   T_k(g))) \, d\mathcal{H}^{N-1} - \int_{\Gamma_N} \psi_n ( \tilde{w}-T_k(u_n))\, d\mathcal{H}^{N-1}\\[12pt] \displaystyle \le \int_{\Omega} (\z_n,D\tilde{w}) - \int_{\Omega} |DT_k(u_n)|
\\[12pt]
\displaystyle  +\int_{\Gamma_D} |\widetilde{w}-  T_k(g)|  \, d\mathcal{H}^{N-1} - \int_{\Gamma_D} |T_k(g)-T_k(u_n)| \, d\mathcal{H}^{N-1} - \int_{\Gamma_N} \psi_n ( \tilde{w}-T_k(u_n))\, d\mathcal{H}^{N-1}.
\end{array}$$
Hence, taking limists in $n$ we get that (observe that monotonicity of $u_n$ also holds on the boundary)
$$\begin{array}{l}\displaystyle
\int_{\Omega} (f -u)( \tilde{w}-T_k(u)) \, dx \le \int_{\Omega} (\z,D\tilde{w}) - \int_{\Omega} |DT_k(u)|
\\[12pt]
 \displaystyle +\int_{\Gamma_D} |\widetilde{w}- T_k(g)|  \, d\mathcal{H}^{N-1} - \int_{\Gamma_D} |T_k(g)-T_k(u)| \, d\mathcal{H}^{N-1} - \int_{\Gamma_N} \psi (\tilde{w}-T_k(u))\, d\mathcal{H}^{N-1}.
\end{array}$$
Therefore,  by Proposition \ref{characterisationIK}, \eqref{CR1New} holds.

\noindent{\bf Step 2}.   For a general $\psi$, if we take $\psi_n=\sup\left\{-1+\frac{1}{n},\psi\right\}$, we have, for all $n\in \N$, $n\ge 2$,  $\psi_n$ verifies the assumption of Step 2. Thus,
\begin{equation}\label{CR1Newn}
\hbox{given} \ f \in L^2(\Omega), \ \ \exists \, u_n \in D(\widetilde{\mathcal{A}}_{\psi_n,g}) \ \ \hbox{such that} \ \  f \in u_n + \widetilde{\mathcal{A}}_{\psi_n,g}(u_n).
\end{equation}
Then, working as in the Step 2,  we can show that the range condition   also holds in this case.
\end{proof}

\begin{remark}\label{enlinfbutpost}\rm
	For $||\psi||_\infty\le 1$, if $f\in L^\infty(\Omega)$ then,   by  Proposition~\ref{bounded1}, the solution $u$ of~\eqref{CR1New} satisfies
	$$\Vert u \Vert_{\infty} \leq \Vert  f   \Vert_\infty + C_\Omega.$$
	Thus, we also have the same estimate for the function $u_n$ obtained in the   Step 2 of the proof of the previous Lemma~\ref{general}. Consequently, we have that if $f\in L^\infty(\Omega)$ and $u$ is solution of $f \in u  -\widetilde \Delta^{\psi,g}_1(u)$, then
	\begin{equation}\label{inftBound}
		\Vert u \Vert_{\infty} \leq \Vert  f   \Vert_\infty + C_\Omega.
	\end{equation}
	Note that with a similar reasoning we get that if $f\in L^\infty(\Omega)$ and $u$ is solution of $f \in u - \lambda \widetilde \Delta^{\psi,g}_1(u)$, then
	\begin{equation}\label{inftBoundN}
		\hbox{ $\Vert u \Vert_{\infty} \leq \Vert  f   \Vert_\infty + \lambda C_\Omega$ \ for  any $\lambda >0$,}
	\end{equation}
	and if $f\in L^2(\Omega)$ and $u$ is solution of $f \in u - \lambda \widetilde \Delta^{\psi,g}_1(u)$, then \samepage
	\begin{equation}\label{inftBound2}
		\hbox{ $\Vert u \Vert_{2} \leq \Vert \vert f \vert + \lambda C_\Omega \Vert_2\le \Vert   f  \Vert_2+\lambda C_\Omega|\Omega|^{1/2}$ \ for any $\lambda >0$.  }
	\end{equation}
	\hfill $\blacksquare$
\end{remark}

\begin{lemma}\label{generaldensedomain} Let $\psi \in L^\infty(\Gamma_N)$ be such that  $\Vert  \psi \Vert_\infty \leq 1$ and  $g \in L^1(\Gamma_D)$.  We have that  $D( -\widetilde \Delta^{\psi,g}_1)$ is dense in $L^2(\Omega)$.
\end{lemma}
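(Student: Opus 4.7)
\noindent\emph{Plan of proof.} Since $-\widetilde{\Delta}^{\psi,g}_1$ is $m$-accretive by Lemma~\ref{general}, the resolvent $J_\lambda := (I+\lambda(-\widetilde{\Delta}^{\psi,g}_1))^{-1}$ is well-defined on $L^2(\Omega)$ and maps into $D(-\widetilde{\Delta}^{\psi,g}_1)$. The plan is to show that $J_\lambda f \to f$ strongly in $L^2(\Omega)$ as $\lambda \to 0^+$ for every $f \in L^\infty(\Omega)$. Since $L^\infty(\Omega)$ is dense in $L^2(\Omega)$, this yields $L^\infty(\Omega) \subset \overline{D(-\widetilde{\Delta}^{\psi,g}_1)}^{L^2(\Omega)}$, and hence $\overline{D(-\widetilde{\Delta}^{\psi,g}_1)}^{L^2(\Omega)} = L^2(\Omega)$.

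Fix $f \in L^\infty(\Omega)$ and $\lambda\in (0,1]$, and set $u_\lambda := J_\lambda f$. By definition, there exists $v_\lambda \in -\widetilde{\Delta}^{\psi,g}_1 u_\lambda$ with $u_\lambda + \lambda v_\lambda = f$, and there exists $\z_\lambda \in X_2(\Omega)$ with $\|\z_\lambda\|_\infty \leq 1$ and $-\divi \z_\lambda = v_\lambda$ in $\mathcal{D}'(\Omega)$. The estimates in Remark~\ref{enlinfbutpost}, namely~\eqref{inftBoundN} and~\eqref{inftBound2}, give respectively
\begin{equation}
\|u_\lambda\|_\infty \leq \|f\|_\infty + \lambda C_\Omega \quad \text{and} \quad \|u_\lambda\|_2 \leq \|f\|_2 + \lambda C_\Omega |\Omega|^{1/2}.
\end{equation}
In particular $\{u_\lambda\}_\lambda$ is bounded in $L^\infty(\Omega)$ and in $L^2(\Omega)$. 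Next I test the identity $-\divi \z_\lambda = (f-u_\lambda)/\lambda$ against an arbitrary $\varphi \in C_c^\infty(\Omega)$:
\begin{equation}
\int_\Omega (f-u_\lambda)\varphi\,dx = -\lambda \int_\Omega \varphi\,\divi(\z_\lambda)\,dx = \lambda \int_\Omega \z_\lambda \cdot \nabla \varphi\,dx,
\end{equation}
where no boundary term appears since $\varphi$ has compact support. Because $\|\z_\lambda\|_\infty \leq 1$, the right-hand side is bounded in absolute value by $\lambda \|\nabla \varphi\|_{L^1(\Omega)}$, which vanishes as $\lambda \to 0^+$. Combined with $L^2$-boundedness and density of $C_c^\infty(\Omega)$ in $L^2(\Omega)$, this gives $u_\lambda \rightharpoonup f$ weakly in $L^2(\Omega)$.

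To upgrade to strong convergence, I use the $L^2$-norm estimate above, which gives $\limsup_{\lambda \to 0^+} \|u_\lambda\|_2 \leq \|f\|_2$. The lower semicontinuity of the norm under weak convergence provides the reverse inequality $\|f\|_2 \leq \liminf_{\lambda\to 0^+}\|u_\lambda\|_2$, so $\|u_\lambda\|_2 \to \|f\|_2$. Weak convergence together with norm convergence in the Hilbert space $L^2(\Omega)$ implies $u_\lambda \to f$ strongly, which is exactly what was required. The argument is fairly clean once one has~\eqref{inftBoundN}--\eqref{inftBound2} from Remark~\ref{enlinfbutpost}; the only subtlety is choosing $L^\infty(\Omega)$ as the dense subset, since it is precisely on $L^\infty$-data that one controls $\|u_\lambda\|_\infty$ uniformly and thereby obtains both weak compactness and the $L^2$ norm bound with $O(\lambda)$ error.
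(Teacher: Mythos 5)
Your proof is correct, but it takes a genuinely different route from the paper. The paper works with $f\in BV(\Omega)\cap L^\infty(\Omega)$ and plugs $w=f$ into the variational inequality \eqref{eq:variationalequalitydirichletprK}, then uses the trace inequality \eqref{eq:7} to absorb the boundary terms; this yields the quantitative bound $n\int_\Omega|f-u_n|^2\,dx\le M$ with $M$ independent of $n$, i.e.\ a rate $\Vert f-u_n\Vert_2=O(n^{-1/2})$, at the price of requiring $f$ to have bounded variation so that $\int_\Omega|Df|$ and $\int_{\Gamma_D}|f-g|$ are finite. You instead run a soft compactness argument: the distributional identity $-\divi\z_\lambda=(f-u_\lambda)/\lambda$ tested against $\varphi\in C_c^\infty(\Omega)$ gives $\int_\Omega(f-u_\lambda)\varphi\,dx=\lambda\int_\Omega\z_\lambda\cdot\nabla\varphi\,dx=O(\lambda)$, which together with the resolvent bound \eqref{inftBound2} yields weak convergence $u_\lambda\rightharpoonup f$, and then $\limsup\Vert u_\lambda\Vert_2\le\Vert f\Vert_2$ combined with weak lower semicontinuity upgrades this to strong convergence. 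Both arguments are sound; yours avoids the variational characterization entirely and needs only the $L^2$ resolvent estimate, but gives no rate. A small remark: since \eqref{inftBound2} holds for every $f\in L^2(\Omega)$, your argument in fact works directly for arbitrary $f\in L^2(\Omega)$ without any density reduction --- the restriction to $L^\infty$ data and the estimate \eqref{inftBoundN} are never actually used, so that part of your closing discussion overstates the role of the $L^\infty$ bound.
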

\begin{proof}
 Given $f\in BV(\Omega)\cap L^\infty(\Omega)$ let  $u_n$ a solution of
$$f \in u_n - \frac{1}{n} \widetilde \Delta^{\psi,g}_1(u_n).$$
Then, by \eqref{inftBoundN}, we have
\begin{equation}\label{inftBoundNN}
\Vert u_n \Vert_\infty \leq \Vert |  f|+\frac{1}{n} C_\Omega\Vert_\infty \quad \forall \, n \in \N.
\end{equation}
Since  $f\in BV(\Omega)\cap L^\infty(\Omega)$ is dense in $L^2(\Omega)$, let us show that $u_n\to f$ to conclude. From~\eqref{eq:variationalequalitydirichletprK}, taking $w=f$ and having in mind that $u_n\in L^\infty(\Omega)$ we can take $k\to +\infty$  to get \begin{align}\label{eq:variationalequalityBoundrepe}
&n\int_{\Omega} (f -u_n)(f-u_n) \, dx \le \int_{\Omega} (\z,Df) - \int_{\Omega} |Du_n|
\\
&\qquad\qquad + \int_{\Gamma_D}\vert f - g \vert \, d\mathcal{H}^{N-1} - \int_{\Gamma_D} | u_n - g| \, d\mathcal{H}^{N-1}-  \int_{\Gamma_N} \psi (f- u_n)\, d\mathcal{H}^{N-1}.
\end{align}
Hence, by \eqref{eq:7}, we have
\begin{align}\label{eq:variationalequalityBoundrepe2}
&n\int_{\Omega} |f -u_n|^2 \, dx \le \int_{\Omega} |Df|
 + \int_{\Gamma_D}\vert f - g \vert \, d\mathcal{H}^{N-1} -  \int_{\Gamma_N} \psi (f- u_n)\, d\mathcal{H}^{N-1} \\& \le 2\int_{\Omega} |Df|  + \int_{\Gamma_D}\vert f - g \vert \, d\mathcal{H}^{N-1}  + \int_{\Gamma_N}\vert f\vert \, d\mathcal{H}^{N-1} + C_\Omega \int_\Omega \vert u_n \vert dx \leq M, .
\end{align}
where $M$ is a constant independent of $n$ by \eqref{inftBoundNN}. Thus
$$\int_{\Omega} |f -u_n|^2 \, dx \to 0.$$
\end{proof}

 \subsection{From   mild solution to strong solution}

  By Theorem \ref{general},  Lemma~\ref{generaldensedomain}, Theorem~\ref{EUm-accretive}  and Theorem~\ref{Breisresult}, we have the following result,

\begin{lemma}\label{ExistUniq} Let $\psi \in L^\infty(\Gamma_N)$ be such that  $\Vert  \psi \Vert_\infty \leq 1$ and  $g \in L^1(\Gamma_D)$. For any $u_0 \in L^2(\Omega)$ and any $T > 0$, there exists a unique mild  solution of problem~\eqref{ProblemI}. Moreover, the following comparison principle holds: for any $q \in [1,\infty]$, if $u_1, u_2$ are mild solutions for the initial data $u_{1,0}, u_{2,0} \in  L^2(\Omega, \nu) \cap L^q(\Omega, \nu) $ respectively, then
\begin{equation}\label{DCompPrincipleplaplaceBVNew}
\Vert (u_1(t) - u_2(t))^+ \Vert_q \leq \Vert ( u_{1,0}- u_{2,0})^+ \Vert_q.
\end{equation}
Furthermore, if $u_0 \in D(-\tilde \Delta^{\psi,g}_1)$, the mild solution is a strong solution.
\end{lemma}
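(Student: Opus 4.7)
The proof is essentially a direct application of the nonlinear semigroup machinery collected in the Appendix to the operator $-\widetilde\Delta^{\psi,g}_1$, so the plan is to verify the hypotheses and invoke the relevant abstract results in order.

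First, I would invoke Lemma \ref{general} to record that $-\widetilde\Delta^{\psi,g}_1$ is $m$-completely accretive in $L^2(\Omega)$, together with Lemma \ref{generaldensedomain} for the density of its domain in $L^2(\Omega)$. These are exactly the hypotheses needed to apply the Crandall-Liggett generation theorem (Theorem \ref{EUm-accretive}): for every $u_0 \in L^2(\Omega)$ and every $T>0$ there is a unique mild solution $u \in C([0,T];L^2(\Omega))$, obtained via the exponential formula
$$u(t) \;=\; \lim_{n\to\infty} \Bigl(I + \tfrac{t}{n}\,\widetilde\Delta^{\psi,g}_1\Bigr)^{-n} u_0,$$
the limit being uniform on $[0,T]$.

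Second, for the comparison principle, I would pass the inequality for the resolvent to the semigroup. Complete accretivity of $-\widetilde\Delta^{\psi,g}_1$ means (Proposition \ref{prop:completely-accretive}) that for any $q\in[1,\infty]$ the resolvents $J_\lambda = (I+\lambda\widetilde\Delta^{\psi,g}_1)^{-1}$ satisfy $\|(J_\lambda f_1 - J_\lambda f_2)^+\|_q \le \|(f_1-f_2)^+\|_q$; iterating $n$ times and letting $n\to\infty$ in the exponential formula, the same estimate holds between mild solutions, yielding \eqref{DCompPrincipleplaplaceBVNew}.

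Finally, if $u_0 \in D(-\widetilde\Delta^{\psi,g}_1)$, the mild solution is in fact a strong solution. Since $L^2(\Omega)$ is a Hilbert space and $-\widetilde\Delta^{\psi,g}_1$ is $m$-accretive, the classical Brezis regularity result (Theorem \ref{Breisresult}) applies and gives $u \in W^{1,\infty}(0,T;L^2(\Omega))$ with $u_t(t) \in \widetilde\Delta^{\psi,g}_1 u(t)$ for a.e.\ $t\in(0,T)$. There is essentially no genuine obstacle at this step; all of the actual analytical work, namely the resolvent existence for the stationary problem, complete accretivity, and density of the domain, was carried out in Theorem \ref{ThStat2}, Lemma \ref{general} and Lemma \ref{generaldensedomain}, so the proof reduces to assembling those ingredients and quoting the abstract semigroup theorems.
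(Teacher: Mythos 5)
Your proposal is correct and follows essentially the same route as the paper, which likewise obtains the lemma by combining Lemma \ref{general} ($m$-complete accretivity), Lemma \ref{generaldensedomain} (density of the domain), Theorem \ref{EUm-accretive} (existence and uniqueness of the mild solution with the exponential formula, from which the comparison principle \eqref{DCompPrincipleplaplaceBVNew} follows by iterating the resolvent estimates coming from complete accretivity), and the abstract regularity results of the Appendix. The only minor remark is that for the last assertion the cleanest reference is Theorem \ref{regularity} (since $L^2(\Omega)$ has the Radon--Nikodym property and $u_0\in D(-\widetilde\Delta^{\psi,g}_1)$) rather than Theorem \ref{Breisresult}, which only yields a weak solution; this does not affect the validity of the argument.
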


The next result give us interesting bounds of the mild solutions which will allow us to prove that they are in fact strong solutions.

\begin{lemma}\label{nteo317} Let  $u(t)$ be a mild solution of  problem \eqref{ProblemI}. We have:
   \begin{equation}\label{nteo317inf}\hbox{if $u_0\in L^\infty(\Omega)$ then } \  \left\| u(t)\right\|_\infty \le \left\| u_0\right\|_\infty+C_\Omega t,\end{equation}
   and
   \begin{equation}\label{nteo317Two} \hbox{if $u_0\in L^2(\Omega)$ then} \   \left\| u(t)\right\|_2 \le \left\|   u_0   \right\|_2+ |\Omega|^{1/2}C_\Omega t ,\end{equation}
   being  $C_\Omega$ the constant in Theorem~\ref{pepesab}.
\end{lemma}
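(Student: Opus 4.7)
The plan is to obtain both inequalities by bootstrapping the already-established elliptic bounds \eqref{inftBoundN} and \eqref{inftBound2} from Remark~\ref{enlinfbutpost} through the Crandall--Liggett implicit Euler scheme that defines the mild solution. Specifically, for each $n\in\mathbb{N}$ set $\tau_n=t/n$ and define iteratively $u_n^0=u_0$ together with
\begin{equation}
u_n^{k-1}\in u_n^k-\tau_n\,\widetilde\Delta^{\psi,g}_1(u_n^k),\qquad k=1,\dots,n,
\end{equation}
which is solvable by the range condition proved in Lemma~\ref{general}. The standard nonlinear semigroup theory recalled in the appendix (Theorem~\ref{EUm-accretive}) then guarantees that the piecewise-constant interpolants converge in $C([0,T];L^2(\Omega))$ to the mild solution, so in particular $u_n^n\to u(t)$ in $L^2(\Omega)$.

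For the $L^\infty$ estimate, assume $u_0\in L^\infty(\Omega)$. Inductively applying \eqref{inftBoundN} with $f=u_n^{k-1}$ and $\lambda=\tau_n$, we obtain $u_n^k\in L^\infty(\Omega)$ with
\begin{equation}
\|u_n^k\|_\infty\le \|u_n^{k-1}\|_\infty+\tau_n C_\Omega,
\end{equation}
so that $\|u_n^n\|_\infty\le \|u_0\|_\infty+n\tau_n C_\Omega=\|u_0\|_\infty+tC_\Omega$. Extracting an a.e.\ convergent subsequence from the $L^2$-convergence $u_n^n\to u(t)$, the estimate \eqref{nteo317inf} follows by lower semicontinuity of the essential supremum. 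For the $L^2$ estimate, the same telescoping argument with \eqref{inftBound2} yields
\begin{equation}
\|u_n^k\|_2\le \|u_n^{k-1}\|_2+\tau_n C_\Omega|\Omega|^{1/2},
\end{equation}
hence $\|u_n^n\|_2\le \|u_0\|_2+tC_\Omega|\Omega|^{1/2}$, and passing to the limit in the (now norm-)convergence $u_n^n\to u(t)$ in $L^2(\Omega)$ gives \eqref{nteo317Two}.

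The only delicate point is making sure that the discrete bounds survive the passage to the limit: for the $L^2$ estimate this is automatic from continuity of the norm, while for the $L^\infty$ estimate one has to invoke either Fatou / a.e.\ convergence of a subsequence, or the weak-$*$ lower semicontinuity of $\|\cdot\|_\infty$. No new PDE work is required beyond what is already present in Remark~\ref{enlinfbutpost}.
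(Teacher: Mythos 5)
Your proposal is correct and follows essentially the same route as the paper: the paper likewise applies the resolvent bounds \eqref{inftBoundN} and \eqref{inftBound2} recursively to the iterates $\left(I-\frac{t}{n}\widetilde\Delta^{\psi,g}_1\right)^{-n}u_0$ of the Crandall--Liggett exponential formula and passes to the limit. Your only addition is spelling out the lower semicontinuity of $\|\cdot\|_\infty$ under $L^2$-convergence, a point the paper leaves implicit.
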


\begin{proof}  By Crandall-Ligett's exponential formula \eqref{CRLExp}, we have
$$\lim_{n \to \infty} \left(I-\frac{t}{n} \widetilde \Delta^{\psi_m,g}_1\right)^{-n}u_0 = u(t) \quad \hbox{in} \ L^2(\Omega).$$
 Now by \eqref{inftBoundN}, we have
 $$\left\| (I-\frac{t}{n} \widetilde \Delta^{\psi_m,g}_1)^{-1}u_0\right\|_\infty\le \left\|  u_0\right\|_\infty+\frac{t}{n} C_\Omega.$$ Therefore, recursively we get
$$\left\| (I-\frac{t}{n} \widetilde \Delta^{\psi_m,g}_1)^{-n}u_0\right\|_\infty\le \left\|  u_0\right\|_\infty+n\frac{t}{n} C_\Omega=\left\|  u_0\right\|_\infty+t C_\Omega \quad \forall\, n \in \N,$$
and consequently \eqref{nteo317inf} holds.

The proof of \eqref{nteo317Two} is similar  but using \eqref{inftBound2} instead of \eqref{inftBoundN}.
\end{proof}

\begin{lemma}\label{nteo317c}  Let $\psi \in L^\infty(\Gamma_N)$ be such that  $\Vert  \psi \Vert_\infty \leq 1$ and  $g \in L^1(\Gamma_D)$.
 For every $u_0\in L^2(\Omega)$,  the mild solution $u$ of problem~\eqref{ProblemI}  is a strong solution.
\end{lemma}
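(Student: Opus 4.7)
The plan is to approximate the boundary flux $\psi$ by a sequence $\psi_n$ with $\Vert\psi_n\Vert_\infty < 1$, so that the corresponding generators are subdifferentials, Theorem~\ref{inteo310} provides strong solutions for arbitrary $L^2$ initial data, and the classical subdifferential machinery supplies a uniform bound on the time derivatives. Passing to the limit in $n$ will upgrade the mild solution $u$ of problem~\eqref{ProblemI} into a strong solution.

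First, I would set $\psi_n := (1 - 1/n)\psi$, so that $\Vert\psi_n\Vert_\infty \le 1 - 1/n < 1$. By Theorem~\ref{inteo310}, there is a unique strong solution $u_n$ of \eqref{ProblemI} with $\psi_n$ in place of $\psi$ and initial datum $u_0$. Theorem~\ref{ThConpAccre} identifies the generator as $-\Delta^{\psi_n,g}_1 = \partial_{L^2(\Omega)}\mathcal{F}_{\psi_n,g}$, so $u_n$ is a subgradient flow starting from $u_0 \in L^2(\Omega) = \overline{D(\mathcal{F}_{\psi_n,g})}^{L^2}$. To establish $u_n \to u$ in $C([0,T];L^2(\Omega))$, I would prove the resolvent convergence $(I+\lambda(-\Delta^{\psi_n,g}_1))^{-1} f \to (I+\lambda(-\widetilde{\Delta}^{\psi,g}_1))^{-1} f$ in $L^2(\Omega)$ for every $\lambda > 0$ and $f \in L^2(\Omega)$. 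This is a monotonicity argument modelled on Step~1 of the proof of Lemma~\ref{general}, supported by the comparison principle of Lemma~\ref{comparison1} and the uniform bounds of Proposition~\ref{bounded1}; the conclusion $u_n \to u$ then follows from the Brezis--Pazy convergence theorem (Theorem~\ref{TeoConBP}).

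The key technical input is a uniform bound on $\{u_n'\}$. Since $u_n$ is a subgradient flow of $\mathcal{F}_{\psi_n,g}$, Brezis' regularization estimate yields, for every $t_0 \in (0,T)$ and every $v \in BV(\Omega)\cap L^2(\Omega)$,
\begin{equation}
\int_{t_0}^T \Vert u_n'(t)\Vert_{L^2}^2\, dt \;\le\; \mathcal{F}_{\psi_n,g}(u_n(t_0)) \;\le\; \mathcal{F}_{\psi_n,g}(v) + \frac{1}{2t_0}\Vert u_0 - v\Vert_{L^2}^2.
\end{equation}
Choosing $v = 0$ gives $\mathcal{F}_{\psi_n,g}(0) = \int_{\Gamma_D}|g|\, d\mathcal{H}^{N-1}$, independent of $n$, so $\{u_n'\}$ is uniformly bounded in $L^2(t_0, T; L^2(\Omega))$ for each $t_0 > 0$. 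Extracting a subsequence, $u_n' \rightharpoonup w$ weakly in $L^2_{\mathrm{loc}}(0,T;L^2(\Omega))$; combined with $u_n \to u$ from the previous step, one identifies $w = u'$ in the distributional sense and concludes $u \in W^{1,2}_{\mathrm{loc}}(0,T;L^2(\Omega))$. Using Remark~\ref{enlinfbut}(1) to view each inclusion $u_n'(t) \in -\Delta^{\psi_n,g}_1(u_n(t))$ as an inclusion into $-\widetilde{\Delta}^{\psi_n,g}_1$, the closedness of the maximal monotone graph established in Lemma~\ref{general}, together with $\psi_n \to \psi$, allows me to pass to the limit and obtain $u'(t) \in -\widetilde{\Delta}^{\psi,g}_1(u(t))$ for a.e.\ $t$.

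The main obstacle is the final limit passage in the vector-field formulation: the associated fields $\z_n$ converge only weakly-$*$ in $L^\infty(\Omega;\R^N)$, and one has to transfer both the pairing identity $(\z_n, DT_k(u_n)) = |DT_k(u_n)|$ for every $k > 0$ and the trace condition $[\z_n,\nu_\Omega] = \psi_n$ on $\Gamma_N$ to the limit $\z$. The former requires a truncation-based lower semi-continuity argument in the spirit of the proof of Lemma~\ref{general}, Step~1, and is the delicate point of the whole scheme.
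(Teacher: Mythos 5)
Your overall architecture (approximate $\psi$ by fluxes of norm $<1$, use the subdifferential structure to get uniform bounds on $u_n'$, pass to the limit via Brezis--Pazy) is the same as the paper's, and the derivative estimate you extract from the subgradient-flow structure is essentially the one the paper gets from Theorem~\ref{Bre-Kom}. But there are two genuine problems. First, and most importantly, your last step is both unnecessary and left unresolved: you try to pass to the limit in the differential inclusion itself, transferring the pairing identity $(\z_n,DT_k(u_n))=|DT_k(u_n)|$ and the trace condition to a weak-$*$ limit field $\z$, and you explicitly flag this as ``the delicate point of the whole scheme'' without carrying it out. The paper never does this. Since $-\widetilde\Delta^{\psi,g}_1$ is already known to be $m$-accretive (Lemma~\ref{general}) and $u$ is by construction its mild solution, Theorem~\ref{regularity} says that a mild solution of an $m$-accretive problem lying in $W^{1,1}_{\mathrm{loc}}(0,T;L^2(\Omega))$ is automatically a strong solution. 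So once you have the uniform bound on $\{u_n'\}$ and the uniform convergence $u_n\to u$, you are done: no identification of the limiting vector field is needed. As written, your proof is incomplete precisely at the step you could have skipped.

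Second, a smaller but real issue: the approximation $\psi_n=(1-1/n)\psi$ is not monotone in $n$ wherever $\psi$ changes sign, so the comparison principle of Lemma~\ref{comparison1} does not give the monotone convergence $u_n\le u_{n+1}$ on which the resolvent-convergence argument of Step~1 of Lemma~\ref{general} rests (monotonicity plus the uniform $L^2$ bound is what upgrades weak to strong $L^2$ convergence there, which in turn is needed to pass to the limit in the variational inequality). The paper instead uses the two-parameter truncation $\psi_{m,n}=\sup\{\inf\{\psi,1-\tfrac1n\},-1+\tfrac1m\}$, which is increasing in $n$ and decreasing in $m$, and performs two successive Brezis--Pazy limits; you should either adopt that choice or supply a different proof of the resolvent convergence. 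Finally, note that your claimed inequality $\int_{t_0}^T\|u_n'\|^2\,dt\le\mathcal F_{\psi_n,g}(u_n(t_0))$ implicitly assumes $\mathcal F_{\psi_n,g}\ge 0$, which fails in general (the boundary term $-\int_{\Gamma_N}\psi_n v$ can make the functional negative); this is harmless once combined with the uniform $L^2$ bound on $u_n$, but it should be said.
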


\begin{proof}
 By Theorem~\ref{regularity} it is enough to prove that $u\in W_{loc}^{1,1}(0, T; X)$.

Consider
$$\psi_{m,n} :=\sup\left\{\inf\left\{\psi,1-\frac{1}{n}\right\},-1+\frac{1}{m}\right\} , \quad m\in \mathbb{N}, \ n,m\ge 2,$$
and let $u_{m,n}$ be the mild solution of problem~\eqref{ProblemI} with Neumann flux $\psi_{m,n}$ and initial data $u_0$. Then,  by Lemma~\ref{nteo317},
\begin{equation}\label{E1nteo317c}\Vert
u_{m,n} \Vert_{L^{\infty}(0, T; L^2(\Omega))} \leq
\left\| u_0\right\|_2+|\Omega|^{1/2}C_\Omega T \quad ,m\in \mathbb{N}, \ n,m\ge 2.
\end{equation}

 For $m$ fixed, we have $\psi_{m,n} \geq -1+\frac{1}{m}$ for all $n\geq 2$. Then, we are under the assumptions of the Step 1 of the proof of Lemma~\ref{general}, hence, by Theorem \ref{Bre-Kom} we have
\begin{equation}\label{E2nteo317c}\Vert
(u_{m,n})_t \Vert_{L^{\infty}(\delta, T; L^2(\Omega))} \leq
\frac{1}{\delta} \Vert u_0 \Vert_2 \ \ \ \ {\rm for}
\ \ 0 < \delta < T,
\end{equation}
and, on the other hand,
$$\lim_{n \to \infty} (I - \widetilde \Delta^{\psi_{m,n,g}}_1)^{-1} f = (I  -\widetilde \Delta^{\psi_m,g}_1)^{-1} f \quad \forall\, f \in L^2(\Omega), $$
being $\psi_m:= \sup\left\{\psi,-1+\frac{1}{m}\right\}$. Then,  by Theorem~\ref{TeoConBP}, we have
\begin{equation}\label{E3nteo317c}
\lim_{n \to \infty} u_{m,n}(t) = u_m(t) \quad \hbox{uniformly on} \ [0,T],
\end{equation}
where $u_m$ is the mild solution of problem~\eqref{ProblemI} with Neumann flux $\psi_{m}$ and initial data $u_0$;
moreover, it satisfies
\begin{equation}\Vert
u_{m} \Vert_{L^{\infty}(0, T; L^2(\Omega))} \leq
\left\| u_0\right\|_2+|\Omega|^{1/2}C_\Omega T \quad ,m\in \mathbb{N}, \ n,m\ge 2.
\end{equation}
and
\begin{equation} \Vert
(u_{m})_t \Vert_{L^{\infty}(\delta, T; L^2(\Omega))} \leq
\frac{1}{\delta} \Vert u_0 \Vert_2 \ \ \ \ {\rm for}
\ \ 0 < \delta < T.
\end{equation}

Now, since $\psi_m$ satisfies the assumption of Step 2 of the proof of  Lemma~\ref{general}, we have
$$\lim_{m \to \infty} (I -\widetilde \Delta^{\psi_m,g}_1)^{-1} f = (I  -\widetilde \Delta^{\psi,g}_1)^{-1} f \quad \forall\, f \in L^2(\Omega).$$
Then, applying again Theorem~\ref{TeoConBP}, we obtain that
\begin{equation}\label{E4nteo317c}
\lim_{m \to \infty} u_{m}(t) = u(t) \quad \hbox{uniformly on} \ [0,T],
\end{equation}
with
\begin{equation} \Vert
u\Vert_{L^{\infty}(0, T; L^2(\Omega))} \leq
\left\| u_0\right\|_2+|\Omega|^{1/2}C_\Omega T \quad ,m\in \mathbb{N}, \ n,m\ge 2.
\end{equation}
and
\begin{equation} \Vert u_t \Vert_{L^{\infty}(\delta, T; L^2(\Omega))} \leq
\frac{1}{\delta} \Vert u_0 \Vert_2 \ \ \ \ {\rm for}
\ \ 0 < \delta < T.
\end{equation}
Then we get that
$$u\in W_{loc}^{1,1}(0, T; L^2(\Omega)).$$
\end{proof}

\section{Large solutions}\label{sectls}

 \begin{proof}[Proof of Theorem \ref{teolsol01}]
  Let $g_{n}:=g\1_{\Gamma_{D,1}}+n\1_{\Gamma_{D,2}}-n\1_{\Gamma_{D,3}}$. Thanks to Theorem~\ref{TeoConBP} it is enough to proof that, for $f\in L^\infty(\Omega)$,
\begin{equation}\label{itse01}\lim_n(I -\widetilde \Delta^{\psi,g_n}_1)^{-1}f =  (I -\widetilde \Delta^{\tilde{\psi},g}_1)^{-1}f.
\end{equation}
   Now,   by Remark~\ref{enlinfbutpost}
  we have that, in fact, for $n$ large enough,
  $$(I -\widetilde \Delta^{\psi,g_n}_1)^{-1}f=
  (I -\widetilde \Delta^{\tilde{\psi},g}_1)^{-1}f,$$
  where $ -\widetilde \Delta^{\tilde{\psi},g}_1$ is the   diffusion operator
associated to breaking the boundary with $\widetilde{\Gamma}_N$ and~$\Gamma_{D,1}$. Then the result follows.
\end{proof}

 In \cite[Example~5.1]{MP} it is shown that if $\Omega = B_1(0)$ in $\R^2$ and the initial datum is the   unbounded function
$$u_0(x):= \left\{ \begin{array}{ll}0 \quad &\hbox{if} \ \ \Vert x\Vert \leq \frac12, \\[10pt] \log \left(\frac{\Vert x\Vert}{1-\Vert x\Vert} \right) \quad &\hbox{if} \ \ \frac12 \leq \Vert x \Vert <1, \end{array} \right.$$
then the solution of problem~\eqref{LargeSolMP} is given by an unbounded function for all time:
$$u(t,x) = a(t) \1_{B_{r(t)}(0)} + \left( \log \left(\frac{\Vert x\Vert}{1-\Vert x\Vert} \right) + \frac{t}{\Vert x\Vert}\right) \1_{B_1(0) \setminus B_{r(t)}(0)},$$
with
$$r(t)= \frac{ W\left(- \frac{t+1}{2e^{t +\frac12}} \right)}{t+1} + 1,$$
where $W$ is the Lambert $W$-function and
$\displaystyle a(t) = \int_0^t \frac{2}{r(s)} \, ds.$ Note that $u(t)$ is not bounded, but $u(t) \in   W^{1,\infty}(\Omega)$.
  With the same technique we are going to get a strong solution $u(t)$ of problem~\eqref{LargeSolMP}, for  $\Omega = B_1(0)$ in $\R^2$, such that $u(t)$ is not in $BV(\Omega)$. This shows that, when $\Vert \psi \Vert_\infty =1$,  the strong solution $u(t)$ of  problem~\eqref{ProblemI}   may not be a $BV$-function, but  $T_k(u(t)) \in BV(\Omega)$ for all $k >0$.

  \begin{example}\label{exlso01}\rm  Let $\Omega = B_1(0)$ in $\R^2$. Take as initial datum the function
$$u_0(x)= \left\{ \begin{array}{ll} 2^{1/4} \quad &\hbox{if} \ \ \Vert x\Vert \leq \frac12, \\[6pt]
\displaystyle \frac{1}{(1-\Vert x\Vert)^{1/4}} \quad &\hbox{if} \ \ \frac12 \leq \Vert x \Vert <1. \end{array} \right.$$
As in \cite[Example~5.1]{MP}, we look for a solution to problem~\eqref{LargeSolMP}  of the form
$$u(t,x) = a(t) \1_{B_{r(t)}(0)} + b\left(t, \Vert x\Vert \right)  \1_{B_1(0) \setminus B_{r(t)}(0)},$$ with   $a(t) = b(t, r(t))$  in $[0, T]$, $b$ increasing in the second variable. Following the same calculations than in \cite[Example~5.1]{MP}  we get that $b(t,r)=\frac{1}{(1-r)^{1/4}} + \frac{t}{r},$ and  $r(t)$  must  solve the   ODE problem
$$\left\{\begin{array}{l}\displaystyle
\left(  \frac{r(t)}{(1- r(t))^{\frac54}} - \frac{t}{r(t)} \right)r'(t) = 1,\\[12pt] \displaystyle r(0) = \frac12,
\end{array}\right.$$
that can be written as a linear ODE in $t(r)$,
\begin{equation}\label{LiODE}
\left\{\begin{array}{l}\displaystyle
\frac{dt}{dr}+\frac{t}{r} =    \frac{r}{(1- r)^{\frac54}},
\\[12pt] \displaystyle t(0) = \frac12,
\end{array}\right.
\end{equation}
whose solution in $[0,+\infty[$ is
$$t(r)=\frac{128 -  2^{1/4}109(1 -  r)^{1/4} - 32 r - 12 r^2}{84 (1 - r)^{1/4} r}.$$ Now, for each $t\in[0,+\infty[$ there is a unique solution $r(t)\in\left[\frac12,1\right[$   of
\begin{equation}\label{lardt}\frac{128 -  2^{1/4}109(1 -  r)^{1/4} - 32 r - 12 r^2}{84 (1 - r)^{1/4} r}=t,
\end{equation}
 it is smooth in $]0,+\infty[$ and its graph is given in Figure~\ref{rdt01}.
\begin{figure}[h]
\centering
    \begin{tikzpicture}
        \begin{axis}[
            axis lines=middle,
            width=7cm, height=5cm,
            ytick={0.5,0.6,0.7, 0.8, 0.9, 1},
            xtick={0.2,0.4,0.6, 0.8,  1},
            samples=1000,
            domain=0:1.1,
            axis line style={-},
            tick label style={font=\tiny}
            ]
            \addplot[black, thick, restrict x to domain=0:1.1]
            ({(128 -  2^(1/4)*109*(1 -  x)^(1/4) - 32*x - 12*x^2)/(84*(1 - x)^(1/4)*x)}, {x});
            \addplot[gray, dashed, restrict x to domain=0:1.1]
            {1};
        \end{axis}
    \end{tikzpicture}
\caption{Graph of $r(t)$}
\label{rdt01}
\end{figure}
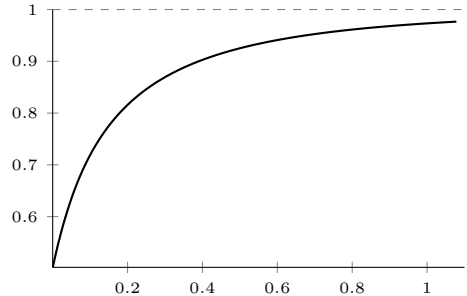
Then,   the solution of problem~\eqref{LargeSolMP} is   given by
$$u(t,x) = a(t) \1_{B_{r(t)}(0)} + \left( \frac{1}{(1-\Vert x\Vert)^{1/4}} + \frac{t}{\Vert x\Vert}\right) \1_{B_1(0) \setminus B_{r(t)}(0)},$$ with
 $r(t)\in\left[\frac12,1\right[$ given by~\eqref{lardt} and with $a(t)= \frac{1}{(1-r(t))^{1/4}} + \frac{t}{r(t)}$.
Since $r(t) < 1$ for all $t\ge \frac12$, and $\frac{1}{(1-\Vert x\Vert)^{1/4}}$ is not in~$BV(\Omega)$, we have that the function $u(t)$ is not in ~$BV(\Omega)$.

\hfill $\blacksquare$
\end{example}

\begin{remark}\rm Large solutions for bounded initial data are indeed bounded. In~\cite{MP} it is shown that, for the solution $u$ of problem~\eqref{LargeSolMP},
\begin{equation}\label{nteo317infp1}\hbox{if $u_0\in L^\infty(\Omega)$ then }    \left\| u(t)\right\|_\infty \le \left\| u_0\right\|_\infty+\frac{N}{s_0} t,\end{equation}
if $\Omega$ satisfies a uniform ball condition with radius $s_0$.
Here we have shown  that, for $\Omega$ with $C^{1,1}$ boundary,
\begin{equation}\label{nteo317infp2}\hbox{if $u_0\in L^\infty(\Omega)$ then }    \left\| u(t)\right\|_\infty \le \left\| u_0\right\|_\infty+C_\Omega t,\end{equation}
being  $C_\Omega$ the constant in Theorem~\ref{pepesab} (observe that $C_\Omega\ge \frac{H^{N-1}(\partial\Omega)}{|\Omega|}$). \hfill $\blacksquare$
\end{remark}

\section{Explicit Solutions}\label{ExplicitS}

In \cite{ABCM1}  it was computed  the solution of the homogeneous Neumann problem for an initial datum given by the characteristic function of a ball $B_r(x_0)$ when $\Omega$ is a ball centered at $x_0$ of radius $R>r$. Now we are going to solve the same case for a non homogenous Neumann boundary condition.

\begin{theorem}\label{Nnohomeneo}
 Consider the problem
\begin{equation}\label{pbej1}
    \left\{
    \begin{array}{ll}
       u_t =  \Delta_1 u   &\hbox{in} \ \ (0,T) \times\Omega,
        \\[10pt] \frac{Du}{\vert Du \vert} \cdot \nu= a \quad &\hbox{in} \ \ (0,T) \times \partial \Omega,
         \\[10pt] u(0) = u_0,
    \end{array}
    \right.
\end{equation}
being $\Omega= B_R(0)$, $u_0= b \1_{B_r(0)}$, with $0 < r < R$, $a, b \in \mathbb{R}$, $\vert a \vert \leq 1$ and $b>0$. Then, the solution of problem \eqref{pbej1} is given by
$$u(t,x) = \left\{ \begin{array}{ll}\displaystyle \left(b - \frac{N}{r}t \right) \,  \1_{B_r(0)}(x) +  N \, \frac{a R^{N-1} + r^{N-1}}{R^N - r^N} \, t \1_{B_R(0) \setminus B_r(0)}(x) \quad &\hbox{if} \ \ 0 \leq t \leq T_1 \\[10pt] \displaystyle \frac{N}{R}a(t-T_1)+\frac{(a R^{N-1} + r^{N-1})r}{R^{N-1}(R+ar)}b  \quad &\hbox{if} \ \  t> T_1,  \end{array} \right.$$
being $\displaystyle T_1=\frac{br(R^N-r^N)}{NR^{N-1}(R+ar)}$.
\end{theorem}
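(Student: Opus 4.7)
The approach is constructive: exhibit an explicit pair $(u(t), \z(t))$ and verify that it satisfies the conditions defining the strong solution of the pure Neumann flow (the case $\Gamma_D = \emptyset$, $\psi \equiv a$). Uniqueness from Theorem~\ref{inteo310} (if $|a| < 1$) or Lemma~\ref{nteo317c} (if $|a| = 1$) then yields the result.

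On the first phase $[0, T_1]$ the candidate $u$ is piecewise constant in $x$, so $Du(t)$ is concentrated on $\partial B_r(0)$. I look for a $t$-independent radial vector field $\z(x) = z(\|x\|)\, x/\|x\|$. Writing $\divi \z = z'(s) + (N-1)z(s)/s$ with $s = \|x\|$, the equation $\divi\z = u_t$ forces $z(s) = -s/r$ on $[0, r]$ (this choice vanishes at the origin, so $\z$ is regular there, and yields $z(r) = -1$). On the annulus $(r, R)$, the general radial solution of $z' + (N-1)z/s = N\alpha$ with $\alpha := (aR^{N-1} + r^{N-1})/(R^N - r^N)$ is $z(s) = \alpha s + \beta s^{1-N}$; imposing continuity at $s = r$ (that is, $z(r) = -1$) determines $\beta = -r^{N-1}(R + ar) R^{N-1}/(R^N - r^N)$, and a direct algebraic check gives $z(R) = a$, which is precisely the Neumann condition at $\partial\Omega$.

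With this $\z$, the three conditions characterizing the strong solution (cf.\ Proposition~\ref{characterisationI}) must be verified. First, $\divi\z = u_t$ as a distribution on $\Omega$: the continuity of the radial profile $z$ at $s = r$ ensures continuity of the normal component of $\z$ across $\partial B_r$, so integration by parts against any test function reduces the distributional identity to the classical divergence identity on each of $B_r$ and $B_R\setminus \overline{B_r}$, where it holds by construction. Second, $[\z, \nu_\Omega] = z(R) = a$. Third, $(\z, Du) = |Du|$ as measures: $Du$ concentrates on $\partial B_r$ as $(u^+ - u^-)(x/r)\mathcal{H}^{N-1}$ with $u^+ - u^- < 0$ throughout $(0, T_1)$, and the identity reduces to $\z \cdot (x/r)\vert_{\|x\| = r} = -1$, that is, $z(r) = -1$, which holds by construction. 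Finally, $\|\z\|_\infty \le 1$: inside $B_r$ it is immediate; in the annulus, since $|a| \le 1 < R/r$ gives $R + ar > 0$, one has $\beta < 0$, and a short sign analysis of $z'(s) = \alpha - (N-1)\beta s^{-N}$ shows that $z$ is either monotone from $-1$ to $a$ (when $\alpha \ge 0$) or has a single interior maximum which is strictly negative (when $\alpha < 0$, in which case $a < 0$ as well), so $z$ remains in $[-1, 1]$ in every case.

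The time $T_1$ is the unique instant at which the interior and annular values of $u$ coincide; solving this equation reproduces the stated formula. For $t > T_1$, $u(t, \cdot)$ is spatially constant, hence $Du = 0$ and $(\z, Du) = |Du|$ is trivially satisfied. Pick $\z(x) = (a/R)\, x$: then $\divi\z = Na/R$, $[\z, \nu_\Omega] = a$, and $|\z| \le |a| \le 1$; integrating $u_t = Na/R$ from $T_1$ with initial value $u(T_1) = (aR^{N-1}+r^{N-1})rb/(R^{N-1}(R+ar))$ inherited from phase one produces the formula in the statement. The most delicate step is the $L^\infty$ bound on $z$ in the annulus when $\alpha$ can change sign; everything else reduces to direct ODE computations.
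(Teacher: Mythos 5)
Your proposal is correct and follows essentially the same route as the paper: the same piecewise-constant ansatz in space, the same radial ODE for the vector field with $z(r)=-1$, $z(R)=a$, the same determination of $T_1$, and the same flat continuation for $t>T_1$. The only (immaterial) differences are that you verify $(\z,Du)=|Du|$ directly on the jump set rather than via Green's formula and total masses, and your sign analysis of $z'(s)=\alpha-(N-1)\beta s^{-N}$ for the bound $\|\z\|_\infty\le 1$ is a slightly cleaner substitute for the paper's polynomial case analysis.
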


\begin{proof}
Like in the case of homogenous Neumann boundary conditions (see~\cite{ABCM1})  let us see that  we can find a solution of \eqref{pbej1} of the form
$$u(t,x) = \alpha (t) \1_{B_r(0)} + \beta(t) \1_{B_R(0) \setminus B_r(0)}$$
with $\alpha(t) > \beta(t)$ on some interval of time $(0,T_1)$, where $T_1$ is the time at which $\alpha (T_1)=  \beta(T_1)$, and with $\alpha(0) = b$, $\beta(0) =0$. Let us see that we can solve
\begin{equation}\label{E1Ecample}
\alpha'(t) = {\rm div}(\z(t)) \quad \hbox{in} \ \ B_r(0),
\end{equation}
\begin{equation}\label{E2Ecample}
\beta'(t) = {\rm div}(\z(t)) \quad \hbox{in} \ \ B_R(0) \setminus B_r(0),
\end{equation}
\begin{equation}\label{E3Ecample}
[\z(t), \nu_\Omega]= a \quad \hbox{in} \ \ \partial B_R(0),
\end{equation}
for  some $\z(t) \in L^\infty(B_R(0))$, with $\Vert \z(t) \Vert_\infty \leq 1$,   continuous at $\partial B_r(0)$. For $$\z(t)(x) := -\frac{x}{r} \quad \hbox{for} \ x \in B_r(0), $$ integrating the equation  \eqref{E1Ecample} in $B_r(0)$ we obtain
$$\alpha'(t) \mathcal{L}^N(B_r(0)) = \int_{B_r(0)} {\rm div}(\z(t)) dx = \int_{\partial B_r(0)} [\z(t), \nu_\Omega] d\mathcal{H}^{N-1} = - \mathcal{H}^{N-1}(\partial B_r(0)).$$
Thus, $\alpha'(t) = - \frac{N}{r}$, and, therefore $$\alpha(t) = b - \frac{N}{r} \, t.$$

Integrating now \eqref{E2Ecample} in $B_R(0) \setminus B_r(0)$ and having in mind \eqref{E3Ecample}, we get
$$\begin{array}{c}
\displaystyle \beta'(t)  \mathcal{L}^N(B_R(0) \setminus B_r(0)) =  \int_{B_R(0) \setminus B_r(0)} {\rm div}(\z(t)) dx  = \int_{\partial( B_R(0) \setminus B_r(0))} [\z(t), \nu_\Omega] d\mathcal{H}^{N-1}
\\[12pt]
\displaystyle
=  \int_{\partial B_R(0)} [\z(t), \nu_{B_R(0)}] d\mathcal{H}^{N-1} -  \int_{\partial B_r(0)} [\z(t), \nu_{B_r(0)}] d\mathcal{H}^{N-1}
\\[12pt]
\displaystyle= a  \mathcal{H}^{N-1}(\partial B_R(0)) + \mathcal{H}^{N-1}(\partial B_r(0)).
\end{array}$$
Thus,
$$\beta'(t) = \frac{ a  \mathcal{H}^{N-1}(\partial B_R(0)) + \mathcal{H}^{N-1}(\partial B_r(0))}{ \mathcal{L}^N(B_R(0)} = N \, \frac{a R^{N-1} + r^{N-1}}{R^N - r^N}.$$
Therefore
$$\beta(t) = N \, \frac{a R^{N-1} + r^{N-1}}{R^N - r^N} \, t.$$

Note that $T_1$ must be given by
$$
T_1\left( \frac{N}{r}+ N \, \frac{a R^{N-1} + r^{N-1}}{R^N - r^N} \right) = b,
$$
and it is always attained,
$$
T_1=\frac{br(R^N-r^N)}{NR^{N-1}(R+ar)}.
$$

To construct $\z(t)$ in $B_R(0) \setminus B_r(0)$ we shall look for $\z(t)$ of the form
$\z(t)(x) = \rho(\Vert x \Vert) \frac{x}{\Vert x \Vert}$, such that
 $${\rm div}(\z(t)) = \beta'(t) \quad \hbox{in} \ \ B_R(0) \setminus B_r(0),$$
 $$ \rho(r) = -1,  \quad \rho(R) =a,$$   so that it coincides on $\partial B_r(0)$ with the field $\z$ defined on $B_r(0)$. Since
$${\rm div}(\z(t))(x)  = \rho'(\Vert x \Vert) + \rho (\Vert x \Vert) \frac{N-1}{\Vert x \Vert}.$$
Then, we must have
\begin{equation}\label{E6Ecample}
\rho'(s) + \rho(s) \, \frac{N-1}{s} =  N \, \frac{a R^{N-1} + r^{N-1}}{R^N - r^N}, \quad r< s < R.
\end{equation}
Hence
$$\rho(s) =  \frac{a R^{N-1} + r^{N-1}}{R^N - r^N} \,s + C \, \frac{1}{s^{N-1}}.$$
Now, the condition
$\rho(r) = -1$
implies that
$$C=-\frac{r^{N-1}R^{N-1}}{R^N - r^N}(ar +R).$$
Thus,
$$\rho(s) =  \frac{a R^{N-1} + r^{N-1}}{R^N - r^N} \,s -\frac{r^{N-1}R^{N-1}}{R^N - r^N}(ar +R) \, \frac{1}{s^{N-1}}.$$
Note that $\rho(R) = a$ is also satisfied,   and consequently,  we have that~\eqref{E3Ecample} holds.

 Moreover,
 $$ |\rho(s)|\le 1.$$
Indeed, for $N=1$ it is obvious. Let us see it for $N\ge 2$. Observe that if $a>\displaystyle -\left(\frac{r}{R}\right)^{N-1}$ then  both summands defining $\rho$ are increasing, and if $\displaystyle a= -\left(\frac{r}{R}\right)^{N-1}$ then we also have   that $\rho$  is increasing, so, since $\rho(r)=-1$ and $\rho(R)=a\in[-1, 1]$ we have that $-1\le \rho\le 1$. In the case $\displaystyle a<-\left(\frac{r}{R}\right)^{N-1}$ we have that $\rho\ge -1$ if and only if
$$p_1(s):=(a R^{N-1} + r^{N-1})s^N+(R^N-r^N)s^{N-1}-r^{N-1}R^{N-1}(ar+R)\ge 0,$$
and this is true since $p_1(r)=0$, $p_1(R)\ge 0$, $\rho(s)\to -\infty$ as $s\to+\infty$, and this polynomial has an unique critical point different from $0$;
on the other hand, $\rho\le 1$ if and only if
$$p_2(s):=(a R^{N-1} + r^{N-1})s^N-(R^N-r^N)s^{N-1}-r^{N-1}R^{N-1}(ar+R)\le 0,$$
and this is true since $p_2(r)<0$,  $\rho(s)\to -\infty$ as $s\to+\infty$, and the unique critical point of this polynomial different from $0$ is negative.

Consequently, for $t \in (0,T_1)$, the vector field $\z(t)$  given in $B_R(0)$  by
$$\z(t)(x)= \left\{  \begin{array}{ll} - \displaystyle\frac{x}{r}\quad &\hbox{if} \ \ x \in B_r(0), \\[10pt] \displaystyle
\left(\frac{a R^{N-1} + r^{N-1}}{R^N - r^N}\Vert x \Vert  -\frac{R^{N-1}(ar +R)}{R^N - r^N} \, \frac{r^{N-1}}{\Vert x \Vert^{N-1}}\right)\frac{x}{\Vert x \Vert}  &\hbox{if} \ \ x \not\in  B_r(0),\end{array} \right.$$
and
$$u(t,x) = \left(b - \frac{N}{r}t \right) \,  \1_{B_r(0)}(x) +  N \, \frac{a R^{N-1} + r^{N-1}}{R^N - r^N} \, t \1_{B_R(0) \setminus B_r(0)}(x),$$
satisfy~\eqref{E1Ecample}, \eqref{E2Ecample} and~\eqref{E3Ecample}.

From~\eqref{E1Ecample} and~\eqref{E2Ecample}, and the fact that $\z(t)$ is continuous on $\partial B_r(0)$ we get that
\begin{equation}\label{E1}u_t(t,.)={\rm div}(\z(t)) \quad \hbox{in} \ \ B_R(0).
\end{equation}

Let us see now that
\begin{equation}\label{E3}
(\z(t), Du(t)) = |Du(t)| \quad \hbox{as measures}.
\end{equation}
By Proposition \ref{Anzelotti} it is enough to proof
$$\int_{B_R(0)} (\z(t), Du(t)) = \int_{B_R(0)} |Du(t)|.$$
Indeed, applying Green`s formula, we have
$$\int_{B_R(0)} (\z(t), Du(t)) =-\int_{B_R(0)} {\rm div}(\z(t))(x)u(t)(x) \, dx + \beta(t)\int_{\partial B_{R}(0)}[\z(t),\nu_{B_{R}(0)}]d\mathcal{H}^{N-1}.$$
Now
$$-\int_{B_R(0)} {\rm div}(\z(t))(x)u(t)(x) \, dx $$ $$= -\int_{B_r(0)} {\rm div}(\z(t))(x)\alpha(t)(x) \, dx  -\int_{B_R(0) \setminus B_r(0)} {\rm div}(\z(t))(x)\beta(t)(x) \, dx $$ $$=(\beta(t) - \alpha(t))\int_{B_r(0)} {\rm div}(\z(t))(x) \, dx -\int_{B_R(0)} {\rm div}(\z(t))(x)\beta(t)(x) \, dx$$ $$= (\beta(t) - \alpha(t)) \int_{\partial B_r(0)} [\z(t), \nu_{B_r(0)}]  d\mathcal{H}^{N-1} - \beta(t)\int_{\partial( B_R(0)} [\z(t), \nu_{B_R(0)}] ) d\mathcal{H}^{N-1}.$$
Hence
$$\int_{B_R(0)} (\z(t), Du(t)) = (\beta(t) - \alpha(t)) \int_{\partial B_r(0)} [\z(t), \nu_{B_r(0)}]  d\mathcal{H}^{N-1} $$ $$= ( \alpha(t)-\beta(t))\mathcal{H}^{N-1}(\partial B_r(0)) = \int_{B_R(0)} |Du(t)|.$$

 By~\eqref{E3Ecample}, \eqref{E1}, and~\eqref{E3}, we have $u(t,x)$ is a solution of problem \eqref{pbej1} for $0< t \leq T_1$.

  At the time~$T_1$, the solution is flat on the ball $B_R(0)$,
$$u(T_1,x)=\frac{(a R^{N-1} + r^{N-1})r}{R^{N-1}(R+ar)}b.$$
For $t > T_1$,
the solution is given by  $$u(t,x)=\frac{N}{R}a(t-T_1)+\frac{(a R^{N-1} + r^{N-1})r}{R^{N-1}(R+ar)}b.$$
In fact, in this case, it is easy to see that
the vector field $\z(t)$   given by
$$\z(t)(x)= a\frac{x}{R}, \quad x \in B_R(0),$$
satisfies all the conditions of the definition.

\end{proof}

\begin{remark}{\rm
1. Let us give a drawing of how the solution behaves, $u_{|_{B_r(0)}}$ decreases in time linearly and, for $\displaystyle a>-\left(\frac{r}{R}\right)^{N-1}$, $u_{|_{B_R(0) \setminus B_r(0)}}$ increases linearly up to the time $T_1$ where both match, while for $\displaystyle a<-\left(\frac{r}{R}\right)^{N-1}$, $u_{|_{B_R(0) \setminus B_r(0)}}$ also decreases linearly, with slow velocity than $u_{|_{B_r(0)}}$ does, up to the time $T_1$ where both match, and for $\displaystyle a=-\left(\frac{r}{R}\right)^{N-1}$,    $u_{|_{B_R(0) \setminus B_r(0)}}$  stays equal to $0$ up the time $T_1$ where $u_{|_{B_r(0)}}$ reaches the value $0$. At the time~$T_1$, the solution is flat on the ball $B_R(0)$,
$$u(T_1,x)=\frac{(a R^{N-1} + r^{N-1})r}{R^{N-1}(R+ar)}b.$$
From that time on, for $a\neq 0$ the solution is given by an evolving flat $$u(t,x)=\frac{N}{R}a(t-T_1)+\frac{(a R^{N-1} + r^{N-1})r}{R^{N-1}(R+ar)}b,$$
which increases linearly if $a>0$, decreases linearly if $a<0$; and, for the case $a=0$ the solution stays constantly equal to $\displaystyle \frac{r^N}{R^N}b$ (observe that in this case we have mass conservation).

2. Following the same technique it is now easy to see that if $u_0(x)=b\1_{B_R(0) \setminus B_r(0)}(x)$ then the solution of problem~\eqref{pbej1} for $a=1$ is given by
$$u(t,x)=   \frac{N}{r}t  \1_{B_r(0)}(x) +  \left(b+N \, \frac{R^{N-1} - r^{N-1}}{R^N - r^N}t\right)    \1_{B_R(0) \setminus B_r(0)}(x)$$
until the time $t$ at which $$\frac{N}{r}t=b+N \, \frac{R^{N-1} - r^{N-1}}{R^N - r^N}t,$$ observe that $\frac{N}{r}> N \, \frac{R^{N-1} - r^{N-1}}{R^N - r^N}$, from that time on the solution is flat and grows up linearly with velocity $\frac{N}{R}$.
\hfill $\blacksquare$
}
\end{remark}

Let us see now the case of a mixed boundary condition.

\begin{theorem}\label{nMixedE} Consider the problem
\begin{equation}\label{nNeumannProblemNew}
    \left\{
    \begin{array}{ll}
       u_t =  \Delta_1 u   &\hbox{in} \ \ (0,T) \times (B_R(0) \setminus  B_{r_0}(0)),
       \\[10pt] u= 0 \quad &\hbox{on} \ \ (0,T) \times \partial B_{r_0}(0),
        \\[10pt] \frac{Du}{\vert Du \vert} \cdot \nu= 0 \quad &\hbox{on} \ \ (0,T) \times \partial B_R(0),
         \\[10pt] u(0) = u_0,
    \end{array}
    \right.
\end{equation}
being  $u_0= b \1_{B_R(0) \setminus B_r(0)}$, with   $0 < r_0 \leq r < R$   and $b>0$.
 Then, the solution of problem~\eqref{nNeumannProblemNew} is given by:
\\
  (a) in the case $r=r_0$,
$$u(t,x) = \left\{ \begin{array}{l} \left(b - N \frac{r_0^{N-1}}{R^N - r_0^N} \, t \right) \1_{B_R(0) \setminus B_r(0)}(x)   \qquad \hbox{if} \ \ 0 \leq t \leq T_1, \\[10pt] \displaystyle 0   \hfill \hbox{if} \ \  t> T_1,     \end{array} \right.$$
being
$$
T_1=\frac{b}{N}\, \cdot \frac{R^N - r_0^N}{r_0^{N-1}};
$$
(b) in the case $r_0 < r<R$,
$$u(t,x) = \left\{ \begin{array}{l}\displaystyle\left(N\frac{r^{N-1}- r_0^{N-1}}{r^N - r_0^N} \, t \right) \,  \1_{B_r(0)\setminus B_{r_0}}(x) + \left(b - N \frac{r^{N-1}}{R^N - r^N} \, t \right) \1_{B_R(0) \setminus B_r(0)}(x) \\[8pt] \hfill \hbox{if} \ \ 0 \leq t \leq T_1, \\[12pt] \displaystyle \left(b - N \frac{r^{N-1}}{R^N - r^N} \, T_1 \right) -  N\frac{ r_0^{N-1}}{R^N - r_0^N} \, (t -T_1)   \hfill \hbox{if} \ \  t> T_1,     \end{array} \right.$$
being
$$
T_1=\frac{b}{N}\, \cdot \frac{(r^N - r_0^N)(R^N - r^N)}{R^N(r^{N-1} +r_0^{N-1}) -r_0^{N-1}(r^N+r_0^N)}.
$$

\end{theorem}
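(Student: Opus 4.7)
The plan is to proceed exactly as in the proof of Theorem~\ref{Nnohomeneo}: search for a solution that is piecewise constant in the radial variable, construct an explicit radial vector field $\z(t,x)=\rho(t,\Vert x\Vert)\,x/\Vert x\Vert$ realizing the Anzellotti pairing, and read off the ODEs for the unknown amplitudes from the divergence equation together with the boundary and pairing conditions. The key point is that now the inner boundary condition is Dirichlet with $g=0$: since the outward normal to $\Omega$ on $\partial B_{r_0}(0)$ is $-x/r_0$, the relaxed condition $[\z,\nu_\Omega]\in\mathrm{sign}(g-u)$ translates, whenever $u>0$ near $\partial B_{r_0}(0)$, into the algebraic requirement $\rho(r_0)=1$.

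For case (a), I would try $u(t,x)=\alpha(t)\,\chi_{B_R\setminus B_{r_0}}(x)$ with $\alpha(0)=b$. Solving $\mathrm{div}\,\z=\alpha'(t)$ in radial form gives $\rho(s)=\alpha'(t)s/N+C/s^{N-1}$; imposing the Neumann condition $\rho(R)=0$ and the Dirichlet condition $\rho(r_0)=1$ determines both $C$ and $\alpha'(t)=-Nr_0^{N-1}/(R^N-r_0^N)$, hence $T_1$ and the solution up to extinction. After extinction, $u\equiv0$ is trivially a solution.

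For case (b), I use the ansatz $u(t,x)=\alpha(t)\chi_{B_r\setminus B_{r_0}}+\beta(t)\chi_{B_R\setminus B_r}$ with $\alpha(0)=0<b=\beta(0)$, and look for $\z$ of the form $\rho(t,\Vert x\Vert)x/\Vert x\Vert$ continuous across $\partial B_r(0)$. In each annulus the divergence equation produces a linear ODE of the same type, giving $\rho$ with two unknown constants per region. I then impose four conditions: $\rho(r_0)=1$ (Dirichlet), $\rho(R)=0$ (Neumann), continuity of $\rho$ at $s=r$, and $\rho(r)=1$. This last equation comes from the pairing condition $(\z,Du)=|Du|$ restricted to the jump set $\partial B_r(0)$: since the jump of $u$ across $\partial B_r(0)$ has size $\beta-\alpha>0$ with normal $x/r$, and $\z$ is continuous there, the pairing equals $(\beta-\alpha)\rho(r)\,\mathcal{H}^{N-1}$, which matches $|Du|=(\beta-\alpha)\,\mathcal{H}^{N-1}$ precisely when $\rho(r)=1$. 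Solving the two 2-by-2 systems yields $\alpha'(t)=N(r^{N-1}-r_0^{N-1})/(r^N-r_0^N)$ and $\beta'(t)=-Nr^{N-1}/(R^N-r^N)$, and $T_1$ is determined by $\alpha(T_1)=\beta(T_1)$. At $t=T_1$ the solution is flat with common value $\alpha(T_1)$, and from there on case~(a) applies with that value as new initial datum, giving the stated formula for $t>T_1$.

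The verifications needed, in order of increasing delicacy, are: (i) global sign control $|\rho(t,s)|\le 1$ in each annulus, handled by an elementary monotonicity/polynomial analysis exactly as in the end of the proof of Theorem~\ref{Nnohomeneo}; (ii) the identity $(\z,Du)=|Du|$ as measures, which by Proposition~\ref{Anzelotti} reduces to $\int_\Omega(\z,Du)=\int_\Omega|Du|$ and then to a direct Green-formula computation using $\rho(r_0)=1$, $\rho(R)=0$, and $\rho(r)=1$; (iii) checking that the continuation at $t=T_1$ is consistent (so that what we construct is indeed a strong solution in $C([0,\infty);L^2)\cap W^{1,2}_{\rm loc}$). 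The main obstacle I expect is the pairing identity across the radial jump: one must ensure that $\z$ is genuinely continuous at $\partial B_r(0)$ with $\rho(r)=1$ so that the singular part of $(\z,Du)$ on the jump set equals $|Du|$ there, and simultaneously match the Dirichlet trace condition at $\partial B_{r_0}(0)$ via $\rho(r_0)=1$; uniqueness of solutions then follows from Theorem~\ref{CHARAT}(3).
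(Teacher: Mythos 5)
Your proposal follows essentially the same route as the paper's proof: the same piecewise-constant radial ansatz, the same radial field $\z=\rho(\Vert x\Vert)x/\Vert x\Vert$ with $\rho(r_0)=1$, $\rho(R)=0$ and $\rho(r)=1$, the same determination of $\alpha'$, $\beta'$ and $T_1$, and the same flat continuation after $T_1$ via case (a). Your local justification of $\rho(r)=1$ through the jump part of $(\z,Du)$ is just a pointwise version of the global Green's-formula verification the paper carries out, so the two arguments coincide in substance.
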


\begin{proof}   {\it (a)} Suppose that $r_0 = r$, in this case $u_0 = b\1_{B_R(0) \setminus B_{r_0}(0)}$, that is, the initial datum  is a positive constant in $\Omega=B_R(0) \setminus B_{r_0}(0)$. We look for a solution of the form
$$u(t,x) =  \gamma(t) \1_{B_R(0) \setminus B_{r_0}(0)}$$
with $\gamma (0) = b$ and $\gamma(t) > 0$ on some interval of time $(0,T_1)$, where $T_1$ is the time at which $\gamma (T_1)= 0$. Let us see that we can solve
\begin{equation}\label{nE1Ecample20}
\gamma'(t) = {\rm div}(\z(t)) \quad \hbox{in} \ \ B_R(0) \setminus B_{r_0}(0),
\end{equation}
\begin{equation}\label{nE3Ecample2D0}
[\z(t), \nu_{B_R \setminus B_{r_0}}] \in {\rm sign}(-u(t)) = -1 \quad \hbox{in} \ \  \partial B_{r_0}(0),
\end{equation}
\begin{equation}\label{nE3Ecample20}
[\z(t), \nu_{B_R \setminus B_{r_0}}]= 0 \quad \hbox{in} \ \ \partial B_R(0),
\end{equation}
for  some $\z(t) \in L^\infty(B_R(0) \setminus B_{r_0}(0))$, with $\Vert \z(t) \Vert_\infty \leq 1$. Note \eqref{nE3Ecample2D0} and \eqref{nE3Ecample20} means that   $$[\z(t), \nu_{B_{r_0}}] = 1 \ \ \hbox{in} \ \  \partial B_{r_0}(0), \quad \hbox{and} \quad [\z(t), \nu_{B_R}] = 0 \ \  \hbox{in} \ \partial B_R(0).$$
Integrating the equation  \eqref{nE1Ecample20} in $B_R(0) \setminus B_{r_0}(0)$ we obtain
$$\gamma'(t) \mathcal{L}^N(B_R(0) \setminus B_{r_0}(0)) = \int_{B_R(0) \setminus B_{r_0}(0)} {\rm div}(\z(t)) dx = \int_{\partial (B_R(0) \setminus  B_{r_0}(0))} [\z(t), \nu_{B_R \setminus B_{r_0}}]$$ $$= -\int_{\partial B_R(0)} [\z(t), \nu_{B_{r_0}(0)}] d\mathcal{H}^{N-1}  =  -\mathcal{H}^{N-1}(\partial B_{r_0}(0)).$$
Thus, $$\gamma'(t) =-N\frac{ r_0^{N-1}}{R^N - r_0^N},$$ and, therefore $$\gamma(t) =  b - N\frac{ r_0^{N-1}}{R^N - r_0^N} \, t.$$

To construct $\z(t)$ in $B_R(0) \setminus B_{r_0}(0)$ we shall look for $\z(t)$ of the form
$\z(t)(x) = \rho(\Vert x \Vert) \frac{x}{\Vert x \Vert}$, such that $${\rm div}(\z(t)) = \gamma'(t)\quad \hbox{in} \ \ B_R(0) \setminus B_{r_0}(0) ,$$ $$\rho(r_0) = -1, \quad \rho(R) =0.$$

Then, we must have
\begin{equation}\label{nE6Ecample0}
\rho'(s) + \rho(s) \, \frac{N-1}{s} =  -N \frac{r_0^{N-1}}{R^N - r_0^N}, \quad r_0< s < R.
\end{equation}
Hence
$$\rho(s) =  -\frac{ r_0^{N-1}}{R^N - r_0^N} \,s + C \, \frac{1}{s^{N-1}}.$$
Since $\rho(R)=0$ we get
$$C = \frac{r_0^{N-1} R^N}{R^N - r_0^N},$$
so,
$$\rho(s) =  -\frac{ r_0^{N-1}}{R^N - r_0^N} \,s +  \frac{r^{N-1} R^N}{R^N - r_0^N} \, \frac{1}{s^{N-1}},$$
and we have
$$\rho(r_0) =  -\frac{ r_0^{N-1}}{R^N - r_0^N} \,r_0 +  \frac{r^{N-1} R^N}{R^N - r_0^N} \, \frac{1}{r_0^{N-1}} = -1.$$

Consequently, for $t \in (0,T_1)$, the vector field $\z(t)$  given by
$$ \z(t)(x)=   \displaystyle\left(-\frac{ r_0^{N-1}}{R^N - r_0^N} \Vert x \Vert +  \frac{r^{N-1} R^N}{R^N - r_0^N}  \frac{1}{\Vert x \Vert^{N-1}}\right)\frac{x}{\Vert x \Vert}\quad \hbox{if} \ \ x \in B_r(0) \setminus B_{r_0}(0)$$
and
$$u(t,x) =  b - N\frac{ r_0^{N-1}}{R^N - r_0^N} \, t \quad x \in B_R \setminus B_{r_0} $$
satisfy~\eqref{nE1Ecample20}, \eqref{nE3Ecample2D0} and~\eqref{nE3Ecample20}.
 Now
$$u(t,x)=0 \iff t= \frac{b}{N}\, \cdot \frac{(R^N - r_0^N)}{r_0^{N-1}},$$
thus
$$T_1 = \frac{b}{N}\, \cdot \frac{(R^N - r_0^N)}{r_0^{N-1}}.$$

We then have $$u_t(t,.)={\rm div}(\z(t)) \quad \hbox{in} \ \ {B_R(0)\setminus B_{r_0}(0)}.$$
Moreover, since $Du(t) =0$, we have
\begin{equation}\label{nE4Ecample2=}
(\z(t), Du(t)) = \vert Du(t) \vert \quad \hbox{as measures}.
\end{equation}
Therefore $u(t)$ is the solution of problem \eqref{nNeumannProblemNew} for $0 < t < T_1$. Now $u(T_1,x) = 0$ for all $x \in B_R \setminus B_{r_0}$, and consequently, $u(t,x) = 0$ for all $x \in B_R \setminus B_{r_0}$  is the solution of problem \eqref{nNeumannProblemNew} for $t \geq T_1$.

\noindent  {\it (b)}   Suppose now that  $r_0 < r<R$. As in the proof of Theorem \ref{Nnohomeneo}, we look for a solution of the form
$$u(t,x) = \alpha (t) \1_{B_r(0) \setminus B_{r_0}(0)} + \beta(t) \1_{B_R(0) \setminus B_r(0)}$$
with $\alpha(t) < \beta(t)$ on some interval of time $(0,T_1)$, where $T_1$ is the time at which $\alpha (T_1)=  \beta(T_1)$, and with $\alpha(0) = 0$, $\beta(0) =b$. Let us see that we can solve
\begin{equation}\label{nE1Ecample2}
\alpha'(t) = {\rm div}(\z(t)) \quad \hbox{in} \ \ B_r(0) \setminus B_{r_0}(0),
\end{equation}
\begin{equation}\label{nE2Ecample2}
\beta'(t) = {\rm div}(\z(t)) \quad \hbox{in} \ \ B_R(0) \setminus B_r(0),
\end{equation}
\begin{equation}\label{nE3Ecample2D}
  [\z(t), \nu_{B_R \setminus B_{r_0}}]  \in {\rm sign}(-u(t)) \quad \hbox{in} \ \  \partial B_{r_0}(0),
\end{equation}
\begin{equation}\label{nE3Ecample2}
  [\z(t), \nu_{B_R \setminus B_{r_0}}]  = 0 \quad \hbox{in} \ \ \partial B_R(0),
\end{equation}
for  some $\z(t) \in L^\infty( B_R(0) \setminus  B_{r_0}(0))$, with $\Vert \z(t) \Vert_\infty \leq 1$,  continuous at $\partial B_r(0)$. For
$$\z(t)(x) :=\frac{x}{r} \quad \hbox{for} \ x \in \partial B_r(0),$$
and $$-[\z(t), \nu_{B_{r_0}}] = a \in {\rm sign}(-u(t)), $$
integrating the equation  \eqref{nE1Ecample2} in $B_r(0) \setminus B_{r_0}(0)$ we obtain
$$\alpha'(t) \mathcal{L}^N(B_r(0) \setminus B_{r_0}(0)) = \int_{B_r(0) \setminus B_{r_0}(0)} {\rm div}(\z(t)) dx $$ $$= \int_{\partial (B_r(0)\setminus B_{r_0})} [\z(t), \nu_{(B_r(0)\setminus B_{r_0})}] d\mathcal{H}^{N-1} $$ $$=\int_{\partial B_r(0)} [\z(t), \nu_{B_r(0)}] d\mathcal{H}^{N-1} - \int_{\partial B_{r_0}(0)} [\z(t), \nu_{B_{r_0}(0)}] d\mathcal{H}^{N-1} $$ $$= \mathcal{H}^{N-1}(\partial B_r(0)) + a \mathcal{H}^{N-1}(\partial B_{r_0}(0)).$$
Thus, $$\alpha'(t) = N\frac{r^{N-1}+ a r_0^{N-1}}{r^N - r_0^N},$$ and, therefore $$\alpha(t) = N\frac{r^{N-1}+ a r_0^{N-1}}{r^N - r_0^N} \, t.$$
This implies that  $\alpha (t) > 0$, and necessarily $a = -1$, so
$$\alpha(t) = N\frac{r^{N-1}- r_0^{N-1}}{r^N - r_0^N} \, t.$$
Integrating now \eqref{nE2Ecample2} in $B_R(0) \setminus B_r(0)$, we get
$$\beta'(t)  \mathcal{L}^N(B_R(0) \setminus B_r(0)) =  \int_{B_R(0) \setminus B_r(0)} {\rm div}(\z(t)) dx $$ $$ = \int_{\partial( B_R(0) \setminus B_r(0))} [\z(t), \nu_{B_R(0) \setminus B_r(0)}] d\mathcal{H}^{N-1} $$$$=  \int_{\partial B_R(0)} [\z(t), \nu_{B_R(0)}] d\mathcal{H}^{N-1} -  \int_{\partial B_r(0)} [\z(t), \nu_{B_r(0)}] d\mathcal{H}^{N-1} $$ $$= - \mathcal{H}^{N-1}(\partial B_r(0)).$$
Thus,
$$\beta'(t) = -N \frac{r^{N-1}}{R^N - r^N},$$
and therefore
$$\beta(t) = b - N \frac{r^{N-1}}{R^N - r^N} \, t.$$
Note that $T_1$ must be given by
$$
T_1\left(N\frac{r^{N-1}- r_0^{N-1}}{r^N - r_0^N} + N \frac{r^{N-1}}{R^N - r^N} \right) = b,
$$
and it is always attained,
$$
T_1=\frac{b}{N}\, \cdot \frac{(R^N - r^N)(r^N - r_0^N)}{R^N(r^{N-1} -r_0^{N-1}) +r^{N-1}r_0^{N-1}(r -r_0)}.
$$

 Let us now construct $z(t)$ in in $B_R(0) \setminus B_r(0)$ and in $B_r(0) \setminus B_{r_0}(0)$, continuous in $\partial B_r(0)$.

To construct $\z(t)$ in $B_R(0) \setminus B_r(0)$ we shall look for $\z(t)$ of the form
$\z(t)(x) = \rho(\Vert x \Vert) \frac{x}{\Vert x \Vert}$, such that $${\rm div}(\z(t)) = \beta'(t)\quad \hbox{in} \ \ B_R(0) \setminus B_r(0) ,$$ $$\rho(r) = 1, \quad \rho(R) =0.$$
Then, we must have
\begin{equation}\label{nE6Ecample}
\rho'(s) + \rho(s) \, \frac{N-1}{s} =  -N \frac{r^{N-1}}{R^N - r^N}, \quad r< s < R.
\end{equation}
Hence
$$\rho(s) =  -\frac{ r^{N-1}}{R^N - r^N} \,s + C \, \frac{1}{s^{N-1}}.$$
Since $\rho(r)=1$ we get
$$C = \frac{r^{N-1} R^N}{R^N - r^N},$$
so,
$$\rho(s) =  -\frac{ r^{N-1}}{R^N - r^N} \,s +  \frac{r^{N-1} R^N}{R^N - r^N} \, \frac{1}{s^{N-1}}.$$
Note that $\rho(R)=0$ and we have that~\eqref{nE3Ecample2} is indeed true.
 Moreover, since $\rho$ is decreasing in $[r,R]$ and $\rho(r) = 1$ and $\rho(R) =0$, we have $\rho(s) \in [0,1]$ for $s \in [r,R]$ and consequently $\Vert \z(t) \Vert_\infty \leq 1$.

To construct $\z(t)$ in $B_r(0) \setminus B_{r_0}(0)$, again  we   look for $\z(t)$ of the form
$\z(t)(x) = \rho(\Vert x \Vert) \frac{x}{\Vert x \Vert}$, but now such that $${\rm div}(\z(t)) = \alpha'(t)\quad \hbox{in} \ \ B_r(0) \setminus B_{r_0}(0),$$ $$\rho(r) = 1, $$
so that it coincides on $\partial B_r(0)$ with the field $\z(t)$  obtained on $B_R(0)\setminus B_r(0)$,  and with
$$  \rho(r_0) =1.$$
Then, we must have
\begin{equation}\label{nE6EcampleN}
\rho'(s) + \rho(s) \, \frac{N-1}{s} =   N\frac{r^{N-1}- r_0^{N-1}}{r^N - r_0^N}, \quad r_0< s <r.
\end{equation}
Hence
$$\rho(s) = \frac{r^{N-1}- r_0^{N-1}}{r^N - r_0^N} \,s + C \, \frac{1}{s^{N-1}}.$$
Now, $\rho(r)=1$ implies that
$$C = \left(1 - \frac{r^{N-1}-r_0^{N-1}}{r^N - r_0^N} \,r\right) r^{N-1} = \frac{ r - r_0}{r^N - r_0^N} \, r_0^{N-1} \,r^{N-1},$$
so
$$\rho(s) = \frac{r^{N-1}- r_0^{N-1}}{r^N - r_0^N} \,s + \frac{ r - r_0}{r^N - r_0^N} \, r_0^{N-1} \,r^{N-1} \, \frac{1}{s^{N-1}}.$$
Now, $\rho(r_0) =1$ and then~\eqref{nE3Ecample2D} is indeed satisfied.
  Moreover,  it is easy to see that $ |\rho(s)|\le 1.$

Consequently, for $t \in (0,T_1)$, the vector field $\z(t)$  given by
$$ \z(t)(x)= \left\{  \begin{array}{l}  \displaystyle\left(\frac{r^{N-1}- r_0^{N-1}}{r^N - r_0^N} \Vert x \Vert + \frac{ (r - r_0)r_0^{N-1} r^{N-1} }{r^N - r_0^N} \, \frac{1}{\Vert x \Vert^{N-1}}\right)\frac{x}{\Vert x \Vert}\\[14pt]\hfill\hbox{if} \ \ x \in B_r(0) \setminus B_{r_0}(0), \\[10pt]  \displaystyle\left(-\frac{ r^{N-1}}{R^N - r^N} \Vert x \Vert +  \frac{r^{N-1} R^N}{R^N - r^N} \, \frac{1}{\Vert x \Vert^{N-1}}\right)\frac{x}{\Vert x \Vert}  \qquad \hfill\hbox{if} \ \ x \in B_R(0) \setminus B_r(0),\end{array} \right.$$
and
$$u(t,x) = \left(N\frac{r^{N-1}- r_0^{N-1}}{r^N - r_0^N} \, t \right) \,  \1_{B_r(0)\setminus B_{r_0}}(x) + \left(b - N \frac{r^{N-1}}{R^N - r^N} \, t \right) \1_{B_R(0) \setminus B_r(0)}(x)$$
satisfy~\eqref{nE1Ecample2}, \eqref{nE2Ecample2}, \eqref{nE3Ecample2D} and~\eqref{nE3Ecample2}.

From~\eqref{nE1Ecample2} and~\eqref{nE2Ecample2}, and the fact that $\z(t)$ is continuous on $\partial B_r(0)$ we get that
\begin{equation}\label{nE1}u_t(t,.)={\rm div}(\z(t)) \quad \hbox{in} \ \ {B_R(0)\setminus B_{r_0}(0)}.
\end{equation}
Let us see now that \begin{equation}\label{nE4Ecample2}
(\z(t), Du(t)) = \vert Du(t) \vert \quad \hbox{as measures}.
\end{equation}
  By Proposition \ref{Anzelotti} it is enough to proof
 $$\int_{B_R(0)\setminus B_{r_0}(0)} (\z(t), Du(t)) = \int_{B_R(0)\setminus B_{r_0}(0)} |Du(t)|.$$
 Now, this follows since $\z$ is continuous at $\partial B_{r}(0)$. Indeed, applying Green`s formula, we have
$$\int_{B_R(0)\setminus B_{r_0}(0)} (\z(t), Du(t)) $$ $$=-\int_{B_R(0)\setminus B_{r_0}(0)}{\rm div}(\z(t))(x)u(x) \, dx
+\int_{\partial B_{R}(0)}[\z(t),\nu_{B_{R}(0)}]u -\int_{\partial B_{r_0}(0)}[\z(t),\nu_{B_{r_0}(0)}]u$$ $$= -\int_{B_R(0)\setminus B_{r_0}(0)}{\rm div}(\z(t))(x)u(x) \, dx+ \alpha(t) \mathcal{H}^{N-1}(\partial B_{r_0}(0) ).$$
On the other hand,
$$-\int_{B_R(0)\setminus B_{r_0}(0)} {\rm div}(\z(t))(x)u(x) \, dx $$ $$=-\int_{B_R(0)\setminus B_{r}(0)} {\rm div}(\z(t))(x)u(x)\, dx
-\int_{B_r(0)\setminus B_{r_0}(0)} {\rm div}(\z(t))(x)u(x).$$
Now,
$$-\int_{B_R(0)\setminus B_{r}(0)} {\rm div}(\z(t))(x)u(x) \, dx $$ $$=\int_{B_R(0)\setminus B_{r}(0)}(\z(t),Du)-
\int_{\partial B_R(0)}[\z(t),\nu_{B_R(0)}]u+ \int_{\partial B_r(0)}[\z(t),\nu_{B_r(0)}]u$$
$$=\beta(t)\mathcal{H}^{N-1}( \partial B_r(0) ) $$
and
$$-\int_{B_r(0)\setminus B_{r_0}(0)} {\rm div}(\z(t))(x)u(x) \, dx $$ $$=\int_{B_r(0)\setminus B_{r_0}(0)}(\z(t),Du)-
\int_{\partial B_r(0)}[\z(t),\nu_{B_r(0)}]u+ \int_{\partial B_{r_0}(0)}[\z(t),\nu_{B_{r_0}}]u$$
$$=-\alpha(t)\mathcal{H}^{N-1}( \partial B_r(0) )-\alpha(t)\mathcal{H}^{N-1}(\partial B_{r_0}(0) ) .$$
Therefore
$$\int_{B_R(0)\setminus B_{r_0}(0)} (\z(t), Du(t)) =(\beta(t)-\alpha(t))\mathcal{H}^{N-1}(\partial B_r(0)) = \int_{B_R(0)\setminus B_{r_0}(0)}|Du| .$$

We have that at $t=T_1$,
  $$u(T_1,x) = \left(b - N \frac{r^{N-1}}{R^N - r^N} \, T_1 \right) \quad \hbox{for all} \ x \in B_R(0)\setminus B_{r_0}(0). $$
Then, by (a), we have
$$u(t,x) =  \left(b - N \frac{r^{N-1}}{R^N - r^N} \, T_1 \right) -  N\frac{ r_0^{N-1}}{R^N - r_0^N} \, (t -T_1)$$
 is the solution of problem \eqref{nNeumannProblemNew} for $t \geq T_1$.
\end{proof}

\appendix
\section{Reminder on some basic  tools in nonlinear semigroup theory}

 We collect in this Appendix some   results concerning the nonlinear semigroup Theory. For more details one can consult  \cite{BCrP}, \cite{Barbu}, \cite{CrandallLiggett} or \cite{BrezisBook}.

Let $X$ be a Banach space,     $W_{loc}^{1,1}(0, T; X)$ denotes the space of all locally absolutely continuous functions $u: [0,T] \rightarrow X$ which  are differentiable almost every where on $[0,T]$.  We have that $u \in W^{1,1}(0, T; X)$  if and only if there exists a function  $g \in L^1(0,T; X)$ such that
$$u(t) = u(a) + \int_a^t g(s) ds \quad \hbox{for} \   a,t \in [0,T],$$
and then $u'(t) = g(t)$ almost every where. When the Banach space $X$ has {\it the Radom-Nikodym property}, for instance when $X$ is reflexive, then absolutely continuous functions are differentiable almost every where.

Let $A : X \rightarrow 2^X$ be an operator and consider the {\it abstract Cauchy problem}
\begin{equation}\label{CP}
	\left\{ \begin{array}{ll}
		u^{\prime}(t) + Au(t) \ni 0 \ \ \ \ \ {\rm on} \ \ t \in (0,
		T), \\[10pt] u(0) = x.
	\end{array} \right.
\end{equation}

\begin{definition}\rm A function $u$ is called a {\it strong solution} of problem~\eqref{CP} if
	$$
	\left\{ \begin{array}{lll}
		u \in C([0, T]; X) \cap W_{loc}^{1,1}(0, T; X), \\[10pt] u^{\prime} +
		Au(t) \ni 0 \ \ {\rm a.e.} \ t
		\in (0, T),
		\\[10pt] u(0) = x.
	\end{array} \right.
	$$
\end{definition}

\begin{definition}{\rm  Let $\varepsilon > 0$. An $\varepsilon$-{\it
			discretization} of $$u^{\prime}+ Au \ni 0$$ on $[0,T]$ consists of a partition $t_0 < t_1 < \cdots < t_N$  such that,
		$$t_i - t_{i-1} \leq \varepsilon, \ i = 1,
		\ldots , N, \ \ t_0 \leq \varepsilon \ \ {\rm and} \ \ T - t_N \leq \varepsilon.
		$$
		
		We will denote this discretization by $D_A(t_0, \ldots , t_N)$.

		A {\it solution of the discretization} $D_A(t_0, \ldots , t_N$ is a piecewise
		constant function $v : [t_0, t_N] \rightarrow X$ whose values $v(t_0) = v_0$, $v(t) = v_i$ for $t
		\in ]t_{i-1}, t_i]$, $i = 1, \ldots , N$ satisfy
		\begin{equation}\label{4E3}
			\frac{v_i - v_{i-1}}{t_i - t_{i-1}} + Av_i \ni 0, \ \ \ \ i = 1, \ldots , N.
		\end{equation}
		
		A {\it mild solution} of problem \eqref{CP} is a continuous function $u \in C([0, T]; X)$ such
		that, $v(0) = x$ and for each $\varepsilon > 0$ there is $D_A(t_0, \ldots , t_N)$ an $\varepsilon$-discretization of $u^{\prime}
		+ Au \ni 0$ on $[0,T]$ which has a solution $v$ satisfying
		$$\Vert u(t) - v(t) \Vert \leq \varepsilon \ \ \ \ \ {\rm for} \ \ t_0 \leq t \leq t_N.$$
	}
\end{definition}

It is well known that {\it every strong solution is a mild solution}. the reciprocal, in general, is not true

In order to have uniqueness of mild solutions we need to introduce the following class of operators.

\begin{definition} {\rm An operator $A$ in $X$ is {\it accretive} if
		$$\Vert x - \hat{x} \Vert \leq \Vert x - \hat{x} + \lambda (y - \hat{y}) \Vert \ \ \ {\rm
			whenever} \ \ \lambda > 0 \ \ {\rm and} \ \ (x, y), (\hat{x},
		\hat{y}) \in A.$$
		That is,   $A$ is accretive if and only if $(I + \lambda A)^{-1}$ is a singlevalued nonexpansive
		map for $\lambda \geq 0$.
	}
\end{definition}

\begin{definition}\label{maccretivity} {\rm An operator $A$ is called m-{\it
			accretive}\index{m-accretive operator} in
		$X$ if and only if $A$ is accretive and $R(I + \lambda A) = X$ for all $\lambda > 0$.
	}
\end{definition}

We have the following existence and uniqueness result.

\begin{theorem}\label{EUm-accretive}  Let $A$ be an operator in $X$ and $x_0
	\in \overline{D(A)}$. If $A$ is m-accretive, then the problem
	$$u^{\prime} + A u \ni f \ \ {\rm on} \ \ [0, T], \ \ \ u(0) = x_0$$
	has a unique mild solution $u$ on  $[0, T]$.   Moreover we have the Crandall-Ligett's exponential formula
	\begin{equation}\label{CRLExp}
		u(t) = \lim_{n \to \infty}\left( I + \frac{t}{n} A \right)^{-n} u_0.
	\end{equation}
\end{theorem}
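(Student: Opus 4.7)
The plan is to establish the three pillars of the Crandall--Liggett theorem: solvability of the implicit Euler scheme, convergence via the fundamental Crandall--Liggett estimate, and uniqueness via accretivity. Since $A$ is m-accretive, for every $\lambda > 0$ the resolvent $J_\lambda := (I+\lambda A)^{-1}$ is a single-valued nonexpansive operator defined on all of $X$, which immediately makes every time step of any $\varepsilon$-discretization uniquely solvable. Given a partition $t_0 < t_1 < \cdots < t_N$ of $[0,T]$ with mesh $\le \varepsilon$, one defines $v_0 = x_0$ and $v_i = J_{t_i - t_{i-1}} v_{i-1}$ for the homogeneous case; the forcing is incorporated via $v_i = J_{t_i-t_{i-1}}\!\bigl(v_{i-1} + (t_i - t_{i-1}) f_i\bigr)$ with appropriate step-averages $f_i$ of $f$. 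I focus first on $f=0$ and $x_0 \in D(A)$, which is what is needed for the exponential formula.

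The heart of the argument is the Crandall--Liggett estimate: for $x \in D(A)$, any $y \in Ax$, positive reals $\lambda, \mu$, and non-negative integers $n, m$,
\begin{equation}
\|J_\lambda^n x - J_\mu^m x\| \;\le\; \|y\|\,\sqrt{(n\lambda - m\mu)^2 + n\lambda\,|\lambda - \mu| + m\mu\,|\mu - \lambda|}\,.
\end{equation}
I would first derive the resolvent identity $J_\mu z = J_\lambda\!\bigl(\tfrac{\mu}{\lambda} z + \tfrac{\lambda-\mu}{\lambda} J_\mu z\bigr)$ and combine it with nonexpansivity of $J_\lambda$ to obtain a convex-combination recurrence for $a_{n,m} := \|J_\lambda^n x - J_\mu^m x\|$ in terms of $a_{n-1,m}$, $a_{n,m-1}$, and $a_{n-1,m-1}$. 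A careful double induction on $(n,m)$, verified by checking that the square root in the displayed bound satisfies the same recurrence, then yields the estimate. Specializing to $\lambda = t/n$, $\mu = t/m$ shows that $\{J_{t/n}^n x_0\}_n$ is Cauchy in $X$, and the bound is uniform in $t$ on compact subsets of $[0,T]$, so the limit $u(t)$ is continuous.

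For general $x_0 \in \overline{D(A)}$, nonexpansivity of $J_{t/n}^n$ extends $u$ by density: if $x_k \to x_0$ with $x_k \in D(A)$, then $\|J_{t/n}^n x_0 - J_{t/n}^n x_k\| \le \|x_0 - x_k\|$ uniformly in $n$ and $t$, so the limits in $n$ and in $k$ may be interchanged. To verify that the continuous function $u$ constructed in this way is actually a mild solution in the sense of the definition, for given $\varepsilon > 0$ I would pick $n$ with $t/n < \varepsilon$; the piecewise constant interpolant $v_n$ with step $t/n$ is a genuine solution of an $\varepsilon$-discretization and satisfies $\|u - v_n\|_{C([0,T];X)} \le \varepsilon$ by the estimate of the previous paragraph. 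The nonhomogeneous case runs in parallel, with the recurrence adapted to include the averaged source $f_i$ and with the Cauchy property established by the same kind of double-induction bound, now picking up an extra term controlled by $\|f\|_{L^1(0,T;X)}$.

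Uniqueness is the cleanest ingredient: if $u_1, u_2$ are two mild solutions with the same initial datum, approximate each by step-$\varepsilon$ discretization solutions $v_1^\varepsilon, v_2^\varepsilon$ built with common partitions. At each step, nonexpansivity of $J_h$ on $X$ gives $\|v_1^\varepsilon(t_i) - v_2^\varepsilon(t_i)\| \le \|v_1^\varepsilon(t_{i-1}) - v_2^\varepsilon(t_{i-1})\|$; iterating and then letting $\varepsilon \to 0$ yields $u_1 = u_2$. The main obstacle is the Crandall--Liggett estimate itself: identifying the correct two-parameter majorant and executing the double induction without losing sharpness is the delicate technical core, whereas everything else reduces to systematic exploitation of nonexpansivity of the resolvent.
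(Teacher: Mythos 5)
This theorem is stated in the paper's appendix as classical background from nonlinear semigroup theory, with no proof given — the reader is referred to \cite{BCrP}, \cite{Barbu}, \cite{CrandallLiggett} and \cite{BrezisBook}. Your outline is the standard Crandall--Liggett argument from those references (resolvent solvability of the implicit Euler scheme, the two-parameter resolvent estimate proved by double induction from the resolvent identity, extension to $\overline{D(A)}$ by nonexpansivity, and uniqueness of mild solutions via nonexpansivity of the discrete schemes), and it is correct in all essentials; the only caveat is that the precise majorant in the Crandall--Liggett estimate appears in the literature as a sum of two square roots rather than the single one you wrote, but your version is of the right order and suffices for the Cauchy property, as you acknowledge when flagging the double induction as the delicate step.
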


In general every strong solution is a mild solution. We have the following regularity result.
\begin{theorem}\label{regularity} Suppose that $A$ is an $m$-accretive operator in $X$ and $u$ is a mild solution of
	$$u^{\prime} + A u \ni f \ \ {\rm on} \ \ [0, T], \ \ \ u(0) = x_0.$$
	
	If $u \in  W_{loc}^{1,1}(0, T; X)$  then $u$ is a strong solution.
	
	If $X$ has the Radom-Nikodym property and $x \in D(A)$ then   $u$ is a strong solution
\end{theorem}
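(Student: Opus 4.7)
The plan is to derive both claims from the integral accretivity inequality satisfied by every mild solution, which is the standard link between the discretized and continuous formulations. Specifically, from the $\varepsilon$-discretization scheme and accretivity of $A$, one shows that the mild solution $u$ of the inclusion satisfies, for every pair $(v,w)\in A$ and every $0\le s\le t\le T$,
\begin{equation}
\|u(t)-v\|\;\le\;\|u(s)-v\|+\int_s^t [u(\tau)-v,\,f(\tau)-w]_+\,d\tau,
\end{equation}
where $[\cdot,\cdot]_+$ denotes the right semi-inner product on $X$. This is obtained by applying accretivity step-by-step to the implicit Euler scheme $(v_i-v_{i-1})/(t_i-t_{i-1})+Av_i\ni f_i$, and then passing to the uniform limit.

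For the first conclusion, assume $u\in W^{1,1}_{\mathrm{loc}}(0,T;X)$, so $u$ is a.e. differentiable on $(0,T)$ with $u'\in L^1_{\mathrm{loc}}$. At any common Lebesgue point $t$ of $u'$ and $f$, divide the above integral inequality by $t-s$ and let $s\uparrow t$; upper semicontinuity of $[\cdot,\cdot]_+$ in its second argument yields
\begin{equation}
[u(t)-v,\,-u'(t)+f(t)-w]_+\;\le\;0 \qquad \forall\,(v,w)\in A.
\end{equation}
This says exactly that the pair $(u(t),\,f(t)-u'(t))$ is \emph{dissipatively compatible} with every element of $A$, and therefore lies in the accretive closure of $A$. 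Since $A$ is $m$-accretive it is maximal accretive, so $u(t)\in D(A)$ and $f(t)-u'(t)\in A u(t)$ for a.e.\ $t$. Combined with $u\in C([0,T];X)\cap W^{1,1}_{\mathrm{loc}}$, this is exactly the definition of a strong solution.

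For the second conclusion, assume $x_0\in D(A)$ and that $X$ has the Radon-Nikodym property. I would first establish, at the level of the discretization, a uniform Lipschitz estimate $\|u(t)-u(s)\|\le C|t-s|$: pick $(x_0,y_0)\in A$, apply accretivity to compare consecutive iterates $v_i,v_{i+1}$ in the implicit scheme, to obtain $\|v_{i+1}-v_i\|/(t_{i+1}-t_i)\le\|y_0\|+\|f\|_\infty$ (in the autonomous case $f=0$ one simply gets $\le \|y_0\|$), which survives the passage to the mild limit. Hence $u$ is a Lipschitz curve $[0,T]\to X$. By the defining property of the Radon-Nikodym property, every Lipschitz map into $X$ is a.e. differentiable and in fact lies in $W^{1,\infty}([0,T];X)\subset W^{1,1}_{\mathrm{loc}}(0,T;X)$. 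Therefore the first conclusion applies and $u$ is a strong solution.

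The delicate point — and where I expect the main technical friction — is the passage from the integral accretivity inequality to the pointwise differential inclusion at a.e.\ $t$: the use of the semi-inner product $[\cdot,\cdot]_+$ needs care (it is only upper semicontinuous, not continuous), and the identification of $(u(t),f(t)-u'(t))$ with an actual element of the graph of $A$ rests crucially on maximal accretivity of $A$, which is available here because $A$ is $m$-accretive. The other ingredients — the integral estimate from the Crandall-Liggett construction, the Lipschitz control for initial data in $D(A)$, and the a.e.\ differentiability of Lipschitz curves into a Radon-Nikodym space — are classical and enter only as cited black boxes.
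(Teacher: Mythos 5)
The paper does not prove this statement: it appears in the appendix as a recalled classical result, with the reader referred to \cite{BCrP}, \cite{Barbu} and \cite{BrezisBook}. So the only meaningful comparison is with the standard literature proof, and your architecture is exactly that one: (i) mild solutions are integral solutions in B\'enilan's sense; (ii) if $u\in W^{1,1}_{\rm loc}$, differentiate the integral inequality at a.e.\ $t$ to see that the pair $(u(t),f(t)-u'(t))$ can be adjoined to $A$ without destroying accretivity, and invoke maximality of the $m$-accretive $A$ to conclude $f(t)-u'(t)\in Au(t)$; (iii) for $x_0\in D(A)$, the resolvent contraction gives $\|v_i-v_{i-1}\|\le (t_i-t_{i-1})\|y_0\|$ for $y_0\in Ax_0$, hence $u$ is Lipschitz, and the Radon--Nikodym property upgrades this to $u\in W^{1,\infty}\subset W^{1,1}_{\rm loc}$, reducing to (ii). Part (iii) is complete as a sketch (with the caveat, which you implicitly acknowledge, that for nonzero $f$ the Lipschitz bound requires $f$ better than $L^1$, e.g.\ $f\in BV(0,T;X)$ or $f\equiv 0$ as in this paper's application).

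Two points in step (ii) need repair, and you flagged the right neighbourhood without landing on the fix. First, the displayed conclusion has the wrong sign: to adjoin $(u(t),f(t)-u'(t))$ to $A$ and preserve accretivity you must show $[u(t)-v,\,f(t)-u'(t)-w]_+\ \ge\ 0$ for all $(v,w)\in A$, not $\le 0$; with $\le 0$ maximal accretivity gives you nothing. Second, the limit should be taken from the right, i.e.\ apply the integral inequality on $[t,t+h]$ and let $h\downarrow 0$: at a point of differentiability of $u$ this produces $[u(t)-v,u'(t)]_+\le[u(t)-v,f(t)-w]_+$, and then the subadditivity estimate $[x,a+b]_+\ge[x,a]_+-[x,-b]_+$ with $a=f(t)-w$, $b=-u'(t)$ yields the required nonnegativity. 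Letting $s\uparrow t$ as you propose only controls the weaker left bracket $[u(t)-v,u'(t)]_-$, which is not sufficient to close this chain. Once the limit is taken from the correct side, the exceptional null set (non-differentiability points of $u$ and non-Lebesgue points of $f$) is independent of $(v,w)$, so the argument is uniform over the graph of $A$ and the proof is complete.
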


In the particular case that the Banach space is a Hilbert space $(H, ( \  | \ ))$ be a Hilbert space the accretivity of an operator $A$ is equivalent to its monotonia, i.e., $A$ is accretive if and only if $A$ is {\it monotone} in the sense that
\begin{equation}\label{3E4}
	(x - \hat{x} | y - \hat{y}) \geq 0 \ \ \ \ \ {\rm for \ all} \ \ (x, y), (\hat{x}, \hat{y}) \in
	A.
\end{equation}

In the hilbertian framework, we have the following result.

\begin{theorem}({\bf Minty's Theorem.})\label{Minty} Let $H$ be a Hilbert space and $A$
	an accretive operator in $H$. Then, $A$ is m-accretive if and only if $A$ is maximal monotone.
\end{theorem}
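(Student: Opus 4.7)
The plan is to prove the equivalence by treating each implication separately. The direction ``m-accretive $\Rightarrow$ maximal monotone'' is a short algebraic consequence of the range condition, while the converse contains the substantive content of the theorem.

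For the easy direction I would argue as follows. Suppose $A$ is m-accretive, and let $(x_0,y_0)\in H\times H$ be monotonically related to $A$, i.e.\ $\langle y_0-v,x_0-u\rangle\ge 0$ for every $(u,v)\in A$; I want $(x_0,y_0)\in A$. Using the surjectivity $R(I+A)=H$, pick $(x_1,y_1)\in A$ with $x_1+y_1=x_0+y_0$, so that $y_0-y_1=-(x_0-x_1)$. Substituting $(u,v)=(x_1,y_1)$ into the monotonicity inequality yields $-\|x_0-x_1\|^2\ge 0$, forcing $(x_0,y_0)=(x_1,y_1)\in A$. Hence $A$ is maximal monotone.

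For the hard direction I would first establish that when $A$ is monotone, the resolvent $J:=(I+A)^{-1}$, wherever it is defined, is single-valued and nonexpansive: given $u_i=x_i+v_i$ with $v_i\in Ax_i$, monotonicity gives $\langle u_1-u_2,x_1-x_2\rangle\ge \|x_1-x_2\|^2$, so by Cauchy--Schwarz $\|x_1-x_2\|\le\|u_1-u_2\|$. From this it follows that $R(I+A)$ is closed: for a Cauchy sequence $u_n=x_n+v_n\to u$, the $x_n$ are Cauchy with some limit $x$, the $v_n$ converge to $u-x$, and the demiclosedness of the maximal monotone graph (which itself follows from maximality via a standard weak-strong limit argument) gives $(x,u-x)\in A$, so $u\in R(I+A)$.

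The crux, which I identify as the main obstacle, is to upgrade closedness to the equality $R(I+A)=H$. My plan is to show additionally that $R(I+A)$ is convex, then proceed by contradiction: if some $y\notin R(I+A)$ exists, project it onto the closed convex set $R(I+A)$ to obtain $y_0=x_0+v_0$ with $v_0\in Ax_0$. Using the projection characterization $\langle y-y_0,z-y_0\rangle\le 0$ for every $z\in R(I+A)$, specialized to $z=x+v$ with $(x,v)\in A$, one constructs a new pair (a natural candidate being $(x_0,v_0+(y-y_0))$, with a perturbation argument if needed) that is monotonically related to $A$ but does not belong to $A$, contradicting maximality. Hence $R(I+A)=H$; applying the same reasoning to $\lambda A$, which is also maximal monotone for any $\lambda>0$, gives $R(I+\lambda A)=H$ for every $\lambda>0$, i.e., m-accretivity. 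The delicate point where the proof genuinely exploits the Hilbert-space geometry (projection inequality, inner-product expansions) rather than just monotonicity and closedness is precisely the verification that the projection construction yields a monotone extension, and this is where I expect the work to concentrate.
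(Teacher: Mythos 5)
The paper itself offers no proof of this statement: Theorem~\ref{Minty} is quoted in the appendix as a classical result, with the reader referred to \cite{BrezisBook}, \cite{Barbu}, \cite{BCrP}. So your attempt can only be measured against the standard proofs. Your easy direction is correct and is the standard argument: if $(x_0,y_0)$ is monotonically related to $A$, solve $x_1+y_1=x_0+y_0$ with $(x_1,y_1)\in A$ and feed it back into the monotonicity inequality to get $-\|x_0-x_1\|^2\ge 0$. The single-valuedness and nonexpansiveness of $(I+A)^{-1}$ and the closedness of $R(I+A)$ are also fine (strong-strong closedness of the graph of a maximal monotone operator is elementary). Your projection step can indeed be completed: with $w=y-y_0\ne 0$, the pair $(x_0+w,\,v_0+w)$ (not $(x_0,\,v_0+w)$, which does not obviously work) is monotonically related to $A$, since for $(x,v)\in A$ one has $\langle v_0+w-v,\,x_0+w-x\rangle=\langle v_0-v,x_0-x\rangle-\bigl(\langle w,x-x_0\rangle+\langle w,v-v_0\rangle\bigr)+\|w\|^2>0$ by monotonicity and the projection inequality; maximality then forces $y_0+2w\in R(I+A)$, contradicting $\langle w, z-y_0\rangle\le 0$.

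The genuine gap is the sentence ``show additionally that $R(I+A)$ is convex.'' You give no argument for this, and none of the elementary tools you have assembled (nonexpansiveness of the resolvent, closedness of the graph, inner-product expansions) will produce it. Convexity of $R(I+A)$ for a maximal monotone $A$ is not a lemma on the way to Minty's theorem; it is essentially the theorem itself, since the conclusion is that this set equals $H$, and there is no known way to establish even convexity of the range without the same machinery. Every standard proof of the surjectivity $R(I+\lambda A)=H$ invokes a substantive external input at exactly this point: a finite-dimensional Galerkin reduction solved by Brouwer's fixed-point theorem followed by the Minty weak-limit trick, the Debrunner--Flor extension lemma, or Kirszbraun's theorem on extending nonexpansive maps (applied to the Cayley transform $u\mapsto x-v$ on $R(I+A)$, whose nonexpansive extension to all of $H$ yields, by maximality, that $A$ itself has full range). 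As written, your plan is circular --- you would be assuming a statement equivalent to the one you are trying to prove --- so the hard direction is not established.
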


We have the following existence and uniqueness result \cite[Th\'{e}or\`{e}me 3.4]{BrezisBook}.

\begin{theorem}\label{Breisresult} Let $H$ be a Hilbert space and $A$ a maximal monotone operator in $H$  and $u_0 \in \overline{D(A)}$, then the
	mild solution $u(t)$ of
	$$
	\left\{
	\begin{array}{l}
		u^{\prime} +
		A(u) \ni 0 \ \ \mbox{ on } \ [0, T], \\[10pt]
		u(0) = u_0,
	\end{array}
	\right.
	$$
	is a  weak solution in the sense of \cite[Definition 3.1]{BrezisBook}.
\end{theorem}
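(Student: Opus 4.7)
The plan is to establish the result by first noting that if $u_0\in D(A)$ the mild solution is automatically strong, so the only real work is to pass to the limit when $u_0\in\overline{D(A)}\setminus D(A)$. Brezis's notion of weak solution of $u'+Au\ni 0$ on $[0,T]$ is the pair of requirements that $u\in C([0,T];H)$ with $u(0)=u_0$, and that for every $[x,y]\in A$ the scalar function $t\mapsto \tfrac12\|u(t)-x\|^2$ satisfies the distributional inequality
\begin{equation}
\tfrac{d}{dt}\tfrac12\|u(t)-x\|^2 \le (-y,\,u(t)-x) \quad \text{in } \mathcal{D}'(0,T).
\end{equation}
So the proof reduces to verifying this inequality for the mild solution produced by Theorem~\ref{EUm-accretive}.

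First I would treat the regular case $u_0\in D(A)$. Since $H$ is a Hilbert space it has the Radon–Nikodym property, so Theorem~\ref{regularity} applies and the mild solution $u$ is in fact a strong solution: $u\in W^{1,1}_{\mathrm{loc}}(0,T;H)$ and $u'(t)+Au(t)\ni 0$ a.e. Pick any $[x,y]\in A$ and a selection $\xi(t)\in Au(t)$ with $\xi(t)=-u'(t)$ a.e. Monotonicity of $A$ gives $(\xi(t)-y,u(t)-x)\ge 0$, i.e. $(-u'(t)-y,u(t)-x)\ge 0$, which since $u$ is strong rearranges to
\begin{equation}
\tfrac{d}{dt}\tfrac12\|u(t)-x\|^2 = (u'(t),u(t)-x) \le (-y,u(t)-x) \quad \text{a.e. } t\in(0,T).
\end{equation}
Integrating against a nonnegative test function $\varphi\in C_c^\infty(0,T)$ gives the weak-solution inequality. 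Hence for $u_0\in D(A)$ the mild solution is a weak solution.

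For the general case $u_0\in\overline{D(A)}$, I would approximate: choose $u_{0,n}\in D(A)$ with $u_{0,n}\to u_0$ in $H$. Let $u_n$ be the corresponding mild solutions, which by the previous step are weak solutions. Because $A$ is accretive, the semigroup generated by $-A$ is a contraction, so $\|u_n(t)-u(t)\|\le \|u_{0,n}-u_0\|$ uniformly in $t\in[0,T]$; in particular $u_n\to u$ uniformly on $[0,T]$. For any fixed $[x,y]\in A$, the functions $t\mapsto \tfrac12\|u_n(t)-x\|^2$ converge uniformly to $t\mapsto \tfrac12\|u(t)-x\|^2$, and $(-y,u_n(t)-x)\to(-y,u(t)-x)$ uniformly as well. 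Testing the distributional inequalities for $u_n$ against an arbitrary nonnegative $\varphi\in C_c^\infty(0,T)$ and passing to the limit yields the same inequality for $u$, together with the initial condition $u(0)=u_0$ by continuity.

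The main obstacle is a cosmetic one rather than a mathematical one: the statement references \cite[Definition 3.1]{BrezisBook}, so strictly speaking one must reproduce (or recall) that definition before writing the proof; once this is fixed, the argument is the standard density/approximation scheme, and the only delicate point is ensuring that the distributional inequality passes to the uniform limit, which follows because the relevant $\varphi$-integrals are continuous functionals of $u_n$ in $C([0,T];H)$. No further accretivity estimate beyond the contraction property is required.
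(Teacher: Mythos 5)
The paper gives no proof of this statement at all: it is quoted as background directly from Brezis's monograph (Th\'{e}or\`{e}me 3.4 of \cite{BrezisBook}), so there is nothing to compare your argument with except the cited source. Your argument is the standard one and the estimates in it are all correct, but it is aimed at the wrong definition. Brezis's Definition 3.1 does \emph{not} define a weak solution by the distributional inequality $\tfrac{d}{dt}\tfrac12\|u(t)-x\|^2\le(-y,\,u(t)-x)$ for all $(x,y)\in A$; it defines $u\in C([0,T];H)$ to be a weak solution of $u'+Au\ni f$ if there exist strong solutions $u_n$ of $u_n'+Au_n\ni f_n$ with $f_n\to f$ in $L^1(0,T;H)$ and $u_n\to u$ uniformly on $[0,T]$. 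What you actually verify is that the mild solution is an integral solution in B\'enilan's sense. For maximal monotone $A$ this property is indeed equivalent to being a weak solution, but that equivalence is itself a theorem which you would have to invoke or prove; as written, your proof establishes a different (if ultimately equivalent) property and therefore has a genuine gap relative to the stated claim.

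The repair is already contained in your own approximation step. Take $u_{0,n}\in D(A)$ with $u_{0,n}\to u_0$. Since $H$ is a Hilbert space it has the Radon--Nikodym property, so by Theorem~\ref{regularity} the mild solutions $u_n$ with data $u_{0,n}$ are strong solutions of $u_n'+Au_n\ni 0$; the contraction property gives $\|u_n(t)-u(t)\|\le\|u_{0,n}-u_0\|$, hence $u_n\to u$ uniformly on $[0,T]$, and this is precisely Definition 3.1 with $f_n=f=0$. Once the definition is stated correctly, the entire computation with the monotonicity inequality and the passage to the limit in $\mathcal{D}'(0,T)$ becomes superfluous and the proof is three lines. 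So: correct skeleton and correct analysis, but either state Definition 3.1 accurately (and shorten the proof accordingly) or add the equivalence between integral and weak solutions if you wish to keep the inequality formulation.
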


One of the more important class of maximal monotone in Hilbert spaces are the subdifferential of convex lower-semicontinuous functionals in Hilbert spaces. We remember that for a proper functional $\varphi : H
\rightarrow (- \infty, +\infty]$, that is
$D(\varphi) := \{ x  \in H \ : \ \varphi (x) \not= + \infty \} \not= \emptyset$, its  {\it subdifferential operator} $\partial \varphi$  is defined by
$$w \in \partial \varphi (z) \ \Longleftrightarrow \ \varphi (x) \geq \varphi(z) + (w | x - z) \
\
\ \ \forall \, x \in H.$$
For such operators we have the following regularity.

\begin{theorem}{\bf (Brezis-Komura Theorem)}\label{Bre-Kom}
	Let H be a Hilbert space and $\varphi : H \rightarrow (- \infty, +\infty]$ a  proper, convex and lower
	semi-continuous function and $u_0 \in \overline{D(\partial \varphi)}$, then the
	mild solution $u(t)$ of
	$$
	\left\{
	\begin{array}{l}
		u^{\prime} +
		\partial
		\varphi(u) \ni 0 \ \ \mbox{ on } \ [0, T], \\[10pt]
		u(0) = u_0,
	\end{array}
	\right.
	$$
	is a strong solution  and we have the following estimate
	$$\Vert
	u^{\prime} \Vert_{L^{\infty}(\delta, T; H)} \leq
	\frac{1}{\delta} \Vert u_0 \Vert \ \ \ \ {\rm for}
	\
	\ 0 < \delta < T.
	$$
\end{theorem}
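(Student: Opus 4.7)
The plan is to follow the classical Yosida/Moreau regularization strategy. For each $\lambda > 0$ I would introduce the Moreau envelope $\varphi_\lambda(x) := \inf_{y \in H}\{\frac{1}{2\lambda}\|x-y\|^2 + \varphi(y)\}$, which is convex and $C^{1,1}$ with gradient equal to the Yosida approximation $A_\lambda = (I - J_\lambda)/\lambda$ of $A := \partial\varphi$, where $J_\lambda = (I + \lambda A)^{-1}$ is the resolvent (well defined because Minty's theorem, Theorem~\ref{Minty}, gives $m$-accretivity of $\partial\varphi$ after verifying the range condition via minimization of $y \mapsto \frac{1}{2\lambda}\|y - x\|^2 + \varphi(y)$). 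Since $A_\lambda$ is globally Lipschitz and monotone, the regularized Cauchy problem $u_\lambda' + A_\lambda u_\lambda = 0$, $u_\lambda(0) = u_0$, has a unique $C^1$ solution on $[0,T]$ by standard ODE theory.

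The first key step is the energy identity: testing against $u_\lambda'$ and using $\nabla \varphi_\lambda = A_\lambda$ gives $\frac{d}{dt}\varphi_\lambda(u_\lambda) = -\|u_\lambda'\|^2$, so $\int_0^t \|u_\lambda'(s)\|^2 \, ds = \varphi_\lambda(u_0) - \varphi_\lambda(u_\lambda(t))$. The second key step is that $t \mapsto \|u_\lambda'(t)\|$ is non-increasing: for $h > 0$ the shift $v_h(t) := u_\lambda(t+h) - u_\lambda(t)$ satisfies $v_h' = -(A_\lambda u_\lambda(t+h) - A_\lambda u_\lambda(t))$, and monotonicity of $A_\lambda$ forces $\frac{d}{dt}\|v_h\|^2 \leq 0$; dividing by $h^2$ and letting $h \to 0^+$ yields the monotonicity claim. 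Combining the two ingredients gives the regularizing bound $t\|u_\lambda'(t)\|^2 \leq \int_0^t \|u_\lambda'\|^2 \leq \varphi_\lambda(u_0) - \varphi_\lambda(u_\lambda(t))$. To convert $\varphi_\lambda(u_0)$ into the $\|u_0\|$ on the right-hand side of the stated estimate, I would test the regularized equation against $u_\lambda$ itself and use convexity, $\frac{1}{2}\frac{d}{dt}\|u_\lambda\|^2 = -(A_\lambda u_\lambda, u_\lambda) \leq \varphi_\lambda(0) - \varphi_\lambda(u_\lambda)$; integrating on $[0,t]$ and invoking the monotonicity of $s \mapsto \varphi_\lambda(u_\lambda(s))$ delivers $t(\varphi_\lambda(u_\lambda(t)) - \varphi_\lambda(0)) \leq \tfrac{1}{2}\|u_0\|^2$, and substituting back (after the normalization $\varphi(0) = \min \varphi = 0$ implicit in the statement) produces $\|u_\lambda'(t)\|^2 \leq \|u_0\|^2/t^2$.

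Next I would pass to the limit $\lambda \to 0^+$. The standard comparison $\|u_\lambda(t) - u_\mu(t)\|^2 \leq C(\lambda+\mu)t$, obtained by subtracting the two regularized equations and using the identity $A_\lambda x = A J_\lambda x$ together with monotonicity of $A$, shows $\{u_\lambda\}$ is Cauchy in $C([0,T];H)$; call the limit $u$. The uniform $L^\infty_{\mathrm{loc}}(0,T;H)$ bound on $u_\lambda'$ transfers to $u$ by lower semi-continuity, so $u$ is locally Lipschitz on $(0,T]$, hence a.e. differentiable since $H$ has the Radon--Nikodym property. Identification $u'(t) \in -Au(t)$ for a.e.~$t$ follows from demi-closedness of the maximal monotone graph $A$ applied to the pairs $(J_\lambda u_\lambda(t), A_\lambda u_\lambda(t))$, once one notes $J_\lambda u_\lambda(t) \to u(t)$ strongly and $A_\lambda u_\lambda(t) = -u_\lambda'(t) \rightharpoonup -u'(t)$ weakly. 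Finally, for $u_0 \in \overline{D(\partial\varphi)}$ density of $D(\partial\varphi)$ combined with the contractivity $\|u(t;u_0) - u(t;\tilde u_0)\| \leq \|u_0 - \tilde u_0\|$ (a direct consequence of monotonicity) yields the mild solution, which becomes strong on $(0,T]$ because the regularizing estimate propagates to the limit by weak-$*$ lower semi-continuity.

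The main obstacle I expect is the careful bookkeeping needed to pass the pointwise inclusion $u'(t) \in -Au(t)$ through the limit and simultaneously preserve the $\|u_0\|/\delta$ bound: demi-closedness of $A$ is standard, but combining it with the monotonicity argument for $\|u_\lambda'\|$ in a way that remains valid for merely $u_0 \in \overline{D(\partial\varphi)}$ (rather than $u_0 \in D(\partial\varphi)$, where $\|u_\lambda'(0)\| \leq |Au_0|^\circ$ is immediate) is the delicate part, and requires the intermediate translation/normalization to a minimizer to get the desired dependence on $\|u_0\|$ rather than on $\varphi(u_0)$.
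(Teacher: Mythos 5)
The paper itself does not prove this statement: Theorem~\ref{Bre-Kom} is quoted in the Appendix as a classical result with references to \cite{BrezisBook}, \cite{Barbu} and \cite{BCrP}, so your proposal has to stand on its own. Your overall strategy --- Moreau--Yosida regularization, the energy identity $\tfrac{d}{dt}\varphi_\lambda(u_\lambda)=-\|u_\lambda'\|^2$, monotonicity of $t\mapsto\|u_\lambda'(t)\|$, the Cauchy-in-$\lambda$ comparison, demi-closedness of the maximal monotone graph to identify the limit, and density of $D(\partial\varphi)$ plus contractivity to reach $u_0\in\overline{D(\partial\varphi)}$ --- is exactly the standard proof in Br\'ezis's book, and those steps are sound. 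You are also right to flag that the estimate as stated implicitly normalizes $\varphi$ so that $0$ is a minimizer with $\varphi(0)=0$; the general form is $\|u'(t)\|\le|\partial\varphi(v)|^{\circ}+\tfrac1t\|u_0-v\|$ for every $v\in D(\partial\varphi)$.

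There is, however, a genuine gap in the quantitative step. From $t\|u_\lambda'(t)\|^2\le\varphi_\lambda(u_0)-\varphi_\lambda(u_\lambda(t))$ together with $\varphi_\lambda(u_\lambda(t))-\varphi_\lambda(0)\le\tfrac{1}{2t}\|u_0\|^2$ you cannot ``substitute back'' to obtain $\|u_\lambda'(t)\|^2\le\|u_0\|^2/t^2$: the second inequality is an \emph{upper} bound on $\varphi_\lambda(u_\lambda(t))$, which enters the first inequality with a minus sign, so all you actually get is $t\|u_\lambda'(t)\|^2\le\varphi_\lambda(u_0)$ --- and $\varphi_\lambda(u_0)\uparrow\varphi(u_0)$, which may be $+\infty$ for $u_0\in\overline{D(\partial\varphi)}\setminus D(\varphi)$, precisely the case the regularizing effect is meant to cover. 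The standard repair is the weighted energy identity: integrating $s\|u_\lambda'(s)\|^2=-s\,\tfrac{d}{ds}\varphi_\lambda(u_\lambda(s))$ by parts gives
\begin{equation}
\int_0^t s\,\|u_\lambda'(s)\|^2\,ds=\int_0^t\varphi_\lambda(u_\lambda(s))\,ds-t\,\varphi_\lambda(u_\lambda(t))\le t\,\varphi_\lambda(0)+\tfrac12\|u_0\|^2-t\,\varphi_\lambda(u_\lambda(t))\le\tfrac12\|u_0\|^2
\end{equation}
when $0$ minimizes $\varphi_\lambda$, while the monotonicity of $\|u_\lambda'\|$ gives $\int_0^t s\,\|u_\lambda'(s)\|^2\,ds\ge\tfrac{t^2}{2}\|u_\lambda'(t)\|^2$; this yields the sharp bound $\|u_\lambda'(t)\|\le\tfrac1t\|u_0\|$, which passes to the limit $\lambda\to0^+$ by weak lower semi-continuity. (Alternatively, restarting your own estimate at time $t/2$ and using the bound on $\varphi_\lambda(u_\lambda(t/2))$ closes the argument, but only with the constant $\sqrt2/t$ rather than $1/t$.)
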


We have the following interesting convergence result.

\begin{theorem}\label{TeoConBP} {\bf (Brezis-Pazy Theorem)}
	Let $A_n$ be $m$-accretive in $X$, $x_n \in
	\overline{D(A_n)}$ and $f_n \in L^1(0, T; X)$ for $n = 1, 2,
	\ldots , \infty$. Let $u_n$ be the mild solution of
	$$u'_n + A_n u_n \ni f_n, \ \ \hbox{in} \ [0,T], \ \ u_n(0) = x_n.$$
	If $f_n \to f_{\infty}$ in $L^1(0, T; X)$ and  $x_n \to x_{\infty}$ as $n \to \infty$ and
	$$\lim_{n \to \infty} (I + \lambda A_n)^{-1} z = (I + \lambda A_{\infty})^{-1} z,$$
	for some $\lambda > 0$ and all $z \in D$, with $D$ dense in $X$, then
	$$\lim_{n \to \infty} u_n(t) = u_{\infty}(t) \ \ \hbox{uniformly on} \ [0,T].$$
\end{theorem}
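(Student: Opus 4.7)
The plan is to reduce the convergence of mild solutions to the convergence of their implicit Euler approximations, which in turn is controlled by the convergence of resolvents. Throughout, the crucial structural fact is that, by accretivity, $(I+\lambda A_n)^{-1}$ is nonexpansive for every $\lambda>0$ and every $n\in\mathbb{N}\cup\{\infty\}$; this uniform Lipschitz bound will be the main tool for passing limits inside nonlinear expressions.

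\textbf{Step 1: upgrade resolvent convergence.} The hypothesis gives convergence of $(I+\lambda A_n)^{-1}z$ for a single $\lambda>0$ and for $z$ in a dense set $D\subset X$. First, using nonexpansiveness together with an $\varepsilon/3$ argument, I would extend the convergence to all $z\in X$ for this same $\lambda$. Next, I would enlarge the set of admissible parameters. The classical trick is the resolvent identity
\[
(I+\mu A_n)^{-1}z \;=\; (I+\lambda A_n)^{-1}\!\left(\tfrac{\lambda}{\mu}z+\bigl(1-\tfrac{\lambda}{\mu}\bigr)(I+\mu A_n)^{-1}z\right),
\]
valid for every m-accretive operator and every $\lambda,\mu>0$. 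For $\mu\in(\lambda/2,2\lambda)$ the factor $|1-\lambda/\mu|<1$, so combining this identity with the nonexpansiveness of $(I+\lambda A_n)^{-1}$ yields a contraction-type fixed-point argument that propagates convergence from $\lambda$ to every $\mu$ in a neighborhood; iterating, one obtains convergence for every $\mu>0$ and every $z\in X$:
\[
(I+\mu A_n)^{-1}z \;\longrightarrow\; (I+\mu A_\infty)^{-1}z \qquad\forall\,\mu>0,\ \forall\,z\in X. \tag{$\ast$}
\]

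\textbf{Step 2: approximate $u_\infty$ by a discretization.} Fix $\varepsilon>0$. Using the definition of mild solution for $A_\infty$, I choose a partition $t_0<t_1<\dots<t_N$ in $[0,T]$ with mesh $\le\varepsilon$, together with discrete sources $f^i_\infty:=\frac{1}{t_i-t_{i-1}}\int_{t_{i-1}}^{t_i}f_\infty(s)\,ds$, such that the piecewise-constant solution $v_\infty$ of
\[
v^i_\infty \;=\; (I+(t_i-t_{i-1})A_\infty)^{-1}\bigl(v^{i-1}_\infty+(t_i-t_{i-1})f^i_\infty\bigr),\qquad v^0_\infty=x_\infty,
\]
satisfies $\|u_\infty(t)-v_\infty(t)\|\le\varepsilon$ on $[t_0,t_N]$.

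\textbf{Step 3: use the same partition for $A_n$.} Define $v_n$ by the analogous recursion with $A_n$, $x_n$ and the piecewise averages $f^i_n$ of $f_n$. By a straightforward induction on $i$, I combine (i) the nonexpansiveness of each resolvent $(I+(t_i-t_{i-1})A_n)^{-1}$, (ii) the pointwise resolvent convergence $(\ast)$ applied to the fixed vectors $v^{i-1}_\infty+(t_i-t_{i-1})f^i_\infty$, (iii) the convergence $x_n\to x_\infty$, and (iv) the convergence $f^i_n\to f^i_\infty$ (which follows from $f_n\to f_\infty$ in $L^1(0,T;X)$). Because $N$ is fixed once the partition is fixed, the inductive estimate
\[
\|v^i_n-v^i_\infty\|\le\|v^{i-1}_n-v^{i-1}_\infty\|+(t_i-t_{i-1})\|f^i_n-f^i_\infty\|+\eta^i_n,
\]
where $\eta^i_n$ is the resolvent error at step $i$, produces $\sup_{t\in[t_0,t_N]}\|v_n(t)-v_\infty(t)\|\to 0$ as $n\to\infty$.

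\textbf{Step 4: control $u_n-v_n$ and conclude.} Here I would invoke a quantitative stability estimate for the implicit Euler scheme on a fixed partition, which is a built-in ingredient of the Crandall--Liggett theory: for an $\varepsilon$-discretization of $u'+A_n u\ni f_n$ one has $\|u_n(t)-v_n(t)\|\le C(\varepsilon+\omega_n(\varepsilon))$ uniformly in $n$, where $\omega_n(\varepsilon)$ collects the modulus of continuity in time of the data $f_n$, and $C$ depends only on bounds of the initial data and $\|f_n\|_{L^1}$. Since $f_n\to f_\infty$ in $L^1$, the family $\{f_n\}$ is uniformly integrable and $\omega_n(\varepsilon)\to 0$ as $\varepsilon\to 0$ uniformly in $n$ (up to passing to an equi-integrable majorant). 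Combining the triangle inequality
\[
\|u_n(t)-u_\infty(t)\|\le\|u_n(t)-v_n(t)\|+\|v_n(t)-v_\infty(t)\|+\|v_\infty(t)-u_\infty(t)\|,
\]
taking $n\to\infty$ first (Step 3) and then $\varepsilon\to 0$, yields the claimed uniform convergence on $[0,T]$.

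\textbf{Main obstacle.} I expect Step 1 to be routine modulo book\-keeping, and Step 3 is a clean induction on a \emph{fixed} partition. The delicate point is Step 4: one needs a discretization-error bound for the m-accretive Cauchy problem that is uniform in $n$ and does not see the specific operator $A_n$, only the data $(x_n,f_n)$. This is where the $L^1$-convergence hypothesis on $f_n$ is crucial (mere weak or pointwise convergence would not suffice), and where one must carefully handle the modulus of continuity of the time-averaged sources $f^i_n$ simultaneously for all $n$. Once this uniform discretization estimate is in hand, the three-term triangle inequality closes the argument.
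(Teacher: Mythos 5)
The paper does not prove this statement: it is quoted in the appendix as a classical result of Brezis--Pazy (see \cite{BrPa} and \cite{BCrP}), so there is no in-paper argument to compare with; I therefore assess your proposal on its own terms. Your Steps 1--3 are correct and standard: the extension of resolvent convergence to all $z\in X$ and all $\mu>0$ via nonexpansiveness and the resolvent identity is exactly the usual argument (the inequality $(1-|1-\lambda/\mu|)\,\|J^n_\mu z-J^\infty_\mu z\|\le\|J^n_\lambda w_\infty-J^\infty_\lambda w_\infty\|$ closes the fixed-point step), and the induction over a \emph{fixed} partition in Step 3 is clean.

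The genuine gap is in Step 4, and it is not quite where you locate it. You assert a discretization-error bound for the implicit Euler scheme that is ``uniform in $n$ and does not see the specific operator $A_n$, only the data $(x_n,f_n)$.'' No such bound exists. The fundamental error estimate of the Crandall--Liggett/Kobayashi/B\'enilan--Crandall--Pazy theory controls $\|u_n(t)-v_n(t)\|$ in terms of $\varepsilon$, the data, \emph{and} the quantity $|A_n\hat x|:=\inf\{\|y\|:y\in A_n\hat x\}$ for a reference point $\hat x\in D(A_n)$, together with $\|x_n-\hat x\|$. Already the first Euler step satisfies only $\|(I+hA_n)^{-1}x_n-x_n\|\le 2\|x_n-\hat x\|+h\,|A_n\hat x|$, so for $x_n$ merely in $\overline{D(A_n)}$ the one-step error on a fixed mesh can be arbitrarily large unless $x_n$ is uniformly well approximated by points with uniformly bounded minimal sections. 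The equi-integrability of $\{f_n\}$, which you emphasize, controls only the time-averaging error of the sources and cannot repair this. The fix is available from your own hypotheses: for small $\mu>0$ set $\hat x_n:=(I+\mu A_n)^{-1}x_\infty\in D(A_n)$; then $|A_n\hat x_n|\le\mu^{-1}\|x_\infty-\hat x_n\|$ is bounded uniformly in $n$ by Step 1, $\hat x_n\to\hat x_\infty$, and $\hat x_\infty\to x_\infty$ as $\mu\to 0$ because $x_\infty\in\overline{D(A_\infty)}$; combining this with the nonexpansive dependence of mild solutions on $(x,f)$ lets you first prove the theorem for the regularized initial data and then pass to $x_n$. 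Without this regularization step (or an equivalent appeal to the full Kobayashi estimate with reference points), Step 4 as written does not go through.
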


Let us also collect some preliminaries and notations concerning
completely accretive operators that will be used afterwards (see
\cite{BCr2}). Let $(\Sigma, \mathcal{B}, \mu)$ be a $\sigma$-finite
measure space, and $M(\Sigma,\mu)$ the space of $\mu$-a.e. equivalent
classes of measurable functions $u : \Sigma\to \R$.
Let
\begin{displaymath}
	J_0:= \Big\{ j : \R \rightarrow
	[0,+\infty] : \text{$j$ is convex, lower
		semicontinuous, }j(0) = 0 \Big\}.
\end{displaymath}
For every $u$, $v\in M(\Sigma,\mu)$, we write
\begin{displaymath}
	u\ll v \quad \text{if and only if}\quad \int_{\Sigma} j(u)
	\,d\mu \le \int_{\Sigma} j(v) \, d\mu\quad\text{for all $j\in J_{0}$.}
\end{displaymath}

\begin{definition}{\rm
		An operator $A$ on $M(\Sigma,\mu)$ is called {\it completely
			accretive} if for every $\lambda>0$ and
		for every $(u_1,v_1)$, $(u_{2},v_{2}) \in A$ and $\lambda >0$, one
		has that
		\begin{displaymath}
			u_1 - u_2 \ll u_1 - u_2 + \lambda (v_1 - v_2).
		\end{displaymath}
		If $X$ is a linear subspace of $M(\Sigma,\mu)$ and $A$ an operator
		on $X$, then $A$ is {\it $m$-completely accretive on $X$} if $A$ is
		completely accretive and satisfies the {\it range
			condition}
		\begin{displaymath}
			\textrm{Ran}(I+\lambda A)=X\qquad\text{for some (or equivalently, for
				all) $\lambda>0$.}
		\end{displaymath}
	}
\end{definition}

We denote
\begin{displaymath}
	L_0(\Sigma,\mu):= \left\{ u \in M(\Sigma,\mu) \ : \
	\int_{\Sigma} \big[\vert u \vert
	- k\big]^+\, d\mu < \infty \text{ for all $k > 0$} \right\}.
\end{displaymath}
The following results were proved in \cite{BCr2}.

\begin{proposition}
	\label{prop:completely-accretive}
	Let $P_{0}$ denote the set of all functions $q\in C^{\infty}(\R)$
	satisfying $0\le q'\le 1$,
	$q'$  is compactly supported, and $0$ is not contained in the
	support ${\rm supp}(q)$ of $q$. Then,
	an operator $A \subseteq L_{0}(\Sigma,\mu)\times
	L_{0}(\Sigma,\mu)$ is  completely accretive if and only if
	\begin{displaymath}
		\int_{\Sigma}q(u-\hat{u})(v-\hat{v})\, d\mu \ge 0
	\end{displaymath}
	for every $q\in P_{0}$ and every $(u,v)$, $(\hat{u},\hat{v})\in A$.
\end{proposition}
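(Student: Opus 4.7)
The assertion is the classical Bénilan--Crandall characterization of complete accretivity, which we will establish by relating the primitives of elements of $P_{0}$ to the admissible test functions $j\in J_{0}$ used in the definition of $\ll$.

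\textbf{Direction $(\Rightarrow)$.} Given $q\in P_{0}$, I would first produce a test function $j_{q}\in J_{0}$ by setting $j_{q}(r):=\int_{0}^{r}q(s)\,ds$. Since $q$ is smooth and non-decreasing (as $q'\ge 0$) and vanishes on a neighbourhood of the origin (as $0\notin\operatorname{supp}(q)$), one has $q(r)\le 0$ for $r<0$ and $q(r)\ge 0$ for $r>0$; consequently $j_{q}$ is convex, $C^{1}$, non-negative with minimum $j_{q}(0)=0$, so $j_{q}\in J_{0}$. Applying the complete accretivity condition with this $j_{q}$ yields, for every $\lambda>0$,
\begin{equation}
\frac{1}{\lambda}\int_{\Sigma}\bigl[j_{q}\bigl((u-\hat u)+\lambda(v-\hat v)\bigr)-j_{q}(u-\hat u)\bigr]\,d\mu\ge 0.
\end{equation}
The integrand converges pointwise to $q(u-\hat u)(v-\hat v)$ and is dominated by $\|q\|_{\infty}|v-\hat v|$; crucially it vanishes on $\{|u-\hat u|<\delta\}$ for a $\delta>0$ with $q\equiv 0$ on $(-\delta,\delta)$. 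Because $u,\hat u,v,\hat v\in L_{0}(\Sigma,\mu)$, Chebyshev gives $\mu(\{|u-\hat u|\ge\delta\})<\infty$, and on any set of finite measure $|v-\hat v|$ is integrable by the definition of $L_{0}$; dominated convergence then produces the required inequality $\int_{\Sigma}q(u-\hat u)(v-\hat v)\,d\mu\ge 0$.

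\textbf{Direction $(\Leftarrow)$.} Fix $(u,v),(\hat u,\hat v)\in A$, $\lambda>0$, and $j\in J_{0}$; the goal is $\int j(u-\hat u)\,d\mu\le\int j(u-\hat u+\lambda(v-\hat v))\,d\mu$. By convexity, for any measurable selection $\eta(x)\in\partial j(u(x)-\hat u(x))$,
\begin{equation}
j\bigl(u-\hat u+\lambda(v-\hat v)\bigr)-j(u-\hat u)\ge\lambda\,\eta\cdot(v-\hat v)\quad\mu\text{-a.e.,}
\end{equation}
so it suffices to show $\int_{\Sigma}\eta\,(v-\hat v)\,d\mu\ge 0$ for a convenient choice of $\eta$ (namely $\eta=j'_{+}(u-\hat u)$). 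Since $j'_{+}$ is non-decreasing, I would approximate it by functions $q_{n}\in P_{0}$: first truncate $j'_{+}$ so it vanishes on a shrinking neighbourhood of $0$ and is constant outside a compact set (preserving monotonicity and non-negativity to the right of $0$), then mollify and, if necessary, divide by a large constant to enforce $0\le q_{n}'\le 1$ (the hypothesis is unaffected by positive rescaling).

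\textbf{Passing to the limit.} With $q_{n}\in P_{0}$ satisfying $q_{n}\to j'_{+}$ pointwise outside a negligible set, the hypothesis applied to each $q_{n}$ gives $\int q_{n}(u-\hat u)(v-\hat v)\,d\mu\ge 0$. The same dominated convergence argument from $(\Rightarrow)$ (localising to $\{|u-\hat u|\ge\delta\}$, exploiting $L_{0}$) yields $\int j'_{+}(u-\hat u)(v-\hat v)\,d\mu\ge 0$, whence integrating the convexity inequality gives the desired comparison. The main technical obstacle I foresee is precisely the integrability step in both directions: neither $v-\hat v$ nor $j'_{+}(u-\hat u)$ need be globally integrable, and everything relies on carefully exploiting the assumption $A\subseteq L_{0}\times L_{0}$ together with the vanishing of the $q_{n}$ near the origin to confine the action to a set of finite measure on which truncation arguments succeed.
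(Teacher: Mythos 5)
The paper does not actually prove this proposition: it is quoted from B\'enilan--Crandall (``The following results were proved in \cite{BCr2}''), so there is no in-paper argument to compare against and your attempt must stand on its own. Your forward direction is essentially correct: $j_q(r)=\int_0^r q(s)\,ds$ does belong to $J_0$, and the difference quotients $\tfrac{1}{\lambda}\bigl[j_q(a+\lambda b)-j_q(a)\bigr]$ converge to $q(a)b$ with the integrable dominating function $\|q\|_\infty\,|b|\,\1_{\{|a|>\delta/2\}\cup\{|b|>\delta/2\}}$ for $\lambda\le 1$. (Your localisation only to $\{|u-\hat u|\ge\delta\}$ misses the points where $|a|<\delta$ but $|a+\lambda b|>\delta$; the extra set $\{|b|>\delta/2\}$ is needed, and it too has finite measure with $|b|$ integrable on it by the $L_0$ hypothesis.)

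The backward direction has a genuine gap at the limit passage $\int q_n(a)b\,d\mu\to\int j'_+(a)b\,d\mu$. The entire integrability mechanism rests on $q$ vanishing on a fixed neighbourhood $(-\delta,\delta)$ of the origin, which confines the integrand to a set of finite measure; your $q_n$ vanish only on shrinking neighbourhoods, so there is no common localisation set and no integrable dominating function. Worse, the target quantity need not be a well-defined Lebesgue integral: for $j(r)=|r|$ one has $j'_+(a)b=\mathrm{sign}(a)\,b$ on $\{a\neq 0\}$, and with $\Sigma=\N$, counting measure, $a=(1/m)_m$ and $b=((-1)^m/m)_m$ (both in $L_0$) the sum is only conditionally convergent. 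The same objection hits the final step, ``integrating the convexity inequality'': $\int j(u-\hat u)\,d\mu$ may be $+\infty$, and $\partial j(u-\hat u)$ is empty where $j=+\infty$. The standard repair — and what \cite{BCr2} does — is never to differentiate $j$: one proves $\int j_q(a)\,d\mu\le\int j_q(a+\lambda b)\,d\mu$ directly for each primitive $j_q$ (both integrals are finite since $j_q(c)\le\|q\|_\infty[\,|c|-\delta]^+$, and convexity gives $j_q(a+\lambda b)-j_q(a)\ge\lambda\,q(a)b$ with integrable right-hand side), and then recovers a general $j\in J_0$ as a pointwise increasing limit of such $j_q$'s — equivalently via the representation of $j$ through the elementary functions $(r-k)^+$ and $(r+k)^-$ — using Fatou or monotone convergence. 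Finally, your selection $\eta=j'_+(u-\hat u)$ on the set $\{u=\hat u\}$ is incompatible with $q_n(0)=0$ whenever $j'_+(0)>0$; one must take $\eta=0\in\partial j(0)$ there.
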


{\flushleft \bf Acknowledgements.}      The first author would like to acknowledge the CNRST of Morocco for their partial support through the Fincom program.     The second and third authors  have been partially supported  by  Grant PID2022-136589NB-I00 funded by MCIN/AEI/10.13039/501100011 033 and FEDER and by Grant RED2022-134784-T funded by MCIN/AEI/10.13039/501 100011033.    The authors also thank the EST Essaouira for its hospitality and welcome in Essaouira during the CIMPA School 2025, where part of the results and the writing was finalized.

For the purpose of open access, the authors have applied a CC BY public copyright licence to any Author Accepted Manuscript version arising from this submission.

\

{\bf Data Availability.} Data sharing is not applicable to this article as no datasets were generated or
analyzed during the current study.

\

{\bf Conflict of interest.} The authors have no conflict of interest to declare that are relevant to the content of this
article.

\end{document}